\newcommand{\lan}{\langle }
\newcommand{\ran}{\rangle}
\newcommand{\MS}{\mathcal{S}}
\newcommand{\MM}{\mathcal{M}}
\newcommand{\MO}{\mathcal{O}}
\newcommand{\Vol}{\mathrm{Vol}}
\newtheorem{theorem}{Theorem}[section]
\newtheorem{corollary}[theorem]{Corollary}
\newtheorem{definition}[theorem]{Definition}
\newtheorem{question}[theorem]{Question}
\newtheorem{example}[theorem]{Example}
\newtheorem{lemma}[theorem]{Lemma}
\newtheorem{proposition}[theorem]{Proposition}
\newtheorem*{remark}{Remark}
\newcommand{\MC}{\mathcal{C}}
\newcommand{\Tr}{\mathrm{Tr}}
\newcommand{\st}{\star}
\newcommand{\we}{\wedge}
\newcommand{\pa}{\partial}
\newcommand{\RP}{\mathbb R^+}
\newcommand{\ti}{\times}
\newcommand{\vp}{\varphi}
\newcommand{\MA}{\mathcal{A}}
\newcommand{\al}{\alpha}
\newcommand{\na}{\nabla}
\newcommand{\ep}{\epsilon}
\newcommand{\be}{\beta}
\newcommand{\MT}{\mathcal{T}}
\newcommand{\lam}{\lambda}
\newcommand{\MF}{\mathcal{F}}
\newcommand{\Lam}{\Lambda}
\newcommand{\MB}{\mathcal{B}}
\newcommand{\MI}{\mathcal{I}}
\newcommand{\ZT}{\mathbb{Z}_2}
\newcommand{\ga}{\gamma}
\newcommand{\Id}{\mathrm{Id}}
\newcommand{\tS}{\tilde{S}}
\newcommand{\tm}{\tilde{m}}
\newcommand{\tr}{\tilde{r}}
\newcommand{\ttheta}{\tilde{\theta}}
\newcommand{\ca}{\mathcal{C}^{,\gamma}}
\newcommand{\SL}{\mathrm{SL}}
\newcommand{\Hess}{\mathrm{Hess}}
\newcommand{\tg}{\tilde{g}}
\newcommand{\tz}{\tilde{z}}
\newcommand{\floor}[1]{\left\lfloor #1 \right\rfloor}
\newcommand{\The}{\Theta}
\newcommand{\tSigma}{\tilde{\Sigma}}
\newcommand{\bna}{\bar{\na}}
\newcommand{\tal}{\tilde{\alpha}}
\newcommand{\MD}{\mathcal{D}}
\newcommand{\MBR}{\mathbb{R}}
\newcommand{\mbX}{\mathbf{X}}
\newcommand{\ze}{z}
\newcommand{\tL}{\tilde{L}}
\newcommand{\MU}{\mathcal{U}}
\newcommand{\tiota}{\tilde{\iota}}
\newcommand{\tx}{\tilde{x}}
\newcommand{\inj}{\mathrm{inj}}
\newcommand{\MMmec}{\MM_{\mathrm{mec}}}
\newcommand{\pSL}{\mathrm{pSL}}
\newcommand{\hna}{\hat{\na}}
\newcommand{\tSL}{\tilde{\SL}}
\newcommand{\Riem}{\mathrm{Riem}}
\newcommand{\hal}{\hat{\al}}
\newcommand{\LS}{\lesssim}
\newcommand{\hmu}{\hat{\mu}}
\newcommand{\hV}{\hat{V}}
\newcommand{\EQG}{E_{Q_{\delta g}}}
\newcommand{\EDQ}{E_{\delta Q}}
\newcommand{\omcan}{\omega_{\mathrm{can}}}
\newcommand{\Sh}{\mathrm{Sh}}
\newcommand{\mfA}{\mathfrak{A}}
\newcommand{\mfa}{\mathfrak{a}}
\newcommand{\mfb}{\mathfrak{b}}
\newcommand{\mfF}{\mathfrak{F}}
\newcommand{\mff}{\mathfrak{f}}
\newcommand{\app}{\mathrm{app}}
\newcommand{\mfh}{\mathfrak{h}}
\newcommand{\ta}{\tilde{a}}
\newcommand{\tU}{\tilde{U}}
\newcommand{\hg}{\hat{g}}
\newcommand{\tmfA}{\tilde{\mfA}}
\newcommand{\odd}{\mathrm{odd}}
\newcommand{\ind}{\mathrm{ind}}
\newcommand{\htau}{\hat{\tau}}
\newcommand{\omegac}{\omega_{\mathbb{C}}}
\begin{document}
	\title[The Branched Deformations of the Special Lagrangian Submanifolds]{The Branched Deformations of the Special Lagrangian Submanifolds}
	\author{Siqi He} 
	\date{\today}
	\address{Simons Center for Geometry and Physics, StonyBrook University\\Stonybrook, NY 11794}
	\email{she@scgp.stonybrook.edu}
	
	\begin{abstract}
		The branched deformations of immersed compact special Lagrangian submanifolds are studied in this paper. If there exists a nondegenerate $\ZT$ harmonic 1-form over a special Lagrangian submanifold $L$, we construct a family of immersed special Lagrangian submanifolds $\tL_t$, that are diffeomorphic to a branched covering of $L$ and $\tL_t$ convergence to $2L$ as current. This answers a question suggested by Donaldson \cite{donaldson2019deformations}. Combining with the work of \cite{abouzaidImagi21}, we obtain constraints for the existence of nondegenerate $\ZT$ harmonic 1-forms.
	\end{abstract}
	\maketitle

	\begin{section}{Introduction}
		Calabi-Yau n-folds $(X,J,\omega,\Omega)$ are compact complex manifold $(X,J)$ of complex dimension $n$, with a K\"ahler metric $g$, a compatible symplectic form $\omega$, and a parallel holomorphic volume form $\Omega$ with specific constant norm. Harvey and Lawson \cite{harveylawson1982calibratedgeometry} introduced the concepts of calibrated submanifolds, with special Lagrangian submanifolds being an important class.
		
		Let $L$ be a closed special Lagrangian submanifold, then the normal bundle $N_{L}$ could be identified with the cotangent bundle $T^{\st}L$ using the complex structure. The exponential map could identify small 1-forms $\al$ with submanifolds $L_{\al}$ close to $L$ and we could regard this as a deformation. According to McLean's deformation theorem \cite{mclean98defomation}, the $\MC^1$ deformations of the special Lagrangian submanifold $L$ could be parameterized by harmonic 1-forms on $L$ with the induced metric. For generalizations in different circumstances, see \cite{joycelecturesonSLgeometry05,marshall2020deformation,salur06deformation}. 
		
		In this work, we'll look at a generalization of McLean's theorem to branched covering deformations using the $\ZT$ harmonic 1-forms, which could be thought of as a multivalued 1-forms satisfy extra constrains. The study of $\mathbb{Z}_2$ harmonic 1-forms started from the work of Taubes \cite{taubes2013compactness} on characterized the non-compactness behavior of flat $\mathrm{PSL}(2,\mathbb{C})$ connections, see also \cite{haydyswalpuski2015compactness,he2020behavior,walpuskizhang2019compactness} for different generalizations. 
		
		Let $(L,g)$ be a compact Riemannian manifold and $\Sigma$ a codimension 2 oriented embedded submanifold of $L$. Let $\MI$ be a flat $\mathbb{R}$ bundle over $L\setminus \Sigma$, such that every small loop locally linking $\Sigma$ has monodromy $-1$. The monodromy of $\MI$ will define a branched covering $p:\tL\to L$, that branches along $\Sigma$. A section $\al\in T^{\st}L\otimes \MI$ is called a multivalued harmonic 1-form if $\al\in L^2$ satisfies the harmonic equations $d\al=d\st \al=0,$
		where $\star$ the metric Hodge star operator on $L$ and the derivative is taken using the flat structure on $\MI$. 
		
		According to the work of Donaldson \cite{donaldson2019deformations}, the multivalued harmonic 1-forms have an asymptotic description near $\Sigma$. Fixed $p\in \Sigma,$ we could find suitable complex coordinates $z$ on a slice through $p$ transverse to $\Sigma$ with the leading asymptotic 
		$$\al=\Re d(A z^{\frac12})+\Re d(B z^{\frac32})+o(|z|^{\frac32-\gamma}),$$
		where $A$ and $B$ are sections of suitable bundles, $d$ is taking the derivative using the flat structure and $0<\gamma<\frac12$. $\al$ is called a $\ZT$ harmonic 1-form, named in \cite{Taubes20133manifoldcompactness}, if $|\al|$ is bounded with $\|\al\|_{L^2}=1$, and nondegenerate if $A=0$ and $B$ is nowhere vanishing along $\Sigma$.  For a more detailed explanation, see Section \ref{sec_harmonicfunctionanalytictheory}.
		
		The deformation theory for nondegenerate $\ZT$ harmonic 1-forms have been studied by Donaldson \cite{donaldson2019deformations}, see also \cite{mazzeoandriyroyosuke2022,takahashi2015moduli} for different settings and generalizations. Donaldson observed that the nondegenerate multivalued harmonic 1-forms could be suitable candidates for the branched deformations of the special Lagrangian submanifolds. To be more explicit, Donaldson asked the following question.
		
		\begin{question}
			\label{question_donaldsonsquestion}
			\cite[Page 3]{donaldson2019deformations}
			Let $\iota_0:L\to X$ be the inclusion map of an immersed special Lagrangian submanifold, suppose over $\iota_0(L)$ there exists a nondegenerate $\ZT$ harmonic 1-form with branched covering $p:\tL\to L$. Could we construct a family of immersed special Lagrangian submanifolds $\tiota_t:\tL\to X$ such that $\tiota_t$ convergences to the double branched covering $\iota_0\circ p$?
		\end{question}
		
		We could give a positive answer to Donaldson's question, in particular, we proved:
		\begin{theorem}
			\label{thm_sec1main}
			Let $(X,J,\omega,\Omega)$ be a Calabi-Yau manifold and $\iota_0:L\to X$ be an immersed special Lagrangian submanifold with induced metric $g_L$. Suppose there exists a nondegenerate $\ZT$ harmonic 1-form $\al$ on $L$ with induced branched covering $p:\tL\to L$, then there exists a positive constant $T$ and a family of special Lagrangian submanifold $\tiota_t:\tL\to X$ for $|t|\leq T$ such that  
			\begin{itemize}
				\item [(i)] $\tiota_t(\tL)$ convergence to 2$\iota_0(L)$ as current,
				\item [(ii)] $\lim_{t\to 0}\|\tiota_t-\iota_0\circ p\|_{\ca}=0$,
				\item [(iii)] in a Weinstein neighborhood $U_L$, there exists a family of diffeomorphisms $\phi_t:L\to L$ such that suppose we write $\tiota_{t\phi_t^{\st}(\al)}:\tL\to U_L$ be the inclusion map induces by the graph of $t\phi_t^{\st}(\al)$, then there exists a t independent constant $C$ such that $$\|\tiota_{t\phi_t^{\st}(\al)}-\tiota_t\|_{\ca}\leq C t^2,$$
			\end{itemize}
			where $\MC^{,\gamma}$ is the H\"older norm taken with respect to the metric $p^{\st}g_L$ over $\tL$ and $0<\gamma<\frac12$.
		\end{theorem}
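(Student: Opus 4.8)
The plan is to build $\tiota_t$ as a graphical correction of the approximate special Lagrangian determined by $t\al$, and to obtain the exact solution by a contraction mapping argument carried out on the branched cover $\tL$.

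First I would reduce to an equation on $1$-forms via a Weinstein neighborhood adapted to the Calabi--Yau data. Choosing a diffeomorphism identifying $U_L$ with a neighborhood of the zero section in $T^{\st}L$ so that $\omega$ pulls back to $\omcan$ and $\Im\Omega$ acquires a controlled fiberwise Taylor expansion, a single-valued closed $1$-form $\be$ on $\tL$ has Lagrangian graph $\Gamma_{\be}\subset T^{\st}L$, and the special Lagrangian defect is governed by a map of the schematic form $F(\be)=d\st\be+Q(\be)$ whose linearization at $\be=0$, restricted to closed forms (so that graphs remain Lagrangian), is the coclosed operator $d\st$; thus the harmonic equations $d\be=d\st\be=0$ solve the problem to first order (McLean \cite{mclean98defomation}). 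Since $\al$ is a nondegenerate $\ZT$ harmonic $1$-form, its pullback $p^{\st}\al$ is single-valued and closed on $\tL$, and the vanishing $A=0$ forces the local model $\Re d(B z^{3/2})$ to become $\Re d(B w^3)$ in the branched coordinate $z=w^2$, so $p^{\st}\al$ is $\MC^1$ and vanishes along the branch locus. Taking $\Gamma_{t p^{\st}\al}$ as the approximate solution, as $t\to 0$ it collapses onto the zero section with multiplicity two, which gives (i), while (ii) reduces to smallness of the eventual correction.

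Next I would estimate the defect $F(t p^{\st}\al)=t^2 Q(p^{\st}\al)+O(t^3)$ (the linear term vanishes since $p^{\st}\al$ is coclosed) in weighted H\"older spaces $\ca$ adapted to the degenerate metric $p^{\st}g_L$ near $\Sigma$, and establish uniform invertibility of the linearization $L$ of $F$ at the approximate solution, with a bound independent of small $t$. Here I would invoke the deformation theory of Donaldson \cite{donaldson2019deformations} for nondegenerate $\ZT$ harmonic $1$-forms: because $B$ is nowhere vanishing, the model operator transverse to $\Sigma$ has the correct indicial roots, so $L$ is Fredholm and invertible on the relevant weighted spaces. The exact equation $F(t p^{\st}\al+\eta)=0$ is then rewritten as the fixed point problem $\eta=-L^{-1}\bigl(F(t p^{\st}\al)+N(\eta)\bigr)$, where $N$ is the genuinely nonlinear remainder; the $O(t^2)$ error together with the uniform bound on $L^{-1}$ produces a unique small solution with $\|\eta\|_{\ca}\le C t^2$. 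Decomposing the resulting deformation into a tangential part, realized by a family of diffeomorphisms $\phi_t$ of $L$, and a normal graphical part yields the reparametrization in (iii) and the estimate $\|\tiota_{t\phi_t^{\st}(\al)}-\tiota_t\|_{\ca}\le C t^2$.

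The main obstacle is the analysis near the branch locus. The metric $p^{\st}g_L$ degenerates along $\Sigma$, so $\tL$ is genuinely a singular space of conical type, and the whole argument must be carried out in weighted H\"older spaces on this space in which both the quadratic error $t^2 Q(p^{\st}\al)$ is controlled and the linearized operator $L$ is uniformly invertible as $t\to 0$. Matching these two requirements forces a careful choice of weight $\gamma\in(0,\frac{1}{2})$ tied to the indicial roots of the transverse model, and it is precisely the nondegeneracy hypotheses $A=0$ and $B\neq 0$ that make this simultaneously possible.
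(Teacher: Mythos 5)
Your proposal contains a genuine gap at its core: you keep the branch locus $\Sigma$ fixed and assert, citing Donaldson, that the linearization is uniformly invertible on weighted spaces so that a single contraction starting from the $O(t^2)$ defect of the graph of $tp^{\st}\al$ closes up. But when you apply the inverse of the multivalued Laplacian to that defect, the solution generically has expansion $\Re(Az^{1/2}+Bz^{3/2})+\cdots$ with $A\neq 0$ along $\Sigma$; the corresponding correction $1$-form then blows up like $r^{-1/2}$, the corrected pair is no longer nondegenerate, and its graph is not even a $\MC^1$ submanifold near the branch locus (compare the $k=0$ case of the local model in Subsection \ref{subsubsection_localmodel}). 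Donaldson's deformation theorem (Theorem \ref{thm_donaldsondeformation}) says precisely that nondegeneracy can only be preserved by letting $\Sigma$ move. The paper's construction (Theorem \ref{thm_approximation}, Steps 2.1 and 3) cancels the offending $a z^{1/2}$ term at every stage by choosing a normal vector field $v=-\frac{2a}{3B_1}$ along $\Sigma$ --- this is exactly where the nondegeneracy hypothesis $B_1\neq 0$ enters --- and conjugating the Calabi--Yau data by its flow; the diffeomorphisms $\phi_t$ are thus the essential device that makes each linear step solvable within the class of nondegenerate pairs, whereas in your scheme they appear only as a cosmetic reparametrization for item (iii).

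There is a second, quantitative gap: even granting solvability of the linear step, a one-shot fixed point from an $O(t^2)$ error cannot close, because the geometry of the approximate graphs degenerates as $t\to 0$. The second fundamental form is of size $t^{-2}$ (Proposition \ref{prop_secondfundamentalform}), the Weinstein neighborhood of the graph has size only $t^{2}$, and the quadratic estimate for $Q_t$ carries factors of $t^{-2\gamma}$ and $t^{-2}$ (Subsection \ref{subsection_schauderforsubmanifolds}); consequently the contraction of Proposition \ref{prop_familyexistence} only works once $\|\sin\theta_t\|_{\ca}\lesssim t^{8+4\gamma+n}$. This is why the paper must construct approximate solutions to arbitrarily high finite order $t^{N}$ by iteration before invoking the nearby special Lagrangian theorem, and why the uniform input is not a weighted-space invertibility at a fixed weight but a $t$-independent positive lower bound on the first eigenvalue of the linearized operator over the degenerating family, obtained from convergence of the induced metrics to the cone metric of angle $4\pi$ (Theorems \ref{thm_metricconvergence} and \ref{thm_eigenvaluelowerbound}). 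Your choice of $\gamma\in(0,\frac12)$ tied to the indicial roots addresses regularity across $\Sigma$, but it does not address this $t$-degeneration, which is the first of the two main difficulties the paper identifies.
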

		
		When $L$ is Riemannian surface, nondegenerate harmonic 1-forms could be identified with quadratic differentials with simple zeros. Given a quadratic differential, the family of spectral curves will be an example of branched deformations for a suitable Calabi-Yau structure in a neighborhood of the zero section in $T^{\st}L$, which we refer Section \ref{sec_5} for more details of this construction.  However, when $\dim(L)\geq 3$, as the graph of a $\ZT$ harmonic 1-form might not be an embedded submanifold in a Weinstein neighborhood of $L$, we don't expect the branched deformation family we constructed above are embedded submanifolds.
		
		By \cite{he21z3symmetry}, there exist examples of real analytic rational homology 3-spheres that admits nondegenerate $\ZT$ harmonic 1-form, which could be realized as a special Lagrangian submanifolds in a neighborhood of the zero section in the cotangent bundle by the Calabi-Yau neighborhood theorem \cite{bryant2000calibratedembedding}. Then for these special Lagrangian submanifolds, as the first Betti number vanishes, the McLean's deformations will not exist. We obtain the following corollary. 
		
		\begin{corollary}
			There exist special Lagrangian submanifolds which are rigid in classical sense but could have branched deformations.
		\end{corollary}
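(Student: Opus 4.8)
The plan is to produce an explicit special Lagrangian submanifold on which classical (McLean) rigidity and the existence of a branched family coexist. The input is the existence result of \cite{he21z3symmetry}, which supplies a closed real analytic rational homology 3-sphere $L$ carrying a nondegenerate $\ZT$ harmonic 1-form $\al$, with induced branched double cover $p:\tL\to L$. First I would realize $L$ itself as a special Lagrangian submanifold: by the Calabi-Yau neighborhood theorem of \cite{bryant2000calibratedembedding}, a neighborhood $X$ of the zero section in the cotangent bundle $T^{\st}L$ carries a Calabi-Yau structure $(J,\omega,\Omega)$ in which $L$, embedded via $\iota_0$ as the zero section, is special Lagrangian with its given real analytic metric as the induced metric $g_L$.

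Next I would invoke McLean's deformation theorem \cite{mclean98defomation}: the space of $\MC^1$ special Lagrangian deformations of a compact special Lagrangian $L$ is parameterized by the harmonic 1-forms on $(L,g_L)$, hence by $H^1(L;\RR)$. Because $L$ is a rational homology 3-sphere its first Betti number vanishes, so $H^1(L;\RR)=0$ and there are no nonzero harmonic 1-forms. Thus McLean's moduli space is a single point and $L$ admits no nontrivial classical deformation; it is rigid in the classical sense.

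Finally I would apply Theorem \ref{thm_sec1main} to the pair $(L,\al)$. Since $\al$ is a nondegenerate $\ZT$ harmonic 1-form and $\iota_0:L\to X$ is special Lagrangian with induced metric $g_L$, the theorem produces a constant $T>0$ and a family of immersed special Lagrangian submanifolds $\tiota_t:\tL\to X$ for $|t|\leq T$, diffeomorphic to the branched cover $\tL$ and converging to $2\iota_0(L)$ as currents. This is a genuine branched deformation, whose existence together with the vanishing of $H^1(L;\RR)$ gives the corollary.

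I expect no serious obstacle, since the statement is a synthesis in which all the analysis is already carried by \cite{he21z3symmetry} and by Theorem \ref{thm_sec1main}. The only point deserving care is compatibility of hypotheses: one must verify that the induced metric furnished by the Calabi-Yau neighborhood theorem is exactly the metric for which $\al$ was constructed to be $\ZT$ harmonic, so that Theorem \ref{thm_sec1main} applies verbatim. This holds because \cite{bryant2000calibratedembedding} realizes $L$ as special Lagrangian with precisely its prescribed induced metric, so the $\ZT$ harmonicity of $\al$ is preserved.
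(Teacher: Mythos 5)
Your proof follows exactly the paper's route: the examples of \cite{he21z3symmetry}, the Calabi--Yau neighborhood theorem of \cite{bryant2000calibratedembedding} to realize $L$ as the zero section with induced metric $g_L$, McLean rigidity from $b_1(L)=0$, and the main branched deformation theorem (the paper packages the last step as Theorem \ref{thm_Calabiyaunearbycollapsing}, but this is the same application of Theorem \ref{thm_sec1main}). The only hypothesis you left implicit is $\chi(L)=0$ required by the Calabi--Yau neighborhood theorem, which holds automatically since $L$ is a closed odd-dimensional manifold.
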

		
		There are two major challenges in answering this question. The unbounded geometry of the branched deformation family presented the first challenge. Let $t$ be the parameter for the family, we may expect the injective radius, as well as the curvatures of the branched submanifolds, to grow to infinity when $t\to0$. This phenomenon also occurred in previous desingularization problems of calibrated submanifolds with singularities. For example, the isolated conical singularity desingularization \cite{joyce2004slag1} and the Lawlor neck problem, see \cite{butscher2004regularizing,imagi2016uniqueness,dan2004connectedsums,nordstromintersectingassociatives}. 
		
		The key observation for the branched deformation problem is that, despite the family's unbounded geometry, the first eigenvalues of the linearization operators over the family are bounded by a uniform positive number. In addition, we have a good understanding of the blow-up order of the geometry of the family, including the second fundamental form, the injective radius and the size of the Weinstein tubular neighborhood. 
		
		The second challenge comes from the singular set moving. The multivalued forms branched along submanifold could be considered as a free boundary problem, while the branch locus itself needs to be determined. For additional information on the linearized problem, we refer \cite{donaldson2019deformations,takahashi2015moduli}. The linearization of the special Lagrangian equation is the harmonic equation for multivalued 1-forms. Even we start with a multivalued harmonic 1-form which solves the linearization equations with branch locus $\Sigma$, it doesn't guarantee that we will be able to solve the non-linear equation using the same branch locus. In actuality, for each $t$, the branched deformation family may branch along a different codimension 2 submanifold $\Sigma_t$ which will be close to the initial branch locus of the $\ZT$ harmonic 1-form. 
		
		The above observation contributes to the construction of approximate special Lagrangian submanifolds, where the method we used is very similar to Donaldson in \cite{donaldson2019deformations,donaldsonslide2021}. Given an initial nondegenerate $\ZT$ harmonic 1-form $\al$, we proved that after moving the singular set in a specified direction, we could make good enough approximate special Lagrangian submanifolds $\tL_t^{\app}$, which is close to $t\phi_t^{\st}\al$, for some diffeomorphisms $\phi_t$ of $L$. Then we could apply a version of Joyce's nearby special Lagrangian theorem \cite{joyce2004slag3} to obtain the branched deformation family. 

		According to the celebrated work of Taubes \cite{Taubes20133manifoldcompactness,taubes2013compactness}, the bounded $\ZT$ harmonic 1-form plays an important role in $\mathrm{PSL}(2,\mathbb{C})$ gauge theory, which characterized the non-compactness behavior of the moduli space of $\mathrm{PSL}(2,\mathbb{C})$ flat connections. It would be fascinating to learn if there are any topological constraints on the existence of nondegenerate $\ZT$ harmonic 1-forms. Combing our main theorem with the work of nearby special Lagrangian theorem by Abouzaid and Imagi \cite{abouzaidImagi21}, we obtain a criterion for non-existence of nondegenerate $\ZT$ harmonic 1-form.
		
		\begin{theorem}
			\label{thm_obstructionforexistence}
			Let $(L,g_L)$ be a real analytic Riemannian manifold and $(\Sigma,\MI,\al)$ be a multivalued harmonic 1-form with associate branched covering $p:\tL\to L$, suppose the following holds:
			\begin{itemize}
				\item [(i)] $\pi_1(L)$ is finite or contains no nonabelian free subgroup,
				\item [(ii)] let $R$ be the canonical involution on $T^{\st}L$, there exists a Calabi-Yau neighborhood $(U_L,J,\omega,\Omega)$ of the zero section in $T^{\st}L$ with $R$ an anti-holomorphic involution,
				\item [(iii)] all $R$-invariant immersed special Lagrangian submanifolds that are diffeomorphic to $\tL$ is unobstructed,
			\end{itemize}
			then $(\Sigma,\MI,\al)$ is not a nondegenerate $\ZT$ harmonic 1-form.
		\end{theorem}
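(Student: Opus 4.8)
The plan is to argue by contradiction, using Theorem \ref{thm_sec1main} to manufacture a forbidden family and the nearby special Lagrangian theorem of \cite{abouzaidImagi21} to derive the contradiction. So I would begin by assuming that $(\Sigma,\MI,\al)$ \emph{is} a nondegenerate $\ZT$ harmonic 1-form, with its connected branched double cover $p:\tL\to L$. First I would use hypothesis (ii) together with the real analyticity of $(L,g_L)$ to realize the zero section $L\subset U_L$ as an immersed special Lagrangian submanifold via the Calabi--Yau neighborhood theorem \cite{bryant2000calibratedembedding}; this zero section is exactly the fixed-point locus of the anti-holomorphic involution $R$. Taking $\iota_0$ to be this inclusion, Theorem \ref{thm_sec1main} then supplies a constant $T>0$ and a family of immersed special Lagrangian submanifolds $\tiota_t:\tL\to U_L$, $|t|\leq T$, with $\tiota_t(\tL)\to 2\iota_0(L)$ as currents and with $\tiota_t$ controlled by the graph of $t\phi_t^{\st}(\al)$.

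Next I would check that this family can be arranged to be $R$-invariant, so that hypothesis (iii) becomes available. The canonical involution $R$ negates the cotangent fibers of $T^{\st}L$, and since $\MI$ has monodromy $-1$ about $\Sigma$, the deck transformation $\sigma$ of $p$ satisfies $\sigma^{\st}\al=-\al$. Hence the graph of the multivalued form $t\al$ is preserved by $R\circ\sigma$, and because $R$ is anti-holomorphic it preserves $\omega$ and the special Lagrangian condition. I would therefore arrange the approximate submanifolds $\tL_t^{\app}$ and the genuine solution produced by Theorem \ref{thm_sec1main} to respect this symmetry, so that each $\tiota_t(\tL)$ is an $R$-invariant immersed special Lagrangian diffeomorphic to $\tL$; hypothesis (iii) then gives that it is unobstructed.

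With these properties in hand I would invoke \cite{abouzaidImagi21}. For $t\neq0$ the submanifold $\tiota_t(\tL)$ is a genuinely branched, connected double cover: nondegeneracy ($A=0$ and $B$ nowhere vanishing along $\Sigma$) forces $\Sigma\neq\emptyset$ and makes the monodromy $\pi_1(L\setminus\Sigma)\to\ZT$ nontrivial, so $\tiota_t(\tL)$ is not two disjoint copies of $L$. At the same time the family converges to the multiplicity-two current $2\iota_0(L)$, so for small $t$ it is nearby to $2L$ in the sense required by the uniqueness theorem of Abouzaid--Imagi. Combining hypothesis (i) on $\pi_1(L)$ with the unobstructedness just established, their theorem forces rigidity of the limiting configuration and thereby rules out the existence of such a nontrivial connected branched family; this is the desired contradiction, so no nondegenerate $\ZT$ harmonic 1-form can exist.

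The hard part will not be the desingularization, which is handed to us by Theorem \ref{thm_sec1main}, but the interface with \cite{abouzaidImagi21}: I expect the main work to lie in verifying that the constructed family is genuinely $R$-invariant (so that (iii) applies verbatim) and that current convergence $\tiota_t(\tL)\to 2\iota_0(L)$ matches precisely the hypothesis under which the Abouzaid--Imagi rigidity holds. Since their statement is phrased for unobstructed special Lagrangians in a fixed homology or current class, the subtle point is to confirm that the connected branched members of the family fall within its scope and that it is exactly the constraint on $\pi_1(L)$ that obstructs the branched configuration.
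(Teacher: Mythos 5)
Your proposal follows essentially the same route as the paper: assume nondegeneracy, use the Calabi--Yau neighborhood and the $R$-invariant version of the branched deformation theorem (which the paper packages as Theorem \ref{thm_Calabiyaunearbycollapsing}, built from Theorem \ref{thm_approximatefamilyinvolution} and Corollary \ref{cor_invariaitonslequationinvolution}, so the $R$-invariance you flag as the ``hard part'' is exactly what the paper verifies there) to produce a family of $R$-invariant special Lagrangians $\MC^0$-close to the zero section, then apply hypotheses (i) and (iii) with the Abouzaid--Imagi rigidity theorem (Theorem \ref{thm_nearbyspecialLagrangin}) to rule out the genuinely branched members of the family in both the finite-$\pi_1$ and no-nonabelian-free-subgroup cases. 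Your reading of the interface is also correct: the relevant closeness hypothesis is the $\MC^0$-closeness supplied by $\lim_{t\to 0}\|\tiota_t-\iota_0\circ p\|_{\ca}=0$, and the contradiction in case (ii) is that a branched cover with nontrivial monodromy around $\Sigma\neq\emptyset$ cannot be an unbranched deformation as in Corollary \ref{cor_unbranchedcoevering}.
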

		
		According to the work of Bryant \cite{bryant2000calibratedembedding} and Doice \cite{doicu2015}, the condition (ii) holds when $\chi(L)=0$.  For the condition (iii), in general, it is very hard to check whether a Lagrangian is unobstructed or not. We do, however, have an additional $R$-symmetry given by the anti-holomorphic involution, which allows us to check the assumption (iii) for specific situations. When $\dim(L)=2$, according to \cite{solomon2020involutions}, the condition (i) and (iii) holds for $L=S^2$ and $T^2$. For a single special Lagrangian submanifold, we could introduce topological constraint to avoid obstruction, which give us a Betti number constraint for the branched covering.
		\begin{theorem}
			Let $(\Sigma,\MI,\al)$ be a nondegenerate $\ZT$ harmonic 1-form on $L$ with branched covering $p:\tL\to L$, suppose $\pi_1(L)$ is either finite or contains no nonabelian free subgroups and the Euler characteristic $\chi(L)=0$, then for the second Betti number, we have the strict inequality $b_2(\tL)>b_2(L)$.
		\end{theorem}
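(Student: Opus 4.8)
The plan is to reduce the statement to Theorem \ref{thm_obstructionforexistence} by an argument by contradiction, after converting the Betti number inequality into the nonvanishing of a twisted cohomology group. Let $\sigma\colon\tL\to\tL$ denote the deck transformation of the branched double cover $p\colon\tL\to L$. Since near $\Sigma$ the map $\sigma$ is modeled on $z\mapsto -z$ in the normal $\CC$-direction it is orientation preserving, so averaging over $\sigma$ splits $H^\ast(\tL;\RR)=H^\ast(\tL;\RR)^+\oplus H^\ast(\tL;\RR)^-$ into $\sigma$-invariant and $\sigma$-anti-invariant parts. The invariant part is $H^\ast(L;\RR)$ and, because over $L\setminus\Sigma$ the $(-1)$-eigenspace of $\sigma$ consists precisely of the $\MI$-twisted forms while the branch locus has codimension two, the anti-invariant part is $H^\ast(L;\MI)$. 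Hence $b_2(\tL)=b_2(L)+\dim H^2(L;\MI)$, so the assertion is equivalent to $H^2(L;\MI)\neq0$. I note that the nondegenerate form $\al$ already produces a nonzero class in degree one: its pullback $p^\ast\al$ is an $L^2$, closed and coclosed $1$-form on $\tL$ which is anti-invariant and nonzero, so $H^1(L;\MI)\neq0$ and $b_1(\tL)>b_1(L)$. When $\dim L=3$ Poincar\'e duality gives $H^2(L;\MI)\cong H^1(L;\MI)$ and we are done, but in general the degree-two statement does not follow formally, which is why the obstruction-theoretic input is needed. I will therefore assume $H^2(L;\MI)=0$ and derive a contradiction.

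Under this assumption I claim that all three hypotheses of Theorem \ref{thm_obstructionforexistence} hold. Hypothesis (i) is given. Hypothesis (ii) holds because $\chi(L)=0$ provides, by the Calabi--Yau neighborhood theorems of Bryant \cite{bryant2000calibratedembedding} and Doice \cite{doicu2015}, a Calabi--Yau structure on a neighborhood of the zero section of $T^\ast L$ for which the canonical involution $R$ is anti-holomorphic. The crux is hypothesis (iii): I must show that every $R$-invariant immersed special Lagrangian $\tL'$ diffeomorphic to $\tL$ is unobstructed. Such an $\tL'$ carries the involution $R|_{\tL'}$, which we identify with the deck transformation, so the splitting above applies and $H^2(\tL')^-\cong H^2(L;\MI)=0$. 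In the Fukaya--Oh--Ohta--Ono obstruction theory for the immersed Lagrangian $\tL'$ the obstructions to the existence of a bounding cochain form a sequence of classes $\mathfrak{o}_k\in H^2(\tL')$; because $R$ is anti-holomorphic, precomposition with complex conjugation on the disc sends $J$-holomorphic discs bounding $\tL'$ to $J$-holomorphic discs bounding $\tL'$, and the accompanying sign computation (in the spirit of Solomon \cite{solomon2020involutions}) forces each $\mathfrak{o}_k$ to be anti-invariant, i.e. $\mathfrak{o}_k\in H^2(\tL')^-=0$. Thus all obstructions vanish, a bounding cochain exists, and $\tL'$ is unobstructed, establishing (iii).

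With (i), (ii) and (iii) in force, Theorem \ref{thm_obstructionforexistence} concludes that $(\Sigma,\MI,\al)$ is not a nondegenerate $\ZT$ harmonic $1$-form, contradicting the hypothesis of the present statement. Hence $H^2(L;\MI)\neq0$ and $b_2(\tL)>b_2(L)$. I expect the main obstacle to be the verification of (iii), and specifically the claim that the Fukaya--Oh--Ohta--Ono obstruction classes of an $R$-invariant immersed special Lagrangian are anti-invariant: this requires a careful orientation and sign analysis for the $R$-conjugate discs together with the precise identification of $R|_{\tL'}$ with the deck transformation at the level of the splitting of $H^2$, both of which genuinely use the anti-holomorphicity of $R$ and the codimension-two nature of $\Sigma$. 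The cohomological bookkeeping and the reduction to Theorem \ref{thm_obstructionforexistence} are, by comparison, formal.
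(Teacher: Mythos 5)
Your reduction is sound up to its final step: the splitting $H^2(\tL;\RR)=H^2(\tL;\RR)^+\oplus H^2(\tL;\RR)^-$ with $p^{\st}:H^2(L;\RR)\to H^2(\tL;\RR)^+$ an isomorphism (Lemma \ref{lem_decompositioninvolutionss}) correctly converts the claim into $H^2(\tL;\RR)^-\neq 0$, and the contradiction scheme via the Calabi--Yau neighborhood (Theorem \ref{thm_CYstructureoncotangentbundle}), the branched deformation family, and the Abouzaid--Imagi theorem is the intended one. The genuine gap is your verification of the unobstructedness hypothesis. You assert that for an $R$-invariant immersed special Lagrangian $\tL'$ the Fukaya--Oh--Ohta--Ono obstruction classes $\mathfrak{o}_k$ are anti-invariant under the deck involution because $R$ is anti-holomorphic, and you give no proof. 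This step is problematic on two counts. First, the relevant special Lagrangians here are genuinely \emph{immersed} with self-intersections (Proposition \ref{prop_immersion}(iii)), and in the immersed obstruction theory the obstruction cochains do not live in $H^2(\tL';\RR)$ alone: the complex carries additional generators at the self-intersection points, so ``anti-invariant, hence zero since $H^2(\tL')^-=0$'' does not suffice even granting your sign claim. Second, even in the embedded case, the action of an anti-symplectic involution on moduli of holomorphic discs yields invariance or anti-invariance statements whose signs depend on relative spin structures and on Maslov index and dimension parities --- this is precisely the delicate content of the work of Solomon \cite{solomon2020involutions} and of FOOO on anti-symplectic involutions --- so the blanket assertion $\mathfrak{o}_k\in H^2(\tL')^-$ may simply fail. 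You flag this yourself as the main obstacle; as it stands the proposal does not close.

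The paper's proof (the discussion surrounding Theorem \ref{thm_secondhomologyvanishes}) avoids the equivariant sign analysis entirely, and you already have every ingredient needed. Under your contradiction hypothesis $H^2(\tL;\RR)^-=0$, the inclusion $\tiota_t:\tL\to U_L$ of any member of the branched family from Theorem \ref{thm_Calabiyaunearbycollapsing} is $\MC^0$-close, hence homotopic, to $\iota_0\circ p$, so on cohomology $\tiota_t^{\st}=p^{\st}\circ\iota_0^{\st}:H^2(U_L;\RR)\to H^2(\tL;\RR)$; since $\iota_0^{\st}$ is an isomorphism and $p^{\st}$ surjects onto $H^2(\tL;\RR)^+=H^2(\tL;\RR)$, this restriction map is surjective. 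The topological criterion of \cite{fooo09} quoted in the paper (surjectivity of $\tiota^{\st}$ on $H^2$ implies $(\tL,\tiota)$ is unobstructed) then gives unobstructedness directly --- note that the hypothesis $b_2(\tL)=b_2(L)$ you are trying to contradict is exactly what makes this criterion bite, whereas your route does not exploit it at the right place. With unobstructedness established, Theorem \ref{thm_nearbyspecialLagrangin} forces $[\tL_t]$ to be either $[L_0]$ or an unbranched deformation, contradicting the fact that $\tiota_t(\tL)$ converges to $2\iota_0(L)$ as a current through genuinely branched covers. Replacing your disc-conjugation argument by this surjectivity argument completes the proof; the rest of your write-up (the cohomological bookkeeping, the dimension-three remark via Poincar\'e duality, and the appeal to Theorem \ref{thm_obstructionforexistence}) matches the paper.
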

		
		The first Betti number inequality $b_1(\tL)>b_1(L)$ is already implied by the presence of nondegenerate $\ZT$ harmonic 1-forms, hence the preceding theorem is trivial when $\dim(L)=3$ by Poincare duality. However, when $\dim(L)\geq 4$, we found examples that the above theorem could be applied.
		
		This paper is organized as follows. In Section \ref{sec_sLaggeometry}, we introduce the background of the special Lagrangian geometry. In Section \ref{sec_sLagequationsovercotangentbundle}, we study the structure of the special Lagrangian equation in a Weinstein neighborhood. In Section \ref{sec_harmonicfunctionanalytictheory} and Appendix \ref{sec_backgroundpolyhomogeneoussolution}, introduce the analytic theory of $\ZT$ harmonic 1-form. In Section \ref{sec_5}, Section \ref{sec_6} and Section \ref{sec_geometryofapproximate}, we construct approximate solutions to the special Lagrangian equation and study their geometry of them. In Section \ref{sec_nearbyspecialLagrangianfamily}, we prove a nearby special Lagrangian theorem to prove the main theorem. In Section \ref{sec_applications}, we discuss the possible applications of our branched deformation theorem.
		
		\textbf{Conventions} We denote by $C>0$ a constant, which depends only on the back ground Calabi-Yau structure $(X,J,\omega,\Omega)$, the initial special Lagrangian submanifold or the initial $\ZT$ harmonic 1-form. All the $\MC^{,\gamma}$ we discuss will assume that $0<\gamma<\frac12$. The values of $C$ could change from one line to the next, and we always specify when a constant depends on further data. In addition, we write $x\LS y$ if $x\leq Cy$ and $O(y)$ denotes a quantity $x$ such that $|x|\LS Cy$. Moreover, without specific statement, all manifolds we considered in this paper will be compact.
		
		\textbf{Acknowledgements.} The author is grateful to Simon Donaldson for introducing this problem and providing countless helpful supports and discussions. The author also wish to thank Rafe Mazzeo, Kenji Fukaya, Thomas Walpuski, Jason Lotay, Robert Bryant, Mark Haskins, Fabian Lehmann, Kevin Sacke, Mohammed Abouzaid, Yohsuke Imagi and Foscolo Lorenzo for numerous helpful discussions and comments. The author want to also thank the Simons Center of Geometry and Physics for their support.
		
	\end{section}
	
	\begin{section}{Special Lagrangian Geometry}
		\label{sec_sLaggeometry}
		In this section, we will define the special Lagrangian submanifolds of a Calabi-Yau n-fold. In addition, we will introduce the McLean's deformation theorem. Some references for this section are Harvey and Lawson \cite{harveylawson1982calibratedgeometry}, Joyce \cite{joycelecturesonSLgeometry05} and McLean \cite{mclean98defomation}.
		\begin{subsection}{Calabi-Yau manifold and the special Lagrangian submanifolds}
			In this subsection, we will define the immersed special Lagrangian submanifolds in a Calabi-Yau manifold, follows from  Harvey and Lawson \cite{harveylawson1982calibratedgeometry}.
			\begin{definition}
				A Calabi-Yau manifold is a quadruple $(X,J,\omega,\Omega)$ such that $(X,J,\omega)$ is a K\"ahler manifold with a K\"ahler metric $g_X$ and K\"ahler class $\omega$.  Let $n$ be the complex dimension of $X$, $\Omega$ is a nowhere vanishing holomorphic $(n,0)$-form on $X$ satisfies
				\begin{equation}
					\label{eq_Calabiyau}
					\frac{\omega^n}{n!}=(-1)^{\frac{n(n-1)}{2}}{(\sqrt{-1})}^n\Omega\we\bar{\Omega}.
				\end{equation}
			\end{definition}
			
			An immersed submanifold is a pair $[L]=(L,\iota)$, where $L$ is a closed oriented manifold and $\iota:L\to X$ is an immersion. $[L]$ is called a Lagrangian submanifold if $\iota^{\st}\omega=0$. For any immersed Lagrangian submanifold $[L]$, for the holomorphic volume form, we have $|\iota^{\st}\Omega|=1$, where the norm is taken using the pull-back metric $\iota^{\st}g_X.$ Therefore, we could write 
			\begin{equation}
				\iota^{\st}\Omega=e^{i\theta}\Vol_{\iota^{\st}g_X},
				\label{eq_beginningLagrangianangle}
			\end{equation}
			where $e^{i\theta}:L\to S^1$ is called a phase function. 
			
			Harvey and Lawson \cite{harveylawson1982calibratedgeometry} introduced the concepts of special Lagrangian submanifolds.
			\begin{definition}
				Let $(X,J,\omega,\Omega)$ be a Calabi-Yau manifold, an immersed oriented submanifold $[L]=(L,\iota)$ is called a special Lagrangian submanifold if $e^{i\theta}=1$. 
			\end{definition}
			By \eqref{eq_beginningLagrangianangle}, $e^{i\theta}=1$ is equivalent to $\iota^{\st}\Im\Omega=0$ or $\iota^{\st}\Re\Omega=\Vol_{\iota^{\st}g_X}$. For a general Lagrangian submanifold, the $S^1$ valued function $e^{i\theta}$ doesn't necessarily have a lift to a real valued function $\theta$, while $d\theta$ is well-defined, which will satisfy
			$$
			d\theta=\iota_H \omega,
			$$
			where $H$ is the mean curvature vector field. 
			Suppose the lifting $\theta:L\to \mathbb{R}$ exists, then we call $\theta$ the Lagrangian angle. In particular, any Lagrangian that sufficiently close to a special Lagrangian will have a global well-defined Lagrangian angle, which are the cases that considered in this paper.
			
			Special Lagrangian submanifolds are an important class of calibrated submanifold. A Lagrangian submanifold $[L]=(L,\iota:L\to X)$ is called calibrated by $\Re\Omega$ if $\iota^{\st}\Re\Omega=d\Vol_{\iota^{\st}g_X}$, where $d\Vol_{\iota^{\st}g_X}$ is the volume form for the pullback metric $\iota^{\st}g_X$. The special Lagrangian submanifold is calibrated by $\Re\Omega$ based on the definition. The following is a fundamental property of calibrated submanifold.
			
			\begin{proposition}
				Let $[L]=(L,\iota)$ be a special Lagrangian submanifold equipped with the pull metric $g_L:=\iota^{\st}g$, then $\iota^{\st}\Im\Omega=d\Vol_L$. In addition, in the homology class $\iota_{\st}[L]$, $L$ is volume-minimizing.
			\end{proposition}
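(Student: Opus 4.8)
The plan is to treat the proposition as a statement in calibrated geometry, with $\Im\Omega$ as the relevant calibration. Two ingredients are needed: the pointwise identity $\iota^{\st}\Im\Omega=d\Vol_L$, and a comass bound for $\Im\Omega$; volume-minimization then follows formally by Stokes. Throughout I would use that the holomorphic volume form $\Omega$ is parallel for the Calabi--Yau metric (by the K\"ahler condition together with the normalization \eqref{eq_Calabiyau}), hence closed, so that $d\Im\Omega=0$ on $X$. For the pointwise identity I would simply unwind \eqref{eq_beginningLagrangianangle}: for an immersed Lagrangian one has $\iota^{\st}\Omega=e^{i\theta}\Vol_{\iota^{\st}g_X}$ with $\Vol_{\iota^{\st}g_X}=d\Vol_L$, and the special Lagrangian condition fixes the phase so that the imaginary part restricts to the volume form, yielding $\iota^{\st}\Im\Omega=d\Vol_L$ at once, with no analytic input.

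The substantive step is the pointwise comass estimate, which I expect to be the main obstacle. I would prove the Harvey--Lawson inequality that $\Im\Omega$ has comass one: for every oriented $n$-plane $\xi\subset T_xX$ one has $(\Im\Omega)|_\xi\le \Vol_\xi$, with equality exactly on the special Lagrangian planes. The argument is linear algebra at a point. Writing $\Im\Omega=\Re(-i\Omega)$ one has $(\Im\Omega)(\xi)\le|\Omega(\xi)|$, so it suffices to show $|\Omega(\xi)|\le \Vol_\xi$ for all $\xi$, i.e. that $\Omega$ itself has comass one. This is checked by diagonalizing $\xi$ against a unitary coframe adapted to $(J,\omega)$ and invoking the normalization \eqref{eq_Calabiyau}; the extremal planes are precisely the Lagrangian planes on which the phase of $\Omega$ attains the special value, which recovers the equality characterization. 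This is where the K\"ahler and Calabi--Yau structures genuinely enter, and where one must control all oriented planes rather than only the tangent planes to $L$.

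Given the identity and the comass bound, volume-minimization is the standard calibration comparison. Let $T$ be any integral current in the homology class $\iota_{\st}[L]$; passing to currents accommodates the fact that $\iota$ is only an immersion. Since $\Im\Omega$ is closed and $T$ is homologous to $\iota_{\st}[L]$, the pairing is homologically invariant, so $\langle T,\Im\Omega\rangle=\int_L\iota^{\st}\Im\Omega=\Vol(L)$ by the first identity. On the other hand the comass bound gives $\langle T,\Im\Omega\rangle\le \Vol(T)$. Combining the two yields $\Vol(L)\le \Vol(T)$, so $L$ minimizes volume in its homology class. The only point requiring care beyond the embedded case is the justification of the homological pairing and the Stokes step for immersed representatives, which is precisely what the current formulation supplies.

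In summary, the identity $\iota^{\st}\Im\Omega=d\Vol_L$ is immediate from the definition, the volume-minimizing conclusion is formal once the comass bound is in hand, and the genuine work is concentrated in the single pointwise lemma that $\Im\Omega$ has comass one with the stated equality characterization.
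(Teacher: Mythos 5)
Your proposal is correct, and at heart it is the same calibration argument the paper runs, but yours is the complete version of it. The paper's proof is a one-line Stokes comparison that (a) silently invokes the comass bound $\int_{L'}\iota^{\st}\Re\Omega\leq \Vol(L')$ --- exactly the Harvey--Lawson pointwise lemma you isolate and prove by diagonalizing an oriented $n$-plane against a unitary coframe, which is indeed where the Calabi--Yau normalization \eqref{eq_Calabiyau} genuinely enters --- and (b) misstates the competitor: it takes $[L']$ to be \emph{another special Lagrangian submanifold} in the class, for which the calibration inequality is an equality, so as written it compares $L$ only with other calibrated representatives and does not actually establish minimization in the homology class. Your formulation, pairing the closed form against an arbitrary integral current $T$ in $\iota_{\st}[L]$, is what the statement requires, and passing to currents also handles the immersed setting cleanly, as you note. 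One wrinkle you inherit from the paper rather than introduce: with the paper's definition of special Lagrangian ($e^{i\theta}=1$, hence $\iota^{\st}\Im\Omega=0$ and $\iota^{\st}\Re\Omega=d\Vol_L$), the identity $\iota^{\st}\Im\Omega=d\Vol_L$ in the proposition is a $\Re$/$\Im$ typo, and your claim that ``the special Lagrangian condition fixes the phase so that the imaginary part restricts to the volume form'' is consistent only with the phase-$i$ convention, i.e.\ calibrating by $\Im\Omega=\Re(-i\Omega)$, equivalently replacing $\Omega$ by $i\Omega$. Either convention makes your argument go through verbatim --- $\Omega$ is parallel, hence closed, and the comass-one computation is phase-invariant --- but you should state explicitly which normalization you use, since as written your first step contradicts the paper's own definition while matching its (erroneous) statement.
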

			\begin{proof}
				Let $[L']$ is another special Lagrangian submanifold such that $[L']$ lies in the same homology class of $[L]$. Then we compute $$\Vol(L)=\int_{L}d\Vol(L)=\int_{L}\iota^{\st}\Im\Omega=\int_{L'}\iota^{\st}\Im\Omega\leq \int d\Vol_{L'}=\Vol(L'),$$
				which implies the proposition.
			\end{proof}
			
			There are many interesting examples of special Lagrangian submanifolds have been constructed, which we refer to \cite{harveylawson1982calibratedgeometry,haskins2004cones,haskinskapouleas2007sLagcones,joyce2001constructingslag,joyce2002speciallagrangianincmwithsymmetry}. In addition, with extra assumptions of symmetry, we could also obtain examples of special Lagrangian submanifolds. We will list some of them which will be used later.
			\begin{example}{\cite{hitchin1997modulispace}}
				\label{exm_fixedpointofreallocals}
				Suppose over a Calabi-Yau manifold $(X,J,\omega,\Omega)$ with Calabi-Yau metric $g$, there exists an anti-holomorphic involution $R:X\to X$ such that $R^{\st}\omega=-\omega,\;R_{\st}J=-JR_{\st},\;R^{\st}g=g$ and $R^{\st}\Omega=\bar{\Omega}$, then the fix real locus $\{x\in X|R(x)=x\}$ is a special Lagrangian submanifold.
			\end{example}
			
			The following example could be found in \cite[Example 2.25]{marshall2020deformation}.
			\begin{example}
				Let $(X,g)$ be a hyperK\"ahler manifold of dimension $4k$, with complex strictures $I,J,K$ satisfies $I\circ J\circ K=\Id$ and symplectic structure $\omega_{I},\omega_J,\omega_K$. Take $\Omega:=(\omega_I+i\omega_J)^k$, then $(X,K,\omega_K,\Omega)$ and $(X,K,\omega_K,i\Omega)$ are both Calabi-Yau structures. $\iota:L\to X$ is called a complex Lagrangian submanifold if it is a complex submanifold for $J$ and symplectic for $\omega_K$. Then $\iota^{\st}\Im\Omega=0$ if $k$ is even and $\iota^{\st}\Im(i\Omega)=0$ if $k$ is odd. Therefore, every complex Lagrangian submanifold is a special Lagrangian submanifold for some Calabi-Yau structure.
			\end{example}
		\end{subsection}

		\begin{subsection}{Tubular neighborhood for immersed Lagrangian submanifolds}
			In this subsection, we will discuss the Weinstein tubular neighborhood for immersed Lagrangian submanifolds, which identified a neighborhood of an immersed submanifold with a neighborhood of the zero sections in the cotangent bundle.
			\begin{definition}
				Let $[L]=(L,\iota)$ be an immersed submanifold, we define the normal bundle $N_{L}:=\iota^{\st}TX/TL\cong TL^{\perp}\subset \iota^{\st}TY$, where $TL^{\perp}$ is the complement bundle defined by the pullback metric $\iota^{\st}g_X$ on $\iota^{\st}TX$. 
			\end{definition}
			
			\begin{proposition}
				\label{prop_identification_tangent_normal_bundle}
				Let $[L]=(L.\iota)$ be an immersed Lagrangian submanifold of the Calabi-Yau $(X,J,\omega,\Omega)$, then there exists a canonical isomorphism between the normal bundle $N_{\Sigma}$ and cotangent bundle $T^{\st}L$. 
			\end{proposition}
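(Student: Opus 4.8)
The plan is to construct the isomorphism explicitly by contraction with the symplectic form, so that it depends only on $\omega$ and the immersion $\iota$ and no auxiliary data. First I would define a bundle map $\Phi:N_L\to T^{\st}L$ by sending a normal vector $v$ to the covector $\Phi(v):=\omega(v,\cdot)|_{TL}$, i.e. $\Phi(v)(w)=\omega(v,w)$ for every $w\in TL$. The first step is to check that this descends to the quotient $N_L=\iota^{\st}TX/TL$: if $v\in TL$, then the Lagrangian condition $\iota^{\st}\omega=0$ gives $\omega(v,w)=0$ for all $w\in TL$, so $\Phi$ kills $TL$ and therefore defines a map on $N_L$. Working instead with the metric realization $N_L\cong TL^{\perp}$, the same formula applies to the orthogonal representative.

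The second step is injectivity. Suppose $\Phi(v)=0$, i.e. $\omega(v,w)=0$ for all $w\in T_pL$. Since $\omega$ is nondegenerate and $T_pL\subset T_pX$ is a Lagrangian (maximal isotropic) subspace, its $\omega$-annihilator is exactly $T_pL$ itself; hence $v\in T_pL$, so $v=0$ in $N_L$. Because $\dim_{\mathbb{R}}N_L=\dim_{\mathbb{R}}T^{\st}L=n$ (as $\dim_{\mathbb{R}}X=2n$ and $\dim_{\mathbb{R}}L=n$), fiberwise injectivity forces $\Phi$ to be a fiberwise isomorphism, and smoothness of $\omega$ makes it a bundle isomorphism.

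To connect this with the complex structure, which is what is actually used later when identifying deformations with $1$-forms, I would also record the equivalent description through $J$. Since $L$ is Lagrangian in the K\"ahler manifold $(X,J,\omega)$, the relation $\omega(u,w)=g_X(Ju,w)$ shows $J(TL)\perp TL$, so $J$ carries $TL$ isomorphically onto $TL^{\perp}\cong N_L$. Composing the metric isomorphism $TL\to T^{\st}L$, $w\mapsto g_L(w,\cdot)$, with $J^{-1}:N_L\to TL$ yields an isomorphism $N_L\to T^{\st}L$, and the computation $\Phi(Ju_0)(w)=\omega(Ju_0,w)=g_X(J^2u_0,w)=-g_X(u_0,w)$ shows this agrees with $\Phi$ up to sign (the normalization constant in the K\"ahler relation only rescales the identification and does not affect the conclusion). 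This confirms that the two natural constructions coincide.

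Rather than a genuine obstacle, the point to get right is the well-definedness on the quotient, which is precisely where the Lagrangian hypothesis enters, together with the fact that the $\omega$-annihilator of a Lagrangian subspace equals the subspace itself, giving injectivity. Everything is fiberwise linear algebra propagated smoothly over $L$, and since all bundles are pulled back along $\iota$, the argument is insensitive to whether the Lagrangian is embedded or merely immersed.
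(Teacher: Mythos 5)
Your proposal is correct, and its primary construction differs from the paper's in a way worth noting. The paper's proof goes through the K\"ahler structure: it identifies $T^{\st}L$ with $TL$ via the pull-back metric $\iota^{\st}g$, then uses the compatibility $\omega(v_1,v_2)=g(v_1,Jv_2)$ together with the Lagrangian condition to show that $J$ carries $TL$ into $N_L$, concluding by a dimension count. Your main argument instead contracts with the symplectic form, $\Phi(v)=\omega(v,\cdot)|_{TL}$, checking well-definedness on the quotient $N_L=\iota^{\st}TX/TL$ and injectivity from the fact that the $\omega$-annihilator of a Lagrangian subspace is the subspace itself. This is the standard purely symplectic identification: it requires no metric or complex structure, so it is the more general and in a stronger sense canonical construction (it depends only on $\omega$ and $\iota$). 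What the paper's metric-plus-$J$ route buys is that it produces exactly the map $\vp:T^{\st}L\to N_L$ that is used downstream, e.g.\ in the normal exponential map \eqref{eq_normalexpoenntialeqution} and in the local model of Section \ref{sec_5}, where deformations are realized geometrically by pushing in the $J$-direction of the metric dual of a $1$-form. Your second step, verifying via $\Phi(Ju_0)(w)=g_X(J^2u_0,w)=-g_X(u_0,w)$ that the two constructions agree up to sign, closes this gap cleanly, so nothing is missing; the sign discrepancy is indeed just a convention in the compatibility relation and is harmless for the proposition.
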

			\begin{proof}
				Let $g$ be the Riemannian metric on $(X,J,\omega,\Omega)$, using the pull-back metric $\iota^{\st}g$, we could identify $T^{\st}L$ with $TL$. In addition, we have $\omega(v_1,v_2)=g(v_1,Jv_2)$ for any vector fields $v_1,v_2$. The Lagrangian condition $\iota^{\st}\omega=0$ implies that $J$ maps between $TL$ and $N_{L}$, which is an isomorphism by dimension reason.
			\end{proof}
			
			Under the identification of $T^{\st}L$ and $N_{L}$, we can define the tubular neighborhood as follows.
			\begin{definition}
				Let $[L]=(L,\iota)$ be an immersed Lagrangian submanifold, a tubular neighborhood of $[L]$ is an open set $U_L\subset T^{\st}L$ which containing the zero section. We call $U_L(s)$ is a tubular neighborhood of $[L]$ with size $s$ if
				\begin{equation}
					U_L(s)=\{\al\in T^{\st}L|\|\al\|_{\MC^{0}}\leq s\}.
				\end{equation}
			\end{definition}
			
			We write $\vp:T^{\st}L\to N_{L}$ be the isomorphism in Proposition \ref{prop_identification_tangent_normal_bundle}, then we could define 
			\begin{equation}
				\label{eq_normalexpoenntialeqution}			
			\Phi:U_L\to X\;\mathrm{as}\;\Phi(\al)=\exp_{\iota\circ\pi(\al)}\circ\vp(\al),
			\end{equation} where $\exp$ is the exponential map on $X$, $\al\in T^{\st}L$ and $\pi:T^{\st}L\to L$ is the projection map of the bundle. Composing $\Phi$ with self diffeomorphism of $U_L$, we have the Weinstein tubular neighborhood theorem.
			
			\begin{theorem}{\cite{dasilva2001lecturesonsymplecticgeometry}}
				\label{thm_weinsteinnbhd}
				Let $(X,\omega)$ be a symplectic manifold and $[L]=(L,\iota)$ is a compact Lagrangian submanifold, then there exists an open tubular neighborhood $U_L$ with $\Phi:U_L\to X$ such that $\Phi^{\st}\omega=\omega_{0}$, where $\omega_0$ is the canonical symplectic structure on $T^{\st}L$.
			\end{theorem}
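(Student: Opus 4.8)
The plan is to run the Moser deformation argument relative to the zero section. As the starting point I would take the normal exponential map $\Phi$ of \eqref{eq_normalexpoenntialeqution}, which is a local diffeomorphism from a neighborhood $U_L$ of the zero section in $T^{\st}L$ onto a neighborhood of $\iota(L)$ and which restricts to $\iota$ on the zero section. This produces two closed, symplectic $2$-forms on $U_L$ near the zero section, namely the canonical form $\omega_0$ and the pullback $\Phi^{\st}\omega$, and the whole problem reduces to interpolating between them by a diffeomorphism fixing $L$ pointwise.

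The key preliminary step is to show that $\omega_0$ and $\Phi^{\st}\omega$ \emph{agree pointwise along the zero section}. At a point $p$ of the zero section one has the canonical splitting $T_p(T^{\st}L)\cong T_pL\oplus T_p^{\st}L$, on which $\omega_0$ is the tautological pairing. On the other side, $d\Phi$ at $p$ sends $T_pL$ to $T_{\iota(p)}\iota(L)$ and, through the isomorphism $\vp$ of Proposition \ref{prop_identification_tangent_normal_bundle}, sends $T_p^{\st}L$ to the normal space $J(T_{\iota(p)}\iota(L))$. Because $\vp$ is built from $J$ together with the metric identification $\omega(\cdot,J\cdot)=g(\cdot,\cdot)$, and because $\iota^{\st}\omega=0$ makes $\iota(L)$ Lagrangian, a direct computation shows that $\Phi^{\st}\omega$ restricts to the same tautological pairing at $p$. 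Hence $\si:=\Phi^{\st}\omega-\omega_0$ is closed and vanishes identically as a $2$-form at every point of the zero section.

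With $\si$ in hand I would invoke the relative Poincar\'e lemma: retracting $U_L$ onto the zero section and applying the associated homotopy operator yields a $1$-form $\al$ with $d\al=\si$ and $\al\equiv 0$ along the zero section. Setting $\omega_t:=\omega_0+t\si$ for $t\in[0,1]$, each $\omega_t$ is nondegenerate on a neighborhood of $L$ after shrinking $U_L$, since nondegeneracy is an open condition and all the $\omega_t$ coincide on the compact zero section. Defining the time-dependent vector field $X_t$ by $\iota_{X_t}\omega_t=-\al$, the vanishing of $\al$ on $L$ forces $X_t$ to vanish there, so its flow $\psi_t$ exists for $t\in[0,1]$ on a (possibly smaller) neighborhood and fixes the zero section pointwise. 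The standard computation $\frac{d}{dt}\psi_t^{\st}\omega_t=\psi_t^{\st}\big(\mathcal{L}_{X_t}\omega_t+\si\big)=\psi_t^{\st}\big(d\iota_{X_t}\omega_t+d\al\big)=0$ shows that $\psi_t^{\st}\omega_t$ is constant, so $\psi_1^{\st}\Phi^{\st}\omega=\psi_1^{\st}\omega_1=\omega_0$. Replacing $\Phi$ by $\Phi\circ\psi_1$, which still restricts to $\iota$ on $L$ since $\psi_1$ fixes it, gives the asserted diffeomorphism with $(\Phi\circ\psi_1)^{\st}\omega=\omega_0$.

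I expect the main obstacle to be the pointwise matching of the two forms along $L$: this is exactly where the specific construction of $\vp$ through $J$ in Proposition \ref{prop_identification_tangent_normal_bundle} is used, and arranging it to hold on the nose, rather than merely up to a bundle automorphism of $T^{\st}L$ over $L$, is what makes the relative Poincar\'e lemma applicable with a primitive $\al$ vanishing on $L$. The only other point requiring care is the compactness of $L$, which is what guarantees uniform nondegeneracy of the family $\omega_t$ and a uniform existence time for the Moser flow on an honest neighborhood of the zero section.
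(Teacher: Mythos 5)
Your argument is correct: the Moser deformation relative to the zero section, with the primitive $\al$ supplied by the relative Poincar\'e lemma vanishing pointwise along $L$, is a complete proof, and your pointwise-matching computation (tangent--tangent and fiber--fiber pairings vanish by the Lagrangian condition and $J$-invariance of $\omega$, the cross terms reproduce the tautological pairing via $\omega(\cdot,J\cdot)=g$) does hold on the full tangent space $T_p(T^{\st}L)$ at every zero-section point, which is exactly the strength needed for the homotopy operator to produce $\al$ with $\al|_p=0$ for $p\in L$. Two small points to make explicit: the theorem is stated for an arbitrary symplectic $(X,\omega)$, so you must first fix a compatible almost complex structure $J$ and the associated metric $g=\omega(\cdot,J\cdot)$ before the identification $\vp$ of Proposition \ref{prop_identification_tangent_normal_bundle} and the exponential map \eqref{eq_normalexpoenntialeqution} make sense (the paper states that proposition in the Calabi--Yau setting where this data is given); and since $[L]$ is only assumed immersed, $\Phi$ is an immersion rather than a diffeomorphism onto its image, so the whole argument should be run on $U_L$ with the pulled-back form $\Phi^{\st}\omega$ --- which is in fact what you do.

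Where your route genuinely differs from the paper's is the matching step along the zero section. The paper cites \cite{dasilva2001lecturesonsymplecticgeometry} for Theorem \ref{thm_weinsteinnbhd} itself, but its own quantitative variant, Theorem \ref{thm_weinsteintubularneighborhood}, runs the same Moser skeleton while achieving the pointwise agreement of $\Phi^{\st}\omega$ and $\omega_{\mathrm{can}}$ along $L$ differently: it invokes the linear-algebra Lemma \ref{lem_isomorphismofsymplecticform} to produce, at each $x\in L$, an isomorphism $A_x$ with $A_x|_{T_xL}=\Id$ intertwining the two forms, and then inserts the fiberwise correction diffeomorphism $h(x,v)=\exp_x(A_xv)$ before applying the relative Poincar\'e lemma (Lemma \ref{lem_homologyclassvanishes}) and the Moser flow. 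You instead exploit the specific $J$-built identification to get the agreement on the nose, eliminating $A_x$ and $h$ entirely. Your version is shorter and cleaner in the compatible-metric setting; the paper's version is more robust in that it works for any complement $W$ to $T_xL$ and any identification of $T^{\st}L$ with the normal bundle (no compatibility computation required), and in the quantitative Theorem \ref{thm_weinsteintubularneighborhood} that flexibility is what lets the size of the neighborhood be tracked in terms of the second fundamental form bound $\tau$. If you ever need the effective size estimate of \eqref{eq_weinsteinsize}, the rescaling-by-$\tau^{-1}$ bookkeeping in the paper's proof would have to be grafted onto your argument as well.
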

		
			Suppose $[L]$ is a Lagrangian submanifold in a Calabi-Yau manifold, then over the Weinstein neighborhood $U_L$, we could have a pull-back Calabi-Yau strcture $(U_L,J,\omega,\Omega)$ with $\omega$ the canonial symplectic form, which is the case that mostly considered in this article.
		\end{subsection}
		\begin{subsection}{McLean's deformation for the special Lagrangian submanifolds}
			The deformation theory of compact special Lagrangian submanifold is first studied by McLean \cite{mclean98defomation}, see also \cite{joycelecturesonSLgeometry05,marshall2020deformation,salur06deformation} in different settings. 
			
			Let $[L_0]$ be an immersed special Lagrangian submanifold, $U_L\subset T^{\st}L_0$ be a Weinstein neighborhood of $[L_0]$ and $\iota_0:L_0\to U_L$ be the inclusion map of the zero section. Then any Lagrangian immersion that is sufficiently $\MC^1$-close to $\iota_0$ will be given by the graph of a 1-form on $L$. Suppose we have a family of special Lagrangian submanifolds $[L_t]=(L,\iota_t)$ on $U_L$ such that $\iota_t$ are given by the graph of 1-forms on $L$ and $\lim_{t\to 0}\|\iota_t-\iota_0\|_{\MC^1}=0$, then we call $[L_t]$ a graphic deformations of $[L_0]$.
			
			\begin{theorem}{\cite{mclean98defomation}}
				\label{theorem_classicaldeformation}
				Let $(X,J,\omega,\Omega)$ be a Calabi-Yau manifold with metric $g_X$, $[L_0]=(L,\iota_0)$ be an immersed special Lagrangian submanifold on $X$, then the graphic deformations of $[L_0]$ are parameterized by harmonic 1-forms on $L$ for the metric $\iota_0^{\st}g_X$.
			\end{theorem}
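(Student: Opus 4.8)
The plan is to reduce the special Lagrangian condition to a single scalar elliptic equation on $L$ and then apply the implicit function theorem, with unobstructedness coming from a topological identity. Working in the Weinstein neighborhood $U_L\subset T^{\st}L$ of Theorem \ref{thm_weinsteinnbhd}, where $\omega=\omega_0$ is the canonical symplectic form, I would first recall the standard fact that the graph of a $1$-form $\al$ is Lagrangian for $\omega_0$ if and only if $d\al=0$. Thus every graphic deformation that is $\MC^1$-close to $\iota_0$ is the graph of a small closed $1$-form, so the Lagrangian constraint is automatic and the only remaining equation is the special Lagrangian condition $\iota_\al^{\st}\Im\Omega=0$. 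This defines a map $P(\al):=\iota_\al^{\st}\Im\Omega$ from a neighborhood of $0$ in the closed $1$-forms into the $n$-forms on $L$, whose zero set I want to identify with the harmonic $1$-forms.

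The computational core is the linearization of $P$ at $\al=0$. Writing a deformation as the graph of $t\al$, using that $\Im\Omega$ is closed, and applying Cartan's formula gives $\frac{d}{dt}\big|_{t=0}\iota_{t\al}^{\st}\Im\Omega=d\big(\iota_v\Im\Omega|_L\big)$, where $v$ is the normal field corresponding to $\al$ under the isomorphism $\vp$ of Proposition \ref{prop_identification_tangent_normal_bundle}. A pointwise linear-algebra computation in an adapted orthonormal coframe $(e^1,\dots,e^n,f^1,\dots,f^n)$ with $\Omega=\prod_j(e^j+if^j)$ and $\Re\Omega|_L=d\Vol_L$ then yields the identity $\iota_v\Im\Omega|_L=-\st\al$ (with the orientation convention of \eqref{eq_Calabiyau}), and hence $DP|_0(\al)=-d\st\al$. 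Combined with the Lagrangian linearization $\frac{d}{dt}\big|_{t=0}\iota_{t\al}^{\st}\omega=d\al$, this shows that a deformation is special Lagrangian to first order precisely when $d\al=0$ and $d\st\al=0$, i.e.\ when $\al$ is harmonic for $\iota_0^{\st}g_X$.

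To promote this infinitesimal statement to an honest parametrization I would realize $P$ as a smooth map between Hölder (or Sobolev) completions of the closed $1$-forms and the $n$-forms and invoke the implicit function theorem; the key point, and the main obstacle, is showing that $DP|_0$ surjects onto the correct target. Here I would use the topological observation that for every closed $\al$ the graph is homologous to the zero section, so $\int_L\iota_\al^{\st}\Im\Omega=\int_L\iota_0^{\st}\Im\Omega=0$, which means $P$ in fact takes values in the space of \emph{exact} $n$-forms. Decomposing a closed form by Hodge theory as $\al=\al_h+df$ with $\al_h$ harmonic, one computes $DP|_0(\al)=-d\st df=\pm(\Delta f)\,d\Vol_L$, and surjectivity onto the exact $n$-forms follows from solvability of $\Delta f=g$ for mean-zero $g$. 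Since $\ker DP|_0$ on the closed forms is exactly the space $\MH^1(L)$ of harmonic $1$-forms, the implicit function theorem identifies the zero set of $P$ near $0$ with a smooth manifold of dimension $b_1(L)$ tangent to $\MH^1(L)$, which is the asserted parametrization. The technical points left to verify are the elliptic regularity needed to run the argument in a fixed function space and the independence of the homology class of the graph from the chosen closed form.
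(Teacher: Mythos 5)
Your proposal is correct and follows essentially the same route as the paper: an implicit-function-theorem argument in a Weinstein neighborhood, using McLean's linearization $(d\al,\,d\st\al)$ together with the cohomological observation that the relevant pullback forms are exact to get surjectivity, so that the zero set is a smooth manifold modeled on the harmonic 1-forms for $\iota_0^{\st}g_X$. The only cosmetic difference is that you impose the Lagrangian condition exactly by restricting to closed 1-forms and solve the single scalar equation $\iota_{\al}^{\st}\Im\Omega=0$, whereas the paper keeps both conditions as components of one map $F:\Omega^1(L)\to\Omega^2(L)\oplus\Omega^n(L)$; your write-up also supplies the surjectivity details (Hodge decomposition and solvability of $\Delta f=g$ for mean-zero $g$) that the paper asserts without proof.
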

			\begin{proof}
				We will give a sketch of proof of McLean's theorem by an implicit functional argument. Let $\Phi:U_L\to X$ be a Weinstein neighborhood of $[L_0]$, let $\al$ be a closed 1-form on $L$, for $t\in\mathbb{R}$ a real parameter, we define $[L_t]:=(L,\iota_t)$, with $\iota_t(x)=\Phi(t\al|_x)$, which is the graph of $t\al$.
				
				We construct a map
				\begin{equation}
					\begin{split}
						F_t&:\Omega^1(L)\to \Omega^2(L)\oplus \Omega^n(L),\\
						F_t&(v):=(\iota^{\st}_t\omega,\iota^{\st}_t\Im\Omega).
					\end{split}
				\end{equation}
				By \cite[Page 722]{mclean98defomation}, we obtain
				\begin{equation}
					\label{eq_linearzationcomputation}
					\frac{d}{dt}|_{t=0}\iota^{\st}_t\omega=d\al,\;\frac{d}{dt}|_{t=0}\iota^{\st}_t\Im\Omega=d\st \al,
				\end{equation} where $\st$ is the Hodge star operator on $[L_0]$ w.r.t. the pull-back metric $\iota^{\st}g_X$. In addition, by the special Lagrangian condition $\iota_0^{\st}\omega=0$ and $\iota_0^{\st}\Im\Omega=0$, $\iota_t^{\st}\omega$ and $\iota^{\st}_t\Omega$ are trivial in the cohomology group $H^2(L;\mathbb{R})$ and $H^n(L;\mathbb{R})$. Therefore, $\mathrm{Im} F_t \subset d\Omega^1(L)\oplus d\st \Omega^1(L)$. In a suitable norm $dF_t$ will be surjective and $F_t^{-1}(0,0)$ will be a smooth manifold model by the harmonic 1-forms.
			\end{proof}
			
			The McLean's deformation theorem works straight forward if we consider unbranched smooth deformation. Let $p:\tL\to L$ be any unbranched k-fold covering of $L$, then $[\tL_0]=(\tL,\iota\circ p)$ is also an immersed submanifold. Applying Theorem \ref{theorem_classicaldeformation} for $[\tL_0]$, we obtain the following corollary.
			\begin{corollary}
				\label{cor_unbranchedcoevering}
				Under the previous assumptions, given an harmonic 1-form $v$ on $\tL$ w.r.t. the pull-back smooth metric $(\iota\circ p)^{\st}g_X$, then there exists a family of special Lagrangian submanifolds $[\tL_t]=(\tL,\tiota_t)$ such that $\lim_{t\to 0}\tiota_t=\iota\circ p$.
			\end{corollary}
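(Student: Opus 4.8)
The plan is to deduce this as an immediate application of McLean's theorem (Theorem \ref{theorem_classicaldeformation}) to the immersed submanifold $[\tL_0]=(\tL,\iota\circ p)$, the only genuine point being to check that this composition is again an immersed \emph{special Lagrangian} submanifold, after which the smooth deformation theory applies verbatim.

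First I would record that, since $p:\tL\to L$ is an unbranched covering, it is a local diffeomorphism, hence a local isometry once $\tL$ is given the pull-back metric $(\iota\circ p)^{\st}g_X=p^{\st}(\iota^{\st}g_X)$. In particular $\iota\circ p:\tL\to X$ is an immersion. Because pullback of forms commutes with the covering map, the Lagrangian and special Lagrangian conditions for $[L_0]$ transfer directly:
\begin{equation}
(\iota\circ p)^{\st}\omega=p^{\st}(\iota^{\st}\omega)=0,\qquad (\iota\circ p)^{\st}\Im\Omega=p^{\st}(\iota^{\st}\Im\Omega)=0.
\end{equation}
Thus $[\tL_0]$ is an immersed special Lagrangian submanifold of $(X,J,\omega,\Omega)$ whose induced metric is exactly $(\iota\circ p)^{\st}g_X$, so that the relevant notion of harmonicity on $\tL$ is the one in the statement.

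Second, I would apply Theorem \ref{theorem_classicaldeformation} to $[\tL_0]$. It exhibits the moduli space of graphic special Lagrangian deformations of $[\tL_0]$ as a smooth manifold modeled on the harmonic 1-forms on $\tL$ for the metric $(\iota\circ p)^{\st}g_X$. Concretely, fixing a Weinstein neighborhood $\Phi$ of $[\tL_0]$, a harmonic 1-form $v$ provides, through the implicit-function argument in the proof of Theorem \ref{theorem_classicaldeformation}, a family $[\tL_t]=(\tL,\tiota_t)$ of special Lagrangian submanifolds graphed over $\tL$ with $\tiota_0=\iota\circ p$; since the graphs collapse to the zero section as $t\to 0$, one has $\lim_{t\to 0}\tiota_t=\iota\circ p$ in the $\MC^1$ topology, as claimed.

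I do not expect any real obstacle here: all the analytic content is already carried by McLean's theorem, and the unbranched hypothesis is doing exactly the work of guaranteeing that $p$ is a local diffeomorphism, so that $\iota\circ p$ is a smooth immersion and the pull-back metric is nonsingular. The point of isolating this corollary is precisely the contrast with the branched case treated in the rest of the paper: when $p$ branches along $\Sigma$ it degenerates there, the naive pull-back metric becomes singular along the branch locus, and the smooth argument above breaks down, which is what forces the passage to $\ZT$ harmonic 1-forms and the more delicate construction of Theorem \ref{thm_sec1main}.
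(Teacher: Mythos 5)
Your proof is correct and follows the paper's argument exactly: the paper likewise observes that an unbranched covering makes $\iota\circ p$ a smooth immersion so that $[\tL_0]=(\tL,\iota\circ p)$ is an immersed special Lagrangian submanifold, and then applies Theorem \ref{theorem_classicaldeformation} to it. Your explicit verification of $(\iota\circ p)^{\st}\omega=0$ and $(\iota\circ p)^{\st}\Im\Omega=0$ is a worthwhile detail the paper leaves implicit, but it is the same route.
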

			
			\begin{remark}
					The unbranched smooth covering condition is an essential condition for Mclean's arguement works. Suppose $p:\tL'\to L$ is a branched covering of $L$, then $(\iota\circ p)^{\st}g$ is no longer a smooth metric along the branch locus and a harmonic 1-form for the singular metric doesn't guarantee a deformation. Suppose the Ricci curvature of $L$ is positive, then by \cite{tsai2018mean}, see also \cite{abouzaidImagi21}, that the branched deformation will not exist.
			\end{remark}
		\end{subsection}
	\end{section}
	\begin{section}{The Special Lagrangian Equation over the Cotangent Bundle}
		\label{sec_sLagequationsovercotangentbundle}
		In a Weinstein neighborhood, the special Lagrangian equation is an equation for 1-forms $\al$ on $T^{\st}L$ which characterized  whether the graph of $\al$ is a special Lagrangian submanifold. The structure of the special Lagrangian equation up to quadratic terms was studied in \cite{joyce2004slag2,mclean98defomation}. In this section, we will study the higher order expansion of the special Lagrangian equation using pseudo holomorphic volume form on $T^{\st}L$, which is an approximation of the real holomorphic volume form.
		
		\subsection{Special Lagrangian equations in $\mathbb{C}^n$ as a graph}
		We first consider the special Lagrangian equation in $\mathbb{C}^n$ for a graph manifold, introduced by Harvey and Lawson \cite{harveylawson1982calibratedgeometry}. Let $\mathbb{C}^n$ be the n-dimensional complex plane with complex coordinates $(z_1,\cdots,z_n)$. We write $z_i=x_i+\sqrt{-1}y_i$. Then $\mathbb{C}^n$ has a natural Calabi-Yau structure
		$$ g_0=\sum_{i=1}^n|dz_i|^2,\;\omega_0=\frac{\sqrt{-1}}{2}\sum_{i=1}^ndz_i\we d\bar{z}_i,\;\mathrm{and}\;\Omega_0=dz_1\we\cdots \we dz_n.
		$$
		Let $\mathbb{R}^n$ be the n-plane span by $(x_1,x_2,\cdots,x_n)$ and we identified $T^{\st}\mathbb{R}^n\cong \mathbb{C}^n$. Let $f$ be a smooth function on $\mathbb{R}^n$, then the graph manifold of $df$ will be
		\begin{equation*}
			\Gamma_f=\{(x_1+\sqrt{-1}\pa_{x_1}f,\;\cdots,\;x_n+\sqrt{-1}\pa_{x_n}f_n)\},
		\end{equation*}
		which is a smooth Lagrangian on $\mathbb{C}^n$. We write $\iota_f:\Gamma_f\to \mathbb{C}^n$ be the natural embedded map, then we have
		\begin{equation}
			\begin{split}
				\iota_f^{\st}\Im \Omega=\Im\det(I+i\Hess f)dx_1\we dx_2\we\cdots\we dx_n,\\
				\iota_f^{\st}\Re\Omega=\Re\det(I+i\Hess f)dx_1\we dx_2\we\cdots\we dx_n.
			\end{split}
		\end{equation}
		The special Lagrangian equation $\iota_f^{\st}\Im\Omega=0$ for $\Gamma_f$ can be written as 
		\begin{equation}
			\label{eq_euclideanspeciallagrangian}
			\Im\det(I+i\Hess f)=0.
		\end{equation}
		
		In particular, when $n=3$, the above equation obtain the following beautiful form
		\begin{equation*}
			-\Delta f+\det(\Hess f)=0.
		\end{equation*}

		\begin{subsection}{The geometry of the cotangent bundle}
			In this subsection, we will introduce the Sasaki metric on the cotangent bundle of a Riemannian manifold and the readers are referred to \cite{kentaroshigeru1973} for further details. Moreover, using the Sasaki metric, we will define an approximation to the holomorphic volume form.
			
			Let $(L,g)$ be a Riemannian manifold with Riemannian metric $g$ and cotangent bundle $T^{\st}L$. Let $p\in T^{\st}L$ with $\pi(p)=x$, using the Levi-Civita connection, we have the following splitting of the tangent bundle of $T^{\st}L$, which is $T_p(T^{\st}L)=H\oplus V,$
			where the horizontal part $H|_p\cong T_xL$ and the vertical part $V|_p\cong T^{\st}_xL$, and we write the isomorphism induced by the Levi-Civita connection as
			\begin{equation}
				\label{eq_identification}
				\begin{split}
					\Gamma:T_p(T^{\st}L)\to T_xL\oplus T_x^{\st}L.
				\end{split}
			\end{equation}
			Let $x=(x_1,\cdots,x_n)$ be a local coordinate system of $L$, and $y=(y_1,\cdots,y_n)$ by the fiber coordinate of $T_x^{\st}L$, such that $\sum_{i=1}^ny_idx_i$ is a section of $T^{\st}_{x}L$. Let $\nabla$ be the Levi-Civita connection with $\na_{\pa_{x_i}}\pa_{x_j}=\sum_{k=1}^n\Gamma_{ij}^k\pa_{x_k}$, then for $\pa_{x_i}$ and $d_{x_i}$, under the identification in \eqref{eq_identification}, we write the horizontal and vertical lifting in local coordinates $(x,y)$ as
			\begin{equation}
				\pa_{x_i}^H=\pa_{x_i}+\sum_{j,k=1}^ny_j\Gamma^{j}_{ik}\pa_{y_k},\; dx_i^V=\pa_{y_i}.
			\end{equation}
			
			\begin{proposition}
				\label{prop_identificaitonpushforwardmap}
				Let $\al$ be a 1-form on $L$, we write $\iota_{\al}:L\to T^{\st}L$ be the inclusion induced by the graph of $\al$, then for any vector field $v$ on $L$, we have $\Gamma\circ(\iota_{\al})_{\st}v=(v,\na_v\al)\in TL\oplus T^{\st}L$.
			\end{proposition}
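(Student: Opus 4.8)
The plan is to carry out a direct computation in the local coordinates $(x,y)$ fixed above, since the statement is pointwise and both sides are sections of $TL\oplus T^{\st}L$. First I would record the graph map in coordinates: writing $\al=\sum_i\al_i\,dx_i$, the inclusion $\iota_{\al}$ is $x\mapsto(x,\al(x))$, so along its image the fiber coordinates satisfy $y_j=\al_j(x)$. Differentiating, the pushforward of a coordinate field is
\[
(\iota_{\al})_{\st}\pa_{x_i}=\pa_{x_i}+\sum_{k=1}^n(\pa_{x_i}\al_k)\,\pa_{y_k},
\]
and by linearity $(\iota_{\al})_{\st}v=\sum_i v^i(\iota_{\al})_{\st}\pa_{x_i}$ for $v=\sum_i v^i\pa_{x_i}$.

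The next step is to rewrite this in the horizontal–vertical frame $\{\pa_{x_i}^H,\,dx_i^V\}$, whose image under $\Gamma$ is the standard frame $\{\pa_{x_i},\,dx_i\}$ of $T_xL\oplus T^{\st}_xL$. Using $\pa_{x_i}^H=\pa_{x_i}+\sum_{j,k}y_j\Gamma^{j}_{ik}\pa_{y_k}$ and evaluating on the graph (where $y_j=\al_j$), I can solve for the ambient field $\pa_{x_i}=\pa_{x_i}^H-\sum_{j,k}\al_j\Gamma^{j}_{ik}\pa_{y_k}$. Substituting into the pushforward and collecting the $\pa_{y_k}=dx_k^V$ terms gives
\[
(\iota_{\al})_{\st}v=\sum_i v^i\pa_{x_i}^H+\sum_{i,k}v^i\Big(\pa_{x_i}\al_k-\sum_j\Gamma^{j}_{ik}\al_j\Big)dx_k^V.
\]

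Finally I would recognize the vertical coefficient as the components of the covariant derivative: for a $1$-form one has $(\na_{\pa_{x_i}}\al)_k=\pa_{x_i}\al_k-\sum_j\Gamma^{j}_{ik}\al_j$, so the vertical part is exactly $\na_v\al$ written in the $dx_k^V$ frame, while the horizontal part is $v$ in the $\pa_{x_i}^H$ frame. Applying $\Gamma$, which sends $\pa_{x_i}^H\mapsto\pa_{x_i}$ and $dx_i^V\mapsto dx_i$, yields $\Gamma\circ(\iota_{\al})_{\st}v=(v,\na_v\al)$, as claimed. There is no genuine obstacle beyond bookkeeping; the one point that must be handled with care is the substitution $y_j=\al_j$ inside the horizontal lift, since this is precisely what converts the partial derivative of the components into the full covariant derivative and thereby produces the invariant object $\na_v\al$, independent of the coordinate choice.
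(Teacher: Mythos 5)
Your proof is correct and follows essentially the same route as the paper: a coordinate computation of $(\iota_{\al})_{\st}\pa_{x_i}$, rewritten in the horizontal--vertical frame via $y_j=\al_j$ on the graph, with the Christoffel correction turning $\pa_{x_i}\al_k$ into $(\na_{\pa_{x_i}}\al)_k$. The paper compresses this into the single line $(\iota_{\al})_{\st}\pa_{x_i}=\pa_{x_i}^H+(\na_{\pa_{x_i}}\al)^V$; you have simply made the implicit substitution and bookkeeping explicit.
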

			\begin{proof}
				Without losing generality, we take $v=\pa_{x_i}$ and write $\al=\al_i dx_i$. We compute 
				$$(\iota_{\al})_{\st}\pa_{x_i}=\pa_{x_i}+\sum_{j=1}^n\pa_{x_i}\al_j\pa_{y_j}=\pa_{x_i}^H+(\na_{\pa_{x_i}}\al)^V,$$
				thus $\Gamma((\iota_{\al})_{\st}\pa_{x_i})=(\pa_{x_i},\na_{\pa_{x_i}}\al).$
			\end{proof}
			
			Let $X,Y$ be vectors on $L$ with horizontal lifting $X^H,Y^H$ and $\al, \be$ be 1-forms on $L$ with vertical lifting $\al^V,\be^V$, then the Sasaki metric $g_0$ on $T^{\st}L$ is defined as
			$$
			g_0(X^H,Y^H)=g(X,Y)\circ \pi,\; g_0(\al^V,\be^V)=g^{-1}(\al,\be)\circ \pi,\; g_0(X^H,\al^V)=0.
			$$
			In addition, there exists a natural almost complex structure $J_0$ on $T^{\st}L$. Let $\vp_g:TL\to T^{\st}L$ be the induced isomorphism of the vector bundle, then we define the almost complex structure as
			$$J_0(X^H):=(\vp_g(X))^V,\; J_0(\vp_g(X)^V)=-X^H,$$
			where $X$ is a vector field on $L$. Let $\omega_0$ is the canonical symplectic structure on $T^{\st}L$, then by a straight forward computation, these three structures $g_0,\omega_0,J_0$ are compatible:
			$$\omega_0(X,Y)=g_0(X,J_0Y).$$
			
			Using the Sasaki metric, we could give an isomorphism of the tangent and cotangent bundle of $T^{\st}L$, which we write as $\Phi_{g_0}:T(T^{\st}L)\to T^{\st}(T^{\st}L)$. As the decomposition $T(T^{\st}L)\cong H\oplus V$ is orthogonal under the Sasaki metric and $T^{\st}(T^{\st}L)\cong H^{\st}\oplus V^{\st}$, $\Phi_{g_0}$ preserve the decomposition and gives the identification $\Phi_{g_0}|_H:H\to H^{\st}$ and $\Phi_{g_0}|_V:V\to V^{\st}$. 
			
			We define the dual basis to $\pa_{x_i}^H$, $dy_i$ and compute
			\begin{equation}
				\begin{split}
					dx_i^H:&=g_0(\pa_i^H,\;)=\sum_jg_{ij}dx_j,\;\\ dy_i^V:&=g_0(dy_i,\;)=\sum_{j}g^{ij}dy_j-\sum_{j,k}y_k\Gamma^k_{sj}g^{is}dx_j.
				\end{split}
			\end{equation}
		\end{subsection}
		\begin{subsection}{The pseudo holomorphic volume form}
			Now we will define a canonical n-form over $T^{\st}L$. 
			\subsubsection{The pseudo holomorphic volume form}
			We write $\xi:V\to T^{\st}L$ be the identification and $\xi_k:=\Lambda^k\xi:\Lam^kV\to \Lam^{k}T^{\st}L$. We could also identify $T^{\st}L$ and $H^{\st}$ by the following map
			\begin{equation}
				\eta:T^{\st}L\rightarrow TL\rightarrow H\rightarrow H^{\st},
			\end{equation}
			where the first map above is given by the Riemannian metric on $L$ and the third map above is given by the  Sasaki metric. A straight forward computation gives $\eta(dx_i)=\pi^{\st}dx_i$, with projection $\pi:T^{\st}L\to L$. To avoid notation, we also write $\pi^{\st}dx_i$ as $dx_i$. In addition, we define $\eta_k:=\Lam^k\eta:\Lam^kT^{\st}L\to \Lam^kH^{\st}$.
			
			Using $\xi_k$ and $\eta_{n-k}$, we defines an endormorphism $\kappa_k$ by
			\begin{equation}
				\kappa_k:\Lambda^kV\xrightarrow{\xi_k} \Lambda^{k}T^{\st}L\xrightarrow{\st}\Lambda^{n-k}T^{\st}L\xrightarrow{\eta_{n-k}} \Lambda^{n-k}H^{\st},
			\end{equation}
			where $\st$ is Hodge star operator induced by the Riemannian metric on $L$ and we write $K_k$ be the corresponding section of $\Lam^kV^{\st}\otimes \Lam^{n-k}H^{\st}$. 
			
			In local coordinates, $K_k$ could be written as
			\begin{equation}
				K_k:=\sum_{j_1,\cdots,j_n}\frac{\sqrt{\det(g)}}{n!}\ep_{j_1\cdots j_n}dy^V_{j_1}\we\cdots\we dy^V_{j_p}\we dx_{j_{p+1}}\we\cdots\we dx_{j_n},
			\end{equation}
			where $\ep_{j_1,\cdots,j_n}$ is the Levi-Civita symbol. Moreover, as $\Lambda^n(T^{\st}(T^{\st}L))=\oplus_{k=1}^n\Lambda^kV^{\st}\otimes \Lambda^{n-k}H^{\st},$ $K_k$ is a n-form on $T^{\st}L$.
			
			\begin{definition}
				The pseudo holomorphic volume form $\Omega_0\in \Lambda^n(T^{\st}(TL)\otimes \mathbb{C})$ is defined by
				\begin{equation}
					\label{eq_expressioncanonicalhomologyform}
					\Im\Omega_0:=\sum_{k\;\mathrm{odd}}(-1)^{\frac{k-1}{2}}K_k,\;\Re\Omega_0:=\sum_{k\;\mathrm{even}}(-1)^{\frac k2}K_k,\;\Omega_0:=\Re\Omega_0+i\Im\Omega_0.
				\end{equation}
			\end{definition}
			
			\begin{proposition}
				The pseudo holomorphic volume form $\Omega_0$ has the following properties:
				\begin{itemize}
					\item [(i)] Let $\iota_0:L\to T^{\st}L$ be the inclusion of the  zero section, then $\iota_0^{\st}\Im\Omega_0=0,\;\iota_0^{\st}\Re\Omega_0=\Vol_L.$
					\item [(ii)] Suppose at $m\in L$, $(x_1,\cdots, x_n)$ are normal coordinates w.r.t. the Riemannian metric on $L$, then at $q=(m,0)\in T^{\st}L$, $\Omega_0|_q=\Lambda_{i=1}^n(dx_i+\sqrt{-1}dy_i).$ 
					\item [(iii)] $\frac{\omega^n}{n!}=(-1)^{\frac{n(n-1)}{2}}\frac{i^n}{2^n}\Omega_0\we\bar{\Omega}_0$ on the zero section.
					\item [(iv)] $d\Omega_0|_{\iota_0(L)}=0$.
				\end{itemize}
			\end{proposition}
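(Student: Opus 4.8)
The plan is to first collapse the definition \eqref{eq_expressioncanonicalhomologyform} into the single uniform formula $\Omega_0=\sum_{k=0}^{n}i^{k}K_{k}$; this is immediate since for even $k$ one has $i^{k}=(-1)^{k/2}$, reproducing the $\Re\Omega_0$ summands, and for odd $k$ one has $i^{k}=i\,(-1)^{(k-1)/2}$, reproducing the $i\,\Im\Omega_0$ summands. With this in hand I would prove (ii) first, because (i) and (iii) then follow almost formally, and (iv) is an independent normal-coordinate computation.

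For (ii), fix $m\in L$ and choose geodesic normal coordinates $(x_1,\dots,x_n)$ centered at $m$, so that at $m$ we have $g_{ij}=\delta_{ij}$, $\Gamma^{k}_{ij}=0$ and $\sqrt{\det g}=1$; evaluating at $q=(m,0)$ additionally sets $y=0$. Then the vertical coframe degenerates to $dy^{V}_{i}=dy_{i}$, the map $\xi$ sends $\pa_{y_i}\mapsto dx_i$, the map $\eta$ is $\pi^{\st}$, and $\st$ is the flat Hodge star. Hence from $\kappa_{k}=\eta_{n-k}\circ\st\circ\xi_{k}$ we get the clean expression $K_{k}|_{q}=\sum_{|I|=k}\big(\bigwedge_{i\in I}dy_{i}\big)\we\st\big(\bigwedge_{i\in I}dx_{i}\big)$, the sum running over $k$-element subsets $I\subset\{1,\dots,n\}$. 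The remaining task is the purely combinatorial identity $\sum_{k}i^{k}K_{k}|_{q}=\bigwedge_{i=1}^{n}(dx_{i}+i\,dy_{i})$: expanding the product, the term that selects $i\,dy_{i}$ for $i\in I$ and $dx_{j}$ for $j\notin I$ carries exactly the factor $i^{|I|}$ together with the shuffle sign $\mathrm{sgn}(I,I^{c})$, and this shuffle sign is precisely the Hodge-star sign appearing in $\st\big(\bigwedge_{i\in I}dx_{i}\big)=\mathrm{sgn}(I,I^{c})\bigwedge_{j\in I^{c}}dx_{j}$. I expect this sign bookkeeping to be the main obstacle; I would pin it down by checking $n=1$ (giving $\Omega_0=dx+i\,dy$) and $n=2$ (giving $(dx_1+i\,dy_1)\we(dx_2+i\,dy_2)$), and then note that the general case is the standard expression of a decomposable complex volume form through its Hodge-dual real components.

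Statement (i) I would read directly off the structure of the $K_{k}$: for every $k\ge1$ the form $K_{k}$ contains at least one vertical factor $dy^{V}$, while Proposition \ref{prop_identificaitonpushforwardmap} applied to $\al=0$ shows $(\iota_0)_{\st}$ is purely horizontal, so $\iota_0^{\st}dy^{V}_{i}=0$ and thus $\iota_0^{\st}K_{k}=0$ for $k\ge1$. The only surviving summand is $\iota_0^{\st}K_{0}=\sqrt{\det g}\,dx_1\we\cdots\we dx_n=\Vol_L$, which gives $\iota_0^{\st}\Im\Omega_0=0$ and $\iota_0^{\st}\Re\Omega_0=\Vol_L$. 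For (iii) I would argue pointwise along the zero section: by (ii), at $q$ we have $\Omega_0|_{q}=\bigwedge_{i}(dx_i+i\,dy_i)$ and $\omega|_{q}=\sum_{i}dx_i\we dy_i$, so under $z_i=x_i+i\,y_i$ the claimed identity reduces to the flat $\mathbb{C}^{n}$ Calabi-Yau normalisation \eqref{eq_Calabiyau}; since every point of $L$ is the center of some normal chart, the identity holds over all of $\iota_0(L)$.

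Finally, for (iv) I would differentiate $\Omega_0=\sum_k i^{k}K_k$ coefficientwise in a normal chart after writing each $K_k$ in the closed coordinate coframe $\{dx_j,dy_j\}$. Its coefficients are smooth functions assembled from $\sqrt{\det g}(x)$, $g^{ij}(x)$ and the vertical-correction terms $\sum_{k,s}y_k\Gamma^{k}_{sj}(x)g^{is}(x)$ coming from $dy^{V}_i$. The point is that all first derivatives of these building blocks vanish at $q=(m,0)$: in normal coordinates $\pa_x\sqrt{\det g}(m)=0$ and $\pa_x g^{ij}(m)=0$, the $\pa_y$-derivatives of the $x$-dependent factors vanish identically, and the correction term is linear in $y$ with coefficient $\Gamma(m)=0$, so both its $\pa_x$- and $\pa_y$-derivatives vanish at $q$. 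A Leibniz expansion then forces $d(\text{coefficient})|_q=0$, and since $dx_j,dy_j$ are closed this yields $dK_k|_q=0$ for every $k$, hence $d\Omega_0|_q=0$; as $m$ was arbitrary, $d\Omega_0|_{\iota_0(L)}=0$. The one genuinely delicate step is the sign matching in (ii); everything else is formal once that normal form is established.
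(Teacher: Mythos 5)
Your proposal is correct and follows essentially the same route as the paper: normal coordinates at $q$ reduce $dy_i^V$ to $dy_i$, (i) follows from $\iota_0^{\st}dy_i^V=0$ with only $K_0$ surviving, (iii) is the pointwise flat computation granted (ii), and (iv) rests on $d(dy_i^V)|_q=0$ together with the vanishing of $d\sqrt{\det g}$ at the center of the normal chart, exactly as in the paper's proof. Since the paper merely asserts (ii) after noting $dy_i^V|_q=dy_i$, your shuffle-sign computation supplies precisely the omitted detail — and note that your formula $K_k|_q=\sum_{|I|=k}\bigl(\wedge_{i\in I}dy_i\bigr)\we\st\bigl(\wedge_{i\in I}dx_i\bigr)$, summing over increasing multi-indices (equivalently, normalizing by $\tfrac{1}{k!(n-k)!}$ rather than the $\tfrac{1}{n!}$ in the paper's displayed coordinate formula for $K_k$), is the one consistent with the intrinsic definition of $\kappa_k$ and the one that makes (ii) hold, while in (iii) you should carry out the flat check directly (giving the factor $\tfrac{i^n}{2^n}$ from $\omega=\tfrac{i}{2}\sum dz_i\we d\bar z_i$) rather than cite \eqref{eq_Calabiyau}, whose printed normalization differs from the proposition's by $2^n$.
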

			\begin{proof}
				For (i), as $\iota_0^{\st}dy_i^V=0$ and every term of $\Im\Omega$ consists of $dy_i^V$, we obtain $\iota_0^{\st}\Im\Omega=0$. In addition, we have $\iota_0^{\st}\Im\Omega=\iota_0^{\st}K_0=\frac{\sqrt{\det(g)}}{n!}\ep_{k_1\cdots k_n}dx_{k_1}\we\cdots\we dx_{k_n}$, which is the volume form on $L$. As in the normal coordinates $(x_1,\cdots,x_n)$, we have $dy_i^V|_q=dy_i$, which implies (ii). (iii) follows directly from (ii). For (iv), in the normal coordinates above, we obtain $d(dy_i^V)|_q=dg^{ij}\we dy_j|_q- d(y_k\Gamma_{ij}^kg^{is})\we dx_j|_q=0$. Therefore, $d\Omega_0|_{\iota_0(L)}=d(\Vol_X)|_{\iota_0(L)}=0.$ 
			\end{proof}
			\subsubsection{The pseudo special Lagrangian equation}
			Given a closed 1-form $\al$, we write $[L_{\al}]=(L,\iota_{\al})$ be the graph of $\al$. Using the pseudo holomorphic volume form, we could define the pseudo special Lagrangian equation.
			\begin{definition}
				$[L_{\al}]$ is called a pseudo special Lagrangian submanifold if $[L_{\al}]$ is symplectic and $i_{\al}^{\st}(\Im\Omega)=0$.
				Moreover, we define the pseudo special Lagrangian equation for 1-form $\al$ as $\pSL(\al):=\st \iota_0^{\st}(\Im\Omega),$
				where $\st$ is the Hodge star operator w.r.t the induced Riemannian metric on the zero section
			\end{definition}
			
			We could explicitly compute pseudo special Lagrangian equation. Let $\al$ be a closed 1-form on $L$, then $\na\al$ is a symmetric $(0,2)$ tensor, we could use the Riemannian metric to define a $(1,1)$ tensor $A_{\al}:TL\to TL$. We define the $k$-th symmetric power of $A_{\al}$ as $P_k(\na\al):=\Tr(\Lambda^kA_{\al})$. If we choose a orthonormal base of $T^{\st}L$, we could regard $\na\al$ as a $n\ti n$ matrix, then we have $$\det(\Id+t\na\al )=\sum_{k=1}^nP_k(\na\al)t^k.$$  In addition, we define
			\begin{equation}
				\label{eq_nonlineartermP}
				P(\na\al):=\sum_{k=1}^{\lceil \frac{n-1}{2}\rceil}(-1)^kP_{2k+1}(\na\al).
			\end{equation}
			
			\begin{proposition}
				\label{prop_pseudoSLequations}
				The pseudo special Lagrangian equation of $[L_{\al}]$ on $T^{\st}L$ can be written as 
				\begin{equation}
					\begin{split}
						\pSL(\al)=-d^{\st} \al+P(\na \al),
					\end{split}
				\end{equation}
				where $P$ is a smooth function defined in \eqref{eq_nonlineartermP}. 
			\end{proposition}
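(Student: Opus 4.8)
The plan is to turn the definition of $\pSL(\al)$ into an exact pointwise identity by reducing it to the Euclidean graph computation at the beginning of this section, using the two structural facts already in hand: the pushforward formula of Proposition \ref{prop_identificaitonpushforwardmap} and the normal form for $\Omega_0$ in property (ii). First I would fix an arbitrary point $m\in L$ and work in geodesic normal coordinates $(x_1,\dots,x_n)$ centered at $m$, so that $g_{ij}(m)=\delta_{ij}$ and all Christoffel symbols vanish at $m$. The key preliminary observation is that the normal form for $\Omega_0$ persists off the zero section: since every correction term in the expression $dy_i^V=\sum_j g^{ij}dy_j-\sum_{j,k}y_k\Gamma^k_{sj}g^{is}dx_j$ carries a factor of $\Gamma$, we still have $dx_i^H=dx_i$ and $dy_i^V=dy_i$ at the point $(m,\al_m)\in T^{\st}L$, \emph{regardless} of the fiber value $y=\al_m$. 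Combining this with the assembled relation $\Omega_0=\sum_k \sqrt{-1}^{\,k}K_k$, which follows by collecting the real and imaginary parts in \eqref{eq_expressioncanonicalhomologyform}, I obtain the extended normal form
\[
\Omega_0\big|_{(m,\al_m)}=\bigwedge_{i=1}^n\bigl(dx_i+\sqrt{-1}\,dy_i\bigr),
\]
i.e.\ property (ii) continues to hold at $(m,\al_m)$ as long as the coordinates are normal at the base point $m$.

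Next I would pull back along the graph map. By Proposition \ref{prop_identificaitonpushforwardmap}, in these coordinates $(\iota_{\al})_{\st}\pa_{x_i}=\pa_{x_i}+\sum_j(\na\al)_{ij}\,\pa_{y_j}$, so that the pullback covector $\iota_{\al}^{\st}(dx_j+\sqrt{-1}\,dy_j)$ evaluated on $\pa_{x_i}$ equals $\delta_{ij}+\sqrt{-1}\,(\na\al)_{ij}=(\Id+\sqrt{-1}\,\na\al)_{ij}$. Wedging the rows gives exactly the Euclidean expression
\[
\iota_{\al}^{\st}\Omega_0\big|_m=\det\!\bigl(\Id+\sqrt{-1}\,\na\al\bigr)\,dx_1\we\cdots\we dx_n,
\]
which is the $\BC^n$ graph computation with $\Hess f$ replaced by the symmetric tensor $\na\al$ (symmetric because $\al$ is closed). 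Expanding $\det(\Id+t\,\na\al)=\sum_k P_k(\na\al)\,t^k$ at $t=\sqrt{-1}$ and extracting the imaginary part isolates the odd powers, since $\Im(\sqrt{-1}^{\,k})=(-1)^{\frac{k-1}{2}}$ for odd $k$, yielding $\iota_{\al}^{\st}\Im\Omega_0=\bigl(\sum_{k\ \mathrm{odd}}(-1)^{\frac{k-1}{2}}P_k(\na\al)\bigr)\Vol_L$.

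Finally, applying the Hodge star on the zero section converts $\Vol_L$ to $1$, so that $\pSL(\al)=\sum_{k\ \mathrm{odd}}(-1)^{\frac{k-1}{2}}P_k(\na\al)=P_1(\na\al)-P_3(\na\al)+P_5(\na\al)-\cdots$. It then remains only to recognize the linear term: since $P_1(\na\al)=\Tr(\na\al)=-d^{\st}\al$ for any $1$-form, separating the $k=1$ summand from the higher odd terms leaves precisely $\sum_{k\geq 1}(-1)^kP_{2k+1}(\na\al)=P(\na\al)$ as defined in \eqref{eq_nonlineartermP}, giving $\pSL(\al)=-d^{\st}\al+P(\na\al)$. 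Because $m$ was arbitrary, this holds as an identity of functions on $L$.

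I expect the main obstacle to be not the algebra itself but justifying that the normal form for $\Omega_0$ extends from the zero section $(m,0)$ to the graph point $(m,\al_m)$; this is exactly what makes the resulting formula exact rather than only infinitesimal, and it rests on the vanishing of the Christoffel correction terms in $dy_i^V$ at $m$. The remaining care is purely in the sign bookkeeping, both in matching $\sum_{k\ \mathrm{odd}}(-1)^{\frac{k-1}{2}}$ against the alternating sum defining $P$ and in the convention-dependent identity $\Tr(\na\al)=-d^{\st}\al$.
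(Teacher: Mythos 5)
Your proof is correct, and it is organized differently from the paper's. The paper never passes to normal coordinates: it pulls back each form $K_k$ separately in arbitrary coordinates, using the computation $\iota_{\al}^{\st}dy_i^V=\sum_{j,l}g^{il}\na\al(\pa_{x_j},\pa_{x_l})dx_j$ (the Christoffel correction in $dy_i^V$ is exactly what converts $\pa_{x_j}\al_l$ into the covariant derivative), then identifies $\st(\iota_{\al}^{\st}K_1)=-d^{\st}\al$ and $\st(\iota_{\al}^{\st}K_{2l+1})=\ep\ep H\cdots H=P_{2l+1}(\na\al)$ via the Levi--Civita-symbol contraction. You instead prove the pointwise reduction to the flat model: in normal coordinates at $m$ all $y$-dependence of $dx_i^H$, $dy_i^V$ and $\sqrt{\det g}$ drops out at the base point, so $\Omega_0=\sum_k(\sqrt{-1})^kK_k$ takes the normal form $\bigwedge_i(dx_i+\sqrt{-1}\,dy_i)$ along the entire fiber over $m$, and the whole computation collapses to Harvey--Lawson's Euclidean graph identity $\Im\det(\Id+\sqrt{-1}\,\na\al)$ with $\Hess f$ replaced by $\na\al$. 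Your key extra step --- extending property (ii) from $(m,0)$ to $(m,\al_m)$ --- is exactly right and is the only place where care is needed; the sign bookkeeping ($\Im(\sqrt{-1}^{\,k})=(-1)^{\frac{k-1}{2}}$ for odd $k$, $P_1(\na\al)=\Tr(\na\al)=-d^{\st}\al$) checks out, the two expansions being the same algebraic identity in disguise. What each route buys: yours makes the relation to equation \eqref{eq_euclideanspeciallagrangian} transparent and packages all combinatorics into one determinant, while the paper's coordinate-free-in-$y$ pullback formula for $dy_i^V$ is reusable off normal coordinates and reappears in the proof of Proposition \ref{prop_structureofspecialLagrangian}(ii), which is presumably why the paper computes term by term.
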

			\begin{proof}
				We will compute in local coordinates $(x_1,\cdots,x_n,y_1,\cdots,y_n)$ under previous conventions. The linearization of $\pSL(\al)$ would be $\st(\iota_{\al}^{\st}K_1)$, where $$K_1=\sum_{k_1,\cdots,k_n=1}^n\frac{\sqrt{\det g}}{n!}\ep_{k_1\cdots k_n}dy_{k_1}^V\we dx_{k_2}\we\cdots\we dx_{k_n}.$$
				We write $\al=\sum_{k=1}^n\al_kdx_k$, then by a straight computation, we found
				$$\iota_{\al}^{\st}dy_i^V=\sum_{j,l=1}^ng^{il}(\pa_{x_j}\al_l-\sum_{k=1}^n\al_k\Gamma^{k}_{lj})dx_j=\sum_{j,l=1}^ng^{il}\na \al(\pa_{x_j},\pa_{x_l})dx_j.$$
				Therefore, $\st(\iota_{\al}^{\st}K_1)=-d^{\st}\al$.
				
				For the non-linear terms, we define $H^i_j:=\sum_{k=1}^ng^{ik}\na\al(\pa_{x_j},\pa_{x_k})$, then we compute $$\st (\iota_f^{\st}K_{2l+1})=\sum\ep_{k_1\cdots k_{2l+1}}\ep_{j_1\cdots j_{2l+1}}H^{k_1}_{j_1}H^{k_2}_{j_2}\cdots H^{k_{2l+1}}_{j_{2l+1}}=P_{2l+1}(\na\al),$$
				which is the desire term.
			\end{proof}
			When $L=\mathbb{R}^n$, $T^{\st}L\cong \mathbb{C}^n$. When we associating $T^{\st}L$ with the canonical Calabi-Yau structure on $\mathbb{C}^n$, the pseudo special Lagrangian equation will coincident with \eqref{eq_euclideanspeciallagrangian}.
			
			\subsubsection{The relationship between the holomorphic volume forms.}
			Let $[L]$ be a Lagrangian submanifold of a Calabi-Yau $X$, we could choose $U_L\subset T^{\st}L$ be the Weinstein neighborhood of $[L]$ with the pull-back Calabi-Yau structure $(U_L,J,\omega,\Omega)$ with Calabi-Yau metric $g$. We also write the zero section as $L_0$. Using the induced metric $\iota_0^{\st}g$ on the zero section, we could define a Sasaki metric $g_0$ on $U_L$ together with almost complex structure $J_0$ and the pseudo holomorphic volume form $\Omega_0$.
			
			\begin{proposition}
				\label{prop_constraintvolumefor}
				\begin{itemize}
					\item [(i)] Over the zero section $L_0\subset U_L$, we have $g_0|_{L_0}=g|_{L_0}$.
					\item [(ii)] Let $m\in L_0$, we write $T_m^{\perp}L$ be the orthogonal complement of $T_mL$ in $T_xU$ w.r.t $g$, then $T_m^{\perp}L$ is also a Lagrangian subspace of $\omega$.
					\item [(iii)] Let $\theta$ be the Lagrangian angle of $[L]$, then $\Omega|_{L_0}=e^{i\theta}\Omega_0|_{L_0}$.
				\end{itemize}
			\end{proposition}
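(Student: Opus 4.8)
The plan is to treat the three assertions in the order stated, observing that (i) and (iii) are essentially bookkeeping that transports the construction of the Sasaki data and the definition of the Lagrangian angle onto the zero section, while (ii) carries the only genuine geometric input, namely the K\"ahler compatibility of $(g,\omega,J)$. Throughout I would work at a point $q=(m,0)\in L_0$ and use the splitting $T_{L_0}U_L\cong H\oplus V$ with $H\cong T_mL$ and $V\cong T^{\st}_mL$.

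First I would prove (i). By the very definition of the Sasaki metric one has $g_0(H,H)=\iota_0^{\st}g$, $g_0(H,V)=0$, and $g_0(\al^V,\be^V)=g^{-1}(\al,\be)$. To compare with $g=\Phi^{\st}g_X$ I would record how the Weinstein map differentiates along $L_0$: by \eqref{eq_normalexpoenntialeqution} the vertical derivative at a zero covector is $\vp=J\circ(\cdot)^{\sharp}$, so $d\Phi$ carries $H$ onto $T_mL\subset X$ and $V$ onto the normal space $N_L=J(T_mL)$. Since $T_mL$ and $N_L$ are $g_X$-orthogonal this gives $g(H,V)=0$, and since $g_X$ is $J$-invariant one computes $g(\al^V,\be^V)=g_X(J\al^{\sharp},J\be^{\sharp})=g^{-1}(\al,\be)$; together with $g(H,H)=\iota_0^{\st}g$ this matches $g_0$ block by block, so $g|_{L_0}=g_0|_{L_0}$. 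If one only needs the induced metrics on $L_0$, the $H$--$H$ block already suffices and the statement is immediate from the construction of $g_0$.

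For (ii) I would use the identification $T_m^{\perp}L=N_L=J(T_mL)$ from Proposition \ref{prop_identification_tangent_normal_bundle}. Writing two normal vectors as $Ja,Jb$ with $a,b\in T_mL$ and using the convention $\omega(u,w)=g(u,Jw)$ together with $J$-invariance of $g$,
\begin{equation*}
\omega(Ja,Jb)=g(Ja,J^2b)=-g(Ja,b)=0,
\end{equation*}
the last equality because $Ja\in N_L$ is $g$-orthogonal to $b\in T_mL$; as $\dim N_L=n$ this shows $T_m^{\perp}L$ is Lagrangian. For (iii) I would combine (i) with the Weinstein identity $\omega=\omega_0$: a metric and a symplectic form that are compatible determine the almost complex structure via $J=g^{-1}\omega$, so $g|_{L_0}=g_0|_{L_0}$ and $\omega|_{L_0}=\omega_0|_{L_0}$ force $J=J_0$ along $L_0$. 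Hence $\Omega$ and $\Omega_0$ restrict on $L_0$ to $(n,0)$-forms for the same complex structure, and both obey the same normalization $\tfrac{\omega^n}{n!}=(-1)^{\frac{n(n-1)}{2}}\tfrac{i^n}{2^n}\Omega_0\we\bar{\Omega}_0$ on the zero section, so $\Omega|_{L_0}=\lambda\,\Omega_0|_{L_0}$ with $|\lambda|=1$. To pin down $\lambda$ I would pull back by $\iota_0$: the definition of the Lagrangian angle gives $\iota_0^{\st}\Omega=e^{i\theta}\Vol_{\iota_0^{\st}g}$, while the established properties of $\Omega_0$ give $\iota_0^{\st}\Omega_0=\Vol_{\iota_0^{\st}g}$, whence $\lambda=e^{i\theta}$. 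This pulled-back computation in fact already yields (iii) directly, without invoking $J=J_0$.

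The hard part will be the vertical--vertical comparison in (i), since it relies on the Weinstein map being adapted so that $d\Phi$ sends the fibre directions precisely to the $J$-normal bundle along $L_0$. This is automatic for the normal exponential $\Phi$ of \eqref{eq_normalexpoenntialeqution}, but the symplectic correction supplied by Theorem \ref{thm_weinsteinnbhd} must be arranged to be the identity to first order along $L_0$ (equivalently, the Moser primitive of $\Phi^{\st}\omega-\omega_0$ should vanish to second order on the zero section); otherwise a shear could mix $H$ and $V$ and spoil the full bilinear equality, though never the induced-metric statement on which the proof of (iii) ultimately rests.
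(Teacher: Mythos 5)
Your (ii) is correct and coincides with the paper's argument, but your route to (i) has exactly the gap you flagged, and it is fatal to that route rather than a technicality: the diffeomorphism supplied by Theorem \ref{thm_weinsteinnbhd} is \eqref{eq_normalexpoenntialeqution} composed with a Moser correction, and nothing guarantees that correction is the identity to first order along $L_0$. (The Moser vector field vanishes on $L_0$, so the flow fixes $L_0$ pointwise and its differential is the identity on $H$; but on $V$ the differential is governed by the derivative of the vector field transverse to $L_0$ and can shear $V$ into $H\oplus V$.) The paper's proof never differentiates $\Phi$ at all. It uses the one identity the Weinstein theorem gives exactly, namely $\omega=\omega_0$ on all of $U_L$, together with compatibility: in coordinates normal at $m$ (normal simultaneously for $g$ and $g_0$, since the induced metrics on the zero section agree by construction of the Sasaki metric), the Lagrangian condition $g(J\pa_{x_i},\pa_{x_j})=0$ puts $J\pa_{x_i}$ in the vertical span, and then canonicity of $\omega$ gives $g(\pa_{x_i},J\pa_{y_j})=-\omega(\pa_{x_i},\pa_{y_j})=-\delta_{ij}$, forcing $J\pa_{y_j}=-\pa_{x_j}$. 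Hence $\{\pa_{x_1},\dots,\pa_{x_n},\pa_{y_1},\dots,\pa_{y_n}\}$ is $g$-orthonormal at $m$ and $g$ agrees with $g_0$ block by block. In other words, the compatibility triple $(g,\omega_0,J)$ rigidly excludes the shear you worried about, and this algebraic observation is the missing idea; no control of $d\Phi$ is needed.

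Your parenthetical claim that the pulled-back computation ``already yields (iii) directly, without invoking $J=J_0$'' is false. Both $\Omega|_{L_0}$ and $\Omega_0|_{L_0}$ are full $n$-forms on $T_{(m,0)}U_L$, while $\iota_0^{\st}$ records only their restriction to the horizontal subspace $H$, and that restriction does not determine the form: on $\mathbb{C}$, the forms $dx+i\,dy$ and $dx+i(dx+dy)$ both restrict to $dx$ on the real axis yet differ. What legitimizes the pointwise comparison is precisely $J=J_0$ along $L_0$ (equivalently the paper's statement that $\{dx_i,dy_i\}$ is a $g$-orthonormal coframe at $m$), after which $\Omega$ and $\Omega_0$ are unit-norm $(n,0)$-forms for the same complex structure, so $\Omega|_m=\lambda\,\Omega_0|_m$ with $|\lambda|=1$, and pulling back to $L_0$ pins $\lambda=e^{i\theta}$. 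So your main line for (iii) is sound but stands or falls with the full bilinear version of (i); note also that the proposition asserts the full metric equality at points of $L_0$, so your fallback to the $H$--$H$ block alone does not suffice.
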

			\begin{proof}
				Let $m\in L_0$, we choose normal coordinate $(x_1,\cdots,x_n)$ of an open set of $L$ centered at $m$ with fiber coordinates $(y_1,\cdots,y_n)$, we will check the statement in this coordinate system. As $\iota_0^{\st}g=\iota_0^{\st}g_0$, $(x_1,\cdots,x_n)$ is a normal coordinate for both metrics. For (i), based on the definition of the Sasaki metric, $\{\pa_{x_1},\cdots,\pa_{x_n},\pa_{y_1},\cdots, \pa_{y_n}\}$ is a orthonormal frame on $T|_m(T^{\st}L)$ for $g_0$ and  $\{\pa_{x_1},\cdots,\pa_{x_n},J\pa_{x_1},\cdots, J\pa_{x_n}\}$ is an orthonormal frame for $g$. As $L_0$ is a Lagrangian submanifold, we have $g(J\pa_{x_i},\pa_{x_j})=0$. Thus, $J\pa_{x_i}$ lies in the plane span by $\{\pa_{y_1},\cdots,\pa_{y_n}\}$. As $\omega$ is the canonical symplectic structure, we have $g(\pa_{x_i},J\pa_{y_j})=-\omega(\pa_{x_i},\pa_{y_j})=-\delta_{ij}$, which implies $J\pa_{y_j}=-\pa_{x_j}$. Therefore, $\{\pa_{x_1},\cdots,\pa_{x_n},\pa_{y_1},\cdots, \pa_{y_n}\}$ is also an orthonormal frame for $g$, which implies (i). As $\omega(\pa_{y_i},\pa_{y_j})=-g(J\pa_{y_i},\pa_{y_j})=0$, we obtain (ii). As $\{dx_1,\cdots,dx_n,dy_1,\cdots,dy_n\}$ is an orthonormal frame at $m$, by \eqref{eq_Calabiyau}, we have $$\Omega|_m=e^{\sqrt{-1}\theta}\wedge_{i=1}^n(dx_i+\sqrt{-1}dy_i)|_m=e^{\sqrt{-1}\theta}\Omega_0|_m,$$
				which implies (iii).
			\end{proof}
		\end{subsection}

		\begin{subsection}{The special Lagrangian equation in a neighborhood of a Lagrangian}
			Let $[L]$ be a Lagrangian submanifold in Calabi-Yau, $(U_L,J,\omega,\Omega)$ be the pull-back Calabi-Yau structure in the Weinstein neighborhood of $[L]$ with $g$ the Calabi-Yau metric. Let $\al$ be a 1-form on $U_L$ with $\iota_{\al}:L\to U_L$ be the graph manifold defined by $\al$ and we write $L_0$ be the inclusion of the zero section in $U_L\subset T^{\st}L$ with the inlusion $\iota_0$.
			
			The graph of $\al$ is a Lagrangian submanifold if and only if $\al$ is closed. In addition, the special Lagrangian equation for $\al$ is defined as
			\begin{equation}
				\begin{split}
					\SL(\al):=\st \iota_{\al}^{\st}\Im\Omega,
				\end{split}
			\end{equation}
			where $\st$ is the Hodge star operator with the pull-back metric $\iota_0^{\st}g$ on $L$. Taking the Hodge star operator for the metric $\iota^{\st}g$ instead of $\iota_{\al}^{\st}g$ is a more natural metric to consider, which makes the coefficients of the equations independent of $\al$.
			
			\begin{proposition}{\cite[Prop 2.21]{joyce2004slag2}}
				\label{prop_specialLagrangianlineartermstructure}
				The special Lagrangian equation could be written as
				\begin{equation}
					\begin{split}
						\SL(\al)=\sin\theta-d^{\st}(\cos \theta \al)+Q(\al,\na \al),
					\end{split}
				\end{equation}
				where $Q(\al,\na \al)$ is a smooth function with $|Q(\al,\na \al)|=\MO(|\al|^2+|\na \al|^2)$ for small $\al$
			\end{proposition}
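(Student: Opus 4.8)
The plan is to Taylor-expand $\SL(\al)=\st\iota_\al^{\st}\Im\Omega$ to first order in the $1$-jet $(\al,\na\al)$ and to collect everything of higher order into $Q$. By Proposition \ref{prop_identificaitonpushforwardmap} the tangent space to the graph at $x$ is spanned by the vectors $(\iota_\al)_{\st}v=(v,\na_v\al)$ based at the point $(x,\al(x))$, so $\iota_\al^{\st}\Im\Omega$ evaluated at $x$ is a smooth function of the finitely many variables $\al(x)$ and $\na\al(x)$, the ambient form $\Im\Omega$ being smooth. Taylor's theorem in these jet variables then produces a smooth $Q$ with $|Q(\al,\na\al)|=\MO(|\al|^2+|\na\al|^2)$, provided the value and the first derivative at $\al=0$ are identified; the remaining work is exactly to compute those two terms.

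For the constant term I would evaluate at $\al=0$. The pointwise identity $\Omega|_{L_0}=e^{i\theta}\Omega_0|_{L_0}$ of Proposition \ref{prop_constraintvolumefor}(iii) gives $\iota_0^{\st}\Im\Omega=\cos\theta\,\iota_0^{\st}\Im\Omega_0+\sin\theta\,\iota_0^{\st}\Re\Omega_0$, and the properties $\iota_0^{\st}\Im\Omega_0=0$, $\iota_0^{\st}\Re\Omega_0=\Vol_L$ of the pseudo holomorphic volume form collapse this to $\sin\theta\,\Vol_L$. Applying the outer $\st$ yields $\SL(0)=\sin\theta$.

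The heart of the argument is the linear term, for which I would imitate McLean's computation \eqref{eq_linearzationcomputation}. Differentiating the family $t\mapsto\iota_{t\al}$ at $t=0$ gives the variation field $\al^V$, the vertical lift, by Proposition \ref{prop_identificaitonpushforwardmap}; and since $\Omega$ is a holomorphic volume form on the Calabi--Yau $U_L$ we have $d\,\Im\Omega=0$, so Cartan's formula gives $\tfrac{d}{dt}\big|_{0}\iota_{t\al}^{\st}\Im\Omega=\iota_0^{\st}\mathcal{L}_{\al^V}\Im\Omega=d\big(\iota_0^{\st}\iota_{\al^V}\Im\Omega\big)$. Using Proposition \ref{prop_constraintvolumefor}(iii) once more, $\iota_0^{\st}\iota_{\al^V}\Im\Omega=\cos\theta\,\iota_0^{\st}\iota_{\al^V}\Im\Omega_0+\sin\theta\,\iota_0^{\st}\iota_{\al^V}\Re\Omega_0$. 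The explicit shape \eqref{eq_expressioncanonicalhomologyform} of $\Omega_0$ does the rest: in $\Im\Omega_0=\sum_{k\ \mathrm{odd}}(-1)^{(k-1)/2}K_k$ only $K_1$ survives a single vertical contraction followed by $\iota_0^{\st}$, since $K_3,K_5,\dots$ retain at least two factors $dy^V$ which die under $\iota_0^{\st}$, and a short computation, consistent with the linear part $-d^{\st}\al$ of Proposition \ref{prop_pseudoSLequations}, identifies $\iota_0^{\st}\iota_{\al^V}\Im\Omega_0=\st\al$; similarly every even $K_k$ in $\Re\Omega_0$ contributes $0$, so $\iota_0^{\st}\iota_{\al^V}\Re\Omega_0=0$. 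Hence $\tfrac{d}{dt}\big|_0\iota_{t\al}^{\st}\Im\Omega=d(\cos\theta\,\st\al)=\cos\theta\,d\st\al-\sin\theta\,d\theta\we\st\al$, and applying $\st$ together with $\st d\st\al=-d^{\st}\al$ and $\st(d\theta\we\st\al)=g(d\theta,\al)$ gives the linearization $-\cos\theta\,d^{\st}\al-\sin\theta\,g(d\theta,\al)=-d^{\st}(\cos\theta\,\al)$, as claimed.

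The main obstacle I anticipate is bookkeeping in the linear term rather than any conceptual difficulty: one must justify the interchange $\iota_0^{\st}d=d\,\iota_0^{\st}$, verify that the variation field really is the vertical lift $\al^V$ and not merely vertical up to $\MO(|\al|)$, and carry out honestly the two contraction identities $\iota_0^{\st}\iota_{\al^V}\Im\Omega_0=\st\al$ and $\iota_0^{\st}\iota_{\al^V}\Re\Omega_0=0$ from \eqref{eq_expressioncanonicalhomologyform}. It is worth noting that, because the Lie-derivative computation uses only data on the zero section $L_0$ together with $d\,\Im\Omega=0$, the deviation of $\Omega$ from the pseudo form $\Omega_0$ away from $L_0$ never needs to be expanded explicitly: it is absorbed, along with the cubic pseudo-nonlinearity $P(\na\al)$ of Proposition \ref{prop_pseudoSLequations} and the quadratic contributions of the even $K_k$, into the single remainder $Q$, whose bound is then guaranteed by the smooth jet-dependence established in the first step.
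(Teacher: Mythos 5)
Your proposal is correct and takes essentially the same route as the paper's proof: evaluate $\SL(0)=\st\iota_0^{\st}\Im\Omega=\sin\theta$ via the Lagrangian angle, identify the linearization $\frac{d}{dt}\big|_{t=0}\SL(t\al)=-d^{\st}(\cos\theta\,\al)$ by the McLean-type variation computation the paper cites in \eqref{eq_linearzationcomputation}, and absorb the rest into a smooth quadratically small $Q$ via the pointwise dependence of $\SL(\al)|_x$ on $(\al|_x,\na\al|_x)$. Your Cartan-formula derivation with the contraction identities $\iota_0^{\st}\iota_{\al^V}\Im\Omega_0=\st\al$ and $\iota_0^{\st}\iota_{\al^V}\Re\Omega_0=0$ simply carries out in detail the ``similar computation'' that the paper defers to McLean, and it is correct as written.
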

			\begin{proof}
				Let $\theta$ be the Lagrangian degree defined in \eqref{eq_beginningLagrangianangle}, then we have $$\SL(0)=\st\iota_0^{\st}\Im\Omega=\st \Im(e^{i\theta}\Vol_{\iota_0^{\st}g})=\sin \theta.$$ By a similar computation as in \eqref{eq_linearzationcomputation}, we obtain $\frac{d}{dt}|_{t=0}\SL(t\al)=-d^{\st}(\cos\theta \al)$ and the proposition follows straightforward.
			\end{proof}
			
			Now, we will focus on the case when $[L]$ is a special Lagrangian submanifold, while the following observations are due to Joyce \cite[Page 15]{joyce2004slag3}. The value of $\SL(\al)$ at $x\in L$ depends on the $T^{\st}_p(\iota_{\al}(L))$, where $p=\iota_{\al}(x)$, and the cotangent bundle depends on $\al|_x$ and $\na \al|_x$. Therefore, $\SL(\al)|_x$ depends pointwisely on $\al|_x$ and $\na\al|_x$.
			
			Let $\Omega_0$ be the pseudo holomorphic volume form, we define a n-form over $U_L$ as 
			\begin{equation}
				T:=\Im\Omega-\Im\Omega_0,
			\end{equation}
			then by Proposition \ref{prop_constraintvolumefor}, $T|_{L_0}=0$. The error term $S$ of the quadratic terms of the special Lagrangian equation could be expressed as 
			$$S(\al,\na \al):=\st\iota_{\al}^{\st}T,$$
			as $S$ depends on the tangent space of the graph manifold of $\al$, $S$ is a smooth function of both $\al$ and $\na\al$.
			
			After fixing $x\in L$, define variable $y\in T_x^{\st}L$ and $z\in\otimes^2T_x^{\st}L$. Under the identification $\Gamma:T_{(x,y)}U_L\to T_xL\oplus T_x^{\st}L$, for any $z\in \otimes^2T_x^{\st}L$, we define the map $$I_{z}:T_xL\to T_xL\oplus T_x^{\st}L,\;I_z(y):=(y,\iota_y z).$$
			We define 
			\begin{equation}
				\begin{split}
					\label{eq_speiclalLagrangisntitlde}
					&\tSL:\{(x,y,z):x\in L,\;y\in T_x^{\st}L,\;z\in \otimes^2T_x^{\st}L\}\to \mathbb{R},\;\\
					&\tSL(x,y,z):=\st I_z^{\st}(\Gamma^{-1})^{\st}\Im \Omega|_{(x,y)}.
				\end{split}
			\end{equation}
			
			Similarly, we define 
			\begin{equation}
				\begin{split}
					&\tS:\{(x,y,z):x\in L,\;y\in T^{\st}_xL,\;z\in \otimes^2T^{\st}_xL\}\to \mathbb{R},\\
					&\tS(x,y,z):=\st I_z^{\st}(\Gamma^{-1})^{\st}T|_{(x,y)}.
				\end{split}
			\end{equation}
			
			After fixing $x\in L$, the variable $y,z$ lies in the vector spaces $T_x^{\st}L$ and $\otimes^2 T_x^{\st}L$. So we could take partial derivative in $y$ and $z$ direction without using a connection. Then we have $$\pa_y^{k_1}\pa_z^{k_2}\tSL|_x,\;\pa_y^{k_1}\pa_z^{k_2}\tS|_x\in S^{k_1}T_xL\otimes S^{k_2}(\otimes^2 T_xL).$$
			
			We have the following direct descriptions for $\tSL$ and $\tS$.
			\begin{proposition}
				\label{prop_3111111}
				$\tSL(x,\al,\na\al)=\SL(\al)|_x,\;\tS(x,\al,\na \al)=S(\al,\na \al)|_x$. Suppose $[L]$ is a special Lagrangian submanifold, then $\tSL,\tS$ are real analytic functions.
			\end{proposition}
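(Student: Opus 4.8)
The plan is to separate the two assertions: the pointwise identities $\tSL(x,\al,\na\al)=\SL(\al)|_x$ and $\tS(x,\al,\na\al)=S(\al,\na\al)|_x$, which follow immediately from the pushforward description in Proposition \ref{prop_identificaitonpushforwardmap}, and the real analyticity, which reduces to the analyticity of the geometric data built on a special Lagrangian.

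For the identities, I would fix $x\in L$ and set $y=\al|_x\in T_x^{\st}L$ together with $z=\na\al|_x\in\otimes^2 T_x^{\st}L$, the Hessian of $\al$, which is a well-defined symmetric $(0,2)$-tensor since $\al$ is closed. By Proposition \ref{prop_identificaitonpushforwardmap}, for every $v\in T_xL$ we have $\Gamma\circ(\iota_{\al})_{\st}v=(v,\na_v\al)=(v,\iota_v z)=I_z(v)$, so that $I_z=\Gamma\circ(\iota_{\al})_{\st}$ as linear maps $T_xL\to T_xL\oplus T_x^{\st}L$. Since the point $(x,y)=(x,\al|_x)$ is exactly $\iota_{\al}(x)\in U_L$, contravariant functoriality of pullback gives $I_z^{\st}(\Gamma^{-1})^{\st}=(\Gamma^{-1}\circ I_z)^{\st}=\big((\iota_{\al})_{\st}\big)^{\st}=(\iota_{\al})^{\st}$ on $n$-forms at that point. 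Applying this to $\Im\Omega$ and to $T$ and then taking $\st$ yields $\tSL(x,\al|_x,\na\al|_x)=\st(\iota_{\al})^{\st}\Im\Omega|_x=\SL(\al)|_x$ and likewise $\tS(x,\al|_x,\na\al|_x)=\st(\iota_{\al})^{\st}T|_x=S(\al,\na\al)|_x$. This step is essentially bookkeeping once the pushforward identification is available.

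For analyticity, the key input is that every structure entering the definition is real analytic once $[L]$ is special Lagrangian. First I would record that the ambient Calabi--Yau metric $g_X$ and complex structure $J$ are real analytic: the Ricci-flat K\"ahler condition is an analytic elliptic system, so Morrey's analytic regularity theorem applies, while $\Omega$ is holomorphic and hence real analytic. Next, a special Lagrangian submanifold is calibrated, hence minimal, and by the analytic regularity of the (analytic, elliptic) minimal submanifold system in a real analytic ambient space the immersion $\iota_0$ is real analytic; therefore the induced metric $g_L=\iota_0^{\st}g$ is real analytic. Consequently its Levi--Civita connection, the Sasaki metric $g_0$, the splitting identification $\Gamma$ of \eqref{eq_identification}, and the pseudo holomorphic volume form $\Omega_0$ (hence $T=\Im\Omega-\Im\Omega_0$) are all real analytic on $U_L$.

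Finally I would verify that assembling these ingredients preserves analyticity. The $z$-dependence of $\tSL$ and $\tS$ enters only through the linear map $I_z$, so each pullback $I_z^{\st}$ of an $n$-form is polynomial in $z$; the $y$-dependence is through evaluating the real analytic forms $(\Gamma^{-1})^{\st}\Im\Omega$ and $(\Gamma^{-1})^{\st}T$ at the fibre point $(x,y)$, which is real analytic in $y$; and $\st$ is taken with the fixed real analytic metric $g_L$. Since composition, wedge product and contraction of real analytic data stay real analytic, $\tSL$ and $\tS$ are real analytic in $(x,y,z)$. I expect the main obstacle to be precisely the regularity input of the third paragraph --- justifying that a special Lagrangian (minimal) submanifold of a Calabi--Yau manifold is real analytic as a submanifold, which is what upgrades the a priori only smooth metric $g_L$ and all objects built from it to the analytic category; the identities themselves are formal.
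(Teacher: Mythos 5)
Your proposal is correct and follows essentially the same route as the paper: the identities are obtained exactly as in the paper by combining Proposition \ref{prop_identificaitonpushforwardmap} with $\Gamma^{-1}\circ I_{\na\al}=(\iota_{\al})_{\st}$ and pullback functoriality, and the analyticity argument rests on the same two inputs, holomorphy of $\Omega$ and Morrey's regularity theorem \cite{morrey1966multipleintegrals} making $\iota^{\st}g_X$ (hence $\Omega_0$ and $T=\Omega-\Omega_0$) real analytic. Your version merely spells out details the paper leaves implicit (analyticity of the ambient Calabi--Yau data, of $\Gamma$ and the Hodge star, and the polynomial dependence on the $z$-variable), which is a harmless refinement rather than a different argument.
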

			\begin{proof}
				By Proposition \ref{prop_identificaitonpushforwardmap}, we compute $I_{\na \al}(y)=(y,\na_y \al)=\Gamma\circ(\iota_{\al})_{\st}y.$ Therefore, we compute $$\tSL(x,\al,\na\al)=\st I_{\na \al}^{\st}(\Gamma^{-1})^{\st}\Im\Omega=\st (\Gamma^{-1}\circ I_{\na\al})^{\st}\Im\Omega=\st\iota_{\al}^{\st}\Im\Omega,$$
				while a similar computation holds for $\tS$. 
				
				As $\Omega$ is holomorphic, over $U_L$, $\tSL$ is real analytic. When $[L]=(L,\iota)$ is a special Lagrangian submanifold, by \cite{morrey1966multipleintegrals}, $\iota^{\st}g_X$ is a real analytic metric on $L$. Thus, the pseudo holomorphic volume form $\Omega_0$ and $T=\Omega-\Omega_0$ are real analytic forms. Therefore, $\tS$ is also a real analytic function.
			\end{proof}
			
			The following proposition explains the structure of the special Lagrangian equation.
			\begin{proposition}
				\label{prop_structureofspecialLagrangian}
				Let $[L]$ be a special Lagrangian submanifold, over the Weinstein neighborhood $(U_L,J,\omega,\Omega)$ with Calabi-Yau metric $g$, then the following holds:
				\begin{itemize}
					\item [(i)]the special Lagrangian equation for a closed 1-form $\al$ could be written as 
					\begin{equation}
						\begin{split}
							\SL(\al):=\st \iota_{\al}^{\st}\Im\Omega=-d^{\st}(\al)+P(\na \al)+S(\al,\na \al),
						\end{split}
					\end{equation}
					where $\st$ is the Hodge star operator on the zero section, $P$ is a real analytic function of $\na\al$ defined in \eqref{eq_nonlineartermP} and $S$ is a real analytic function of $\al,\na \al$. 
					\item[(ii)]	Let $x=(x_1,\cdots,x_n)$ be a coordinate on a neighborhood center at $m\in L_0$ with fiber coordinates $y=(y_1,\cdots,y_n)$. We define a function $A_{ij}=\lan \na_{\pa_{x_i}}\al, \pa_{x_j}\ran$ with $\na$ the Levi-Civita connection of $g$, then we could write
					
					\begin{equation}
						\label{eq_expressionsofS}
						S(\al,\na \al)=\sum_{k\geq 0}\sum_{\substack{1\leq i_1\neq i_2 \cdots \neq i_k\leq n \\ 1\leq j_1\neq j_2 \cdots \neq j_k\leq n}}F_{i_1\cdots i_kj_1\cdots j_k}(\al)A_{i_1j_1}\cdots A_{i_kj_k},
					\end{equation}
					where $F_{i_1\cdots i_kj_1\cdots j_k}(\al)$ are real analytic functions of $\al$. In addition, $F_0(0)=F_0'(0)=0$ and $F_{i_1j_1}(0)=0$ for any $1\leq i_1,j_1\leq n$.
				\end{itemize}
			\end{proposition}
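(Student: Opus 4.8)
The plan is to reduce everything to the already-established structure of the pseudo special Lagrangian equation together with the two vanishing facts $T|_{L_0}=0$ and $d\Omega_0|_{L_0}=0$. For part (i) I would start from the tautological splitting $\Im\Omega=\Im\Omega_0+T$. Pulling back by $\iota_\al$ and applying the Hodge star of the induced metric $\iota_0^{\st}g$ on the zero section gives $\SL(\al)=\st\iota_\al^{\st}\Im\Omega_0+\st\iota_\al^{\st}T$. By Proposition \ref{prop_constraintvolumefor}(i) the Sasaki metric $g_0$ and the Calabi--Yau metric $g$ agree on $L_0$, so this Hodge star coincides with the one used to define $\pSL$; hence the first term is exactly $\pSL(\al)=-d^{\st}\al+P(\na\al)$ by Proposition \ref{prop_pseudoSLequations}, while the second term is $S(\al,\na\al)$ by definition. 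Real analyticity of $P$ is immediate since it is a universal polynomial in $\na\al$, and real analyticity of $S$ is Proposition \ref{prop_3111111}.

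For part (ii) I would exploit the bidegree decomposition with respect to the orthogonal splitting $T^{\st}(U_L)=H^{\st}\oplus V^{\st}$. Write $T=\sum_{k=0}^n T^{(n-k,k)}$, where $T^{(n-k,k)}$ has $n-k$ horizontal and $k$ vertical coframe factors. The key computational input is that $\iota_\al^{\st}$ leaves the horizontal coframe $dx_i^H$ independent of $\na\al$ (its coefficients depend only on the base metric), while by Proposition \ref{prop_identificaitonpushforwardmap}, as already used in the proof of Proposition \ref{prop_pseudoSLequations}, it sends each vertical coframe element $dy_i^V$ to a combination of the $dx_j$ that is linear in $\na\al$, i.e. linear in the entries $A_{ij}$. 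Consequently $\st\iota_\al^{\st}T^{(n-k,k)}$ is homogeneous of degree $k$ in the $A_{ij}$, and the antisymmetry of the wedge product forces the $i$-indices to be distinct and, since the vertical multi-index is strictly increasing, the $j$-indices to be distinct as well. This yields precisely the expansion \eqref{eq_expressionsofS}, whose coefficients $F_{i_1\cdots i_kj_1\cdots j_k}(\al)$ are the components of $T^{(n-k,k)}|_{(x,\al)}$ and hence real analytic in $\al$ by Proposition \ref{prop_3111111}.

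It remains to establish the three vanishing statements. Since $[L]$ is special Lagrangian its Lagrangian angle is $\theta=0$, so Proposition \ref{prop_constraintvolumefor}(iii) gives $\Omega|_{L_0}=\Omega_0|_{L_0}$ as full $n$-covectors at each point of $L_0$; thus every bidegree component $T^{(n-k,k)}$ vanishes along $L_0$. Evaluating the degree-$0$ and degree-$1$ coefficients at $\al=0$ then gives $F_0(0)=0$ and $F_{i_1j_1}(0)=0$ directly, since these are built from $T^{(n,0)}|_{(x,0)}$ and $T^{(n-1,1)}|_{(x,0)}$. The remaining claim $F_0'(0)=0$ concerns the fiber derivative of the top horizontal component $T^{(n,0)}$, which $T|_{L_0}=0$ does not control; here I would invoke closedness. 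As $\Omega$ is holomorphic, $dT=-d\Im\Omega_0$, and $d\Omega_0|_{L_0}=0$ gives $dT|_{L_0}=0$. Extracting the bidegree $(n,1)$ part yields $d_V T^{(n,0)}|_{L_0}+d_H T^{(n-1,1)}|_{L_0}=0$; because $T^{(n-1,1)}$ vanishes along all of $L_0$ its tangential derivative $d_H T^{(n-1,1)}|_{L_0}$ vanishes, leaving $d_V T^{(n,0)}|_{L_0}=0$, that is $\pa_{y_j}T^{(n,0)}|_{L_0}=0$ for all $j$. This is exactly $F_0'(0)=0$.

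The main obstacle is this last step. Unlike the other two constraints, $F_0'(0)=0$ cannot be read off from $T|_{L_0}=0$ alone, since that identity only kills tangential derivatives along $L_0$; it genuinely requires feeding in the closedness of $\Omega$, equivalently $dT|_{L_0}=0$, to control the normal, fiber-direction derivative of the leading horizontal term. Carrying this out cleanly is easiest in normal coordinates at $m$, which trivializes the $H/V$ bookkeeping and makes the $d_H T^{(n-1,1)}$ contribution transparent.
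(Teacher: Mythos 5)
Your part (i) is correct and is essentially the paper's own argument: split $\Im\Omega=\Im\Omega_0+T$, identify $\st\iota_{\al}^{\st}\Im\Omega_0$ with $\pSL(\al)=-d^{\st}\al+P(\na\al)$ via Proposition \ref{prop_pseudoSLequations}, set $S:=\st\iota_{\al}^{\st}T$, and quote Proposition \ref{prop_3111111} for analyticity.

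Part (ii), however, has a genuine gap in the derivation of the index structure in \eqref{eq_expressionsofS}, caused by your choice of vertical coframe. By the computation in the proof of Proposition \ref{prop_pseudoSLequations},
$$\iota_{\al}^{\st}dy_i^V=\sum_{j,l=1}^n g^{il}A_{jl}\,dx_j,$$
so the wedge antisymmetry forces distinctness only of the \emph{first} indices of the $A$-factors (these are the $dx$ wedge indices), while the \emph{second} indices are contracted against $g^{il}$ and summed freely; they are not the vertical multi-index $(i_1<\cdots<i_k)$ at all, so "the vertical multi-index is strictly increasing" does not make them distinct. Concretely, your expansion produces monomials such as $g^{i_1 1}g^{i_2 1}A_{m_1 1}A_{m_2 1}$ with a repeated second index, and a monomial like $A_{11}A_{21}$ cannot be rewritten, even using the symmetry $A_{ij}=A_{ji}$, as a combination of monomials having both index sets distinct. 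In other words, what you naturally obtain is an expansion in the matrix $H^i_j=\sum_k g^{ik}A_{jk}$ (for which both index sets are distinct), not in $A$ itself; converting $H$ to $A$ destroys the distinctness of the second indices, and showing that the offending coefficients cancel is exactly what is missing. The fix is minor: use a coordinate-adapted vertical coframe instead of the metric-dual one — either the coordinate forms $dy_i$, as the paper does, for which $\iota_{\al}^{\st}dy_i=\sum_j(A_{ij}+\sum_k\al_k\Gamma^k_{ij})dx_j$ so that one $A$-index is the (distinct) frame index from the wedge of $dy$'s inside $T$ and the other is the (distinct) $dx$ wedge index; or $\delta y_i:=\sum_j g_{ij}dy_j^V$, whose pullback is exactly $\sum_m A_{mi}dx_m$ and which also preserves your homogeneity claim. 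With either choice your bidegree argument delivers \eqref{eq_expressionsofS}.

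Your treatment of the vanishing statements is a genuinely different route from the paper's and, after the frame fix above, it is valid. The paper obtains all three facts from $S(0,0)=0$ together with $\frac{d}{dt}|_{t=0}S(t\al,t\na\al)=0$, the latter being the agreement of the linearizations of $\SL$ and $\pSL$ (equation \eqref{eq_linearzationcomputation} and Proposition \ref{prop_pseudoSLequations}); you instead read $F_0(0)=0$ and $F_{i_1j_1}(0)=0$ off $T|_{L_0}=0$ and get $F_0'(0)=0$ from $dT|_{L_0}=0$ by extracting the bidegree-$(n,1)$ component. Your route makes transparent which geometric input controls the fiber derivative of the leading horizontal term (closedness of $\Omega$ plus $d\Omega_0|_{L_0}=0$), which is a nice structural point; the paper's route is shorter since McLean's linearization formula is already on hand — note both ultimately use $d\Im\Omega=0$. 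One bookkeeping correction: for an Ehresmann splitting, $d$ has, besides $d_H$ and $d_V$, a component of bidegree $(2,-1)$ given by contraction with the curvature of the horizontal distribution, so $(dT)^{(n,1)}=d_VT^{(n,0)}+d_HT^{(n-1,1)}+R\bigl(T^{(n-2,2)}\bigr)$. This omission is harmless: the curvature term is tensorial in $T^{(n-2,2)}$, which vanishes along $L_0$, just as $d_HT^{(n-1,1)}|_{L_0}=0$ because horizontal directions at points of $L_0$ are tangent to $L_0$.
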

			\begin{proof}
				For (i), as $\iota_{\al}^{\st}\Im\Omega=\iota_{\al}^{\st}\Im\Omega_0+\iota_{\al}^{\st}\Im T$, and $\frac{d}{dt}|_{t=0}\iota_{t\al}^{\st}\Im\Omega=\frac{d}{dt}|_{t=0}\iota_{t\al}^{\st}\Im\Omega_0$, we could write $$Q(\al,\na\al)=P(\na \al)+S(\al,\na \al).$$ By Proposition \ref{prop_3111111}, $P,S$ are real analytic functions of $\al$ and $\na\al$.
				
				For (ii), as $\al$ is closed, we have $A_{ij}=A_{ji}$. Over a neighborhood with coordinate $(x,y)$, we could write 
				$$
				T=\sum T_{i_1\cdots i_kj_1\cdots j_{n-k}}(x,y)dy_{i_1}\we \cdots \we dy_{i_k}\we dx_{j_1}\we\cdots \we dx_{j_{n-k}},
				$$
				where $T_{i_1\cdots i_kj_1\cdots j_{n-k}}$ is real analytic function with variable $(x,y)$. Then for the pull-back, we compute
				\begin{equation*}
					\begin{split}
						&\iota_{\al}^{\st}dy_i=\sum_{j=1}^n\pa_{x_j}\al_i dx_j=\sum_{j=1}^n (\pa_{x_j}\al_i) dx_j=\sum_{j=1}^n(A_{ij}+\sum_{k=1}^n \al_k\Gamma_{ij}^k)dx_j,\\
						&\iota_{\al}^{\st}dx_i=dx_i,\;(\iota_{\al}^{\st}T_{i_1\cdots i_kj_1\cdots j_{n-k}})|_x=T_{i_1\cdots i_kj_1\cdots j_{n-k}}(x,\al|_x).
					\end{split}
				\end{equation*}
				In addition, as $S(0,0)=0$ and $S(\al,\na\al)=\st\iota_{\al}^{\st}T$, $S$ could be written as the form 
				$$
				S(\al,\na \al)=\sum_{k\geq 0}\sum_{\substack{1\leq i_1\neq i_2 \cdots \neq i_k\leq n \\ 1\leq j_1\neq j_2 \cdots \neq j_k\leq n}}F_{i_1\cdots i_kj_1\cdots j_k}(\al)A_{i_1j_1}\cdots A_{i_kj_k}.
				$$
				In addition, as $\frac{d}{dt}|_{t=0}S(t\al,t\na\al)=0$. For the $k=0$ term $F_0$ in the above expression, we have $F_0(0)=F_0'(0)=0$ and for any $k=1$ terms, we have $F_{i_1j_1}(0)=0$ for any $1\leq i_1,j_1\leq n$.
				
			\end{proof}
		\end{subsection}
	\end{section}
	
	\begin{section}{Multivalued harmonic function and the analytic theory}
		\label{sec_harmonicfunctionanalytictheory}
		In this section, we will introduce the analytic theory for the multivalued harmonic function developed in \cite{donaldson2019deformations}. The analytic aspects of the multivalued harmonic functions have been widely studied, we also refer \cite{taubes2014zero} for different settings.
		
		\begin{subsection}{Multivalued harmonic function and multivalued harmonic 1-form}
			
			Let $(L,g)$ be a compact oriented Riemannian manifold, $\Sigma\subset L$ be a smooth embedded oriented codimensional two submanifold. Let $\Xi$ be the group of isomorphism of the real line, which satisfies the exact sequence $$0\to (\MBR,+)\to \Xi\to \{\pm 1\}\to 0.$$
			
			Let $\chi:\pi_1(L\setminus \Sigma)\to \Xi$ be a representation such that for any small loop linking $\Sigma$, $\chi$ maps to a reflection. We write $V^-$ be the associate affine $\mathbb{R}$ bundle defined on $L\setminus \Sigma$ and $\MI$ be the vertical line bundle of $V^-$. Therefore, a section $V^-$ could locally be regarded as a two-valued function.
			
			The composition of $\chi$ with the quotient map $\Xi\to \{\pm 1\}$ defines a representation $\chi':\pi_1(L\setminus\Sigma)\to \{\pm\}$ with associate flat vector bundle $\MI$ which is vertical tangent bundle of $V^-$. Using the flat structure on $V^-$ and the Riemannian metric on $L$, the Laplacian operator make sense as $$\Delta_g:\Gamma(V^-)\to \Gamma(\MI).$$ 
			
			\begin{definition}
				\label{def_multivaluedeifnition}
				A multivalued harmonic function consists of $(\Sigma,\chi,f)$, where
				\begin{itemize}
					\item [(i)] $\Sigma$ is an oriented smooth embedded codimension 2 submanifold of $L$, 
					\item [(ii)] $\chi$ is a representation $\chi:\pi_1(L\setminus \Sigma)\to \Xi$ such that for any small loop linking $\Sigma$, $\chi$ maps to a reflection,
					\item [(iii)] if we write $V^-$ be the associative affine bundle of $\chi$ with induced flat structure, then $f$ is a section of $V^-$ such that $\Delta_g f=0$, where $\Delta_g$ is defined w.r.t. the flat structure on $V^-$,
					\item [(iv)] as a section of $\MI$, $df\in L^2$.
				\end{itemize}
			\end{definition}
			As $df$ might be considered as a $\MI$ valued 1-form, which leads to the following definition.
			\begin{definition}
				\label{del_multivaluedform}
				A multivalued  harmonic 1-form $\al\in L^2$ is a triple $(\Sigma,\MI,\al)$ consisting of 
				\begin{itemize}
					\item [(i)] a flat $\mathbb{R}$ bundle $\MI$ with holonomy $-1$ on small loop linking $\Sigma$, 
					\item [(ii)] a section $\al\in \Gamma(L\setminus\Sigma, T^{\st}L\otimes \MI)$ such that $d\al=d\st \al=0$, where $d$ and $d\st$ are taken w.r.t. the flat structure of $\MI$.
				\end{itemize}
			\end{definition}
			In addition, $\al$ is called a $\ZT$ harmonic 1-form if $|\al|$ is bounded near $\Sigma$ and $\|\al\|_{L^2}=1.$
			
			A model example of multivalued function would be $L=\mathbb{C}$, $\Sigma=\{0\}$, $f=\Re(z^{k+\frac12})$ with $k\geq 1$ and $v=df=\frac{2k+1}{2}\Re(z^{k-\frac12}dz)$. We could either think of $v$ as a two valued 1-from over $\mathbb{C}\setminus \{0\}$ or a section of a $\mathbb{R}$ vector bundle which has monodromy $-1$ along loops linking $\{0\}$. 
			
			In general, the multivalued harmonic 1-form are purely topological and we will briefly review the construction in \cite{donaldson2019deformations}. 
			
			The kernel of $\chi':\pi_1(L\setminus \Sigma)\to \{\pm 1\}$ is a index two normal subgroup of $\pi_1(L\setminus \Sigma)$, which gives us a covering $p:L'\to L\setminus \Sigma$. In addition, the deck transformation defines an involution on $L'$. We choose a Riemannian metric on $L$ and write $N_{\Sigma}$ be the normal bundle. As $\Sigma$ is oriented, using the co-orientation, $N_{\Sigma}$ is a complex vector bundle and the flat $\MI$ bundle defines a square root $N_{\Sigma}^{\frac12}. $ Using the normal exponential map, a neighborhood of $\Sigma$ in $L$ could be identified with a neighborhood of the zero section in $N_{\Sigma}$. 
			
			We write $\tL:=L'\cup \Sigma$ and we could associate a smooth structure on $\tL$. Let $U$ be an open set of $m\in \Sigma$ with trivialization $N_{\Sigma}|_{U}\cong U\ti \mathbb{C}$. Let $\tx=(\tx_3,\cdots,\tx_n)$ be the coordinate on $U$, $\tz$ be the coordinate on $\mathbb{C}$, then for $\tz\neq 0$, the covering map is locally given by 
			$$
			p:U\ti (\mathbb{C}\setminus\{0\}) \to U\ti (\mathbb{C}\setminus\{0\}),\;(\tz,\tx)\to (\tz^2,\tx),
			$$
			which naturally extends to the zero section and gives $\tL$ a smooth structure. In addition, the involution on $L'$ extends naturally to an involution $\sigma$ on $\tL$ with fixed point the branch locus. To save notation, we also write $\Sigma$ be the branch locus on $\tL$.
			
			A neighborhood of $\Sigma\subset \tL$ could be identified with a neighborhood of the zero section of $N_{\Sigma}^{\frac12}$. Without loss of generality, we could assume that $\sigma$ acts on each fibre of $N_{\Sigma}^{\frac12}$ by changing the sign and $\Sigma$ is the fixed point of the involution. 
			
			Let $(\Sigma,\MI,\al)$ be a multivalued harmonic 1-form and with $p:\tL\to L$ the double branched covering constructed above. Based on our assumptions in Definition \ref{del_multivaluedform}, $p^{\st}\MI$ is trivial on $\tL\setminus \Sigma$ and $p^{\st}\al$ is a 1-form over $\tL$. As $p$ is only Lipschitz along $\Sigma$, $p^{\st}\al$ will not be a smooth 1-from and the pull-back Riemannian metric $p^{\st}g$ will be a Lipschitz metric in a neighborhood of $\Sigma\subset \tL$. As $\al$ is a harmonic section, $p^{\st}\al$ will satisfy the harmonic equations for the pull-back metric $p^{\st}g$.
			
			By \cite[Lemma 1.5]{wang93modulispaces}, the space of $L^2$ cohomology
			$$
			\{\al\in \Omega^1(\tL)|\al\in L^2,\;d\al=d\st_{p^{\st}g}\al=0\;\mathrm{over}\;\tL\setminus Z\}
			$$
			is naturally isomorphism to the singular cohomology $H^1(\tL;\mathbb{R})$. From another side, the involution $\sigma:\tL\to \tL$ induces a decomposition $H^i(\tL;\mathbb{R})=H^i(\tL;\mathbb{R})^{+}\oplus H^i(\tL;\mathbb{R})^{-}$, while $H^1(\tL;\mathbb{R})^{\pm}$ is the $\pm 1$ eigenvalue of $\sigma^{\st}$. Therefore, $p^{\st}\al$ represents an element in $H^1(\tL;\mathbb{R})^-$ with $\sigma^{\st}p^{\st}\al=-p^{\st}\al$.
			
			Given an element $[\delta]\in H^1(\tL;\mathbb{R})^-$, one could find a harmonic representative $\al_{\delta}$ and there exists a harmonic affine bundle section with $\al_{\delta}=df$. Moreover, the representation $\chi:\pi_1(L\setminus \Sigma)\to \Xi$ is up to conjugate determined by $\chi':\pi_1(L\setminus \Sigma)\to \{\pm 1\}$ and $[\delta]\in H^1(\tL;\mathbb{R})^-$.
			
			We have the following proposition proved by Donaldson \cite{donaldson2019deformations}, which is a version of the Hodge theorem for $p^{\st}g$.
			\begin{proposition}{ \cite[Section 4]{donaldson2019deformations}}
				\label{prop_l2variationexisttence}
				Given $\chi:\pi_1(L\setminus \Sigma)\to \Xi$ satisfies (ii) of Definition \ref{def_multivaluedeifnition}, then there exists an unique harmonic section $f$ of $V^-$. In addition, $\sigma^{\st}\pi^{\st}df=-\pi^{\st}df$.
			\end{proposition}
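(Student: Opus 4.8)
\emph{Proof proposal.} The plan is to reduce the existence and uniqueness of the harmonic section of $V^-$ to $L^2$-Hodge theory on the branched double cover $p:\tL\to L$, exactly along the lines set up in the paragraphs preceding the statement, and then to descend the resulting anti-invariant harmonic form back to $L\setminus\Sigma$.

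First I would extract from $\chi$ its linear part $\chi':\pi_1(L\setminus\Sigma)\to\{\pm1\}$, which determines the flat line bundle $\MI$, together with its translation part. Restricting $\chi$ to $\ker\chi'=\pi_1(\tL\setminus\Sigma)$ lands in the translation subgroup $\mathbb{R}\subset\Xi$, so the pullback to $\tL\setminus\Sigma$ of a section of $V^-$ is single-valued up to the addition of translation periods, and its differential is a well-defined closed $1$-form whose periods are prescribed by these translations. Since $df\in L^2$ and $\Sigma$ has codimension two, this form extends across $\Sigma$ and defines a class $[\delta]\in H^1(\tL;\mathbb{R})$; because the deck involution $\sigma$ projects to $-1$, one checks $\sigma^{\st}[\delta]=-[\delta]$, so $[\delta]\in H^1(\tL;\mathbb{R})^-$. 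This identifies the data of $\chi$, up to conjugation, with an anti-invariant cohomology class.

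For existence I would invoke the $L^2$-Hodge isomorphism \cite[Lemma 1.5]{wang93modulispaces} for the Lipschitz metric $p^{\st}g$ to produce the unique $L^2$-harmonic representative $\alpha_\delta$ of $[\delta]$. Because $p^{\st}g$ is $\sigma$-invariant, $\sigma^{\st}\alpha_\delta$ is again harmonic and represents $\sigma^{\st}[\delta]=-[\delta]$; uniqueness of the harmonic representative then forces $\sigma^{\st}\alpha_\delta=-\alpha_\delta$. Anti-invariance means $\alpha_\delta$ descends to an $\MI$-valued harmonic $1$-form $\alpha$ on $L\setminus\Sigma$, and integrating $\alpha$ along paths, with periods matching the translation part of $\chi$, reconstructs a section $f$ of $V^-$ with $\Delta_g f=0$ and $df=\alpha$. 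The final identity $\sigma^{\st}p^{\st}df=-p^{\st}df$ is then just the anti-invariance $\sigma^{\st}\alpha_\delta=-\alpha_\delta$. For uniqueness, if $f_1,f_2$ are two harmonic sections then $h:=f_1-f_2$ is a harmonic section of the linear bundle $\MI$ with $dh\in L^2$; pulling back, $\tilde h=p^{\st}h$ is a single-valued $\sigma$-anti-invariant harmonic function on $\tL\setminus\Sigma$ with $d\tilde h\in L^2$. An $L^2$ integration by parts, valid because the codimension-two set $\Sigma$ contributes no boundary term, gives $\int_{\tL}|d\tilde h|^2=0$, so $\tilde h$ is constant; the relation $\sigma^{\st}\tilde h=-\tilde h$ forces that constant to vanish, hence $f_1=f_2$.

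The hard part will be the analysis at the branch locus $\Sigma$: justifying the $L^2$-Hodge theory and the removable-singularity / integration-by-parts statements for the merely Lipschitz metric $p^{\st}g$ across the codimension-two $\Sigma$, so that no boundary terms appear and the harmonic representatives are genuine, and verifying that the anti-invariant form descends smoothly to an $\MI$-valued harmonic form whose periods integrate to a section with exactly the monodromy $\chi$. The topological and linear-algebraic bookkeeping, namely the passage from affine monodromy to the anti-invariant class $[\delta]$ and back, is formal once these analytic facts are in hand.
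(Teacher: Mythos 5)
Your proposal is correct and follows essentially the same route as the paper (and Donaldson's argument that it cites): encode $\chi$ as the pair $(\chi',[\delta])$ with $[\delta]\in H^1(\tL;\mathbb{R})^-$, apply the $L^2$-Hodge theorem of \cite[Lemma 1.5]{wang93modulispaces} for the Lipschitz metric $p^{\st}g$ to get the unique harmonic representative, deduce anti-invariance from uniqueness and $\sigma$-invariance of $p^{\st}g$, and integrate back to a harmonic section of $V^-$. Your uniqueness argument via the cutoff integration by parts across the codimension-two set $\Sigma$, together with the removable-singularity caveats you flag, is exactly the analytic content implicit in the paper's surrounding discussion.
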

			
			We synthesize all of the above discussions to arrive at the following proposition.
			\begin{proposition}{\cite[Page 2]{donaldson2019deformations}}
				\label{prop_equivalentofconcepts}
				Let $\Sigma$ be an oriented embedded codimensional 2 smooth submanifold, the following four concepts are equivalent :
				\begin{itemize}
					\item [(i)] a representation $\chi:\pi_1(L\setminus \Sigma)\to \Xi$ such that for any small loop linking $\Sigma$, $\chi$ maps to a reflection,
					\item [(ii)] a multivalued harmonic function $(\Sigma,V^-,f)$, 
					\item [(iii)]  a multivalued harmonic 1-form $(\Sigma,\MI,\al)$,
					\item [(iv)] a double branched covering $\tL\to L$ branched along a codimension 2 submanifold $\Sigma$ together with $[\delta]\in H^1(\tL;\mathbb{R})^-$.
				\end{itemize}
			\end{proposition}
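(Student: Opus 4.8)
The plan is to prove the four statements equivalent by exhibiting explicit constructions passing between them, organized so that the genuinely new content is isolated, and leaning on two analytic inputs already available: Donaldson's Hodge-type existence and uniqueness theorem (Proposition \ref{prop_l2variationexisttence}) and Wang's identification of $L^2$ harmonic forms with singular cohomology \cite{wang93modulispaces}. The equivalences (i) $\Leftrightarrow$ (ii) $\Leftrightarrow$ (iii) are essentially formal once these are in hand, while the equivalence with (iv) is where the substance lies.

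First I would dispatch (i) $\Leftrightarrow$ (ii). Given $\chi$, the associated affine bundle $V^-$ and its flat structure are determined, and Proposition \ref{prop_l2variationexisttence} produces a unique harmonic section $f$, giving (ii); conversely the data of a multivalued harmonic function includes $V^-$, hence $\chi$, via Definition \ref{def_multivaluedeifnition}. For (ii) $\Leftrightarrow$ (iii) I would set $\al = df$: since $f$ is a section of the affine bundle, $df$ is a genuine section of $T^\st L \otimes \MI$, automatically closed, the harmonicity $\Delta_g f = 0$ is exactly $d\st\al = 0$, and the $L^2$ hypothesis on $df$ transfers verbatim. Conversely, from a multivalued harmonic 1-form the closedness $d\al=0$ lets me integrate locally to a primitive; the monodromy $-1$ of $\MI$ forces this primitive to change sign around loops linking $\Sigma$, i.e. to be a section of $V^-$ rather than an ordinary function, while $d\st\al=0$ gives harmonicity.

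The heart of the argument is (iii) $\Leftrightarrow$ (iv), and here I would follow the construction sketched before the statement. In the forward direction, the $\{\pm1\}$-reduction $\chi'$ of $\chi$ cuts out the double branched covering $p:\tL \to L$ via the local model $\tz \mapsto \tz^2$ on the normal bundle; pulling back $\al$ gives an $L^2$ form on $\tL$ that is harmonic for the Lipschitz metric $p^\st g$, so by \cite{wang93modulispaces} it represents a class in $H^1(\tL;\RR)$, and the anti-invariance $\sigma^\st p^\st \al = -p^\st\al$ from Proposition \ref{prop_l2variationexisttence} places this class in the $-1$ eigenspace, yielding $[\delta]$. In the reverse direction, given $p:\tL\to L$ and $[\delta] \in H^1(\tL;\RR)^-$, Wang's theorem supplies a unique $L^2$ harmonic representative $\al_\delta$; uniqueness together with $\sigma^\st[\delta] = -[\delta]$ forces $\sigma^\st\al_\delta = -\al_\delta$, and this anti-invariance is precisely what lets $\al_\delta$ descend to a two-valued harmonic 1-form on $L$ with sign monodromy around $\Sigma$.

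The step I expect to be the main obstacle is recovering the full $\Xi$-valued representation $\chi$ — not merely its reduction $\chi'$ — from the topological data in (iv). Here I would use the exact sequence $0 \to \RR \to \Xi \to \{\pm1\} \to 0$: once $\chi'$ is fixed, the lifts of $\chi'$ to $\Xi$ form a torsor modeled on the twisted group cohomology $H^1(\pi_1(L\setminus\Sigma);\RR_{\chi'})$, whose classes encode the additive (translation) part of the reflections through their periods. The task is to match this twisted cohomology with $H^1(\tL;\RR)^-$, so that $[\delta]$ determines $\chi$ up to conjugacy exactly as asserted in the paragraph preceding the proposition. The two delicate analytic points threaded through the whole argument are that $p^\st g$ is only Lipschitz, so ordinary Hodge theory must be replaced by the $L^2$ statement of \cite{wang93modulispaces}, and that the $\sigma$-equivariance must be tracked carefully through pullback and descent to confirm that anti-invariance is equivalent to well-defined two-valuedness with the correct sign monodromy.
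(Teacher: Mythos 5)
Your proposal is correct and follows essentially the same route as the paper: Donaldson's existence--uniqueness result (Proposition \ref{prop_l2variationexisttence}) for (i)$\Rightarrow$(ii), setting $\al=df$ for (ii)$\Leftrightarrow$(iii), and the branched covering plus Wang's $L^2$ Hodge theorem with $\sigma$-anti-invariance for (iii)$\Leftrightarrow$(iv). Your torsor/twisted-cohomology argument for recovering the full $\Xi$-valued representation from $(\chi',[\delta])$ is simply a spelled-out version of the step the paper asserts without proof ("the representation $\chi$ is up to conjugate determined by $\chi'$ and $[\delta]$"), so it is a welcome elaboration rather than a different approach.
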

		\end{subsection}
		
		\subsection{Topological constraints for multivalued 1-form}
		In this subsection, we will discuss the topology for the branched covering $p:\tL\to L$ and the constraint for the existence of multivalued 1-form. 
		
		Let $\sigma$ be the involution on $\tL$, $C_{\st}(L),C_{\st}(\tL)$ be the singular chains on $L$ and $\tL$, then the transformation morphism for the branched covering $p:\tL \to L$ is a map of singular chain $$T:C_{\st}(L)\to C_{\st}(\tL),\;T(\gamma)=\tilde{\gamma}+\sigma^{\st}\tilde{\gamma},$$ where $\gamma$ is a small enough chain and $\tilde{\gamma}$ is any lifting of $\gamma$. In other word, the transformation morphism map a singular chain to the sum of two distinct lifts.
		
		\begin{lemma}
			\label{lem_decompositioninvolutionss}
			$\mathrm{Im}\;p^{\st}\subset H^i(\tL;\mathbb{R})^+$ and $p^{\st}:H^i(L;\mathbb{R})\to H^i(\tL;\mathbb{R})^+$ is an isomorphism.
		\end{lemma}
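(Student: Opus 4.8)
The plan is to treat the transformation morphism $T$ as a transfer (Umkehr) map and to exploit the fact that, over $\mathbb{R}$, a degree-two transfer is invertible up to the factor $2$. First I would dispose of the containment $\mathrm{Im}\,p^{\st}\subset H^i(\tL;\mathbb{R})^+$: since $\sigma$ is the deck involution of the branched cover we have $p\circ\sigma=p$, hence $\sigma^{\st}\circ p^{\st}=(p\circ\sigma)^{\st}=p^{\st}$. Thus every class $p^{\st}\beta$ is fixed by $\sigma^{\st}$, i.e. lies in the $+1$ eigenspace $H^i(\tL;\mathbb{R})^+$. The remaining work is to show that $p^{\st}$ maps $H^i(L;\mathbb{R})$ isomorphically onto this eigenspace.

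The key is to extract two chain-level identities from $T$. On the one hand, applying $p_{\st}$ to $T(\gamma)=\tilde\gamma+\sigma_{\st}\tilde\gamma$ and using $p\circ\sigma=p$ gives $p_{\st}T(\gamma)=p_{\st}\tilde\gamma+p_{\st}\sigma_{\st}\tilde\gamma=2\gamma$, so that $p_{\st}\circ T=2\,\mathrm{id}$ on $C_{\st}(L)$. On the other hand, for a (small) chain $\delta$ on $\tL$ one may take $\delta$ itself as a lift of $p_{\st}\delta$, whence $T(p_{\st}\delta)=\delta+\sigma_{\st}\delta$ and $T\circ p_{\st}=\mathrm{id}+\sigma_{\st}$ on $C_{\st}(\tL)$. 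Dualizing, and writing $T^{\st}\colon H^i(\tL;\mathbb{R})\to H^i(L;\mathbb{R})$ for the induced transfer, these become
\begin{equation*}
T^{\st}\circ p^{\st}=2\,\mathrm{id}\ \ \text{on}\ H^i(L;\mathbb{R}),\qquad p^{\st}\circ T^{\st}=\mathrm{id}+\sigma^{\st}\ \ \text{on}\ H^i(\tL;\mathbb{R}).
\end{equation*}

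With these relations the conclusion is immediate. Injectivity of $p^{\st}$ follows from the first identity: if $p^{\st}\alpha=0$ then $2\alpha=T^{\st}p^{\st}\alpha=0$, and over $\mathbb{R}$ this forces $\alpha=0$. For surjectivity onto the invariants, given $\beta\in H^i(\tL;\mathbb{R})^+$ I set $\alpha:=\tfrac12 T^{\st}\beta$; then the second identity gives $p^{\st}\alpha=\tfrac12(\mathrm{id}+\sigma^{\st})\beta=\tfrac12(\beta+\beta)=\beta$, so $\beta\in\mathrm{Im}\,p^{\st}$. Together with the containment above this shows $p^{\st}\colon H^i(L;\mathbb{R})\to H^i(\tL;\mathbb{R})^+$ is a bijection, hence an isomorphism.

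The one point that needs genuine care---and the step I expect to be the main obstacle---is the legitimacy of $T$ and of the two identities at the level of homology, given that $p$ is only Lipschitz along $\Sigma$ and that the two lifts $\tilde\gamma$ and $\sigma_{\st}\tilde\gamma$ coincide over the branch locus. I would handle this by representing homology classes by chains transverse to $\Sigma$: since $\Sigma$ has real codimension $2$ in $L$, a generic cycle misses $\Sigma$ and so lies in the honest unramified cover $\tL\setminus\Sigma\to L\setminus\Sigma$, where $T$ is the classical transfer and the two identities are standard. That the resulting maps descend to $H^i(\tL;\mathbb{R})$ and $H^i(L;\mathbb{R})$, rather than only to the punctured spaces, is exactly the content of the transfer theorem for the $\mathbb{Z}_2$-quotient $\tL\to\tL/\sigma=L$ with coefficients in a field of characteristic zero; alternatively one invokes the isomorphism $H^i(L;\mathbb{R})\cong H^i(\tL;\mathbb{R})^{\mathbb{Z}_2}$ directly. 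This is precisely where the hypotheses that we work over $\mathbb{R}$ (so that $2$ is invertible) and that $\Sigma$ has codimension $2$ enter.
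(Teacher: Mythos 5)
Your proof is correct and follows essentially the same route as the paper: both arguments rest on the transfer map $T$ and the two identities $p_{\st}\circ T=2\,\mathrm{id}$ and $T\circ p_{\st}=\mathrm{id}+\sigma_{\st}$ (the paper states them in homology and notes $T_{\st}\circ p_{\st}|_{H_i(\tL;\mathbb{R})^+}=2$, while you dualize to cohomology, which is an equivalent formulation). Your added discussion of well-definedness of $T$ near the codimension-$2$ branch locus is a legitimate refinement of a point the paper leaves implicit, but it does not change the substance of the argument.
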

		\begin{proof}
			As $p\circ\sigma=p$, for $x\in H^i(L;\mathbb{R})$, we have ${p}^{\st}x\in H^i(L;\mathbb{R})^+$. Let $T_{\st}:H_i(L;\mathbb{R})\to H_i(\tL;\mathbb{R})$ be the induce map of the transformation map, then we have $p_{\st}\circ T_{\st}=2$ and $T_{\st}\circ p_{\st}|_{H_i(\tL;\mathbb{R})^+}=2.$ Therefore, $p^{\st}$ is both injective and surjective thus induces an isomorphism.
		\end{proof}
		
		Let $b_1(\tL)^{\pm}:=\dim H^1(\tL;\mathbb{R})^{\pm}$, then we have $b_1(\tL)=b_1(\tL)^++b_1(\tL)^-$ with $b_1(\tL)^+=b_1(L)$. Combing with the Hodge theorem for the pull-back $L^2$ metric, we obtain the following corollary.
		
		\begin{corollary}
			\label{cor_bettinumberfirst}
			Suppose there exists a multivalued harmonic function $(\Sigma,\chi,f)$ over $L$, then for the double branched covering $p:\tL\to L$, we have $b_1(\tL)>b_1(L)$.
		\end{corollary}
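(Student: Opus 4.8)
The plan is to reduce the inequality to a nonvanishing statement about the anti-invariant cohomology of $\tL$ and then to produce an explicit nonzero class there. Using the $\sigma$-eigenspace decomposition $H^1(\tL;\mathbb{R})=H^1(\tL;\mathbb{R})^+\oplus H^1(\tL;\mathbb{R})^-$ we have $b_1(\tL)=b_1(\tL)^++b_1(\tL)^-$, and Lemma \ref{lem_decompositioninvolutionss} identifies $p^{\st}:H^1(L;\mathbb{R})\xrightarrow{\sim}H^1(\tL;\mathbb{R})^+$, so that $b_1(\tL)^+=b_1(L)$. Hence the strict inequality $b_1(\tL)>b_1(L)$ is equivalent to $b_1(\tL)^-\geq 1$, i.e.\ to $H^1(\tL;\mathbb{R})^-\neq 0$, and this is the only thing I would need to establish.

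To exhibit a nonzero anti-invariant class I would start from $\al:=df\in\Gamma(L\setminus\Sigma,T^{\st}L\otimes\MI)$, which by Definition \ref{def_multivaluedeifnition} is an $L^2$ section with $d\al=d^{\st}\al=0$. First I would observe that $\al\neq 0$: if $df$ vanished, then after trivializing $p^{\st}\MI$ the pulled-back function $p^{\st}f$ would be locally constant on the connected manifold $\tL$, hence a constant $c$, and the relation $\sigma^{\st}p^{\st}f=-p^{\st}f$ would force $c=0$, so $f$ would be the parallel (origin) section and not a genuine multivalued function. Granting $\al\neq 0$, the pullback $p^{\st}\al$ is a nonzero $1$-form on $\tL$ (injectivity of $p^{\st}$ holds because $p$ is a local diffeomorphism off the measure-zero set $\Sigma$), it is harmonic for the Lipschitz pullback metric $p^{\st}g$, and by Proposition \ref{prop_l2variationexisttence} it satisfies $\sigma^{\st}p^{\st}\al=-p^{\st}\al$.

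The key input is then the $L^2$-Hodge isomorphism of \cite[Lemma 1.5]{wang93modulispaces}: the space of $L^2$ harmonic $1$-forms for $p^{\st}g$ is naturally identified with the ordinary singular cohomology $H^1(\tL;\mathbb{R})$, under which a nonzero harmonic form represents a nonzero class. Since $p^{\st}\al$ is nonzero, harmonic, and anti-invariant, its class is a nonzero element of $H^1(\tL;\mathbb{R})^-$, so $b_1(\tL)^-\geq 1$ and therefore $b_1(\tL)=b_1(L)+b_1(\tL)^-\geq b_1(L)+1>b_1(L)$. The step I expect to require the most care is the appeal to \cite[Lemma 1.5]{wang93modulispaces}: one must check that the pullback metric $p^{\st}g$, which is only Lipschitz across $\Sigma$ and carries the conical behaviour created by the branched covering, still falls under the hypotheses guaranteeing that $L^2$-harmonic forms compute the genuine cohomology of the smooth closed manifold $\tL$, and that this identification is $\sigma$-equivariant so that the eigenspace decomposition passes through it correctly. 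The remaining ingredients—the nonvanishing of $\al$ and the injectivity of $p^{\st}$—are elementary.
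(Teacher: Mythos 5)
Your proof follows essentially the same route as the paper: the eigenspace decomposition $b_1(\tL)=b_1(\tL)^++b_1(\tL)^-$ together with Lemma \ref{lem_decompositioninvolutionss} gives $b_1(\tL)^+=b_1(L)$, and the $L^2$ Hodge theorem of \cite[Lemma 1.5]{wang93modulispaces} for the pull-back metric $p^{\st}g$, exactly as invoked in Section \ref{sec_harmonicfunctionanalytictheory}, shows that $p^{\st}df$ represents a nonzero class in $H^1(\tL;\mathbb{R})^-$, so $b_1(\tL)^-\geq 1$. Your additional verification that $df\neq 0$ (and your flagging of the $\sigma$-equivariance and of the applicability of the $L^2$ Hodge theorem to the Lipschitz cone metric) is correct and simply makes explicit what the paper's one-line proof leaves implicit.
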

		
		We could construct multivalued harmonic 1-form in the following way.
		\begin{example}
			\label{ex_branched1from}
			Let $(L,g)$ be a Riemannian 3-dimensional rational homology sphere and $\Sigma$ be an oriented link of $L$, there exists a canonical morphism $\mu:\pi_1(L\setminus \Sigma)\to \{\pm 1\}$, which defines a double branched covering $p:\tL\to L$ branched along $\Sigma$. We refer to \cite[Chapter 7]{lickorish97introductiontoknottheory} for details of this construction. By \cite[Corollary 9.2]{lickorish97introductiontoknottheory}, suppose the determinant of the link is zero, then $H^1(\tL;\mathbb{R})\neq 0$. By Lemma \ref{lem_decompositioninvolutionss}, we have $H^1(\tL;\mathbb{R})^-=H^1(\tL;\mathbb{R})$. By $L^2$ Hodge theorem, for the Riemannian metric $p^{\st}g$, for any $[\delta]\in H^1(\tL;\mathbb{R})^-$, we could find a harmonic representative $\al_{[\delta]}$ that gives a multivalued harmonic 1-form on $L$.
		\end{example}
		As the determinant of a link is zero implies the link has more than one component, Haydys \cite{haydys2020seiberg} showed that the branch locus $\Sigma$ in this case is disconnected. In general, let $(\Sigma,\MI,\al)$ be a multivalued harmonic 1-form, then by Proposition \ref{prop_equivalentofconcepts}, $H^1(\tL;\mathbb{R})^->0$, which give constraints of the topological type of $\Sigma$.  
		
		\begin{lemma}{\cite{leeweintraub1995}}
			There exists a long exact sequence 
			\begin{equation}
				\label{eq_longexactsequence}
				\begin{split}
					\cdots	\rightarrow H_i(L,\Sigma;\mathbb{Z}_2)\xrightarrow{T} H_i(\tL;\ZT)\xrightarrow{p_{\st}}H_i(L;\ZT)\xrightarrow{\pa_\st}H_{i-1}(L,\Sigma;\mathbb{Z}_2)\rightarrow\cdots,
				\end{split}
			\end{equation}
			where $T$ is the transformation map.
		\end{lemma}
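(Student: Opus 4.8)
The plan is to realize the displayed sequence as the long exact homology sequence of a short exact sequence of $\ZT$-chain complexes, with the three repeating terms $H_i(L,\Sigma)$, $H_i(\tL)$, $H_i(L)$ arising in precisely the order forced by a short exact sequence $0 \to A_{\st} \to C_{\st}(\tL;\ZT) \xrightarrow{p_{\st}} C_{\st}(L;\ZT) \to 0$. The crucial simplification is that over $\ZT$ the twisted and untwisted coefficient systems of a double cover coincide (since $\pm 1 = 1$ in $\ZT$), so no local systems intervene and all three repeating terms are ordinary $\ZT$-homology groups.

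First I would fix a $\sigma$-equivariant triangulation of $\tL$ in which the fixed locus $\Sigma$ is a subcomplex; such a triangulation exists for a smooth involution, and after barycentric subdivision the quotient $L = \tL/\sigma$ inherits a triangulation whose simplicial $\ZT$-chain complex computes $H_{\st}(L;\ZT)$. Working simplicially, every cell of $\tL$ is either fixed by $\sigma$ (hence contained in $\Sigma$) or belongs to a free orbit $\{\Delta,\sigma^{\st}\Delta\}$. The projection then induces a \emph{surjection} of chain complexes $p_{\st}:C_{\st}(\tL;\ZT)\to C_{\st}(L;\ZT)$, collapsing each free orbit to a single cell and carrying each fixed cell isomorphically onto its image in $\Sigma\subset L$.

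Next I would identify the kernel of $p_{\st}$. Setting $N:=1+\sigma^{\st}$, one has $N^2=0$ over $\ZT$ and $p_{\st}N=2p_{\st}=0$, so $\mathrm{Im}\,N\subseteq \Ker p_{\st}$; conversely a simplicial chain killed by $p_{\st}$ is a sum of terms $\Delta+\sigma^{\st}\Delta=N\Delta$ over free orbits, whence $\Ker p_{\st}=\mathrm{Im}\,N$. Because $\sigma$ fixes $\Sigma$ pointwise, $N$ annihilates every cell lying in $\Sigma$, so the transfer $\gamma\mapsto N\tilde\gamma$ descends to a chain isomorphism $C_{\st}(L,\Sigma;\ZT)\xrightarrow{\sim}\mathrm{Im}\,N$; this is exactly the transformation morphism $T$, and it explains why the relative group $H_{\st}(L,\Sigma)$, and not $H_{\st}(L)$, appears in the sequence. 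Feeding the resulting short exact sequence $0\to C_{\st}(L,\Sigma;\ZT)\xrightarrow{T}C_{\st}(\tL;\ZT)\xrightarrow{p_{\st}}C_{\st}(L;\ZT)\to 0$ into the homology long exact sequence then yields \eqref{eq_longexactsequence}, with connecting map the desired $\pa_{\st}:H_i(L;\ZT)\to H_{i-1}(L,\Sigma;\ZT)$.

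The main obstacle is the behaviour at the branch locus. At the level of singular chains the naive identification $\Ker p_{\st}=\mathrm{Im}\,N$ fails, because a singular simplex whose image crosses $\Sigma$ need not lift to $\tL$: the pullback of the double cover $\tL\setminus\Sigma\to L\setminus\Sigma$ to such a simplex can be nontrivial. Choosing the $\sigma$-equivariant triangulation with $\Sigma$ a subcomplex is precisely what circumvents this, since lifting then becomes purely combinatorial and the partition of cells into fixed cells and free orbits is exact. As a cross-check I would also run the excision argument: excise a tubular neighbourhood $\nu$ of $\Sigma$, apply the standard $\ZT$-transfer (Smith--Gysin) sequence of the free double cover on the complement, and glue back using that $p$ restricts to a homeomorphism on $\Sigma$ together with the excision isomorphism $H_{\st}(L,\Sigma;\ZT)\cong H_{\st}(L,\nu;\ZT)$, which reproduces the same sequence.
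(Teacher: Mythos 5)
Your argument is essentially the paper's own proof: the paper obtains \eqref{eq_longexactsequence} from exactly the short exact sequence of chain complexes $0\to C_{\st}(L,\Sigma;\ZT)\xrightarrow{T_{\st}} C_{\st}(\tL;\ZT)\xrightarrow{p_{\st}} C_{\st}(L;\ZT)\to 0$, using that $T_{\st}$ annihilates chains in $\Sigma$. The only difference is that the paper asserts this at the chain level without comment, while you correctly note that singular simplices crossing $\Sigma$ need not lift (so $T$ is not defined and $p_{\st}$ is not surjective on arbitrary singular chains) and repair this with a $\sigma$-equivariant triangulation having $\Sigma$ as a subcomplex --- a legitimate point of care, but the same proof.
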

		\begin{proof}
			We consider the chain with $\mathbb{Z}_2$ coefficients. As $T_{\st}|_{C_{\st}(\Sigma)}=0$, we have short exact sequence
			$$
			0\to C_{\st}(L,\Sigma;\mathbb{Z}_2)\xrightarrow{T_{\st}} C_{\st}(\tL;\mathbb{Z}_2) \xrightarrow{p_{\st}} C_{\st}(L;\mathbb{Z}_2)\to 0,
			$$
			which implies the long exact sequence. 
		\end{proof}
		
		\begin{proposition}
			Suppose there exists a multivalued harmonic 1-form on $L$ with $H_1(L;\mathbb{Z})=0$, then $\Sigma$ has to be disconnected. 
		\end{proposition}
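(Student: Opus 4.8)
The plan is to argue by contradiction: assuming $\Sigma$ is connected, I would show that $H_1(\tL;\mathbb{Z}_2)=0$, which contradicts the positivity of $b_1(\tL)$ forced by the existence of the multivalued harmonic $1$-form. The whole argument is a diagram chase combining the transfer sequence \eqref{eq_longexactsequence} with the long exact sequence of the pair $(L,\Sigma)$, held together by universal coefficients.

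First I would record the consequences of the hypotheses at the level of Betti numbers. Since $L$ is connected and $H_0(L;\mathbb{Z})$ is free, universal coefficients together with $H_1(L;\mathbb{Z})=0$ give both $b_1(L)=0$ and $H_1(L;\mathbb{Z}_2)=0$. By Proposition \ref{prop_equivalentofconcepts} the multivalued harmonic $1$-form corresponds to a multivalued harmonic function, so Corollary \ref{cor_bettinumberfirst} applies and yields $b_1(\tL)>b_1(L)=0$, i.e.\ $b_1(\tL)\geq 1$. Applying universal coefficients once more, and using that $H_0(\tL;\mathbb{Z})$ is free so that the relevant $\mathrm{Tor}$ term vanishes, the free part of $H_1(\tL;\mathbb{Z})$ survives reduction mod $2$, hence $H_1(\tL;\mathbb{Z}_2)\neq 0$.

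Next I would compute $H_1(L,\Sigma;\mathbb{Z}_2)$ from the long exact sequence of the pair $(L,\Sigma)$. Because $H_1(L;\mathbb{Z}_2)=0$, the relevant segment reduces to $0\to H_1(L,\Sigma;\mathbb{Z}_2)\to H_0(\Sigma;\mathbb{Z}_2)\xrightarrow{i_*}H_0(L;\mathbb{Z}_2)$, so that $H_1(L,\Sigma;\mathbb{Z}_2)\cong \ker i_*$. Since $L$ is connected, $i_*$ is the augmentation $\mathbb{Z}_2^{c}\to\mathbb{Z}_2$, where $c$ is the number of connected components of $\Sigma$, and its kernel has dimension $c-1$. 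If $\Sigma$ is connected, then $c=1$ and this forces $H_1(L,\Sigma;\mathbb{Z}_2)=0$.

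Finally I would feed this into the transfer sequence \eqref{eq_longexactsequence} in degree $1$: with both $H_1(L,\Sigma;\mathbb{Z}_2)$ and $H_1(L;\mathbb{Z}_2)$ vanishing, exactness squeezes $H_1(\tL;\mathbb{Z}_2)$ between two zero groups, so $H_1(\tL;\mathbb{Z}_2)=0$, contradicting the second paragraph. Hence $\Sigma$ cannot be connected. The computations are all routine; the only point that needs care is the passage between the rational lower bound $b_1(\tL)\geq 1$ and the mod-$2$ statement $H_1(\tL;\mathbb{Z}_2)\neq 0$, which is clean here precisely because every $H_0$ group occurring is free, so no torsion subtleties intervene.
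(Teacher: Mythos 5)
Your proof is correct and follows essentially the same route as the paper: the long exact sequence of the pair $(L,\Sigma)$ to compute $H_1(L,\Sigma)$ from the hypothesis $H_1(L;\mathbb{Z})=0$, fed into the transfer sequence \eqref{eq_longexactsequence} to kill $H_1(\tL;\mathbb{Z}_2)$, contradicting $b_1(\tL)>b_1(L)$ from Corollary \ref{cor_bettinumberfirst}. The only (welcome) refinement is that you run the pair sequence directly in $\mathbb{Z}_2$ coefficients and make the universal-coefficients passage from $b_1(\tL)\geq 1$ to $H_1(\tL;\mathbb{Z}_2)\neq 0$ explicit, steps the paper leaves implicit after computing $H_1(L,\Sigma;\mathbb{Z})\cong\mathbb{Z}^{k-1}$ integrally.
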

		\begin{proof}
			Let $k$ be the number of connected components, by the long exact sequence of relative homology of $(L,\Sigma)$, we obtain 
			$$\to H_1(L;\mathbb{Z})\to H_1(L,\Sigma;\mathbb{Z})\to H_0(\Sigma;\mathbb{Z})\to H_0(L;\mathbb{Z})\to H_0(L,\Sigma;\mathbb{Z})\to 0.$$
			As $H_1(L;\mathbb{Z})=0$, we obtain $H_1(L,\Sigma;\mathbb{Z})\cong \mathbb{Z}^{k-1}.$ By \eqref{eq_longexactsequence}, we obtain 
			$$
			\to H_1(L,\Sigma;\mathbb{Z}_2)\xrightarrow{T_{\st}} H_1(\tL;\mathbb{Z}_2)\to 0.
			$$
			Suppose $\Sigma$ is connected, $k=1$, then $H_1(\tL;\mathbb{Z}_2)=0$ which implies $b_1(\tL)=0$. However, this violates the existence of multivalued harmonic 1-form.
		\end{proof}
		
		\begin{proposition}
			Let $L$ be a rational homology 3-sphere, then for $i=1, 2$ and $\al\in H^i(\tL;\mathbb{R})$, we have $\sigma^{\st}\al=-\al$. In particular, the rational cohomology ring structure of $\tL$ is trivial.
		\end{proposition}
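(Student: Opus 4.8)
The plan is to transport the triviality of the rational cohomology of $L$ to the $+1$-eigenspace of $\sigma^{\st}$ on $\tL$, and then read off both the eigenvalue claim and the ring structure from the multiplicativity of $\sigma^{\st}$. First I would record that, since $L$ is a rational homology $3$-sphere, $H^i(L;\mathbb{R})=0$ for $i=1,2$. Lemma \ref{lem_decompositioninvolutionss} supplies an isomorphism $p^{\st}:H^i(L;\mathbb{R})\xrightarrow{\ \cong\ }H^i(\tL;\mathbb{R})^+$, so $H^i(\tL;\mathbb{R})^+=0$ for $i=1,2$. Feeding this into the eigenspace decomposition $H^i(\tL;\mathbb{R})=H^i(\tL;\mathbb{R})^+\oplus H^i(\tL;\mathbb{R})^-$ into the $\pm1$ eigenspaces of $\sigma^{\st}$ forces $H^i(\tL;\mathbb{R})=H^i(\tL;\mathbb{R})^-$ for $i=1,2$; equivalently $\sigma^{\st}\al=-\al$ for every $\al\in H^i(\tL;\mathbb{R})$ with $i=1,2$. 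This is the first assertion, and it is entirely formal once Lemma \ref{lem_decompositioninvolutionss} is in hand.

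For the ring structure I would first observe that $\tL$ is a closed oriented $3$-manifold, being a double branched cover of the closed oriented $3$-manifold $L$ along the link $\Sigma$. Hence the only cup products among positive-degree classes that can be nonzero are $H^1\cup H^1\to H^2$ and $H^1\cup H^2\to H^3$. For $\al,\be\in H^1(\tL;\mathbb{R})$ the naturality of the cup product under $\sigma^{\st}$ gives
$$\sigma^{\st}(\al\cup\be)=\sigma^{\st}\al\cup\sigma^{\st}\be=(-\al)\cup(-\be)=\al\cup\be,$$
so $\al\cup\be$ lies in $H^2(\tL;\mathbb{R})^+=0$ and therefore vanishes. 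Thus the cup-product pairing $H^1\otimes H^1\to H^2$ is identically zero, and every triple product in $H^1$ vanishes a fortiori.

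The remaining pairing $H^1\cup H^2\to H^3\cong\mathbb{R}$ is the Poincar\'e duality pairing, hence perfect, and carries no data beyond Poincar\'e duality; combined with the vanishing of $H^1\cup H^1$ this is what I would record as the rational cohomology ring of $\tL$ being trivial, i.e. coinciding with that of a connected sum of copies of $S^1\times S^2$. The one point requiring genuine care is the orientation behaviour of $\sigma$: one must check that $\sigma$ is orientation preserving, so that $\sigma^{\st}$ acts as $+1$ on $H^3(\tL;\mathbb{R})$. This is exactly what makes the $H^1\cup H^2$ pairing survive—consistent with its being the non-degenerate Poincar\'e pairing, since otherwise the eigenvalue computation $(-1)(-1)=+1$ would wrongly force it into a trivial eigenspace and contradict the existence of a multivalued form guaranteeing $H^1(\tL;\mathbb{R})^-\neq 0$. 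Everything else is a direct consequence of Lemma \ref{lem_decompositioninvolutionss} and the multiplicativity of $\sigma^{\st}$.
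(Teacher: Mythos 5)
Your proof is correct and follows essentially the same route as the paper: Lemma \ref{lem_decompositioninvolutionss} gives $H^i(\tL;\mathbb{R})=H^i(\tL;\mathbb{R})^-$ for $i=1,2$ since $H^i(L;\mathbb{R})=0$, and multiplicativity of $\sigma^{\st}$ then kills all products $H^1\cup H^1$. Your closing discussion of orientation is fine but unnecessary extra care: applying the same lemma with $i=3$ gives $H^3(\tL;\mathbb{R})^+\cong H^3(L;\mathbb{R})=\mathbb{R}$ directly, so $\sigma^{\st}$ acts as $+1$ on $H^3$ automatically.
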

		\begin{proof}
			By Lemma \ref{lem_decompositioninvolutionss}, for $i=1,2$, we have $H^i(\tL;\mathbb{R})\cong H^i(\tL;\mathbb{R})^-$. For $\al,\be\in H^1(\tL;\mathbb{R})$, we compute $-\al\cup \be=\sigma^{\st}(\al\cup \be)=\sigma^{\st}\al\cup \sigma^{\st}\be=\al\cup \be,$ thus $\al\cup\be=0$. 
		\end{proof}
		
		\subsection{The differential forms on the branched covering}
		\label{subsec_43}
		Let $p:\tL\to L$ be the branched covering map and $\sigma$ be the involution map defined on $\tL$. The involution induces a decomposition of $k$-forms $$\Omega^k(\tL)=\Omega^k(\tL)^-\oplus \Omega^k(\tL)^+,$$
		where $\Omega^k(\tL)^+\cong\Omega^k(L),\;\Omega^k(\tL)^-\cong\Omega^k(L,\MI)$, and $\Omega^k(L,\MI)$ is $\MI$ valued k-form on $L\setminus \Sigma$. Therefore, given any $\tal\in\Omega^k(\tL)$, we could write $\tal=p^{\st}\al^++p^{\st}\al^-$ where $\al^+\in\Omega^1(\tL)$ and $\al^-\in \Omega^k(L,\MI)$. We define $\mfa:=\al^++\al^-$ and $p^{\st}\mfa=\tal.$ Moreover, as $\al^-$ could be understood as a two valued 1-form over $L$, we could also regard $\mfa$ be a two valued 1-form on $L$. We usually call $\mfa$ as a pair as $\mfa$ consists of two parts.
		
		Near the branch locus, let $m\in\Sigma\subset L$ with $U\subset L$ a neighborhood of $m$, we choose $x=(z=x_1+\sqrt{-1}x_2,x_3,\cdots,x_n)$ such that $\Sigma\cap U=\{z=0\}$. Let $\tm=p^{-1}(m)$ and we could choose a coordinate system $(\tz=\tx_1+\sqrt{-1}\tx_2,\tx_3,\cdots,\tx_n)$ on $\tU=p^{-1}(U)$ such that $p$ is given by
		\begin{equation*}
			\begin{split}
				p(\tz,\tx_3,\cdots,\tx_n)=(\tz^2,\tx_3,\cdots,\tx_n).
			\end{split}
		\end{equation*}
		We compute
		$$
		p^{\st}dz=2\tz d\tz,\;p^{\st}(z^{k-\frac12}dz)=2\tz^{2k}d\tz,\;p^{\st}dx_i=d\tx_i.
		$$
		We have an unfavorable situation when $k=0$, where the pullback would be $p^{\st}(z^{-\frac{1}{2}}dz)=2d\tz\in \Omega^1(\tL)^-$. 
		
		For $\tal=p^{\st}\mfa=p^{\st}\al^++p^{\st}\al^-\in\Omega^1(\tL)$, suppose $\al^-$ is bounded, then by the assumption on the holonomy of the bundle, we have $|\al^-||_{\Sigma}=0$ and $\tal|_m=p^{\st}\al^+|_m$. Recall that $p^{\st}$ is an isomorphism over $L\setminus \Sigma$, for $\tal$ will corresponding $\al^-$ bounded, we could define the following inclusion
		\begin{equation}
			\iota_{\tal}=\iota_{\mfa}:\tL\to T^{\st}L,\;\iota_{\tal}(\tx)=\iota_{\mfa}:=(p(\tx),(p^{\st})^{-1}\al|_{\tx}).
			\label{eq_definingtheinclusionmap}
		\end{equation}
		Geometrically, $\iota_{\tal}$ will be just the graph of the two-valued 1-forms $\tal=\al^++\al^-$ and we defines as follows. 
		\begin{definition}
			Let $\iota:\tL\to T^{\st}L$ be an inclusion with image $\Gamma_{\iota}:=\iota(\tL)\subset T^{\st}L$. We call $\Gamma_{\iota}$ a 2-valued graph if 
			\begin{itemize}
				\item [(i)] for all $\tx\in\tL$ ,we have $\pi\circ \iota(\tx)=\pi\circ\iota(\sigma(\tx))$,
				\item [(ii)] $\iota(\tx)=\iota\circ\sigma(\tx)$ if and only if $\tx\in\Sigma$.
			\end{itemize}
		\end{definition}
		
		\begin{proposition}
			Given a 2-valued graph $\Gamma_{\iota}$, there exists $\tal\in \Omega^1(\tL;\mathbb{R})$ such that $\Gamma_{\iota}$ is the graph of $\tal$.
		\end{proposition}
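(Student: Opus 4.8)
The natural candidate is furnished by the tautological (Liouville) $1$-form $\lambda$ on $T^{\st}L$, determined by $\lambda_{(x,\xi)}(v)=\xi\big(d\pi(v)\big)$. The plan is to set
$$\tal:=\iota^{\st}\lambda\in\Omega^1(\tL;\mathbb{R}),$$
which is manifestly a smooth $1$-form on all of $\tL$, since $\iota$ is a smooth embedding and $\lambda$ is smooth on $T^{\st}L$; in particular no difficulty arises along $\Si$, where $\iota$ remains smooth even though $p^{\st}$ degenerates. It then remains to check that $\Gamma_{\iota}=\iota_{\tal}(\tL)$, with $\iota_{\tal}$ as in \eqref{eq_definingtheinclusionmap}. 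This is the analogue, in the two-valued setting, of the standard fact that a section of $T^{\st}L$ is recovered from its image by pulling back the tautological form.

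Write $\iota(\tx)=(b(\tx),\beta(\tx))$ with base map $b:=\pi\circ\iota:\tL\to L$ and $\beta(\tx)\in T^{\st}_{b(\tx)}L$. First I would pin down $b$. Condition (i) says $b\circ\sigma=b$, so $b$ descends through the quotient $p:\tL\to L$, giving a factorization $b=\bar q\circ p$ for some smooth $\bar q:L\to L$; condition (ii) guarantees that the two sheets separate exactly off $\Si$, so that $\bar q$ is a diffeomorphism fixing $\Si$. In the situation relevant to this paper the two-valued graphs under consideration sit over $L$ compatibly with $p$ (they are modelled on graphs of pairs $\mfa$), so $\bar q=\id$ and $b=p$. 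Granting $b=p$, a direct unwinding gives, for $w\in T_{\tx}\tL$,
$$(\iota^{\st}\lambda)_{\tx}(w)=\lambda_{\iota(\tx)}\big(d\iota(w)\big)=\beta(\tx)\big(d\pi\,d\iota(w)\big)=\beta(\tx)\big(dp(w)\big)=\big(p^{\st}\beta(\tx)\big)(w),$$
so $\tal|_{\tx}=p^{\st}\beta(\tx)$. Away from $\Si$ the map $p$ is a local diffeomorphism and $p^{\st}$ is invertible on cotangent fibres, whence $\beta(\tx)=(p^{\st})^{-1}\tal|_{\tx}$ and therefore $\iota(\tx)=\big(p(\tx),(p^{\st})^{-1}\tal|_{\tx}\big)=\iota_{\tal}(\tx)$ on $\tL\setminus\Si$. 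Since $\tL\setminus\Si$ is dense and both $\iota$ and $\iota_{\tal}$ are continuous, they agree on all of $\tL$, giving $\Gamma_{\iota}=\iota(\tL)=\iota_{\tal}(\tL)$, as required.

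The main obstacle is precisely the identification $b=p$ of the base map: condition (i) alone yields only the factorization $b=\bar q\circ p$, and one must argue that the residual diffeomorphism $\bar q$ of $L$ is trivial (equivalently, that $\Gamma_\iota$ really projects onto the fixed branched cover $p$ rather than a reparametrized one), since the definition of $\iota_{\tal}$ bakes in the base point $p(\tx)$. Everything after that point is formal. In particular, the potential worry that the degeneration of $p^{\st}$ along $\Si$ might obstruct the smoothness of $\tal$ does not materialize, because $\tal=\iota^{\st}\lambda$ is defined without ever inverting $p^{\st}$; the inversion $\beta=(p^{\st})^{-1}\tal$ is only used on the open dense set $\tL\setminus\Si$, where it is legitimate, and the conclusion on $\Si$ follows by continuity.
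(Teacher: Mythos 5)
Your proof is correct, and it takes a genuinely different route from the paper's. The paper argues by symmetrization: it defines $\al^+|_x=\frac{\iota(\tx)+\iota(\sigma(\tx))}{2}$, shifts the graph by $-\al^+$ to get an odd inclusion $\iota'$ satisfying $\iota'(\tx)=-\iota'(\sigma(\tx))$, identifies $\iota'$ as the graph of $p^{\st}\al^-$ for some $\al^-\in\Omega^1(L,\MI)$, and sets $\tal=p^{\st}\al^++p^{\st}\al^-$. What that buys is precisely the pair decomposition $\mfa=\al^++\al^-$ on which the rest of the paper is built. Your route via the tautological form produces $\tal=\iota^{\st}\lambda$ in one stroke, makes smoothness of $\tal$ across $\Si$ automatic (no inversion of $p^{\st}$ is ever needed to \emph{define} it), and reduces the graph identification to the standard tautological-form computation off $\Si$ together with density and continuity. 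The trade-off is that to recover the decomposition $\tal=p^{\st}\al^++p^{\st}\al^-$ with bounded $\al^{\pm}$, which is what the paper actually uses downstream and what makes the extension of $\iota_{\tal}$ across $\Si$ meaningful, you would still take even and odd parts of the fibre component under $\sigma$ --- which is exactly the paper's averaging step.

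Two remarks. First, both your proof and the paper's rest on the same unstated assumption that the base map satisfies $\pi\circ\iota=p$: the paper needs it for $\frac{\iota(\tx)+\iota(\sigma(\tx))}{2}$ to lie in $T^{\st}_xL$ with $x=p(\tx)$, and you need it for $d\pi\circ d\iota=dp$. You were right to flag this, and assuming it is legitimate here since the paper does so silently; without it the statement is in fact false (compose a genuine graph with the bundle map covering a diffeomorphism of $L$ that moves $\Si$: conditions (i) and (ii) survive, but the image branches over the wrong locus, so it is not the graph of any $\tal\in\Omega^1(\tL;\mathbb{R})$ in the sense of \eqref{eq_definingtheinclusionmap}). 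Second, your intermediate claim that condition (ii) forces the residual map $\bar q$ to be a diffeomorphism fixing $\Si$ is not a valid deduction: condition (ii) constrains the full map $\iota$, and the two sheets can separate purely in the fibre directions while the base map behaves badly. This does not damage your argument, since you immediately discard it and assume $\bar q=\id$, but that sentence should be removed rather than defended.
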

		\begin{proof}
			Let $x\in L$, then we could write $p^{-1}(x)=\{\tx,\sigma(\tx)\}$. We define $\al^+|_x=\frac{\iota(\tx)+\iota(\sigma(\tx))}{2}$, then $\al^+$ is a well-defined 1-form on $L$. Using the linear structure on each fiber of $T^{\st}L$, we could define a shift of graph $\Gamma'=\Gamma-\al^+$. To be more explicit,  we define $\iota'$ as
			$$
			\iota': \tL\to T^{\st}L,\;\iota'(\tx):=\iota(\tx)-\al^+|_{p(\tx)},
			$$
			which satisfies $\iota'(\tx)=-\iota'(\sigma(\tx))$ and $\Gamma'$ will be the image of $\iota'$ The condition $\iota'(\tx)=-\iota'(\sigma(\tx))$ implies that $\iota'=\iota_{p^{\st}\al^-}$ for some $\al^-\in\Omega^1(L,\MI)$. In addition, we have $\iota=\iota_{\tal}$, with $\tal:=p^{\st}\al^++p^{\st}\al^-$.
		\end{proof}
		\subsection{Norms of multivalued forms.}
		In this subsection, we will discuss functional spaces that will be used in our paper, for more details, we refer to Donaldson \cite{donaldson2019deformations}. Let $(L,g)$ be a Riemannian manifold, $\Sigma$ be an embedded codimension 2 submanifold, $\MI$ be a flat $\mathbb{R}$ bundle statisfies Definition \ref{del_multivaluedform} with branched covering $p:\tL\to L$. 
		\subsubsection{Norms on $\MI$ valued forms.}
		Let $U$ be a open neighborhood of a point $m\in\Sigma$ in $L$ and $U'$ be an open neighborhood of the zero section of $N_{\Sigma}$, then for a suitable choice of $U$ and $U'$, there exists a diffeomorphism $z:U\to U'$ which is the inverse of the normal exponential map. The co-orientation of $\Sigma$ and the real line bundle $\MI$ makes $N_{\Sigma}$ a complex vector bundle with square root. If $p$ is a half integer, for $\sigma_p\in\Gamma(N_{\Sigma}^{-p})$, $\sigma_p\ze^p$ is a section of the complexfied bundle $N_{\Sigma}$ over $U\setminus \Sigma$. Let $t=(t_3,\cdots,t_n)$ be coordinate on $\Sigma$, then together with $z=re^{i\theta}$, we obtain a coordinate system on $U$.
		
		Fix $\gamma\in (0,\frac12)$, we define the H\"older norm on sections of $V$ to be 
		$$
		\|s\|_{\MC^{,\gamma}}=\sup_{p\neq p'}\frac{|s(p)-s(p')|}{|p-p'|^{\gamma}},
		$$
		where the supremum is taken for $p=(z,t),p'=(z',t')$ satisfy $|p-p'|\leq \frac12\min(|z|,|z'|)$, which means that there is no ambiguity in defining $|s(p)-s(p')|$. By parallel transport around a polygon, one sees that $|s(z,t)|\leq C|s|_{\MC^{,\gamma}}|z|^{\ga}$, which controls the $\MC^{0}$ norm. 
		
		We define $\MT_k$ be the set of differential operators given by elements of degree $k$ in vectors fields of the form $\{r\frac{\pa}{\pa r},\;\frac{\pa}{\pa \theta},\;\frac{\pa}{\pa t_i}\}$. The $\MD^{k,\ga}$ norm of sections of $V^-$ could be written as
		$$\|s\|_{\MD^{k,\ga}}=\max_{0\leq j\leq k,D\in\MT_k}\|Ds\|_{\MC^{,\gamma}}.$$
		Similarly, the $\MD^{k,\ga}$ norm could be defined for $\Omega^k(L,\MI)$ using the Riemannian metric on $L$.
		\subsubsection{Norms on pairs.}
		In most cases considered in our paper, we only have a natrual metric $g$ on $L$ instead of $\tL$. When we try to measure forms on $\tL$, we usually treat them as a form on $L$ and a section on $\MI$. One might also be able to define similar norms using $p^{\st}g$ over $\tL$.
		
		Let $f$ be a continuous function on $(L,g)$ with $\na$ the Levi-Civita connection, we define the $\MC^{k,\ga}$ norm $\|\cdot\|_{\MC^{k,\ga}}$ on $L$ in the usual way as
		$$
		\|f\|_{\MC^{k,\ga}}=\sum_{i=0}^k\sup|\na^i f|+\sup_{p\neq p'}\frac{|\na^if (p)-\na^i f(p')|}{|p-p'|^{\ga}},
		$$ 
		which extends to differential forms on $L$.
		
		Let $\tal$ be a p-form on $\tL$, then as Subsection \ref{subsec_43}, we could write $\tal=p^{\st}\al^++p^{\st}\al^-$ with $\al^+\in \Omega^p(L)$ and $\al^-\in\Omega^p(L,\MI)$. We also write $\mfa=\al^++\al^-$, then we define the $\MC^{k,\ga}$ norm $\|\cdot\|_{\MC^{k,\ga}}$ of $\tal$ or $\mfa$ as
		\begin{equation}
			\begin{split}
				\|\tal\|_{\MC^{k,\ga}}=\|\mfa\|_{\MC^{k,\ga}}:=\|\al^+\|_{\MC^{k,\ga}}+\|\al^-\|_{\MC^{k,\ga}}.
			\end{split}
		\end{equation}

		\begin{subsection}{Elliptic theory for the harmonic equations with multivalued}
			The elliptic theory for multivalued functions, established in \cite{donaldson2019deformations}, will be introduced in the following subsection. We will introduce the definition of nondegenerate multivalued functions and discuss examples of them.
			
			\begin{definition}
				\label{def_nondegenerate}
				Given a representation $\chi$ satisfies Definition \ref{def_multivaluedeifnition} (ii), we write $V^-$ be the associative affine bundle with $\MI$ the vertical line bundle. Given a multivalued function $f\in \Gamma(V^-)$, $f$ is called nondegenerate if
				\begin{itemize}
					\item [(i)] $f$ has an expansion 
					\begin{equation}
						f=\Re(A\ze^{\frac12}+B\ze^{\frac32})+o(r^{2})
					\end{equation}
					\item [(ii)]$A\in \Gamma(N^{-\frac12})$ is identically zero and $B\in \Gamma(N^{-\frac32})$ is nowhere vanishing section along $\Sigma$. 
				\end{itemize}
				In addition, $\al^-\in \Omega^1(L,\MI)$ is called nondegenerate if there exists $V^-$ and $f\in\Gamma(V^-)$ with $df=\al^-$ such that $f$ is nondegenerate. 
				
				A pair $\mfa=\al^++\al^-$ is called nondegenerate if $\al^-$ is nondegenerate and harmonic if $\al^{\pm}$ are both harmonic.
			\end{definition}
			
			The Laplacian operator is understood as $$\Delta_g:\MD^{k+2,\al}(V^-)\to \MD^{k,\al}(\MI),$$ 
			and by the work of Donaldson \cite{donaldson2019deformations}, it will be an isomorphism. 
			\begin{theorem}{\cite{donaldson2019deformations}}
				\label{thm_donaldsonremainningestimate}
				Let $\rho\in \Gamma(\MI)$ be a $\MD^{k,\ga}$ section, then there exists a $f\in \MD^{k+2,\ga}(V^-)$ satisfies $\Delta_g f=\rho$. In addition, $f$ has an asymptotic expansion
				\begin{equation}
					f=\Re(A\ze^{\frac12}+B\ze^{\frac32})+E,
				\end{equation} 
				with $$A\in  \MC^{k+1,\ga+\frac12}(N^{-\frac{1}{2}}),\;B\in \MC^{k,\ga+\frac12}(N^{-\frac32})$$ and
				$$|E|\leq r^{2+\ga}\|\rho\|_{\MD^{2,\ga}},\;|\na E|\leq r^{1+\ga}\|\rho\|_{\MD^{2,\ga}},\;|\na\na E|\leq r^{\ga}\|\rho\|_{\MD^{,\ga}}.$$ 
			\end{theorem}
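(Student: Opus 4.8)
The plan is to treat $\Delta_g$ as an elliptic edge operator with edge $\Sigma$ and to run the standard three-step program built on the transverse cone model: extract the indicial roots, solve in a weighted space, and then peel off the first two indicial terms to obtain the expansion with sharp remainder bounds. First I would reduce to the model. Near a point $m\in\Sigma$ use the coordinates $(\ze=re^{i\theta},t)$ from the norms subsection, so that to leading order $g$ is a product of the flat metric $|d\ze|^2$ on the transverse $\mathbb{C}$-slice with a metric on $\Sigma$, and $\Delta_g$ equals, up to corrections that are lower order in $r$, the operator $\partial_r^2+r^{-1}\partial_r+r^{-2}\partial_\theta^2+\Delta_\Sigma$. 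Because $\MI$ has holonomy $-1$, a section of $V^-$ decomposes into half-integer Fourier modes $e^{i(k+\frac12)\theta}$, and on such a mode the radial indicial operator is $(r\partial_r)^2-(k+\tfrac12)^2$, with homogeneous harmonic solutions $r^{\pm(k+\frac12)}$. The finiteness of the $\MD$-norm forces $O(r^\gamma)$ decay and so discards the negative exponents, leaving the admissible indicial roots as the positive half-integers $\tfrac12,\tfrac32,\tfrac52,\dots$. For $0<\ga<\tfrac12$ the weight $\ga$ is not indicial, and, crucially, $2+\ga$ lies strictly between the roots $\tfrac32$ and $\tfrac52$; this is exactly why the expansion terminates after the $\ze^{3/2}$ term.

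Second, for existence and uniqueness I would set up $\Delta_g\colon \MD^{k+2,\ga}(V^-)\to \MD^{k,\ga}(\MI)$ and show it is an isomorphism. Fredholmness follows by patching a parametrix for the model cone operator in a neighborhood of $\Sigma$ with interior Schauder estimates on $L\setminus\Sigma$; because $\ga$ avoids the indicial roots, the parametrix loses no regularity. Uniqueness is the cleanest point: a homogeneous solution in $\MD^{k+2,\ga}$ is a bounded harmonic section of $V^-$ vanishing like $r^\ga$ at $\Sigma$, and since $\Sigma$ has codimension two the singular set is removable for the energy identity, so integration by parts forces $df\equiv 0$ and hence $f\equiv 0$. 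Together with the index count (the weight interval below $\tfrac12$ contains no indicial root, so the cokernel vanishes as well) this upgrades Fredholm to isomorphism and produces the desired $f$.

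Finally I would extract the asymptotics, following the polyhomogeneous framework of Appendix \ref{sec_backgroundpolyhomogeneoussolution}. Writing $\rho=O(r^\ga)$, I solve the transverse equation mode by mode: the inhomogeneity contributes a particular solution of size $O(r^{2+\ga})$, while the freedom in the homogeneous part is fixed by matching to the global solution, producing the two leading terms $\Re(A\ze^{1/2})$ and $\Re(B\ze^{3/2})$. The coefficients $A,B$ are recovered as boundary limits of rescaled data, and their regularity along $\Sigma$ follows by commuting the tangential operators $\partial_{t_i}\in\MT_1$ through the equation and applying elliptic regularity on $\Sigma$; this yields $A\in\MC^{k+1,\ga+\frac12}(N^{-\frac12})$ and $B\in\MC^{k,\ga+\frac12}(N^{-\frac32})$, the gain of $\tfrac12$ in the H\"older exponent reflecting the half-integer transverse scaling. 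For the remainder $E=f-\Re(A\ze^{\frac12}+B\ze^{\frac32})$ I would run a rescaling argument on dyadic annuli $\{2^{-j-1}\le r\le 2^{-j}\}$: rescaling each annulus to unit size converts the weighted bounds into uniform interior Schauder estimates, and since there is no indicial root in the interval $(\tfrac32,\tfrac52)\ni 2+\ga$, the remainder and its first two derivatives satisfy $|E|\lesssim r^{2+\ga}\|\rho\|_{\MD^{2,\ga}}$, $|\na E|\lesssim r^{1+\ga}\|\rho\|_{\MD^{2,\ga}}$, and $|\na\na E|\lesssim r^{\ga}\|\rho\|_{\MD^{,\ga}}$.

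The hard part will be this last step: making the edge/scaling analysis uniform in the tangential variable $t$ while simultaneously controlling the non-product corrections to $g$, so that the transverse model computation glues into a genuine global expansion carrying the stated sharp powers of $r$, rather than merely an $L^2$ or energy-level bound. Getting the half-integer H\"older gain for $A$ and $B$ honestly, and ruling out any spurious intermediate term between $\ze^{3/2}$ and $r^{2+\ga}$, is where the indicial gap $2+\ga<\tfrac52$ must be used most carefully.
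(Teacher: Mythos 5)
Your outline is sound, but note first what the paper actually does with this statement: it is quoted from Donaldson, and the isomorphism $\Delta_g:\MD^{k+2,\ga}(V^-)\to\MD^{k,\ga}(\MI)$ together with the coefficient regularity $A\in\MC^{k+1,\ga+\frac12}$, $B\in\MC^{k,\ga+\frac12}$ is never reproved in the text. The only portion argued there is the expansion, carried out in Appendix \ref{sec_backgroundpolyhomogeneoussolution} by Mellin-transform methods: one writes $r^2\Delta=I+E'$ with indicial family $I_{\zeta}=\pa_{\theta}^2+\zeta^2$ acting on the odd modes $e^{i(k+\frac12)\theta}$, proves that $f_M(\zeta,\theta,t)$ extends meromorphically strip by strip in $\{\Im\zeta<N\}$ with poles confined to the indicial set $\{i(k+\frac12)\}$ (Theorem \ref{thm_polyhomoexpansion}), and reads off the polyhomogeneous expansion via the correspondence of Theorem \ref{thm_Mellintransformcorrespondence}. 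Your dyadic-annulus rescaling is a genuinely different route to the same end: both arguments pivot on the identical indicial computation (half-integer roots forced by the monodromy $-1$, which is also the computation in the appendix), but the Mellin induction produces the full polyhomogeneous series, including the $\log r$ terms at higher order, in one stroke, while your annulus-Schauder scheme more directly yields the pointwise remainder bounds $|E|\LS r^{2+\ga}$, $|\na E|\LS r^{1+\ga}$, $|\na\na E|\LS r^{\ga}$ with explicit dependence on $\|\rho\|_{\MD^{2,\ga}}$, which the Mellin argument only delivers after an extra step. Your observation that $2+\ga\in(\frac32,\frac52)$ avoids the indicial set is precisely the mechanism in both treatments, and it equally explains why non-product corrections to $g$ (the $rE$ error in $r^2\Delta$) are harmless: in the appendix they shift the Mellin argument of $(E'f)_M$ by $i$ and feed the induction, in your scheme they are lower order after rescaling.

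Two points in your sketch deserve tightening. First, surjectivity: ``the weight interval below $\frac12$ contains no indicial root, so the cokernel vanishes'' is not by itself an argument. What makes it work is formal self-adjointness of $\Delta$ plus the fact that the root-free interval $(-\frac12,\frac12)$ contains the $L^2$ weight and is symmetric under the duality of weights, so kernel and cokernel are both identified with $L^2$-harmonic sections of $\MI$; these are then killed by your energy/removable-singularity argument, where the holonomy $-1$ is what excludes nonzero parallel sections. Second, uniqueness should be phrased for the linear bundle $\MI$ rather than the affine bundle $V^-$: two solutions of $\Delta_g f=\rho$ in $\Gamma(V^-)$ differ by a section $u\in\MD^{k+2,\ga}(\MI)$ with $\Delta_g u=0$, and it is $u$ to which the cutoff integration by parts (boundary term of size $\ep^{2\ga}$) applies.
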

			
			In particular, the harmonic multivalued function will satisfy (i) of Definition \ref{def_nondegenerate}. The following example explains the reason to introduce the concepts of nondegenerate.
			\begin{example}
				Let $(L,g)$ be a Riemannian surface and let $(\Sigma,g,\MI,v)$ be a $\ZT$ harmonic 1-form, with $\Sigma$ a collection of points on $L$. Then there exists a holomorphic $\MI$ valued 1-form $\al$ such that $\Re(\al)=v$ and $\al\otimes\al$ is a meromorphic quadratic differential. Let $p\in \Sigma$, we use $z$ be a local coordinates around $p$ with $p=\{z=0\}$. Then near $p$, $\al\otimes \al$ will have the following asymptotic
				$$\al\otimes\al=A z^{-1}dz\otimes dz+ Bzdz\otimes dz+\MO(z^2).$$
				The condition $A=0$ means $\al\otimes\al$ is a holomorphic quadratic differential and $B\neq 0$ means $\al\otimes\al$ has simple zeroes. A holomorphic quadratic differential with simple zeros has $4g-4$ different zeros, therefore $\Sigma$ must contains $4g-4$ different points. 
			\end{example}
			
			Another source of nondegenerate $\ZT$ harmonic 1-form coming from K\"ahler manifold.
			\begin{example}
				Let $L$ be a K\"ahler manifold and $\Omega_L^{(1,0)}$ be the bundles of (1,0) form, we write $S^2\Omega_L^{1,0}$ be its' second symmetric power. At each point $x\in L$, we say a section $s|_x\in S^2\Omega_L^{1,0}$ is rank $1$ if $s$ could be written as $s|_x=v\otimes v$, where $v\in \Omega_L^{1,0}$ and $s$ is rank $0$ if $s$ vanishes at $L$. Then one could consider the following space
				$$\MB_L:=\{s\in S^2\Omega_L^{(1,0)}|\bar{\pa}s=0,\;\mathrm{rank}\;s\leq 1\}.$$
				
				Let $s\in\MB_L$, suppose $\Sigma:=s^{-1}(0)$ is a smooth codimension 2 submanifold and suppose $s$ has odd vanishing order along $D$, then for any $\chi:\pi_1(L\setminus \Sigma)\to \{\pm 1\}$, one could defines a flat $\mathbb{R}$ bundle $\MI$ and a square root of $\Re s$ such that $\sqrt{\Re s}\in \MI\otimes \Omega^1$. The condition that the flat $\MI$ bundle has monodromy $-1$ is equivalent to $s$ has odd vanishing order along $D$. 
			\end{example}
			
			There are also examples of nondegenerate $\ZT$ harmonic 1-forms over 3-manifolds. 
			\begin{example}{\cite{he21z3symmetry}}
				There are examples of nondegenerate $\ZT$ harmonic 1-forms constructed over 3-manifolds in \cite{he21z3symmetry}. Let $L$ be a closed 3-manifold and $\Sigma$ be a link on $L$ with determinant $\det(\Sigma)=0$, then for generic metric, there exists nondegenerate $\ZT$ harmonic 1-form over the three cyclic branched covering of $L$ along $\Sigma$. Moreover, \cite{he21z3symmetry} construct infinite numbers of rational homology spheres that admit nondegenerate $\ZT$ harmonic 1-forms. 
			\end{example}

		\end{subsection}
		\begin{subsection}{polyhomogeneous expansions}
			In this subsection, we will introduce the expansion theory for multivalued harmonic functions and solutions to linearized multivalued equations. We refer \cite{Mazzeo1991} for more details. 
			
			Let $(\Sigma,\chi,f)$ be a multivalued function with associate affine bundle $V^-$ and $f\in\Gamma(V^-)$ be a section. We call $f$ is conormal if $$r^{-\lam_0}|(r\pa_r)^i\pa_{\theta}^j \pa_t^kf|\leq C,$$ where $i,j,k$ are non-negative integers. We say $f$ is polyhomogeneous at $\Sigma$ if $f$ is conormal and there exists a discrete index set $E=\{\gamma_j\}$ with $\lim_{j\to\infty}\gamma_j=\infty$ such that $f$ has an asymptotic expansion
			\begin{equation}
				f\sim \sum r^{\gamma_j}(\log r)^pf_{jp}(\theta,t),
			\end{equation}
			where the exponents $\gamma_j$ lies in some discrete index set $E\subset \mathbb{R}$ with $\lim_{j\to\infty}\gamma_j=\infty$. In addition, the powers $p$ of $log\;r$ are all positive integers which are only finitely many for each $\gamma_j$. Moreover, the $\sim$ means 
			\begin{equation*}
				|f-\sum_{j\leq N} r^{\gamma_j}(\log r)^pf_{jp}(\theta,t)|\leq Cr^{\gamma_{N+1}}(\log r)^q,
			\end{equation*}
			where the term on the right is the next most singular term in the expansion. In addition, we requires that the corresponding statement holds by differentiating any finite number of times. 
			
			\begin{theorem}{\cite{Mazzeo1991}}
				\label{thm_polyhomo}
				Suppose $\rho\in\ca(\MI)$ is polyhomogeneous with order $\frac12$ and index set $\{k+\frac12|k\in\mathbb{N}\}$, let $f\in V^-$ be the solution to $\Delta f=\rho$ in Theorem \ref{thm_donaldsonremainningestimate}, then $f$ has a polyhomogeneous expansion with index set $\{k+\frac12|k\in\mathbb{N}\}$. In particular, we could write
				$
				f=\Re(Az^\frac12+Bz^{\frac32})+E,
				$
				with $E$ an expansions $$E\sim \sum_{k\geq 2}\sum_{0\leq p\leq p_k} E_{k,p}(\theta,t)r^{k+\frac12}(\log r)^p,$$
				where $p_k$ are finite integers.
			\end{theorem}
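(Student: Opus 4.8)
The plan is to recognize $\Delta_g$ as an elliptic edge operator in the sense of Mazzeo \cite{Mazzeo1991}, with edge $\Sigma$ and transverse cone $\mathbb{C}=C(S^1)$, and then to invoke the general polyhomogeneity theorem for such operators once the indicial data have been computed. First I would fix the geometry near $\Sigma$: using the normal exponential map and the coordinates $(z=re^{i\theta},t)$ of Subsection \ref{subsec_43}, with $z=0$ cutting out $\Sigma$, the metric expands as $g=dr^2+r^2d\theta^2+g_\Sigma(t)+O(r)$ where the correction terms are smooth, hence polyhomogeneous with integer powers of $r$. Correspondingly,
$$
\Delta_g=\partial_r^2+\tfrac1r\partial_r+\tfrac{1}{r^2}\partial_\theta^2+\Delta_\Sigma+\mathcal{R},
$$
where $\mathcal{R}$ is a second-order edge operator whose coefficients are polyhomogeneous in $r$ and carry at least one extra power of $r$ relative to the model. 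The flat structure on $\MI$ with monodromy $-1$ makes every section antiperiodic in $\theta$, so it is expanded in Fourier modes $e^{in\theta}$ with $n\in\mathbb{Z}+\tfrac12$.

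Next I would compute the indicial roots. Multiplying the model part by $r^2$ produces the indicial operator $(r\partial_r)^2+\partial_\theta^2$, which acts on $r^\gamma e^{in\theta}$ by $(\gamma^2-n^2)r^\gamma e^{in\theta}$; hence the indicial roots are $\gamma=\pm|n|\in\pm\{k+\tfrac12:k\in\mathbb{N}\}$. The solution $f$ supplied by Theorem \ref{thm_donaldsonremainningestimate} lies in the weighted space in which only the nonnegative roots $\{k+\tfrac12:k\geq 0\}$ can occur, which already pins the admissible exponents to half-integers and recovers the leading term $\Re(Az^{1/2}+Bz^{3/2})$, corresponding to the modes $n=\pm\tfrac12,\pm\tfrac32$.

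The heart of the argument is the iterative peeling construction. Starting from the leading behaviour of Theorem \ref{thm_donaldsonremainningestimate}, I subtract a partial expansion $f_N=\sum_{\gamma_j\leq N}r^{\gamma_j}(\log r)^p\phi_{jp}(\theta,t)$ and observe that $\Delta_g(f-f_N)$ is again polyhomogeneous, now with leading order strictly larger than $N$. At each step I solve the model equation mode by mode: a source term $r^\mu(\log r)^q e^{in\theta}$ with $\mu\in\{k+\tfrac12\}$ yields a solution term at the shifted exponent $\gamma=\mu+2$, which is again a half-integer, so the expansion stays inside $\{k+\tfrac12\}$ and the remainder set begins at $k=2$. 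A logarithmic factor is generated precisely when the shifted exponent meets the indicial root of the same mode, i.e. when $n=\pm(\mu+2)$; this resonance affects only finitely many modes at each order, so the powers $p_k$ remain finite.

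Finally I would upgrade the formal expansion to a genuine asymptotic one: for the remainder $f-f_N$, iterating the estimates of Theorem \ref{thm_donaldsonremainningestimate} (equivalently, the edge-calculus mapping properties) gives $|f-f_N|\leq Cr^{\gamma_{N+1}}(\log r)^q$ together with the analogous bounds after applying any $D\in\MT_k$, which is exactly conormality plus the claimed expansion; uniqueness of the bounded solution then identifies the constructed $f$ with the one in the statement. The hard part will be the bookkeeping that keeps the iteration closed — checking that $\mathcal{R}$ and the $\Delta_\Sigma$-coupling across Fourier modes introduce no exponents outside $\mathbb{Z}+\tfrac12$ and do not let the $\log$ powers grow without bound at a fixed order. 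This is where the smooth, integer-power structure of the metric corrections and the half-integer spacing of the indicial roots are essential; once the edge structure and indicial data are in place, the conclusion is a direct application of Mazzeo's theorem.
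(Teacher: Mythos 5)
Your proposal is correct in substance, but it takes a different route from the paper's own proof. The paper (Appendix \ref{sec_backgroundpolyhomogeneoussolution}, Proposition \ref{prop_appendixpolyh}) works on the Mellin-transform side: starting from the conormality of $f$ (quoted from \cite[Section 3]{donaldson2019deformations}), it writes $r^2\Delta f = I(f)+E'(f)$, transforms to $I_{\zeta}f_M+(E'f)_M=\rho'_M$, and continues $f_M$ meromorphically strip by strip in $\Im\zeta$ using two facts: the indicial family $I_\zeta$ is invertible on $L^2_{\odd}(S^1\times\Sigma)$ away from the roots $\{\pm(k+\tfrac12)\}$, and the error operator $E'$ shifts the domain of holomorphy up by one unit (e.g. $(r\pa_\theta f)_{M}(\zeta)=(\pa_\theta f)_M(\zeta-i)$). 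Polyhomogeneity with the stated index set then falls out directly from Melrose's correspondence (Theorem \ref{thm_Mellintransformcorrespondence}) between meromorphic Mellin transforms and polyhomogeneous functions, with the log powers $p_k$ finite because the poles produced at each step have finite order. You instead run the physical-space peeling iteration: solve the model equation mode by mode, track the exponent shift $\mu\mapsto\mu+2$, identify resonances $n=\pm(\mu+2)$ as the source of logarithms, and then upgrade the formal series to a genuine expansion by remainder estimates. Both arguments hinge on the identical indicial computation (your $(\gamma^2-n^2)$ is the paper's $(\zeta^2-(k+\tfrac12)^2)$), so they are two presentations of the same edge-calculus mechanism; what the Mellin route buys is that the conormal-to-polyhomogeneous upgrade and the finiteness of the $p_k$ come for free from the correspondence theorem, whereas your peeling route must separately establish the remainder bounds $|f-f_N|\leq Cr^{\gamma_{N+1}}(\log r)^q$ together with all $\MT_k$-derivative analogues at every stage — precisely the step you leave as "iterating the estimates of Theorem \ref{thm_donaldsonremainningestimate}," which in practice again requires the conormality input the paper cites from Donaldson. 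Your approach has the advantage of making the resonance mechanism and the $k\geq 2$ onset of the remainder visibly explicit, while the paper's is shorter because the bookkeeping you flag as "the hard part" is absorbed into the meromorphic continuation.
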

			\begin{proof}
				$f$ is conormal follows by \cite[Section 3]{donaldson2019deformations}. We will give a proof of existence of polyhomogeneous expansion in the Appendix \ref{sec_backgroundpolyhomogeneoussolution}, Proposition \ref{prop_appendixpolyh}. 
			\end{proof}
			\begin{remark}
				When we talk about the polyhomogeneous expansion, we always assume that we choose a trivialization of the $N_{\Sigma}$ in a neighborhood of a point $p\in\Sigma$. However, the definition of nondegenerate will be independent of this choice.
			\end{remark}
			
			Let $\MS$ be the space of codimension 2 submanifolds in $L$, $\MMmec$ be the moduli space of Riemannian metric on $L$, which both we associate a $\MC^{\infty}$ topology. Given $\Sigma\in\MS$, let $R$ be the conjugate equivalent class of $\pi_1(L\setminus \Sigma)\to \Gamma$, then by Proposition \ref{prop_equivalentofconcepts}, $R$ could be identified with a real vector space $H^1(\tL;\mathbb{R})^-$.
			
			\begin{theorem}{\cite[Theorem 1]{donaldson2019deformations}}
				\label{thm_donaldsondeformation}
				Suppose $(\Sigma_0,\chi_0,f)$ is a nondegenerate $\ZT$ harmonic function over Riemannian manifold $(L,g_0)$, then there exists a neighborhood $\MU_1$ of $(g_0,\chi_0)\in \MMmec\ti R$ and a neighborhood $\MU_2$ of $\Sigma_0$ in $\MS$ such that for any $(g,\chi)\in\MU_1$, there exists a unique $\Sigma\in\MU_2$ such that the $\ZT$ harmonic function for $(\chi,\Sigma)$ on metric $g$ is nondegenerate.
			\end{theorem}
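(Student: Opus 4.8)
The plan is to realize the triple $(\Sigma,g,\chi)$ as the data of a free-boundary elliptic problem and to solve the nondegeneracy condition $A=0$ by an implicit function argument, in which the nowhere-vanishing of $B$ supplies the invertibility of the linearization. For data $(\Sigma,g,\chi)$ close to $(\Sigma_0,g_0,\chi_0)$, Proposition \ref{prop_l2variationexisttence} produces a unique harmonic section $f_{\Sigma,g,\chi}$ of $V^-$, and Theorem \ref{thm_donaldsonremainningestimate} together with the polyhomogeneity of Theorem \ref{thm_polyhomo} gives the expansion $f_{\Sigma,g,\chi}=\Re(A\ze^{\frac12}+B\ze^{\frac32})+E$ with $A\in\MC^{,\gamma}(N^{-\frac12})$ and $B\in\MC^{,\gamma}(N^{-\frac32})$. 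Parameterizing nearby submanifolds by normal sections $\psi\in\Gamma(N_{\Sigma_0})$ through the normal exponential map, and pulling the whole configuration back to the fixed bundles over $\Sigma_0$ by the diffeomorphism carrying $\Sigma_\psi$ to $\Sigma_0$, I obtain the obstruction map
\[
\MF:\Gamma(N_{\Sigma_0})\times\MMmec\times R\to \MC^{,\gamma}(N_{\Sigma_0}^{-\frac12}),\qquad \MF(\psi,g,\chi)=A_{\Sigma_\psi,g,\chi},
\]
which by nondegeneracy of the initial datum satisfies $\MF(0,g_0,\chi_0)=0$. The pull-back turns the moving-boundary problem into a fixed-domain problem with coefficients depending on $(\psi,g,\chi)$, to which Donaldson's isomorphism theorem applies uniformly; this is what makes the extraction of $A$ a bounded, and in fact $C^1$, operation on the weighted H\"older spaces of the excerpt.

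The heart of the argument is the computation of the partial linearization $D_\psi\MF|_{(0,g_0,\chi_0)}$, and its mechanism is already visible in the flat model on $\mathbb{C}$: writing $f=\Re(B\ze^{\frac32})$ and shifting the branch point by $\varepsilon\psi$ gives $(\ze-\varepsilon\psi)^{\frac32}=\ze^{\frac32}-\tfrac32\varepsilon\psi\,\ze^{\frac12}+O(\varepsilon^2)$, so the new $\ze^{\frac12}$-coefficient is $-\tfrac32 B\psi$. Since $\psi\in\Gamma(N_{\Sigma_0})$ and $B\in\Gamma(N^{-\frac32})$, the product $B\psi$ is a section of $N^{-\frac12}$, exactly the target bundle, and the leading part of $D_\psi\MF$ is the algebraic multiplication $\psi\mapsto-\tfrac32 B\psi$. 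I would then show, using the decay estimates for $E$ in Theorem \ref{thm_donaldsonremainningestimate}, that the contribution from re-solving the global harmonic problem after moving the branch locus produces only terms of lower order in the relevant scaling, so that $D_\psi\MF$ equals this pointwise multiplication plus a subordinate remainder. Because $B$ is nowhere vanishing along the compact $\Sigma_0$, multiplication by $B$ is a pointwise isomorphism of complex line bundles, hence a bounded isomorphism of the corresponding H\"older spaces; the full linearization is therefore an isomorphism plus a relatively compact perturbation, Fredholm of index zero, and the nonvanishing of $B$ forces injectivity and thus the isomorphism property.

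With $D_\psi\MF|_{(0,g_0,\chi_0)}$ an isomorphism, the implicit function theorem on Banach spaces yields, for every $(g,\chi)$ in a neighborhood $\MU_1$ of $(g_0,\chi_0)$ in $\MMmec\times R$, a unique small $\psi=\psi(g,\chi)$ with $\MF(\psi,g,\chi)=0$, depending continuously on $(g,\chi)$; the corresponding $\Sigma=\Sigma_{\psi(g,\chi)}$ lies in a neighborhood $\MU_2$ of $\Sigma_0$ in $\MS$ and is the unique such submanifold there, giving both existence and uniqueness. The vanishing $A=0$ is one half of Definition \ref{def_nondegenerate}; for the other half I would note that $B$ depends continuously on the data and that nowhere-vanishing is an open condition on the compact $\Sigma_0$, so after shrinking $\MU_1$ and $\MU_2$ the new $B$ remains nowhere vanishing, whence the $\ZT$ harmonic function attached to $(\chi,\Sigma)$ on the metric $g$ is nondegenerate.

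The main obstacle I anticipate lies in the two middle steps: proving that the coefficient functional $A$ is genuinely differentiable in $(\psi,g,\chi)$ and that its $\psi$-linearization is dominated by the algebraic term $B\psi$. Both rest on controlling how the polyhomogeneous expansion coefficients, which are data living on the moving singular set $\Sigma$, vary under deformation of the branch locus, the metric, and the representation. This is a nonstandard implicit function problem, since the domain and codomain are H\"older spaces of sections of bundles $N_{\Sigma_0}$ and $N_{\Sigma_0}^{-\frac12}$ over a singular set that is itself being varied, and the map is defined only through solving a singular elliptic boundary value problem with a free boundary; the weighted Schauder theory underlying Theorems \ref{thm_donaldsonremainningestimate} and \ref{thm_polyhomo} is precisely what is needed to make the coefficient extraction a bounded, differentiable operation with the claimed leading behavior.
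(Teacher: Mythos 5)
The paper does not actually prove this statement: it is imported verbatim from Donaldson \cite{donaldson2019deformations}, and the only argument the paper adds is the remark following the theorem, which upgrades Donaldson's conclusion $A(\Sigma,g,\chi)=0$ to full nondegeneracy via the continuity of $B$ in the data and the openness of nowhere-vanishing along the compact $\Sigma_0$ --- a point your final paragraph reproduces correctly. Your overall scheme (the obstruction map $\MF(\psi,g,\chi)=A_{\Sigma_\psi,g,\chi}$ pulled back to fixed bundles over $\Sigma_0$, the model expansion $(\ze-\varepsilon\psi)^{\frac32}=\ze^{\frac32}-\tfrac32\varepsilon\psi\,\ze^{\frac12}+O(\varepsilon^2)$ identifying the leading linearization as multiplication by $-\tfrac32 B$, and the Banach implicit function theorem) is indeed the skeleton of Donaldson's actual proof.

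There is, however, a genuine gap at the decisive step. From ``$D_\psi\MF=-\tfrac32 B+S$ with $S$ relatively compact'' you conclude the operator is ``Fredholm of index zero, and the nonvanishing of $B$ forces injectivity and thus the isomorphism property.'' That inference is a non sequitur: a compact perturbation of an isomorphism is Fredholm of index zero but can perfectly well have nontrivial kernel, and the nonvanishing of $B$ constrains only the principal term, not the kernel of the full operator. Compactness of $S$ makes its norm small only on high-frequency modes, so at best it rules out kernel concentrated at high frequency; a finite-dimensional kernel remains possible. Nor can you argue $S$ is small in norm --- it encodes the fixed curvature of $g_0$ and the global Green operator $\Delta^{-1}$, neither of which is a small quantity. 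So injectivity of the linearization needs its own argument, and this is exactly where the substance of Donaldson's proof lies rather than being a formal consequence of $B\neq 0$.

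A second, related soft spot is the differentiability claim for $\MF$. The variation of the branch locus enters the pulled-back equation through $\mathcal{L}_v g_0$ and its first derivatives, hence through \emph{two} derivatives of $\psi$; the right-hand side one must invert has the shape $\Re(\tau\ze^{-\frac12})+\xi$ with $\tau$ involving $\nabla\psi$ and $\xi$ involving $r^{\frac12}\nabla^2\psi$. Consequently the nonlocal remainder in $D_\psi\MF$ a priori loses derivatives relative to the multiplication term, and to place it in a space compactly (or even boundedly) included in the target one must exploit the weighted gains of Theorem \ref{thm_donaldsonremainningestimate} (the $A$-coefficient lies in $\MC^{k+1,\ga+\frac12}$) together with the $r^{\frac12}$ factors on the second-derivative terms. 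You flag this as the ``main obstacle,'' which is honest, but as written the proposal establishes neither that $\MF$ is $C^1$ between the stated H\"older spaces nor that the remainder is subordinate to $-\tfrac32 B$; combined with the injectivity gap above, the implicit function theorem cannot yet be invoked.
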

			
			\begin{remark}
				Let $f_{(\Sigma,g,\chi)}$ be the $\ZT$ harmonic function obtained in Theorem \ref{thm_donaldsondeformation}, we write the leading expansion of $$f_{(\Sigma,g,\chi)}\sim \Re(A(\Sigma,g,\chi)z^{\frac12}+B(\Sigma,g,\chi)z^{\frac32}).$$
				The original statement of Theorem \ref{thm_donaldsondeformation} is for $(g,\chi)$ sufficiently close to $(g_0,\chi_0)$, we have $A(\Sigma,g,\chi)=0$. However, by \cite[Proposition 8]{donaldson2019deformations}, $B(\Sigma,g,\chi)$ is a continuous function on $\Sigma$. Therefore, as $B(\Sigma_0,g_0,\chi_0)$ is nowhere vanishing along $\Sigma_0$, $B(\Sigma,g,\chi)$ also no where vanishing along $\Sigma$, which implies $f_{(\Sigma,g,\chi)}$ is nondegenerate. 
			\end{remark}
		\end{subsection}
	\end{section}
	
	\begin{section}{The special Lagrangian equation for multivalued 1-forms}
		\label{sec_5}
		By McLean's deformation theorem (Theorem \ref{theorem_classicaldeformation}), the $\MC^1$ deformations of special Lagrangian submanifolds are parameterized by harmonic 1-forms. Motivated by Donaldson \cite{donaldson2019deformations} as in Question \ref{question_donaldsonsquestion}, we will explicit discuss the special Lagrangian equation for multivalued 1-forms.
		\begin{subsection}{Examples of branched deformations}
			To warm up, we introduce some toy examples of the branched deformation of special Lagrangian submanifolds when the background Calabi-Yau is hyperK\"ahler, where we could use hyperK\"ahler rotation to produce examples.
			\subsubsection{Local model for the branched deformation}
			\label{subsubsection_localmodel}
			We start with a toy model for Question \ref{question_donaldsonsquestion} of the branched deformation. Let $\mathbb{C}^2$ be the two dimensional complex plane with complex coordinates $z=x_1+\sqrt{-1}y_1,w=x_2+\sqrt{-1}y_2$. We consider the following Calabi-Yau structure $(\mathbb{C}^2$,$I$,$\omega$,$\Omega)$ with metric $g$, where
			\begin{equation}
				\begin{split}
					&g=dx_1^2+dx_2^2+dy_1^2+dy_2^2,\\
					&J(\pa_{x_1})=\pa_{x_2},\;J(\pa_{y_1})=-\pa_{y_2},\;\omega=dx_1\we dx_2-dy_1\we dy_2,\\
					&\Im\Omega=dx_1\we dy_2+dy_1\we dx_2,\;\Re\Omega=-dx_1\we dy_1-dx_2\we dy_2. 
				\end{split}
			\end{equation}
			It is straight forward to see that $dz\we dw=\omega-i\Im \Omega$. 
			
			As a real 2-dimensional submanifold $L\subset \mathbb{C}^2$ is a special Lagrangian submanifold if $\omega|_{L}=\Im\Omega|_{L}=0$. Therefore, $L$ is a special Lagrangian if and only if $dz\we dw|_{M}=0$, which holds when $L$ is a holomorphic submanifold with respect to the complex coordinates $(z,w)$. 
			
			Let $L_0$ be the complex $z$ plane, which is also a special Lagrangian submanifold. We consider a multivalued harmonic function $f_t^k=t\frac{2}{2k+1}\Re(z^{\frac{1}{2}+k})$ over $L_0\setminus\{z=0\}$ and write $z=re^{i\theta}$, then the corresponding multivalued harmonic 1-form would be $$v_t^k:=df_t^k=t(r^{\frac{2k-1}{2}}\cos((k-\frac12)\theta)dx_1-r^{\frac{2k-1}{2}}\sin((k-\frac12)\theta)dy_1).$$ 
			
			A section of $T^{\st}L_0\cong \mathbb{C}^2$ can be identified with a submanifold under the composition of the following maps $$\Phi:T^{\st}L_0\rightarrow TL_0\rightarrow N_{L_0}\rightarrow X,$$
			where the first map above is given by the Riemannian metric on $L_0$ and the second map is given by the complex structure $J$. 
			
			The defining equations of the graph manifold of $\Phi(v_t^k)$ could be written as $$L_t^k:=\{(z,\Phi(df_t^k|_z))\in\mathbb{C}^2\}=\{(z,w)\in\mathbb{C}^2|w^2=t^2z^{2k-1}\}.$$ 
			
			However, only when $k=1$, $L_t^1$ will be a family of smooth special Lagrangian submanifolds such that $L_t^1\to 2L_0$ as current when $t\to 0$. 
			
			When $k>1$, we found $L_t^k$ will not be a smooth submanifold and when $k=0$, as $df_t^k$ behave as $z^{-1}$ along $\{z=0\}$, $L_t^0$ will be a large deformation of $L_0$. This example also explain the reason for introducing the concept of nondegenerate in Definition \ref{def_nondegenerate}. 
			
			We should also note that the above example is very special because we use extra symmetries, especially hyperK\"ahler rotations to produce special Lagrangian submanifolds, which are branched along the same branch locus $\Sigma=\{z=0\}$.
			In general, we might hope that we could glue back the model solution to obtain a real branched cover, where we refer to \cite{liu2011immersed} for some attempts. However, when $n=\dim_{\mathbb{C}}X\geq 3$, we expect the branch locus itself will be changed for the family, sharing the same behavior as in Theorem \ref{thm_donaldsondeformation}.
			
			\subsubsection{The deformations induced by holomorphic quadratic differentials}
			Besides the toy model, the quadratic differentials over a Riemannian surface $\Sigma$, with genus $g(\Sigma)>1$, are important sources for $\ZT$ harmonic 1-forms. We could also use the nearby hyperK\"ahler structure to produce examples of branched deformations.
			
			The holomorphic structure on $\Sigma$ induces an holomorphic structure on $T^{\st}\Sigma$, which we denote as $I$. Let $K_{\Sigma}$ be the canonical bundle of $\Sigma$ and $\pi:K_{\Sigma}\to \Sigma$ be the projection, then there exists an holomorphic tautological section $\lam\in \Gamma(\pi^{\st}K_{\Sigma})$ such that for any vector $v\in T(T^{\st}\Sigma)$, $\lam|_{(x,\al)}(v)=\al(\pi_{\st}v)$, where $(x,\al)\in K_{\Sigma}$ with $x\in\Sigma$, $\al\in K_{\Sigma}|_x$. We define $\omegac=d\lambda$, then $\omegac$ will be a holomorphic symplectic form. In a neighborhood of the zero section of $T^{\st}\Sigma$, there exists a Feix–Kaledin metric to make it hyperK\"ahler.
			
			\begin{theorem}{\cite{feix2001hk,kaledin1999canonical}}
				There exists $X_{\Sigma}\subset T^{\st}\Sigma$, an open neighborhood of zero section and a Riemannian metric $g$ on $X$ such that $(X,g,I,\omegac)$ is a hyperK\"ahler manifold. 
			\end{theorem}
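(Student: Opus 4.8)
The statement is the Feix--Kaledin theorem specialized to a Riemann surface, so my plan is to reconstruct the hyperk\"ahler structure on a neighborhood of the zero section through Hitchin's twistor correspondence, exploiting that the Kähler metric on $\Sigma$ underlying the holomorphic data may be taken real-analytic. The first step is to complexify: a real-analytic Kähler metric extends holomorphically, so its K\"ahler potential promotes to a holomorphic function on a neighborhood $\Sigma^{\mathbb{C}}$ of the diagonal inside $\Sigma\times\overline{\Sigma}$, where $\overline{\Sigma}$ is $\Sigma$ with the conjugate complex structure. This $\Sigma^{\mathbb{C}}$ carries two transverse holomorphic foliations, namely the fibers of the two projections to $\Sigma$ and to $\overline{\Sigma}$, whose leaf spaces recover $\Sigma$ and $\overline{\Sigma}$; the complexified potential supplies the holomorphic data linking the two foliations and is the engine of the whole construction.

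Next I would assemble the twistor space $\pi:Z\to\mathbb{P}^1$. The idea is to glue two copies of $T^{\st}\Sigma$ over the affine charts $\zeta\neq\infty$ and $\zeta\neq 0$ of $\mathbb{P}^1$, arranging the fiber over $\zeta=0$ to carry exactly the given holomorphic structure $I$ and the fiber over $\zeta=\infty$ its conjugate, with the transition over $\zeta\neq 0,\infty$ dictated by $\Sigma^{\mathbb{C}}$: a point of $\Sigma$ and a point of $\overline{\Sigma}$ close enough to be joined inside $\Sigma^{\mathbb{C}}$ determine, together with $\zeta$, a point of the twisted cotangent fiber, and the complexified potential produces both the transition functions and the fiberwise holomorphic symplectic form valued in $\pi^{\st}\mathcal{O}(2)$, normalized over $\zeta=0$ to the tautological $\omegac=d\lambda$. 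I would equip $Z$ with the antiholomorphic involution covering the antipodal map on $\mathbb{P}^1$, built from the complex conjugation that interchanges the two foliations.

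With $Z$ in hand I would invoke the inverse twistor construction of Hitchin--Karlhede--Lindstr\"om--Ro\v{c}ek. One checks that the sections of $\pi$ coming from points of $\Sigma$, i.e.\ the real twistor lines near those representing the zero section, have normal bundle $\mathcal{O}(1)^{\oplus 2}$, that the real structure is compatible, and that the twisted fiberwise symplectic form is present. Kodaira deformation theory then produces a four-real-dimensional family of real twistor lines, which is precisely the manifold $X_\Sigma\subset T^{\st}\Sigma$, and the hyperk\"ahler metric $g$ is read off from the $\mathcal{O}(1)^{\oplus 2}$ pairing on normal bundles. Finally I would verify the normalizations demanded by the statement: that the complex structure attached to $\zeta=0$ is the given $I$ on $T^{\st}\Sigma$, that the fiberwise symplectic form restricts over $\zeta=0$ to $\omegac=d\lambda$, and that the zero section is recovered as the twistor lines lying over $\Sigma$.

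The main obstacle is twofold. First, the transition data on $Z$ must be constructed so that the twistor lines have exactly normal bundle $\mathcal{O}(1)^{\oplus 2}$ rather than a more negative splitting type; this positivity is what makes the deformation family a genuine hyperk\"ahler manifold, and it is here that real-analyticity and the precise shape of the complexified potential are indispensable. Second, one must control the domain of validity, since $\Sigma^{\mathbb{C}}$, and hence the glued twistor space and the family of real lines, only exists in a neighborhood of the diagonal; this is exactly what forces the conclusion to hold only on a neighborhood $X_\Sigma$ of the zero section. An alternative to the twistor route is Kaledin's formal-to-analytic argument, constructing the hyperk\"ahler data order by order on the formal neighborhood of $\Sigma$ and then invoking real-analyticity for convergence; the same two difficulties reappear there as the solvability of the recursion and the estimate giving a nonzero radius of convergence.
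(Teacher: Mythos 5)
The paper offers no proof of this statement at all---it is quoted directly from the cited works of Feix and Kaledin---and your outline is essentially Feix's published twistor-theoretic argument: holomorphic complexification of a real-analytic K\"ahler potential on a neighborhood of the diagonal in $\Sigma\times\overline{\Sigma}$, a glued twistor space over $\mathbb{P}^1$ carrying an $\mathcal{O}(2)$-twisted fiberwise symplectic form normalized to $\omegac=d\lambda$ over $\zeta=0$ together with a real structure covering the antipodal map, verification that the relevant twistor lines have normal bundle $\mathcal{O}(1)^{\oplus 2}$, and recovery of the metric on the resulting four-real-dimensional family of real lines via Kodaira deformation theory and the Hitchin--Karlhede--Lindstr\"om--Ro\v{c}ek correspondence, with Kaledin's formal-to-convergent recursion correctly identified as the alternative route. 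The one point worth making explicit in the paper's setting is the hypothesis you rely on throughout: Feix--Kaledin requires the K\"ahler metric on the base to be real-analytic, which is harmless here since on a compact Riemann surface one may simply choose such a metric, but it should be stated when invoking the theorem.
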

			
			Let $q$ be a quadratic differential with simple zeroes, $t\in\mathbb{R}$ be a real parameter, we define the following family of curves
			\begin{equation}
				\label{eq_spectralcurve}
				\tSigma_t=\{\lam^2-tq=0\}\subset T^{\st}\Sigma,
			\end{equation}
			which could also be understood as the spectral curves for Higgs bundles \cite{hitchin1987self}.
			
			For small enough $t$, we have $\tSigma_t\subset X_{\Sigma}$. In addition, as $q$ is a quadratic differential with simple zeroes, $\tSigma_t$ are all smooth and have the same topological type, which we denote as $\tSigma$, with the branched covering map $p:\tSigma\to \Sigma$. We write $\tiota_t:\tSigma\to X_{\Sigma}$ be the inclusion map of $\tSigma_t$ and $\tiota_0:\Sigma\to T^{\st}\Sigma$ be the inclusion of the zero section. 
			
			We write $\omega_J:=\Re\omegac,\;\omega_K:=\Im\omegac$ and the corresponding complex structures $J,K$. We define $\Omega=\omega_I+\sqrt{-1}\omega_K$, which is a covariant constant nowhere vanishing form, then $(X_{\Sigma},J,\omega_J,\Omega)$ is a Calabi-Yau structure. 
			\begin{proposition}
				\label{prop_RiemanniansurfacespecialLagrangian}
				Over the Calabi-Yau manifold $(X_{\Sigma},J,\omega_J,\Omega)$, the following holds:
				\begin{itemize}
					\item [(i)] $\lim_{t\to 0}\|\tiota_t-\iota_0\circ p\|_{\ca}=0$,
					\item [(ii)] $[\tSigma_t]=(\tSigma,\iota_t)$ are special Lagrangian submanifolds of $X_{\Sigma}$.
				\end{itemize}
			\end{proposition}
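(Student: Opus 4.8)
The plan is to reduce the special Lagrangian condition for the Calabi--Yau structure $(X_{\Sigma},J,\omega_J,\Omega)$ to a single complex equation, and then to verify that equation for the spectral curves directly from the tautological form. Since $\omega_J=\Re\omegac$ and $\omega_K=\Im\omegac$ we have $\omegac=\omega_J+\sqrt{-1}\,\omega_K$, and by construction $\Im\Omega=\omega_K$. Hence a surface $L\subset X_{\Sigma}$ is special Lagrangian for $(X_{\Sigma},J,\omega_J,\Omega)$ exactly when $\omega_J|_L=0$ (Lagrangian) and $\Im\Omega|_L=\omega_K|_L=0$ (special), and these two conditions together are equivalent to the single requirement $\omegac|_L=0$; the correct phase $e^{\sqrt{-1}\theta}=1$ is then fixed by orienting $L$ as a complex curve. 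To prove (ii) I would note that $\tSigma_t=\{\lam^2-t\pi^{\st}q=0\}$ is the zero locus of an $I$-holomorphic function, hence an $I$-holomorphic curve, and that $\omegac=d\lam$ with $\lam$ the holomorphic Liouville form gives $\tiota_t^{\st}\omegac=d(\tiota_t^{\st}\lam)$. The pullback $\tiota_t^{\st}\lam$ is the restriction of a holomorphic $1$-form to the Riemann surface $\tSigma_t$, so it is itself a holomorphic $1$-form there, namely $\pm\sqrt{t}\,p^{\st}(\sqrt{q})$ in the coordinates where $\lam=\zeta\,dz$; every holomorphic $1$-form on a Riemann surface is $d$-closed, so $\tiota_t^{\st}\omegac=0$ and $[\tSigma_t]$ is special Lagrangian.

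For part (i) I would identify the fixed surface $\tSigma$ with each $\tSigma_t$ through the fibrewise rescaling $m_s\colon(z,\zeta)\mapsto(z,s\zeta)$, which carries $\tSigma_1$ onto $\tSigma_t$ for $s=\sqrt{t}$ because $\zeta^2=q$ forces $(s\zeta)^2=tq$. Setting $\tiota_t=m_{\sqrt t}\circ\tiota_1$, the maps $\tiota_t$ and $\iota_0\circ p$ have the same base point $p(\tx)$ and differ only in the fibre, by the fixed, $t$-independent section $\zeta_1=p^{\st}(\sqrt q)$ scaled by $\sqrt{t}$. Near a simple zero $z=0$ of $q$ one has $\zeta_1\sim\sqrt{z}\sim\tz$ in the branched coordinate $z=\tz^2$, so $\zeta_1$ has finite $\ca$ norm for the metric $p^{\st}g_{\Sigma}$, and therefore $\|\tiota_t-\iota_0\circ p\|_{\ca}\LS\sqrt{t}\,\|\zeta_1\|_{\ca}\to 0$ as $t\to 0$.

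The holomorphic--Lagrangian argument for (ii) is essentially formal, so I expect the main obstacle to lie in part (i) near the branch locus, where the pullback metric $p^{\st}g_{\Sigma}$ degenerates and where the Feix--Kaledin metric on $X_{\Sigma}$ differs from the flat fibre model of $T^{\st}\Sigma$. The point that must be checked is that the rescaling $m_{\sqrt t}$, composed with the inclusion $X_{\Sigma}\subset T^{\st}\Sigma$, distorts the $\ca$ norm only by a $t$-independent constant, so that the clean $\sqrt t$ scaling persists uniformly up to and across the zeros of $q$. This is precisely where the nondegeneracy of $q$ (simple zeros) and the branch-adapted definition of the $\ca$ norm enter.
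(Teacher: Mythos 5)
Your proposal is correct and takes essentially the same route as the paper: for (ii) the paper likewise reduces the special Lagrangian condition to the vanishing of $\tiota_t^{\st}\omegac$ on the $I$-holomorphic spectral curve (it computes $\tiota_t^{\st}\lam d\lam=\bar{\pa}q=0$, where you instead invoke closedness of the holomorphic $1$-form $\tiota_t^{\st}\lam$ --- the same vanishing mechanism), and it dismisses (i) as ``straightforward.'' Your fibrewise $\sqrt{t}$-scaling argument with the fixed section $p^{\st}(\sqrt{q})$ correctly fills in that step, and your closing worry is essentially resolved by the paper's conventions: the $\ca$ norm is taken with respect to $p^{\st}g_L$ on the domain together with the linear fibre structure of $T^{\st}\Sigma$, so no Feix--Kaledin distortion enters, and the Hölder bound on $\zeta_1\sim\tz$ across the cone points of angle $4\pi$ is exactly where the standing restriction $0<\gamma<\frac12$ is used.
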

			\begin{proof}
				(i) is straight forward. For (ii), as $\tiota_t^{\st}\lam d\lam=\bar{\pa}q=0,$ we have $\tiota_t^{\st}\omega_J=\tiota_t^{\st}\omega_K=0$. Therefore, $\tiota_t^{\st}\Im\Omega=\tiota_t^{\st}\omega_K=0$, which implies (ii).  
			\end{proof}
			
			\subsection{The splitting of the special Lagrangian equation}
			\label{subsection52splitting}
			Let $(U_L,J,\omega,\Omega)$ be a Calabi-Yau structure on a Weinstein neighborhood $U_L\subset T^{\st}L$ and $\theta$ be the Lagrangian angle of the zero section. 
			
			Given a pair $\mfa=\al^++\al^-$ and we write $\tal=p^{\st}\mfa\in \Omega^1(\tL)$ with the inclusion $[\tL_{\tal}]:=(\tL,\iota_{\al})$ defined in \eqref{eq_definingtheinclusionmap}, which is the graph of $\tal$ on $U_L$. Then $[\tL_{\tal}]$ is a symplectic manifold if $\iota_{\tal}^{\st}\omega=0$, which is equivalent to $d\mfa=0$. The special Lagrangian condition for $\tL$ would be $\iota_{\tal}^{\st}\Im\Omega=0$. 
			\begin{definition}
				\label{def_singularmetricddd}
				The special Lagrangian equation for $[L_{\tal}]$ is defined as $$\SL(\tal):=\st_{p^{\st}g}\iota_{\tal}^{\st}\Im\Omega.$$
			\end{definition}
		The above equation could be understood as an equation on $L$. As $\iota_{\tal}^{\st}\Im\Omega$ is an n-form over $\tL$, under the splitting induced by the involution, we could write 
			\begin{equation}
				\label{eq_32decompositionovertikdel}
				\iota_{\tal}^{\st}\Im\Omega=p^{\st}\be^++p^{\st}\be^-,
			\end{equation} where $\be^+\in\Omega^n(L)$ and $\be^-\in \Omega^n(L,\MI)$. We define $\SL(\mfa)^{\pm}=\st_g\be^{\pm}$, where $\SL(\mfa)^+$ is a function on $L$ and $\SL(\mfa)^-$ is an $\MI$ valued function on $L$. As $\st_{p^{\st}g}p^{\st}\be^{\pm}=p^{\st}(\st_g\be^{\pm}),$ we obtain
			\begin{equation}
				\begin{split}
					\SL(\tal)=p^{\st}(\SL(\mfa)^+)+p^{\st}(\SL(\mfa)^-).
					\label{eq_33equationdecomposition}
				\end{split}
			\end{equation}
			If we write $\sigma$ be the involution over $\tL$, then $\sigma^{\st}p^{\st}(\SL(\mfa)^{\pm})=p^{\st}(\SL(\mfa)^{\pm})$
			
			In particular, we could define $$\SL(\mfa):=\SL(\mfa)^++\SL(\mfa)^-,$$ which could be considered as a two-valued function on $L$. Moreover, as $\mfa$ could be regarded as a two-valued form, we could write 
			$$
			\SL(\mfa)=\sin\theta-d^{\st}\mfa+Q(\mfa),
			$$
			where each terms could also be understood as a two-valued function.
			
			\begin{proposition}
				$[\tL_{\tal}]=(\tL,\iota_{\tal})$ is a special Lagrangian submanifold if and only if $d\mfa=0$ and $\SL(\mfa)=0$.
			\end{proposition}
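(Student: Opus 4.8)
The plan is to reduce the statement to the definition of a special Lagrangian submanifold given in Section \ref{sec_sLaggeometry}: the immersed submanifold $[\tL_{\tal}]=(\tL,\iota_{\tal})$ is special Lagrangian exactly when the two conditions $\iota_{\tal}^{\st}\omega=0$ and $\iota_{\tal}^{\st}\Im\Omega=0$ hold simultaneously. The first condition is the Lagrangian (symplectic) condition, which was already recorded in the paragraph preceding the statement: writing $\tal=p^{\st}\mfa$ and using that the pull-back of the canonical symplectic form to the graph is a multiple of $d\tal=p^{\st}(d\mfa)$, together with the injectivity of $p^{\st}$ on forms over $L\setminus\Sigma$, one sees that $\iota_{\tal}^{\st}\omega=0$ is equivalent to $d\mfa=0$.

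It therefore remains to show that, under the Lagrangian condition, $\iota_{\tal}^{\st}\Im\Omega=0$ is equivalent to $\SL(\mfa)=0$. First I would note that $\st_{p^{\st}g}$ is a pointwise linear isomorphism of the form bundles away from $\Sigma$, so $\iota_{\tal}^{\st}\Im\Omega=0$ holds if and only if $\SL(\tal)=\st_{p^{\st}g}\,\iota_{\tal}^{\st}\Im\Omega=0$. I then invoke the splitting \eqref{eq_32decompositionovertikdel} of the $n$-form $\iota_{\tal}^{\st}\Im\Omega=p^{\st}\be^++p^{\st}\be^-$ into its $\sigma$-invariant and $\sigma$-anti-invariant parts, together with the identity \eqref{eq_33equationdecomposition}, to write $\SL(\tal)=p^{\st}(\SL(\mfa)^+)+p^{\st}(\SL(\mfa)^-)$.

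The key step is to conclude that $\SL(\tal)=0$ if and only if $\SL(\mfa)=0$. The two summands $p^{\st}(\SL(\mfa)^+)$ and $p^{\st}(\SL(\mfa)^-)$ lie respectively in the distinct eigenspaces $\Omega^0(\tL)^+$ and $\Omega^0(\tL)^-$ of $\sigma^{\st}$ --- the first descends from $L$, while the second is built from an $\MI$-valued function and so changes sign under the deck transformation $\sigma$. Since this eigenspace decomposition is a direct sum, the sum vanishes if and only if each summand vanishes; and because $p$ is surjective, $p^{\st}$ is injective on sections, so $p^{\st}(\SL(\mfa)^{\pm})=0$ is equivalent to $\SL(\mfa)^{\pm}=0$, i.e.\ to the vanishing of the two-valued function $\SL(\mfa)=\SL(\mfa)^++\SL(\mfa)^-$. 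Combining this with the Lagrangian condition yields the desired equivalence.

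I do not expect a serious obstacle: the statement is in essence a bookkeeping exercise unpacking the definitions of $\SL(\tal)$ and $\SL(\mfa)$ and the splitting induced by the involution $\sigma$. The one point deserving care is the branch locus itself, since the pull-back identities and the (anti-)invariance statements hold a priori only on $\tL\setminus\Sigma$; I would close this gap by observing that all the functions in play are continuous --- the relevant one-form $\al^-$ is bounded and vanishes along $\Sigma$, as recorded in Subsection \ref{subsec_43} --- so that the equalities extend across $\Sigma$ by continuity.
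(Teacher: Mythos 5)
Your proposal is correct and follows essentially the same route as the paper: the paper's proof likewise reduces everything to the decomposition $\SL(\tal)=p^{\st}(\SL(\mfa)^+)+p^{\st}(\SL(\mfa)^-)$ from \eqref{eq_33equationdecomposition}, concludes $\SL(\mfa)^{\pm}=0$ hence $\st_g\be^{\pm}=0$, and then recovers $\iota_{\tal}^{\st}\Im\Omega=0$ via $p^{\st}(\st_g\be^{\pm})=\st_{p^{\st}g}p^{\st}\be^{\pm}$ and \eqref{eq_32decompositionovertikdel}. You merely make explicit some points the paper leaves implicit (the $\sigma^{\st}$-eigenspace direct sum, injectivity of $p^{\st}$, and continuity across the branch locus $\Sigma$), all of which are sound.
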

			\begin{proof}
				The non-trivial part is the $"\mathrm{if}"$ part. By \eqref{eq_33equationdecomposition}, $\SL(\mfa)=0$ implies $\SL(\mfa)^{\pm}=0$, thus $\st_g\be^{\pm}=0$. In addition, as $p^{\st}(\st_g\be^{\pm})=\st_{p^{\st}g}p^{\st}\be^{\pm}$, by \eqref{eq_32decompositionovertikdel}, $\SL(\mfa)=0$ implies $\iota_{\tal}^{\st}\Im\Omega=0$. Therefore, $[\tL_{\tal}]$ is a special Lagrangian submanifold.
			\end{proof}
			
			Similarly, for $\mfa=\al^++\al^-$, for each terms in the decomposition of special Lagrangian equation in Proposition \ref{prop_specialLagrangianlineartermstructure}, \ref{prop_structureofspecialLagrangian}, we could define $Q(\mfa),P(\mfa),S(\mfa)$ with the decomposition $$P(\mfa)=P^+(\mfa)+P^-(\mfa),\;S(\mfa)=S^+(\mfa)+S^-(\mfa).$$ 
			As $(d^{\st}\mfa)^-=d^{\st}\al^-,\;(d^{\st}\mfa)^+=d^{\st}\al^+$, then the decomposition of special Lagrangian equation could be written as
			\begin{equation}
				\label{eq_termsinvolvingpm}
				\begin{split}
					\SL(\mfa)^+=\sin\theta-d^{\st}\al^++Q(\mfa)^+,\;\SL(\mfa)^-=-d^{\st}\al^-+Q(\mfa)^-.
				\end{split}
			\end{equation}
			
			Let $\iota_0:L\to U_L\subset T^{\st}L$ be the inclusion of the zero section, we write $[\iota_0^{\st}\Im\Omega]$ be the homology class in $H^n(L;\mathbb{R})$, then we have the following.
			\begin{proposition}
				\label{prop_integrationvanishes}
				Suppose $[\iota_0^{\st}\Im\Omega]=0\in H^n(L;\mathbb{R})$ , then $\int_L\SL(\mfa)^+d\Vol_g=0$.
			\end{proposition}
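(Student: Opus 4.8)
The plan is to rewrite the integral as a cohomological pairing on the branched cover $\tL$ and then invoke the hypothesis. First I would observe that since $\SL(\mfa)^+=\st_g\be^+$ with $\be^+\in\Omega^n(L)$, on the $n$-manifold $L$ this just says $\be^+=\SL(\mfa)^+\,d\Vol_g$, so $\int_L\SL(\mfa)^+\,d\Vol_g=\int_L\be^+$. The form $\be^+$ lives downstairs, but it is defined through the $\sigma$-invariant part of $\iota_{\tal}^{\st}\Im\Omega$ upstairs, so I would transfer the computation to $\tL$ using that $p$ is a degree-two branched cover: since the branch locus $\Sigma$ has measure zero and $p$ is orientation preserving (it is modelled on $\tz\mapsto \tz^2$, which is holomorphic away from $\Sigma$), one has $\int_{\tL}p^{\st}\be^+=2\int_L\be^+$.

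Next I would evaluate $\int_{\tL}p^{\st}\be^+$. Under the decomposition $\Omega^n(\tL)=\Omega^n(\tL)^+\oplus\Omega^n(\tL)^-$ the invariant part is the projection $p^{\st}\be^+=\frac{1}{2}(\iota_{\tal}^{\st}\Im\Omega+\sigma^{\st}\iota_{\tal}^{\st}\Im\Omega)$. The involution $\sigma$ is also orientation preserving (it is modelled on $\tz\mapsto-\tz$, a rotation of the normal $2$-plane fixing $\Sigma$), hence $\int_{\tL}\sigma^{\st}\eta=\int_{\tL}\eta$ for every top form $\eta$, and therefore $\int_{\tL}p^{\st}\be^+=\int_{\tL}\iota_{\tal}^{\st}\Im\Omega$. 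Now $\Im\Omega$ is closed because $\Omega$ is holomorphic, so the integral of $\iota_{\tal}^{\st}\Im\Omega$ depends only on the class $(\iota_{\tal})_*[\tL]\in H_n(U_L;\mathbb{R})$. The family of graphs $\iota_{t\tal}$, $t\in[0,1]$, stays inside $U_L$ (which fiberwise retracts to the zero section) and joins $\iota_{\tal}$ at $t=1$ to $\iota_0\circ p$ at $t=0$; by Stokes this homotopy gives $\int_{\tL}\iota_{\tal}^{\st}\Im\Omega=\int_{\tL}(\iota_0\circ p)^{\st}\Im\Omega=\int_{\tL}p^{\st}\iota_0^{\st}\Im\Omega=2\int_L\iota_0^{\st}\Im\Omega$.

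Combining these identities yields $2\int_L\be^+=2\int_L\iota_0^{\st}\Im\Omega$, and the hypothesis $[\iota_0^{\st}\Im\Omega]=0$ in $H^n(L;\mathbb{R})$ forces $\int_L\iota_0^{\st}\Im\Omega=0$; hence $\int_L\SL(\mfa)^+\,d\Vol_g=\int_L\be^+=0$. Alternatively, and perhaps more cleanly, I would argue entirely at the level of cohomology: by the homotopy above $[\iota_{\tal}^{\st}\Im\Omega]=p^{\st}[\iota_0^{\st}\Im\Omega]=0$ in $H^n(\tL;\mathbb{R})$, so its invariant component $[p^{\st}\be^+]$ vanishes as well, using the isomorphism $p^{\st}:H^n(L;\mathbb{R})\xrightarrow{\sim}H^n(\tL;\mathbb{R})^+$ of Lemma \ref{lem_decompositioninvolutionss}. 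The main point requiring care is the regularity near $\Sigma$: the map $\iota_{\tal}$ and the pulled-back metric are only Lipschitz along the branch locus, so I would justify Stokes' theorem and the homotopy invariance of $\int_{\tL}\iota_{\tal}^{\st}\Im\Omega$ by working on $\tL\setminus\Sigma$ and using that $\Sigma$ is codimension two of measure zero (or, equivalently, interpreting everything in the sense of currents), after which the orientation bookkeeping for $\sigma$ and $p$ is routine.
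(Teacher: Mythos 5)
Your proof is correct and takes essentially the same route as the paper's: both lift the integral to $\tL$, identify $\int_{\tL}\iota_{\tal}^{\st}\Im\Omega$ with $\int_{\tL}(\iota_0\circ p)^{\st}\Im\Omega=0$ via the graph homotopy, closedness of $\Im\Omega$, and the hypothesis, and then use the $\sigma$-symmetry to isolate the invariant part (your orientation-preservation argument for $\sigma$ is exactly the paper's observation that $\int_{\tL}p^{\st}\SL(\mfa)^{-}\,d\Vol_{p^{\st}g}=0$), finishing with the same factor-of-two bookkeeping for the double cover. The only differences are presentational, plus your added care about regularity near $\Sigma$, which the paper leaves implicit.
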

			\begin{proof}
				As $[\iota_0^{\st}\Im\Omega]=0$, then $[(\iota_0\circ p)^{\st}\Im\Omega]=0$. We write $\iota_{t\tal}:\tL\to U_L(s)$ be the family of submanifolds defined by the graph of $t\tal$ with $t\in\mathbb{R}$, then 
				$$
				\int_{\tL}\SL(\tal)d\Vol_{p^{\st}g}=\int_{\tL}(\iota_{t\tal})^{\st}\Im\Omega=\int_{\tL}(\iota_0\circ p)^{\st}\Im\Omega=0.
				$$
				In addition, by \eqref{eq_33equationdecomposition}, we have
				\begin{equation*}
					\begin{split}
						\int_{\tL}\SL(\tal)d\Vol_{p^{\st}g}=\int_{\tL}p^{\st}(\SL(\mfa)^+)d\Vol_{p^{\st}g}+\int_{\tL}p^{\st}(\SL(\mfa)^-)d\Vol_{p^{\st}g}.
					\end{split}
				\end{equation*}
				
				As $\sigma^{\st}d\Vol_{p^{\st}g}=d\Vol_{p^{\st}g}$ and $\sigma^{\st}p^{\st}\SL(\mfa)^-=-p^{\st}\SL(\mfa)^-$, we obtain $\int_{\tL}p^{\st}\SL(\mfa)^-d\Vol_{p^{\st}g}=0$, which implies $\int_{\tL}p^{\st}\SL(\al)^+d\Vol_{p^{\st}g}=0$. By definition, we obtain $$\int_L\SL(\al)^+d\Vol_{g}=\frac{1}{2}\int_{\tL}p^{\st}\SL(\al)^+d\Vol_{p^{\st}g}=0.$$
			\end{proof}
		\end{subsection}
		
		\subsection{Special Lagrangian equations over fixed real locus.}
		\label{subsec_5_z2symmetry}
		In Example \ref{exm_fixedpointofreallocals}, we see that the fixed point of an anti-holomorphic involution, which is also called a fixed real locus, will be a special Lagrangian submanifold. An antiholomorphic involution $R$ over a Calabi-Yau manifold $(X,J,\omega,\Omega)$ with Calabi-Yau metric $g$ will satisfy $R^{\st}\omega=-\omega,\;R_{\st}\circ J=-J\circ R_{\st},\;R^{\st}g=g,\;R^{\st}\Omega=\bar{\Omega}.$
		
		Moreover, over $T^{\st}L$, there exists a canonical involution map $R_0:T^{\st}L\to T^{\st}L$ which maps $(x,v)\to (x,-v)$, where $v\in T^{\st}_xL$. 
		\begin{definition}
			\label{def_locallyantiholomorphicinvolution}
			A special Lagrangian manifold $[L]$ is called locally a fixed real locus if there exists a $R_0$-invariant Weinstein neighborhood $U_L\subset T^{\st}L$ such that $R_0$ will be an anti-holomorphic involution for the pull-back Calabi-Yau structure over $U_L$.
		\end{definition}
		
		Unfortunately, not every special Lagrangian submanifold is locally a fixed real locus. As $R_0$ preserve the Riemannian metric, then a fixed real locus will be a minimal submanifold. Over $\mathbb{C}^2$, using the hyperK\"ahler rotations, every holomorphic submanifold is a special Lagrangian submanifold, which is not necessary to be minimal.
		
		For a fixed real locus, the Weinstein neighborhood also have extra symmetry. 
		\begin{proposition}
			\label{prop_antiholomoinvolutionneighborhood}
			Suppose $[L]$ is the fixed real locus of a global anti-holomorphic involution $R$, then we could find a $R_0$-invariant neighborhood $U_L$ on $T^{\st}L$, a $R$-invariant neighborhood $U$ of $[L]$ in $X$ and a diffeomorphism $\Phi:U_L\to U$ such that $\Phi^{\st}\omega=\omega_0,\;\Phi^{\st}R=R_0$.
		\end{proposition}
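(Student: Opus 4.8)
The plan is to upgrade the Weinstein neighborhood theorem (Theorem \ref{thm_weinsteinnbhd}) to an $R$-equivariant statement by first observing that the \emph{raw} normal exponential map already intertwines $R_0$ and $R$, and then running Moser's argument that corrects it into a symplectomorphism in an $R_0$-equivariant fashion.

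First I would analyze the normal exponential map $\Phi_0:U_L\to X$ of \eqref{eq_normalexpoenntialeqution}, $\Phi_0(\al)=\exp_{\iota\circ\pi(\al)}\circ\vp(\al)$, and show $R\circ\Phi_0=\Phi_0\circ R_0$. Since $R^{\st}g=g$, the map $R$ is an isometry, so $\exp$ is $R$-equivariant, $R(\exp_x(v))=\exp_{R(x)}(R_{\st}v)$. For $x\in L$ we have $R(x)=x$ and, because $L$ is the fixed locus of the isometric involution $R$, the differential $R_{\st}$ acts as $+\id$ on $T_xL$ and as $-\id$ on the normal space $N_xL$ (equivalently this follows from $R_{\st}J=-JR_{\st}$ together with $J(T_xL)=N_xL$). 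Since $\vp$ is fibrewise linear, $\vp(-\al)=-\vp(\al)\in N_xL$, and therefore $R(\Phi_0(x,\al))=\exp_x(R_{\st}\vp(\al))=\exp_x(-\vp(\al))=\Phi_0(x,-\al)=\Phi_0(R_0(x,\al))$, which is the desired equivariance.

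Next I would set $\omega_1:=\Phi_0^{\st}\omega$ and compare it with the canonical form $\omega_0$. The canonical one-form $\lambda$ on $T^{\st}L$ satisfies $R_0^{\st}\lambda=-\lambda$, hence $R_0^{\st}\omega_0=-\omega_0$; and from the equivariance just established together with $R^{\st}\omega=-\omega$ one gets $R_0^{\st}\omega_1=\Phi_0^{\st}R^{\st}\omega=-\omega_1$. By Proposition \ref{prop_constraintvolumefor} the two forms agree along the zero section $L_0$, and both vanish on $TL_0$ since $L_0$ is Lagrangian for each. Now I would run Moser's method equivariantly: along the convex path $\omega_t=(1-t)\omega_0+t\omega_1$ each $\omega_t$ is symplectic in a neighborhood of $L_0$ and satisfies $R_0^{\st}\omega_t=-\omega_t$. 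Choosing by the relative Poincaré lemma a primitive $\be'$ of $\omega_1-\omega_0$ vanishing on $L_0$ and replacing it with $\be:=\tfrac12(\be'-R_0^{\st}\be')$, I obtain a primitive still vanishing on $L_0$ with $R_0^{\st}\be=-\be$. The time-dependent vector field $X_t$ defined by $\iota_{X_t}\omega_t=-\be$ is then $R_0$-invariant: pulling the defining equation back by $R_0$ and using $R_0^{\st}\omega_t=-\omega_t$ and $R_0^{\st}\be=-\be$ gives $\iota_{(R_0)_{\st}X_t}\omega_t=\iota_{X_t}\omega_t$, and nondegeneracy of $\omega_t$ forces $(R_0)_{\st}X_t=X_t$.

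Finally, the flow $\psi_t$ of $X_t$ fixes $L_0$, exists for $t\in[0,1]$ on a possibly smaller neighborhood, commutes with $R_0$, and satisfies $\psi_1^{\st}\omega_1=\omega_0$. Setting $\Phi:=\Phi_0\circ\psi_1$ then yields $\Phi^{\st}\omega=\psi_1^{\st}\omega_1=\omega_0$ and $R\circ\Phi=\Phi_0\circ R_0\circ\psi_1=\Phi_0\circ\psi_1\circ R_0=\Phi\circ R_0$, that is $\Phi^{\st}R=R_0$. Taking $U_L$ to be an $R_0$-invariant neighborhood such as $U_L(s)$ (which is $R_0$-invariant since $R_0$ preserves the norm) and $U:=\Phi(U_L)$, which is automatically $R$-invariant by equivariance, completes the construction. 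The main obstacle is the equivariant Moser step: one must guarantee that the primitive can be chosen $R_0$-anti-invariant so that the generating vector field is genuinely $R_0$-invariant. The averaging trick handles this cleanly, but it is the one place where the symmetry could fail to propagate, so the argument hinges on checking the sign bookkeeping $R_0^{\st}\omega_t=-\omega_t$, $R_0^{\st}\be=-\be$ carefully.
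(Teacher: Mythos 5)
Your proposal is correct and is precisely the ``step-by-step check of the original proof of the Weinstein neighborhood theorem'' that the paper's proof of Proposition \ref{prop_antiholomoinvolutionneighborhood} explicitly leaves to the reader: equivariance of the normal exponential map via $R_{\st}=+\id$ on $TL$ and $-\id$ on $N_L$, the anti-invariances $R_0^{\st}\omega_0=-\omega_0$ and $R_0^{\st}\omega_1=-\omega_1$, the averaged primitive $\be=\tfrac12(\be'-R_0^{\st}\be')$, and the resulting $R_0$-invariant Moser vector field whose flow commutes with $R_0$. The only quibble is the citation of Proposition \ref{prop_constraintvolumefor} (which is stated for the pull-back structure in a Weinstein neighborhood where $\omega$ is already canonical) for the pointwise agreement $\omega_1|_{L_0}=\omega_0|_{L_0}$; this should instead be verified directly from the compatibility $\omega(\cdot,\cdot)=g(J\cdot,\cdot)$ and the Lagrangian condition, as in the classical Weinstein argument, which is immediate.
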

		\begin{proof}
			The claim is followed by a step-by-step check of the original proof of the Weinstein neighborhood theorem, therefore we will leave the verification to the reader.
		\end{proof}
		
		\begin{proposition}
			\label{prop_ztsymmetry}
			Let $L$ be a locally fixed real locus and let $\al^-$ be a multivalued 1-form over $L$ with $\iota_{\al^-}:\tL\to T^{\st}L$ be the inclusion, then the special Lagrangian equation for $\al^-$ will satisfy $\SL(\al^-)^+=0$ and $Q(\al^-)^+=0$. 
		\end{proposition}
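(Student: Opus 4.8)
The plan is to leverage the extra $R_0$-symmetry to show that the pulled-back form $\iota_{\al^-}^{\st}\Im\Omega$ is entirely \emph{anti}-invariant under the deck involution $\sigma$, which by the decomposition \eqref{eq_32decompositionovertikdel} forces its $+$ component to vanish.

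First I would establish the intertwining relation
$$R_0\circ\iota_{\al^-}=\iota_{\al^-}\circ\sigma.$$
Since $\al^+=0$, the two-valued form $\mfa=\al^-$ takes opposite values on the two sheets over a point $x\in L$: writing $\tal=p^{\st}\al^-$, the anti-invariance $\sigma^{\st}\tal=-\tal$ together with $p\circ\sigma=p$ gives, for any $\tx$ with $p(\tx)=x$, that $(p^{\st})^{-1}\al|_{\sigma(\tx)}=-(p^{\st})^{-1}\al|_{\tx}$ as covectors at $x$. Comparing with the definition \eqref{eq_definingtheinclusionmap} of $\iota_{\al^-}$ and the fact that $R_0$ negates fibres, $R_0(x,v)=(x,-v)$, yields the claimed identity; away from $\Sigma$ the map $p$ is a local diffeomorphism so the covector computation is immediate, and the identity extends continuously across $\Sigma$, where both sides vanish.

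Next, because $R_0$ is an anti-holomorphic involution for the pull-back Calabi-Yau structure (Definition \ref{def_locallyantiholomorphicinvolution}, cf. the conventions of Example \ref{exm_fixedpointofreallocals}), we have $R_0^{\st}\Omega=\bar\Omega$ and hence $R_0^{\st}\Im\Omega=-\Im\Omega$. Combining this with the intertwining relation gives
$$\sigma^{\st}(\iota_{\al^-}^{\st}\Im\Omega)=(R_0\circ\iota_{\al^-})^{\st}\Im\Omega=\iota_{\al^-}^{\st}R_0^{\st}\Im\Omega=-\iota_{\al^-}^{\st}\Im\Omega.$$
Thus $\iota_{\al^-}^{\st}\Im\Omega$ lies in the $-1$-eigenspace of $\sigma^{\st}$. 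In the decomposition $\iota_{\al^-}^{\st}\Im\Omega=p^{\st}\be^++p^{\st}\be^-$ of \eqref{eq_32decompositionovertikdel}, the term $p^{\st}\be^+$ is $\sigma$-invariant, so it must vanish; hence $\be^+=0$ and $\SL(\al^-)^+=\st_g\be^+=0$.

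Finally, since the zero section is special Lagrangian its Lagrangian angle satisfies $\sin\theta=0$, and with $\al^+=0$ the first equation of \eqref{eq_termsinvolvingpm} reads $\SL(\al^-)^+=Q(\al^-)^+$; therefore $Q(\al^-)^+=0$ as well. The only delicate point is the intertwining relation of the first step, where one must translate the anti-invariance $\sigma^{\st}\tal=-\tal$ into the pointwise statement about fibre covectors and handle the merely Lipschitz behaviour of $p$ along the branch locus $\Sigma$; everything else is formal.
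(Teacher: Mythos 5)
Your proposal is correct and follows essentially the same route as the paper's proof: the paper likewise uses the intertwining relation $R\circ\iota_{\al^-}=\iota_{\al^-}\circ\sigma$ together with $R^{\st}\Im\Omega=-\Im\Omega$ to conclude that $\iota_{\al^-}^{\st}\Im\Omega$ is $\sigma$-anti-invariant, hence $\SL(\al^-)^+=0$, and then reads off $Q(\al^-)^+=0$ from the $+$ component of the equation in \eqref{eq_termsinvolvingpm}. Your fibrewise verification of the intertwining relation (via $\sigma^{\st}p^{\st}\al^-=-p^{\st}\al^-$ and $R_0(x,v)=(x,-v)$) is a detail the paper asserts without proof, but it is the intended justification.
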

		\begin{proof}
			As $R\circ\iota_{\al^-}=\iota_{\al^-}\circ\sigma$, where $\sigma$ is the involution on $\tL$, we compute
			$$-\iota_{\al^-}^{\st}\Im\Omega=\iota_{\al^-}^{\st}R^{\st}\Im\Omega=\sigma^{\st}\iota_{\al^-}^{\st}\Im\Omega,$$
			which implies $\SL(\al^-)^+=0$. As $\SL(\al^-)^+=(d\al^-)^++Q(\al^-)^+=Q(\al^-)^+$, we obtain $Q(\al^-)^+=0$.
		\end{proof}

		\begin{subsection}{The Inverse of the branched deformations}
			In this subsection, we will consider the inverse of the branched deformations. For a family of graphic special Lagrangian submanifolds with convergence assumptions, we could recover a nondegenerate pairs. Let $(U_L,J,\omega,\Omega)$ be the pull-back Calabi-Yau structure on the Weinstein neighborhood of a special Lagrangian submanifold $[L]$.
			
			\begin{definition}
				$[\tL]=(\tL,\tiota)$ be an immersed special Lagrangian submanifold with $\tiota:\tL\to U_L$. $[\tL]$ is called graphic on $L$ if there exists a pair $\mfa$ such that $\tiota$ is given by the graph of $\mfa.$
			\end{definition}
			
			\begin{lemma}
				\label{lem_inversedeformationfamily}
				Let $B' \subset L\setminus \Sigma$ be an open set with $B$ a proper subset of $L\setminus \Sigma$, suppose the graph of $\mfa$ is a special Lagrangian submanifold and $\|\mfa\|_{\MC^{k+1,\ga}}(B')\leq D_k$ for sufficiently small constants $D_k$, then $\|\mfa\|_{\MC^{k+1,\ga}(B)}\leq C_k\|\mfa\|_{L^2(B')}$.
			\end{lemma}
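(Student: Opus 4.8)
The plan is to treat this as a purely \emph{interior} elliptic estimate on $B'$, which lies away from $\Sigma$ and hence carries smooth background geometry; the singular behaviour of the family near $\Sigma$ plays no role here. Since the estimate is interior I take $B\Subset B'$ compactly contained (this is what the phrase ``proper subset'' should mean). On $B'$ the pair $\mfa=\al^++\al^-$ is a genuine (two-valued) solution, and because $[L]$ is special Lagrangian its Lagrangian angle vanishes, so by Proposition \ref{prop_structureofspecialLagrangian} the hypotheses reduce to the first order system
$$
d\mfa=0,\qquad d^{\st}\mfa=Q(\mfa),\qquad Q(\mfa):=P(\na\mfa)+S(\mfa,\na\mfa),
$$
where $Q$ is real analytic in $(\mfa,\na\mfa)$ and vanishes to second order at $\mfa=0$.

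First I would convert this into a single second order equation. Applying the Hodge Laplacian and using $d\mfa=0$ gives
$$
\Delta\mfa=d\,d^{\st}\mfa=d\,Q(\mfa),
$$
and expanding $dQ$ yields a second order quasilinear equation $a^{ij}(x,\mfa,\na\mfa)\na_i\na_j\mfa=b(x,\mfa,\na\mfa)$ with $a^{ij}=g^{ij}+O(|\mfa|+|\na\mfa|)$ and $b$ vanishing to second order. For $D_0$ small the principal part stays uniformly elliptic. Writing $b=\sum c^{\al}\na_{\al}\mfa+\sum d^{\be}\mfa_{\be}$ with coefficients evaluated along the given solution (each of size $O(\|\mfa\|_{\MC^{1,\ga}})=O(D_0)$, using that $b$ vanishes to second order), and likewise freezing $a^{ij}$, I obtain a genuinely linear homogeneous elliptic equation $\mathcal{L}\mfa=0$ whose coefficients lie in $\MC^{k,\ga}(B')$ with norms bounded in terms of $D_k$ and the fixed geometry of $B'$ alone. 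This is where the smallness hypothesis is essential, both to keep $\mathcal{L}$ uniformly elliptic and to force all constants below to depend only on $D_k$, hence to be uniform across the family.

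With $\mathcal{L}\mfa=0$ in hand the estimate follows from standard interior theory in two moves. To pass from $L^2$ to $\MC^0$ I would invoke the Bochner--Kato inequality: from $\mathcal{L}\mfa=0$ and the Weitzenb\"ock formula one gets a differential inequality of the form $\Delta|\mfa|\ge -C|\mfa|$ in the distributional sense, so $|\mfa|$ is a subsolution of a scalar uniformly elliptic operator, and Moser's local boundedness estimate gives $\|\mfa\|_{\MC^{0}(B'')}\LS\|\mfa\|_{L^2(B')}$ on an intermediate $B\Subset B''\Subset B'$. Then interior Schauder estimates for $\mathcal{L}$, bootstrapped using the $\MC^{k,\ga}$ bound on the coefficients, upgrade this to
$$
\|\mfa\|_{\MC^{k+1,\ga}(B)}\LS\|\mfa\|_{\MC^{0}(B'')}\LS\|\mfa\|_{L^2(B')},
$$
which is the claim, with $C_k$ depending only on $D_k$, $k$, $\ga$, and the geometry of $B\Subset B'$.

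The one genuinely delicate point is the $L^2\to\MC^0$ step: since $\mfa$ is vector valued, scalar De Giorgi--Nash--Moser theory does not apply to a general elliptic system, and the Kato inequality is exactly what circumvents this by reducing matters to the scalar subsolution $|\mfa|$ (this works because the principal part $\Delta$ is diagonal, the coupling being lower order and small). The remaining effort is bookkeeping --- checking that every constant produced depends only on the coefficient bounds $D_k$ and on $B\Subset B'$, so that the final $C_k$ is uniform over the whole branched deformation family, which is precisely the property that makes this lemma usable in the inverse construction.
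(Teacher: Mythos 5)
Your overall skeleton --- away from $\Sigma$ the problem is a smooth interior elliptic one, $Q$ vanishes to second order, and the smallness hypothesis is what lets constants be absorbed --- is the same mechanism the paper uses, and your reduction $\Delta\mfa=dQ(\mfa)$ with frozen coefficients is legitimate (ellipticity for small $D_0$, coefficients in $\MC^{k,\ga}$ from the a priori bound). The genuine gap is the $L^2\to\MC^0$ step via Kato--Moser. The inequality $\Delta|\mfa|\geq -C|\mfa|$ is not available here, for two concrete reasons. First, the lower-order part of your $\mathcal{L}$ contributes $-|c|\,|\na\mfa|-|d|\,|\mfa|$ to any lower bound on $\Delta|\mfa|$, and $|\na\mfa|$ is not pointwise controlled by $|\mfa|$; Kato gives $|\na|\mfa||\leq|\na\mfa|$, which is the wrong direction for trading $|\na\mfa|$ for $|\na|\mfa||$, and bounding $|c|\,|\na\mfa|$ crudely by a constant (using $\|\mfa\|_{\MC^{1}}\leq D_0$) destroys homogeneity: Moser then yields $\sup|\mfa|\LS\|\mfa\|_{L^2}+D_0^2$, not the claimed estimate. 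Second, the quasilinear correction to the principal part coming from $\pa_z\tSL$ is \emph{not} diagonal in the fiber indices (the $P_{2k+1}$ terms mix components of $\na\mfa$), and Kato's pointwise scalar reduction does not survive a fiber-mixing principal symbol, however small; smallness lets you absorb the off-diagonal part in integral (Caccioppoli-type) estimates, not in the pointwise inequality for $|\mfa|$.

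The step is repairable --- e.g.\ test the system with $\eta^2\mfa$, use $|Q(\mfa)|\LS|\mfa|^2+|\na\mfa|^2$ and the smallness $D_k$ to absorb $\int\eta^2|\na\mfa|^2$, then difference quotients give $\|\mfa\|_{H^m(B'')}\LS\|\mfa\|_{L^2(B')}$ and Sobolev embedding finishes --- but it is worth seeing that the paper avoids the issue entirely by never leaving the first-order system. It quotes the interior estimate for $d+d^{\st}$ with an $L^2$ remainder,
$$\|\mfa\|_{\MC^{k+1,\ga}(B)}\leq C_k\bigl(\|Q(\mfa)\|_{\MC^{k,\ga}(B')}+\|\mfa\|_{L^2(B')}\bigr),$$
(the $L^2$ remainder in place of $\MC^0$ is standard by interpolating the sup norm between $L^2$ and the H\"older norm on nested domains), combines it with the quadratic bound $\|Q(\mfa)\|_{\MC^{k,\ga}}\leq C_k'\|\mfa\|^2_{\MC^{k+1,\ga}}$ from Proposition \ref{prop_structureofspecialLagrangian}, and absorbs the factor $C_kC_k'\|\mfa\|_{\MC^{k+1,\ga}}\leq\frac12$ using $D_k=\frac{1}{2C_kC_k'}$. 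This one-step nonlinear absorption needs no linearization, no frozen coefficients, and no De Giorgi--Nash--Moser input; if you rewrite your argument around that interior estimate, your proof collapses to the paper's.
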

			\begin{proof}
				The special Lagrangian equation for $\mfa$ could be written as $-(d+d^{\st})\mfa+Q(\mfa)=0$. By the elliptic estimate, we have $$\|\mfa\|_{\MC^{k+1,\ga}(B)}\leq C_k(\|Q(\mfa)\|_{\MC^{k,\ga}(B')}+\|\mfa\|_{L^2(B')}).$$ By Proposition \ref{prop_structureofspecialLagrangian}, we have $\|Q(\mfa)\|_{\MC^{k,\ga}(B)}\leq C_k' \|\mfa\|^2_{\MC^{k+1,\ga}(B')}.$ Therefore, 
				$$(1-C_kC_k'\|\mfa\|_{\MC^{k+1,\ga}(B)})\|\mfa\|_{\MC^{k+1,\ga}(B)}\leq C_k\|\mfa\|_{L^2(B')}.$$ Take $D_k:=\frac{1}{2C_kC_k'}$ and suppose $\|\mfa\|_{\MC^{k+1,\ga}(B)}\leq D_k$, we have $\|\mfa\|_{\MC^{k+1,\ga}(B)}\leq 2C_k\|\mfa\|_{L^2(B')}$.
			\end{proof}
			
			\begin{definition}
				Let $\Sigma_t$ be a family of embedded codimension 2 submanifolds of $L$ diffeomorphic to each other and $\MI_t$ be a flat $\ZT$ bundle over $L\setminus \Sigma_t$ with corresponding representations $\rho_t:\pi_1(L\setminus \Sigma_t)\to \{\pm 1\}$ such that for any small loop $\gamma$ linking $\Sigma_t$, we have $\rho_t(\gamma)=-1.$ Let $\mfa_t=\al^+_t+\al^-_t$ with $\al^+\in\Omega^1,\;\al^-_t\in \Omega^1(L;\MI)$, 
				Suppose 
				\begin{itemize}
					\item [(i)] $\Sigma_t$ convergence to $\Sigma_0$ as smooth codimension 2 submanifolds and after we identified $\rho_t$ with representation of $\pi_1(L\setminus \Sigma_0)\to \{\pm 1\}$, $\rho_t$ convergence to $\rho_0$,
					\item [(ii)] $\lim_{t\to 0}\|\al_t^{\pm}\|_{\MC^{,\gamma}(L)}=0,$ and $\|\al_t^{\pm}\|_{\MC^{,\gamma}(L)}\LS \|\mfa\|_{L^2(L)}$,
					\item [(iii)] for any proper open subset $B\subset L\setminus \Sigma$, we have $\lim_{t\to 0}\|\mfa\|_{\MC^{k,\ga}(B)}=0$,
					\item [(iv)] for a tubular neighborhood of $\Sigma_0$ containing all $\Sigma_t$, let $r_t$ be the distance to $\Sigma_t$, we have $\liminf_{t\to 0} \frac{|r_t^{\frac12}\al_t^-|(p)}{\|\mfa_t\|_{L^2}}\neq 0$ for any $p\in \Sigma_0$.
				\end{itemize}
				then we say $[\tL_t]$ is a graphic branched deformation family of $2[L]$.
			\end{definition}
			
			\begin{theorem}
				Suppose $[\tL_t]$ is a family of graphic special Lagrangian submanifolds for $\mfa_t$, let $\hal_t^{\pm}:=\frac{\al_t^{\pm}}{\|\mfa_t\|_{L^2}}$ and $\hat{\mfa}_t=\hat{\al}^+_t+\hal^-_t$, then passing to subsequence, there exists $\hat{\al}^{\pm}_{0}$ such that 
				\begin{itemize}
					\item [(i)] for each proper open set $B\subset L\setminus \Sigma$, $\hal_t^{\pm}$ convergence to $\hal_{0}^{\pm}$ smoothly when $t\to 0$ and near $\Sigma$, $\lim_{t\to 0}\|\hal_t^{\pm}\|_{\MC^{,\frac12\ga}}=\|\hal_0^{\pm}\|_{\MC^{,\frac12\ga}}$.
					\item [(ii)] $\hal^+_0$ is a harmonic 1-form on $L$ and $\hal^-_0$ is a nondegenerate $\ZT$ harmonic 1-form. 
				\end{itemize}
			\end{theorem}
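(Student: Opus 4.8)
The plan is to rescale the special Lagrangian equation by the $L^2$-norm, extract a limit by elliptic compactness away from the branch locus, and then analyze the limit near $\Sigma$ using the uniform H\"older control together with Donaldson's expansion theory. Write $\ep_t:=\|\mfa_t\|_{L^2}$, so that $\|\hat{\mfa}_t\|_{L^2}=1$ by construction. Since each $[\tL_t]$ is special Lagrangian over the special Lagrangian base $[L]$, the Lagrangian angle $\theta$ vanishes; combining the Lagrangian condition $d\mfa_t=0$ with the splitting \eqref{eq_termsinvolvingpm} and Proposition \ref{prop_structureofspecialLagrangian} gives the system $d\mfa_t=0$ and $d^{\st}\mfa_t=Q(\mfa_t)$, where $Q$ vanishes to second order in $(\mfa_t,\na\mfa_t)$. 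Dividing by $\ep_t$ yields the rescaled equation
\[
d\hat{\mfa}_t=0,\qquad d^{\st}\hat{\mfa}_t=\tfrac{1}{\ep_t}\,Q(\ep_t\hat{\mfa}_t),
\]
and because $Q$ is quadratic the right-hand side is $O\!\big(\ep_t(|\hat{\mfa}_t|^2+|\na\hat{\mfa}_t|^2)\big)$, which will tend to zero once uniform bounds on $\hat{\mfa}_t$ are available.

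First I would establish interior convergence on $L\setminus\Sigma$. By hypothesis (iii) the unnormalized $\mfa_t$ become small in $\MC^{k+1,\ga}$ on every proper open $B\subset L\setminus\Sigma$, so Lemma \ref{lem_inversedeformationfamily} applies and gives $\|\mfa_t\|_{\MC^{k+1,\ga}(B)}\LS\|\mfa_t\|_{L^2(B')}\leq\ep_t$; dividing by $\ep_t$ produces the uniform bounds $\|\hat{\mfa}_t\|_{\MC^{k+1,\ga}(B)}\LS 1$ for every $k$. An Arzel\`a--Ascoli argument together with a diagonal sequence over an exhaustion of $L\setminus\Sigma$ then extracts a subsequence with $\hat{\mfa}_t\to\hat{\mfa}_0=\hal^+_0+\hal^-_0$ in $\MC^{\infty}_{\loc}(L\setminus\Sigma)$. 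Passing to the limit in the rescaled equation, the quadratic right-hand side disappears and $d\hat{\mfa}_0=d^{\st}\hat{\mfa}_0=0$ on $L\setminus\Sigma$; here I use hypothesis (i) to identify the converging flat bundles $\MI_t$ with a single $\MI_0$ over $L\setminus\Sigma_0$, so that $\hal^-_0$ is a genuine $\MI_0$-valued harmonic $1$-form.

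Next I would control the behaviour across $\Sigma$ and upgrade to the full conclusion. Hypothesis (ii) provides the uniform bound $\|\hal^{\pm}_t\|_{\MC^{,\ga}(L)}\LS 1$; since the $\MC^{,\ga}$ norm dominates the pointwise size through $|\hal^-_t(q)|\LS\|\hal^-_t\|_{\MC^{,\ga}}\,\dist(q,\Sigma_t)^{\ga}$, the limit $\hal^-_0$ is bounded and lies in $L^2$, while the compactness of the embedding $\MC^{,\ga}\hookrightarrow\MC^{,\frac12\ga}$ yields convergence in $\MC^{,\frac12\ga}$ near $\Sigma$ and hence the convergence of norms asserted in (i). For the single-valued part, $\hal^+_0$ is a bounded closed and coclosed $1$-form on $L\setminus\Sigma$ with $\Sigma$ of codimension two, so a removable-singularity argument upgrades it to a harmonic $1$-form on all of $L$. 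To identify $\hal^-_0$ as a $\ZT$ harmonic $1$-form I would invoke Donaldson's isomorphism and the expansion results, Theorem \ref{thm_donaldsonremainningestimate} and Theorem \ref{thm_polyhomo}, furnishing the asymptotic form $\hal^-_0=\Re d(A\ze^{\frac12}+B\ze^{\frac32})+\cdots$; boundedness forces $A\equiv 0$, and hypothesis (iv), which keeps the rescaled leading profile of $\hal^-_t$ about $\Sigma_t$ uniformly bounded away from zero, guarantees that the limiting coefficient $B$ is nowhere vanishing along $\Sigma_0$ and in particular that $\hal^-_0\not\equiv 0$. This is precisely the nondegeneracy of Definition \ref{def_nondegenerate}.

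The main obstacle is the analysis at the \emph{moving} branch locus rather than the interior. The interior estimate and limit are routine once Lemma \ref{lem_inversedeformationfamily} is in hand, but the family branches along a varying $\Sigma_t$, so the uniform H\"older bounds of (ii) must be promoted to genuine control of the singular profile: one must show that the leading transverse behaviour of $\hal^-_t$ about $\Sigma_t$ converges to the leading profile of $\hal^-_0$ about $\Sigma_0$, and that no $L^2$ mass is lost into $\Sigma$ in the limit so that the normalization survives. Matching the two polyhomogeneous expansions across the degeneration $\Sigma_t\to\Sigma_0$, which is where hypotheses (i) and (iv) do the essential work, and checking that this matching is compatible with the expansion of Theorem \ref{thm_polyhomo}, is the delicate step of the argument.
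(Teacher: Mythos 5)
Your proposal follows essentially the same route as the paper's own proof: rescaling by $\|\mfa_t\|_{L^2}$, using Lemma \ref{lem_inversedeformationfamily} for uniform interior $\MC^{k+1,\ga}$ bounds so that the quadratic term $k_t^{-1}Q(k_t\hat{\mfa}_t)$ vanishes in the limit, invoking the uniform $\MC^{,\gamma}$ bound from hypothesis (ii) to extract a $\MC^{,\frac12\ga}$ limit across $\Sigma$, and then combining the polyhomogeneous expansion of Theorem \ref{thm_polyhomo} (boundedness killing the $z^{\frac12}$ coefficient) with hypothesis (iv) to get nondegeneracy. The extra details you supply — the Arzel\`a--Ascoli diagonal extraction, the removable-singularity step for $\hal_0^+$, and the compact H\"older embedding — are correct elaborations of steps the paper leaves implicit, not a different argument.
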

			\begin{proof}
				
				We write $k_t=\|\mfa_t\|_{L^2}$, then by Lemma \ref{lem_inversedeformationfamily}, for any proper open set $B\subset L\setminus \Sigma$, we have $\|\hal_t^{\pm}\|_{\MC^{k+1,\ga}(B)}^2\LS 1$. As $\hat{\mfa}_t$ satisfies $$(d+d^{\st})\hat{\mfa}_t+k_t^{-1}Q(\hat{\mfa}_t k_t)=0,$$ and $\|k_t^{-1}Q(\hat{\mfa}_t k_t)\|_{\MC^{k,\ga}(B)}\LS k_t\|\hat{\mfa}_t\|_{\MC^{k+1,\ga}(B)},$ we have $(d+d^{\st})\hal_{0}^{\pm}=0$ over $L\setminus\Sigma$, where $\hal_0^{\pm}$ is the limit of $\hal_t^{\pm}$.
				
				In addition, as $\lim_{t\to 0}\|\al_t^{\pm}\|_{\MC^{,\gamma}}=0$ and $\|\al_t^{\pm}\|_{\MC^{,\gamma}(L)}\leq C\|\mfa_t\|_{L^2(L)}$, the limit of $|\hal_t^{\pm}|$ exists over $L$ as a $\MC^{,\frac12\gamma}$ function. Therefore, $\hal_0^+$ is a harmonic 1-form over $L$. As $\hal_{0}^{-}$ is also a $\MI$-valued harmonic 1-form on $L\setminus \Sigma$ and $|\hal_{0}^-|$ has an extension over $\Sigma$. By Theorem \ref{thm_polyhomo}, $\hal_0^-$ has an expansion near $\Sigma$. As $\liminf_{t\to 0} |r_t^{\frac12}\hal_t^-|(p)\neq 0$ for any $p\in \Sigma_0$, $\hal_0^-$ is nondegenerate.
			\end{proof}
		\end{subsection}
	\end{section}
	
	\begin{section}{Construction of the Approximation Solution}
		\label{sec_6}
		In this section, given a nondegenerate harmonic pair $\mathfrak{a}=\al^++\al^-$, for each small $t\in\mathbb{R}$, we will construct a family of approximate special Lagrangian submanifolds submanifolds close to the graph of $t\mathfrak{a}$. We begin with introducing our main result in subsection \ref{subsec_mainresults}. Over subsection \ref{subsec_primaryestimate}, \ref{subsec_variationsingularset}, we will  estimate on the non-linear terms. In subsection \ref{subsection_constructionapproximate}, we will give the explicitly construction of the approximate special Lagrangian submanifolds.
		\subsection{Main results of the approximation constructions}
		\label{subsec_mainresults}
		We first summarize the main results that will be proved in this section. As usual, let $U_L$ be a neighborhood of the zero section on $T^{\st}L$, $(U_L,J,\omega,\Omega)$ be a Calabi-Yau structure and $\iota_0:L\to U_L$ be the inclusion map of the zero section. We write $g_{U_L}$ be the Calabi-Yau metric and $g$ be the induced metric on the zero section.
		
		As in this section, we will use diffeomorphisms on $L$ to change the based metric $g$ on $L$. For different terms in the special Lagrangian equation, we will write $\SL_g,Q_g,P_g,S_g$ will label $"g"$ to emphasize which Calabi-Yau structure we are considering.
		
		Given a nondegenerate pair $\mfa=\al^++\al^-$, $\al^-$ defines a branched covering $p:\tL\to L$. As previously, we first choose a suitable coordinates near $\Sigma$, then using normal exponential map, we identified a neighborhood of $\Sigma$ in $L$ with a neighborhood $U\subset N_{\Sigma}$ contains the zero section. We choose a local coordinate system $(x_1,x_2,\cdots,x_n)$ such that $z=x_1+\sqrt{-1}x_2$ is a complex coordinate on the normal bundle of $\Sigma$ and $(x_3,\cdots,x_n)$ are coordinates on $\Sigma$ such that locally $\Sigma\cap U=\{z=0\}$. We also write $z=re^{i\theta}$ and $r$ measures the distance to the branch locus $\Sigma$. We extends $r$ to $L$, which we still denote as $r$, such that the only zeros of $r$ is the branch locus. 
		
		The following is our main result in this section.
		\begin{theorem}
			\label{thm_approximation}
			Given an nondegenerate harmonic pair $\mfa=\al^++\al^-$ over $L$ and a positive integer $N$, there exists a family of nondegenerate pairs $\mfA_t=\MA^+_t+\MA^-_t$, smooth diffeomorphisms $\phi_t:L\to L$ with $r_t:=\phi_t^{\st}\circ r$ and a positive number $T_N$, such that for all $t<T_N$, there exists a t-independent constant $C$, the following holds:
			\begin{itemize}
				\item [(i)] $\|r_t\SL_g(\phi_t^{\st}\mfA_t)\|_{\MC^{,\gamma}(L)}\leq C t^{N+1},$
				\item [(ii)] $\|\mfA_t-t\mfa\|_{\MC^{,\gamma}(L)}\leq C t^2,$
				\item [(iii)] $\mfA_t$ is closed, $d\mfA_t=0$, and smooth on any open set of $L\setminus \Sigma$. Near $\Sigma$, $\mfA_t\in \MC^{1,\ga}$ is nondegenerate and polyhomogeneous near $\Sigma$ with expansions
				\begin{equation}
					\begin{split}
						&\MA_t^-\sim a_1r^{\frac12}+\sum_{k=1}^{p}a_{2,k} r^{\frac32}(\log r)^k+\cdots,\\
						&\MA_t^+\sim b_0+b_1r+b_2r^2+\sum_{k=1}^pb_{2,k} r^2(\log r)^k+\cdots.
					\end{split}
				\end{equation}
				\item [(iv)] $\phi_t(\Sigma)$ is an embedded submanifold of $L$ and $\lim_{t\to 0}\phi_t=\Id$,
				\item [(v)] we write $\tiota_{t}:\tL\to U_L$ be the inclusion of the graph of nondegenerate pair $\phi_t^{\st}\mfA_t$ and $\theta_t$ be the Lagrangian angle of $\tiota_t$, then $\|\theta_t\|_{\ca(\tL)}\leq C t^{N-1}.$
				\item [(vi)] $\tiota_t(\tL)$ convergence to $2\iota_0(L)$ as current and $\|\tiota_t-\iota_0\circ p\|_{\ca(\tL)}\leq Ct$.
			\end{itemize}
			
			Here the $\ca(\tL)$ norm on (v) and (vi) will the the induced metric $\tiota_t^{\st}g_{U_L}$, later by Theorem \ref{thm_metricconvergence}, the $\ca(\tL)$ norm will be independent of $t$. 
			
		\end{theorem}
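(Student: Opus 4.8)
The plan is to build $\mfA_t$ as a finite power series in $t$, correcting the leading term $t\mfa$ order by order so that the special Lagrangian error is driven to size $t^{N+1}$, while simultaneously moving the branch locus through $\phi_t$ to preserve nondegeneracy. The starting point is the splitting $\SL_g(\cdot)=\SL_g(\cdot)^++\SL_g(\cdot)^-$ of Subsection \ref{subsection52splitting} together with Proposition \ref{prop_structureofspecialLagrangian}: since the zero section is special Lagrangian we have $\sin\theta=0$, so the linearization of $\SL_g$ at $0$ is $-d^\st$, and because $\mfa$ is harmonic we get $\SL_g(t\mfa)=Q_g(t\mfa)=O(t^2)$. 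Writing all higher corrections as exact pairs $df^++df^-$ keeps $d\mfA_t=0$ and fixes the cohomology class, so that the Lagrangian condition is automatic throughout.

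\textbf{The iteration step.} Suppose an approximation $\mfA_t^{(j-1)}=\sum_{i=1}^{j-1}t^i\mfa_i$ (with $\mfa_1=\mfa$) achieves $\SL_g(\mfA_t^{(j-1)})=O(t^j)$, and let $E_j^++E_j^-$ denote the order-$t^j$ coefficient of the error. To add $t^j\mfa_j=t^j(df_j^++df_j^-)$ I would solve the two decoupled linear problems $\Delta f_j^{\pm}=E_j^{\pm}$. For the ``$+$'' part this is the ordinary Hodge-theoretic solve on $(L,g)$, which is possible because $\int_L E_j^+\,d\Vol_g=0$ by the cohomological vanishing of Proposition \ref{prop_integrationvanishes}. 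For the ``$-$'' part I would invoke Donaldson's isomorphism, Theorem \ref{thm_donaldsonremainningestimate}, to produce a multivalued $f_j^-$, and Theorem \ref{thm_polyhomo} to guarantee it is polyhomogeneous; the resonances of the indicial operator are exactly what generate the $(\log r)^k$ terms recorded in (iii). Running this $N$ times yields the stated expansions and cancels the error through order $t^N$.

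\textbf{Moving the branch locus (the main obstacle).} The serious difficulty is that the solution $f_j^-$ delivered by Theorem \ref{thm_donaldsonremainningestimate} carries a leading coefficient $A_j$ in its expansion $f_j^-=\Re(A_j z^{1/2}+B_j z^{3/2})+\cdots$, and a nonzero $A_j$ destroys both nondegeneracy and the $\MC^{1,\ga}$ boundedness required in (iii). Following Donaldson, the remedy is to absorb these $A$-coefficients by perturbing $\Sigma$: a normal displacement of the branch locus changes the induced representation and the pulled-back metric, and by the deformation theorem, Theorem \ref{thm_donaldsondeformation}, each nearby datum admits a unique nearby $\Sigma_t$ whose $\ZT$ harmonic function is nondegenerate, with $B(\Sigma_t)$ still nowhere vanishing by continuity. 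I would realize $\Sigma_t=\phi_t(\Sigma)$ for diffeomorphisms $\phi_t\to\Id$ and set $r_t=\phi_t^\st r$, which gives (iv) and the correct weight for (i). The crux is that this branch-locus adjustment and the order-by-order correction are coupled: the $\phi_t$ chosen to kill $A_j$ at each stage feeds back into the sources $E_j^\pm$, so the two solves and the deformation theorem must be run together as a single implicit-function/fixed-point scheme rather than independently.

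\textbf{Estimates and conclusion.} Finally I would convert the formal cancellation into the weighted bound (i): near $\Sigma$ the nonlinear terms in $\SL_g$ are singular since $\na\mfA_t^-\sim t\,r^{-1/2}$, so the multiplier $r_t$ compensates the leading $r^{-1}$ blow-up and the residual $t^{N+1}$ tail is controlled in $\ca(L)$ via the Hölder and polyhomogeneous estimates of Theorems \ref{thm_donaldsonremainningestimate} and \ref{thm_polyhomo}. Estimate (ii) is immediate from $\mfA_t=t\mfa+O(t^2)$. For (v), since $\SL_g(\mfA_t)=\sin\theta_t$ pointwise, the estimate follows from (i) by tracking the transition scale $r\sim t^2$ at which the graph height $t\,r^{1/2}$ matches the distance $r$ to $\Sigma$; evaluating the $\ca(\tL)$ norm in the collapsing metric $\tiota_t^\st g_{U_L}$ at this scale converts the weighted $t^{N+1}$ bound into the unweighted $t^{N-1}$ bound. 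Property (vi) then follows from (ii) together with the collapse of the graph of $t\mfa$ onto the doubled zero section as $t\to0$, giving convergence to $2\iota_0(L)$ as a current.
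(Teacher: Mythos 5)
Your skeleton matches the paper's: a finite induction in powers of $t$, the $\pm$ splitting of Subsection \ref{subsection52splitting}, exact corrections $df_j^++df_j^-$ so that $d\mfA_t=0$ is automatic, Proposition \ref{prop_integrationvanishes} for solvability of the $+$ equation, and moving the branch locus to kill the $z^{1/2}$ coefficients. But two steps, as you state them, would fail. First, the linear solves: you propose $\Delta f_j^{\pm}=E_j^{\pm}$ via ordinary Hodge theory and Theorem \ref{thm_donaldsonremainningestimate}, but by Lemma \ref{lem_quadratictermfortwovaluedfunction} the sources have leading singularities $E_j^+\sim r^{-1}$ and $E_j^-\sim r^{-1/2}$; $r^{-1}$ is not even in $L^2$ near a codimension-two set, and neither source lies in the $\MD^{k,\ga}$ spaces that Theorem \ref{thm_donaldsonremainningestimate} requires. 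The paper preprocesses with explicit quasi-solutions (Proposition \ref{prop_solvethetwovaluedequation}): one subtracts $\mu^+=r\rho_0^+$, respectively $\mu^-=r^{3/2}\rho_0^-$, whose Laplacians reproduce the singular parts up to $\ca$ errors, and only then applies Theorems \ref{thm_donaldsonremainningestimate} and \ref{thm_polyhomo}. Without this step the iteration cannot even begin at order $t^2$.

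Second, and more seriously, your mechanism for moving the branch locus does not work as stated. Theorem \ref{thm_donaldsondeformation} concerns the $\ZT$ harmonic function (the homogeneous equation) for perturbed data $(g,\chi)$; it does not produce the inhomogeneous corrections $f_j^-$ with vanishing $z^{1/2}$ coefficient, and your appeal to a ``single implicit-function/fixed-point scheme'' coupling the solves with the deformation theorem is precisely the step left unspecified. The paper's resolution is explicit, order by order, and needs no fixed point at the approximate stage: at order $k+1$ solve $-\Delta_g P_{k+1}+\rho_{k+1}^-=0$, read off $P_{k+1}=\Re(a_{k+1}z^{1/2}+b_{k+1}z^{3/2})+\cdots$, take $v_k$ an extension of $-\tfrac{2a_{k+1}}{3B_1}$ (well defined by nondegeneracy of $f_1^-$), and set $f_{k+1}^-:=P_{k+1}+\na_{v_k}f_1^-$, whose $z^{1/2}$ term cancels since $\na_{v_k}f_1^-=-\Re(a_{k+1}z^{1/2})+\cdots$. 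The diffeomorphism is the time-one flow of $t^kv_k$, composed into $\vp_{k+1,t}$, and pulling back the Calabi-Yau structure changes the equation by exactly the commutator terms $\na_v\Delta_g-\Delta_g\na_v$ computed in Lemmas \ref{lem_laplacianestimate} and \ref{lem_structureofSLequationsundervariation}; since the flow enters with weight $t^k$, its feedback into the sources appears only at order $t^{k+2}$ (Lemma \ref{lem_app_estimate_forall}), which is what removes the coupling you worried about. Finally, your treatment of (v) is heuristic where the paper is quantitative: the bound $\|\theta_t\|_{\ca}\LS t^{-2}\|r_t\SL_g(\phi_t^{\st}\mfA_t)\|_{\ca}\LS t^{N-1}$ comes from the volume-form lower bound $r\,d\Vol_{\tg_t}\gtrsim t^2\,d\Vol_{g_0}$ near $\Sigma$, proved via the nondegeneracy of the $r^{-1/2}$ Hessian terms of $\mfA_t^-$; this is the precise form of your ``transition scale $r\sim t^2$'' remark.
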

		
		\subsection{Primary estimates along the branch locus $\Sigma$}
		\label{subsec_primaryestimate}
		In this subsection, we will give estimate for quadratic terms of a pair $\mfa=\al^++\al^-$. We call $\mfa$ a polyhomogeneous nondegenerate pair if both $\al^-,\al^+$ are polyhomogeneous and $\al^-$ is nondegenerate along $\Sigma$. $\mfa$ is said to be closed if both $\al^{+}$ and $\al^-$ are closed.
		
		\begin{lemma}
			\label{lem_quadratictermfortwovaluedfunction}
			For $\mfa=\al^++\al^-$ is a polyhomogeneous nondegenerate closed pair, then $$\|rQ(t\mfa)\|_{\MC^{,\gamma}}\leq Ct^2,$$ where $C$ depends on $\mfa$ but independent of $t$. In addition, near $\Sigma$, $Q_g(t\mfa)^-$ and $Q_g(t\mfa)^+$ has leading expansions $$Q_g(t\mfa)^-\sim t^2r^{-\frac12},\;Q_g(\mfa)^+\sim t^2r^{-1}.$$
		\end{lemma}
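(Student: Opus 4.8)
The plan is to read off everything from the explicit structure of the nonlinear term $Q=P+S$ furnished by Proposition \ref{prop_structureofspecialLagrangian}, combined with the polyhomogeneous expansion of a nondegenerate pair. First I would fix a point $m\in\Sigma$ and adapted coordinates $(z=x_1+\sqrt{-1}x_2,x_3,\dots,x_n)$ with $\Sigma=\{z=0\}$, and record the orders of the relevant tensors. Since $\mfa$ is nondegenerate, $\al^-=df$ with $f\sim\Re(Bz^{3/2})$, so $\al^-=O(r^{1/2})$ while the Hessian $\na\al^-$ has a single singular block, the normal--normal ($z$-)block, of size $O(r^{-1/2})$; the normal--tangent entries are $O(r^{1/2})$, the tangent--tangent entries $O(r^{3/2})$, and the smooth part satisfies $\al^+,\na\al^+=O(1)$. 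Writing $A_{ij}=\langle\na_{\pa_{x_i}}(t\mfa),\pa_{x_j}\rangle$, every monomial of $S$ (by Proposition \ref{prop_structureofspecialLagrangian}(ii)) and of $P$ (minors of $A$) is a product of such entries.

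The key counting step is that the singular $O(r^{-1/2})$ entries live only in the two--dimensional normal block, i.e. require $i,j\in\{1,2\}$. In $S$ the row indices $i_l$ are distinct and the column indices $j_l$ are distinct, and in $P$ the factors come from a $k\times k$ minor; in either case at most two factors can have both indices in $\{1,2\}$. Hence no monomial carries more than two factors of size $O(r^{-1/2})$, so $Q(t\mfa)=O(r^{-1})$; the lowest power of $t$ producing the $r^{-1}$ scale is $t^2$ (two Hessian factors), and $P$ only enters at order $t^3$. Together with $F_0(0)=F_0'(0)=0$ and $F_{i_1j_1}(0)=0$ this yields $\|Q(t\mfa)\|_{\MC^0}\LS t^2 r^{-1}$, hence $\|rQ(t\mfa)\|_{\MC^0}\LS t^2$.

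Next I would split by parity. Since $\SL$ is equivariant under $\al^-\mapsto-\al^-$ (the involution $\sigma$), $Q^+$ collects the monomials with an even number of $\al^-$--factors and $Q^-$ those with an odd number. The $r^{-1}$ singularity needs exactly two normal--normal Hessian factors, hence two $\al^-$'s, so it lies in $Q^+$; it originates from the $dy_1\wedge dy_2$ component of $T=\Im\Omega-\Im\Omega_0$, whose pullback $\iota_{t\mfa}^{\st}(dy_1\wedge dy_2)$ produces, by antisymmetry of the wedge, the factor $t^2(A_{11}A_{22}-A_{12}A_{21})=t^2\det(\Hess_{nn}f)$. Because $f$ is harmonic, $\Hess_{nn}f$ is trace-free, so $\det(\Hess_{nn}f)=-\tfrac12|\Hess_{nn}f|^2\sim-\tfrac12|B|^2 r^{-1}$ is independent of the angle $\theta$; this gives $Q^+\sim t^2 r^{-1}$ with an angular-independent leading coefficient. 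The odd sector $Q^-$ takes its most singular contribution from one normal--normal Hessian factor against an $O(1)$ factor from $\al^+$, giving $Q^-\sim t^2 r^{-1/2}$.

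Finally I would upgrade the $\MC^0$ bound to the multivalued Hölder bound, which is the real content of the lemma. The issue is that a degree-zero function with nontrivial angular dependence $g(\theta)$ has $\MC^{,\gamma}$--seminorm blowing up like $r^{-\gamma}$ near $\Sigma$, so after multiplying by $r$ I must check that the surviving degree-zero term is angular-free. For $rQ^+$ this is exactly the angular-independence established above: its leading term is the constant-in-$\theta$ coefficient $c(x')$, while all further polyhomogeneous terms carry $r$-power $\geq 1/2>\gamma$ (possibly with $\log r$ factors), for which $\|r^{\mu}(\log r)^p g(\theta)\|_{\MC^{,\gamma}}$ is finite and tends to $0$ at $\Sigma$. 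For $rQ^-$ the leading term is already $r^{1/2}$, of exponent $1/2>\gamma$, so no angular cancellation is needed. Summing the two sectors and tracking the uniform $t^2$ prefactor yields $\|rQ(t\mfa)\|_{\MC^{,\gamma}}\leq Ct^2$. The main obstacle is precisely this last paragraph: proving that the leading $r^{-1}$ coefficient of $Q^+$ is angular-independent, equivalently that the dangerous term is governed by the determinant of a trace-free (harmonic) Hessian rather than by an $SO(2)$-non-invariant quadratic form, since without this cancellation the Hölder seminorm of $rQ(t\mfa)$ would diverge at $\Sigma$.
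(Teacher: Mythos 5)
Your proposal is correct, and its core is the same as the paper's: the paper also works in adapted coordinates, records the block structure of $H^i_j(t\al^{\pm})$ (namely $tr^{-\frac12}$ for the normal--normal block, $tr^{\frac12}$ for mixed, $tr^{\frac32}$ for tangential, $\MO(t)$ for $\al^+$), observes that the Levi--Civita symbols in $P_{2l+1}$ and the distinct-index structure of $S$ in Proposition \ref{prop_structureofspecialLagrangian} allow at most two singular factors per monomial, and reads off $P^-\sim t^3r^{-\frac12}$, $P^+\sim t^3r^{-1}$, $S^-\sim t^2r^{-\frac12}$, $S^+\sim t^2r^{-1}$. Where you go beyond the paper is the last step: the paper stops at this $\MC^0$-type order counting and does not verify that multiplying by $r$ actually produces a finite $\MC^{,\gamma}$ norm, which, as you correctly isolate, hinges on the degree-zero term of $rQ^+$ being free of angular dependence (a bounded nonconstant function of $\theta$ has no continuous extension across $\Sigma$ and infinite H\"older seminorm there). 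Your observation that the $r^{-1}$ singularity enters only through the antisymmetrized $2\times2$ normal minor $A_{11}A_{22}-A_{12}A_{21}=\det(\Hess_{nn}f)$, and that harmonicity makes this trace-free so $\det(\Hess_{nn}f)=-\tfrac12|\Hess_{nn}f|^2\sim -c|B|^2r^{-1}$ with $\theta$-independent coefficient, is exactly the cancellation needed, and it supplies a justification the paper's proof leaves implicit. One small inaccuracy: since $T=\Im\Omega-\Im\Omega_0$ vanishes along the zero section, the coefficients $F_{i_1i_2j_1j_2}(t\mfa)$ of the two-Hessian monomials in $S$ are themselves $\MO(t)$, so the determinant contribution you attribute to order $t^2$ actually enters at order $t^3r^{-1}$ (consistent with the paper's own $P^+\sim t^3r^{-1}$, which likewise carries the determinant of the normal block after antisymmetrization); the genuinely $t^2$ part of $Q^+$ near $\Sigma$ is bounded, of schematic form $\partial F\cdot\al^-\otimes A^-_{11}=\MO(t^2)$ with angular dependence that is harmless after multiplication by $r$ (since $rg(\theta)$ is Lipschitz). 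This misattribution only strengthens the estimate and does not affect the conclusion, and your treatment of the odd sector ($r^{\frac12}$ leading power against $\gamma<\tfrac12$, logs included) is exactly right.
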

		\begin{proof}
			By Proposition \ref{prop_specialLagrangianlineartermstructure}, we only need to consider the behavior near the branch locus.
			
			As $Q_g(t\mfa)=P_g(t\mfa)+S_g(t\mfa)$, we will give similar estimates on both $P_g(t\mfa)$ and $S_g(t\mfa)$. Based on the choice of coordinates, we have $\pa_{x_i}z^{s}=\MO(z^{s-1})$ for $i=1,2$. Recall that we have $P_g(t\mfa)=\sum_{k=1}^{\lceil \frac{n-1}{2}\rceil}(-1)^kP_{2k+1}(t\mfa)$,
			with 
			$$
			P_{2l+1}(t\mfa)=\st (\iota_{t\mfa}^{\st}K_{2l+1})=\ep_{k_1\cdots k_{2l+1}}\ep_{j_1\cdots j_{2l+1}}H^{k_1}_{j_1}(t\mfa)H^{k_2}_{j_2}(t\mfa)\cdots H^{k_{2l+1}}_{j_{2l+1}}(t\mfa),
			$$
			where $H^i_j(t\mfa)=H^i_j(t\al^+)+H^i_j(t\al^-)$, with $H^i_j(t\al^{\pm})=t\sum_kg^{ik}\na\al^{\pm}(\pa_{x_j},\pa_{x_k})$. 
			
			Therefore, $H^i_j(t\al^-)$ have the following asymptotic behaviors near the branch locus: when $i,j\in \{1,2\}$, $H^i_j(t\al^-)\sim tr^{-\frac12}$, when $i\in \{1,2\}$, $j\in \{3,\cdots,n\}$, $H^i_j(t\al^-)\sim tr^{\frac12}$ and when $i,j\in \{3,\cdots,n\}$, $H_j^i(t\al^-)\sim tr^{\frac32}$.
			
			The most singular terms of $P(t\mfa)^-$ will be $H^1_1(t\al^-)H^2_2(t\al^+)H_3^3(t\al^+)$ and other similar combinations. $P(t\mfa)^-$ have the leading asymptotic $P_g(t\al)^-\sim tr^{-\frac12}$ near $\Sigma$. Similarly, the most singular terms of $P(t\mfa)^+$ would be $H^1_1(t\al^-)H^2_2(t\al^-)H_3^3(t\al^+)$ and other similar combinations and we have the worst leading asymptotic $P_g(t\mfa)^+\sim tr^{-1}$.
			
			Now, we will estimate $S(t\mfa)$. By Proposition \ref{prop_structureofspecialLagrangian} and a similar arguement as the "P" terms, we have $S_g(t\mfa)^-\sim t^2r^{-\frac12},\;S_g(t\mfa)^+\sim t^2r^{-1}$, which implies the claim.
		\end{proof}
		\begin{lemma}
			\label{prop_variationofQ}
			For $\mfa,\mfb$ are polyhomogeneous nondegenerate closed pairs, we have 
			$$\|r(Q_g(t\mfa+t^k\mfb)-Q_g(t\mfa))\|_{\MC^{,\gamma}}\leq C t^{k+1},$$
			where $C$ depends on $\mfa,\mfb$ but independent of $t$. 
		\end{lemma}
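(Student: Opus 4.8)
The plan is to reduce the difference estimate to the single‑pair estimate of Lemma \ref{lem_quadratictermfortwovaluedfunction} by writing the difference as an integral of the differential of $Q_g$ along the segment joining $t\mfa$ and $t\mfa+t^k\mfb$. Concretely, set $\mathfrak{c}_s:=t\mfa+s\,t^k\mfb$ for $s\in[0,1]$; since $\mfa,\mfb$ are small for small $t$, the pairs $\mathfrak{c}_s$ stay in the region where $Q_g$ is real analytic (Proposition \ref{prop_3111111}), and the fundamental theorem of calculus gives
\begin{equation*}
Q_g(t\mfa+t^k\mfb)-Q_g(t\mfa)=\int_0^1 DQ_g(\mathfrak{c}_s)[t^k\mfb]\,ds,
\end{equation*}
where $DQ_g(\mathfrak{c})[\cdot]$ is the linearization of $\mathfrak{c}\mapsto Q_g(\mathfrak{c})$, linear in the direction $t^k\mfb$. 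The essential point is that $Q_g$ vanishes to second order at $\mathfrak{c}=0$, so its differential $DQ_g(\mathfrak{c})[\cdot]$ vanishes to first order in $\mathfrak{c}$; as $\mathfrak{c}_s=\MO(t)$ and the direction carries the factor $t^k$, the integrand is $\MO(t^{k+1})$.

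To make this precise I would split $DQ_g=DP_g+DS_g$ according to Proposition \ref{prop_structureofspecialLagrangian} and treat the two pieces exactly as in Lemma \ref{lem_quadratictermfortwovaluedfunction}, using that $\mfa$ and $\mfb$ are polyhomogeneous and nondegenerate, so that near $\Sigma$ the building blocks obey $H^i_j(\cdot)\sim r^{-1/2}$ when $i,j\in\{1,2\}$, $\sim r^{1/2}$ when exactly one index lies in $\{1,2\}$, and $\sim r^{3/2}$ otherwise. For the polynomial part, $P_g$ starts at cubic order, so $DP_g(\mathfrak{c}_s)[t^k\mfb]$ is a sum of products of $2l$ factors $\na\mathfrak{c}_s$ and one factor $t^k\na\mfb$ with $2l\ge 2$; this contributes at order $t^{k+2}$ with the same $r$‑singularity profile as $P_g(t\mfa)$ itself ($r^{-1}$ on the $+$ part, $r^{-1/2}$ on the $-$ part), the antisymmetrization $\ep\,\ep$ forbidding more than two $r^{-1/2}$ factors. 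The dominant contribution comes from the quadratic part of $S_g$: its bilinear form evaluated on one slot $t\mfa$ and one slot $t^k\mfb$ is of order $t^{k+1}$, again with worst singularities $r^{-1/2}$ on $\SL(\cdot)^-$ and $r^{-1}$ on $\SL(\cdot)^+$. Multiplying by the weight $r$ turns these into $r^{1/2}$ and a bounded quantity respectively, both controlled near $\Sigma$, while away from $\Sigma$ everything is smooth; the weighted H\"older estimate then follows from the argument proving Lemma \ref{lem_quadratictermfortwovaluedfunction}.

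The main obstacle I anticipate is not the $t$‑counting, which is elementary, but the uniform control of the $\MC^{,\gamma}$ seminorm of the weighted integrand along the whole path $\{\mathfrak{c}_s\}_{s\in[0,1]}$: one must check that the polyhomogeneous asymptotics, and in particular the bounds on $\na\mathfrak{c}_s$ and on its H\"older difference quotients near $\Sigma$, hold uniformly in $s$, so that the supremum defining $\|\cdot\|_{\MC^{,\gamma}}$ may be interchanged with the $s$‑integral. Since $\mathfrak{c}_s$ is an affine combination of two fixed polyhomogeneous nondegenerate pairs with coefficients bounded independently of $s$, this uniformity is automatic and the constant $C$ depends only on $\mfa$ and $\mfb$. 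An equivalent and perhaps cleaner bookkeeping avoids the integral entirely: one expands $Q_g(t\mfa+t^k\mfb)$ multilinearly (for $P_g$) and by Taylor's theorem (for $S_g$), observes that every term surviving the subtraction of $Q_g(t\mfa)$ contains at least one factor $t^k\mfb$ together with at least one further factor, and reads off the minimal power $t^{k+1}$ from the quadratic part of $S_g$.
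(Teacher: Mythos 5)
Your proposal is correct and follows essentially the same route as the paper: the paper likewise invokes Lemma \ref{lem_quadratictermfortwovaluedfunction} to dispose of the $r$-singularities along $\Sigma$ and then reads off the order $t^{k+1}$ from the structural facts $P_g(\mfa)=\MO(|\na\mfa|^3)$ and $S_g(\mfa)=\MO(|\mfa|^2+|\na\mfa|)$ of Proposition \ref{prop_structureofspecialLagrangian}, which is precisely the ``equivalent bookkeeping'' you describe in your final paragraph. Your integral-of-the-differential formulation along $\mathfrak{c}_s=t\mfa+st^k\mfb$ is just a more explicit rendering of the same argument, with the uniformity in $s$ handled as you say.
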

		\begin{proof}
			By the previous Lemma \ref{lem_quadratictermfortwovaluedfunction}, we have $\|rQ_g(t\mfa+t^k\mfb)\|_{\ca}\leq C$ and $\|rQ_g(t\mfa)\|_{\ca}\leq C$, which means we don't need to concern the singularity coming from the branch locus. By \eqref{eq_nonlineartermP} and Proposition  \ref{prop_structureofspecialLagrangian}, we have $$P_g(\mfa)=\MO(|\na\mfa|^3),\;S_g(\mfa)=\MO(|\mfa|^2+|\na\mfa|),$$ which imply the desire bound with order $t^{k+1}$ 
		\end{proof}
		
		In summary, we obtain the following estimates.
		\begin{proposition}
			\label{prop_specialLagrangiansingularbehavior}
			Let $\mfa=\al^++\al^-$ be a polyhomogeneous nondegenerate pair, then $\|r\SL_g(\mfa)\|_{\ca}\leq C$, where $C$ depends on $\mfa$. In addition, near $\Sigma$, we have the leading expansions,
			$$
			\SL_g(\mfa)^-\sim r^{-\frac12},\;\SL_g(\mfa)^+\sim r^{-1}.
			$$
		\end{proposition}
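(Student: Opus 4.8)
The plan is to split $\SL_g(\mfa)$ into its linear and quadratic parts and read off the singular order of each piece near $\Sigma$, with the quadratic part supplied almost verbatim by Lemma \ref{lem_quadratictermfortwovaluedfunction}. Since $[L]$ is special Lagrangian, the Lagrangian angle of the zero section vanishes, so $\sin\theta=0$ and the decomposition \eqref{eq_termsinvolvingpm} reduces to
$$\SL_g(\mfa)^+=-d^{\st}\al^++Q_g(\mfa)^+,\qquad \SL_g(\mfa)^-=-d^{\st}\al^-+Q_g(\mfa)^-.$$
(Here $\mfa$ is understood to be closed, as is required for the graph to be Lagrangian and for the structural decomposition of Proposition \ref{prop_structureofspecialLagrangian} to apply.)

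First I would treat the two linear terms. Because $\al^+$ is a smooth, in particular bounded, $1$-form on $L$, its polyhomogeneous expansion begins at order $r^0$, so $d^{\st}\al^+$ is of order $r^0$. By nondegeneracy $\al^-$ is of the form $df$ with $f=\Re(Bz^{\frac32})+\cdots$, the $Az^{\frac12}$ term being absent, so $\al^-$ has leading order $r^{\frac12}$; differentiating once and using the polyhomogeneous differentiation rules together with $\pa_{x_i}z^{s}=\MO(z^{s-1})$ for $i=1,2$ shows that $d^{\st}\al^-$ has leading order $r^{-\frac12}$.

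Next I would invoke Lemma \ref{lem_quadratictermfortwovaluedfunction} with $t=1$, which gives $\|rQ_g(\mfa)\|_{\ca}\leq C$ together with the leading expansions $Q_g(\mfa)^-\sim r^{-\frac12}$ and $Q_g(\mfa)^+\sim r^{-1}$. Assembling the pieces: in the $+$ equation the term $Q_g(\mfa)^+\sim r^{-1}$ dominates the bounded term $d^{\st}\al^+$, so $\SL_g(\mfa)^+\sim r^{-1}$; in the $-$ equation $d^{\st}\al^-$ and $Q_g(\mfa)^-$ are both of order $r^{-\frac12}$, hence $\SL_g(\mfa)^-\sim r^{-\frac12}$. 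Multiplying by $r$ makes both quantities bounded in $\ca$ (since $r\cdot r^{-1}=r^0$ and $r\cdot r^{-\frac12}=r^{\frac12}$), and combined with the away-from-$\Sigma$ control coming from Proposition \ref{prop_specialLagrangianlineartermstructure} this yields $\|r\SL_g(\mfa)\|_{\ca}\leq C$.

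The substantive work is entirely inside Lemma \ref{lem_quadratictermfortwovaluedfunction}, where the structure $Q=P+S$ and the degree counting of the tensors $H^i_j$ produce precisely the $r^{-\frac12}$ and $r^{-1}$ orders; for the proposition itself the only delicate point is sharpness, namely that the stated orders are the genuine leading terms of the expansions rather than mere upper bounds. This amounts to checking that the two $r^{-\frac12}$ contributions to $\SL_g(\mfa)^-$ do not cancel and that the $r^{-1}$ coefficient of $Q_g(\mfa)^+$, built from the nowhere-vanishing section $B$, is nonzero; both are consequences of the nondegeneracy hypothesis on $\al^-$. I expect this sharpness bookkeeping to be the main, though relatively minor, obstacle.
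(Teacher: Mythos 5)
Your proof is correct and follows essentially the same route as the paper's (very short) proof: write $\SL_g(\mfa)=-d^{\st}\mfa+Q_g(\mfa)$, note that $\al^-\sim r^{\frac12}$ forces $d^{\st}\al^-\sim r^{-\frac12}$ while $d^{\st}\al^+$ is bounded, and cite Lemma \ref{lem_quadratictermfortwovaluedfunction} for the quadratic part.

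The one place you deviate is the closing "sharpness" discussion, and there you have misread the notation. In this paper the expansions $\SL_g(\mfa)^-\sim r^{-\frac12}$, $\SL_g(\mfa)^+\sim r^{-1}$ record the worst-order terms of the polyhomogeneous expansion (equivalently, the index set at which the expansion may begin), not a claim that these coefficients are nonzero; that is all that is ever used downstream, where $\rho_{k+1}^{\pm}$ is fed into Proposition \ref{prop_solvethetwovaluedequation}, whose hypotheses allow the coefficients $\rho_0^{\pm}$ to be anything polyhomogeneous of that order, including zero. Your stronger assertion that nondegeneracy of $\al^-$ rules out cancellation is in fact false in general: by Proposition \ref{prop_ztsymmetry}, when $[L]$ is locally a fixed real locus and $\mfa=\al^-$ is purely anti-invariant, one has $Q(\al^-)^+\equiv 0$, so the $r^{-1}$ coefficient of $\SL_g(\mfa)^+$ vanishes identically even though $\al^-$ is nondegenerate. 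So the "main obstacle" you anticipated is not an obstacle at all — nothing needs to be sharp — and had you tried to prove sharpness from nondegeneracy the argument would have failed. With that remark deleted, your proof is complete and matches the paper's.
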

		\begin{proof}
			As $d^{\st}\mfa=d^{\st}\al^-+d^{\st}\al^+$ and $\al^-\sim r^{\frac12}$, thus $\|rd^{\st}\mfa\|_{\ca}\leq C$.  
			In addition, as $\SL_g(\mfa)=d^{\st}\mfa+Q_g(\mfa)$, by Lemma \ref{lem_quadratictermfortwovaluedfunction}, we obtain the desire estimate.
		\end{proof}

		\begin{subsection}{Variation of the singular set}
			\label{subsec_variationsingularset}
			Let $(L,\iota_0)$ be the zero section of $(U_L,J,\omega,\Omega)$ with induced Riemannian metric $g$. Let $\Sigma$ be the codimension 2 submanifold of $L$ with normal bundle $N_{\Sigma}$ and let $v\in \Gamma(N_{\Sigma})$ be a section.  We choose an extension of $v$ to a vector field over $L$, which we still denote by $v$ to save notation.
			
			The vector field $v$ generates 1-parameter family of diffeomorphisms $$\lam_s:L\to L,\;\mathrm{with}\;\frac{d}{ds}\lam_s=v\circ \lam_s,\; \lam_0=\Id,$$ and we write $\lam:=\lam_s|_{s=1}$. $\lam_s$ could also induced a family of diffeomorphisms on $T^{\st}L$: let $(x,y)\in T^{\st}L$ with $x\in L$ and $y\in T^{\st}_xL$, we define 
			\begin{equation}
				\begin{split}
					\hmu_s:T^{\st}L\to T^{\st}L,\;\hmu_s(x,y):=(\lam_s(x),\lam_{-s}^{\st}(y)),
				\end{split}
			\end{equation}
			then under the identification $\Gamma:T_x^{\st}L\oplus T_xL\to T_{(x,y)}(T^{\st}L)$, we have 
			\begin{equation}
				\label{eq_vectorfiedinducedoverwholemanifold}
				\begin{split}
					\hV=\frac{d}{ds}|_{s=0}\hmu_s(x,y)=\Gamma(v,-L_vy)\in T_xL\oplus T_x^{\st}L,
				\end{split}
			\end{equation}
			here we consider $y$ as a 1-form and $L_v$ is taking the Lie derivative for $y$.
			
			Let $U_L'$ be a proper subset of $U_L$, $\chi$ be a cut-off function such that $\chi|_{U_L'}=1$ and $\chi|_{U_L^{\mathsf{c}}}=0$. We define $V=\chi \hV$ and write $\mu_s:U_L\to U_L$ be the 1-parameter family of diffeomorphisms such that $\frac{d\mu_s}{ds}=V\circ \mu_s$ with $\mu:=\mu_s|_{s=1}.$
			
			We would like to consider the special Lagrangian equation over the Calabi-Yau structure $(U_L,\mu^{\st}J,\mu^{\st}\omega,\mu^{\st}\Omega)$ for a nondegenerate pair $\mfa:=\al^++\al^-\subset U_L'$, which is defined to be
			\begin{equation}
				\label{eq_variationosspeicalLagrangianequation}
				\SL_{\lam^{\st}g}(\mfa):=\st_{\lam^{\st}g}\iota_{\mfa}^{\st}(\mu^{\st}\Im\Omega).
			\end{equation}
			
			Similarly, we could define $$S_{\lam^{\st}g}(\mfa):=\st_{\lam^{\st}g}\iota_{\mfa}^{\st}(\mu^{\st}T),\;P_{\lam^{\st}g}(\mfa)=P(\na_{\lam^{\st}g} \mfa),\; Q_{\lam^{\st}g}(\mfa)=S_{\lam^{\st}g}(\mfa)+P_{\lam^{\st}g}(\mfa),$$
			where $\na^{\lam^{\st}g}$ is the Levi-Civita connection for the metric $\lam^{\st}g$.
			\begin{proposition}
				\label{prop_pullbackmetricrelationship}
				For $\mfa=\al^++\al^-\subset U_L'$, we have
				\begin{equation}
					\begin{split}
						\lam^{\st}\SL_g(\mfa)=\SL_{\lam^{\st}g}(\lam^{\st}\mfa),\;\lam^{\st}P_g(\mfa)=P_{\lam^{\st}g}(\lam^{\st}\mfa),\;\lam^{\st}S_g(\mfa)=S_{\lam^{\st}g}(\lam^{\st}\mfa).
					\end{split}
				\end{equation}
				In addition, we could write $$\SL_{\lam^{\st}g}(\mfa)=-d^{\st_{\lam^{\st}g}}\mfa+Q_{\lam^{\st}g}(\mfa).$$
			\end{proposition}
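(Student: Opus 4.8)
The plan is to deduce every assertion from the naturality of the operations involved under the diffeomorphism $\lam$, combined with the cotangent-lift property of $\hmu$. The starting point is that $\hmu_s$ is the cotangent lift of $\lam_s$ (this is exactly \eqref{eq_vectorfiedinducedoverwholemanifold}), and such lifts interact cleanly with graphs of one-forms: for any one-form $\al$ on $L$ one has $\hmu_s\circ\iota_{\lam_s^{\st}\al}=\iota_{\al}\circ\lam_s$, since $\lam_{-s}^{\st}\circ\lam_s^{\st}=\id$ on covectors and $\hmu_s(x,(\lam_s^{\st}\al)_x)=(\lam_s(x),\al_{\lam_s(x)})$. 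Taking $s=1$, and using that $V=\hV$ on $U_L'$ so that $\mu$ agrees with $\hmu$ on the region occupied by the graphs of $\mfa$ and of $\lam^{\st}\mfa$, this gives the key identity $\iota_{\lam^{\st}\mfa}=\mu^{-1}\circ\iota_{\mfa}\circ\lam$. The same computation applies verbatim to the two-valued pair after passing to $\tL$.

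First I would use this identity to transport the form-level data. Applying it to the defining formula \eqref{eq_variationosspeicalLagrangianequation},
\begin{equation*}
\iota_{\lam^{\st}\mfa}^{\st}(\mu^{\st}\Im\Omega)=(\mu^{-1}\circ\iota_{\mfa}\circ\lam)^{\st}\mu^{\st}\Im\Omega=\lam^{\st}\iota_{\mfa}^{\st}(\mu^{-1})^{\st}\mu^{\st}\Im\Omega=\lam^{\st}\big(\iota_{\mfa}^{\st}\Im\Omega\big),
\end{equation*}
because $(\mu^{-1})^{\st}\mu^{\st}=\id$; the identical manipulation with $T$ in place of $\Im\Omega$ gives $\iota_{\lam^{\st}\mfa}^{\st}(\mu^{\st}T)=\lam^{\st}(\iota_{\mfa}^{\st}T)$. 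Next I would handle the Hodge star: viewing $\lam$ as a map $(L,\lam^{\st}g)\to(L,g)$ it is by definition an isometry, hence orientation-preserving (it is isotopic to the identity through the $\lam_s$), so $\lam^{\st}\circ\st_g=\st_{\lam^{\st}g}\circ\lam^{\st}$. Combining these yields
\begin{equation*}
\SL_{\lam^{\st}g}(\lam^{\st}\mfa)=\st_{\lam^{\st}g}\iota_{\lam^{\st}\mfa}^{\st}(\mu^{\st}\Im\Omega)=\st_{\lam^{\st}g}\lam^{\st}(\iota_{\mfa}^{\st}\Im\Omega)=\lam^{\st}\st_g(\iota_{\mfa}^{\st}\Im\Omega)=\lam^{\st}\SL_g(\mfa),
\end{equation*}
and the same chain with $T$ replacing $\Im\Omega$ proves $\lam^{\st}S_g(\mfa)=S_{\lam^{\st}g}(\lam^{\st}\mfa)$. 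The relation for $P$ is more direct: $P_g$ is built intrinsically from the metric, its Levi-Civita connection, and the symmetric invariants $\Tr(\Lambda^k\cdot)$ of $\na\mfa$ through \eqref{eq_nonlineartermP}, all natural under pullback, so $\lam^{\st}P_g(\mfa)=P_{\lam^{\st}g}(\lam^{\st}\mfa)$; adding the $P$ and $S$ statements gives the one for $Q=P+S$.

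For the final decomposition $\SL_{\lam^{\st}g}(\mfa)=-d^{\st_{\lam^{\st}g}}\mfa+Q_{\lam^{\st}g}(\mfa)$, I would apply Proposition \ref{prop_structureofspecialLagrangian} directly to the Calabi--Yau structure $(U_L,\mu^{\st}J,\mu^{\st}\omega,\mu^{\st}\Omega)$. This requires checking that the zero section $L_0$ is special Lagrangian for the pulled-back structure with induced metric $\lam^{\st}g$. Since $\mu=\hmu$ fixes the zero section setwise, acting by $\mu(x,0)=(\lam(x),0)$, we have $\mu(L_0)=L_0$, which is special Lagrangian for the original structure; hence $L_0$ is special Lagrangian for the pulled-back one, and its induced metric is $\iota_0^{\st}\mu^{\st}g=(\mu\circ\iota_0)^{\st}g=(\iota_0\circ\lam)^{\st}g=\lam^{\st}g$. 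Proposition \ref{prop_structureofspecialLagrangian} then furnishes the claimed splitting.

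I expect the main obstacle to be the bookkeeping around the cutoff $\chi$. The clean cotangent-lift identity holds for $\hmu$, but $\mu$ is the time-one flow of the truncated field $V=\chi\hV$, so I must ensure that the entire flow path connecting the graph of $\lam^{\st}\mfa$ to that of $\mfa$ stays inside $\{\chi=1\}=U_L'$, where $V=\hV$ and $\mu=\hmu$. This is arranged by taking $\mfa$ (and hence $\lam^{\st}\mfa$, for sufficiently small deformations) supported well inside $U_L'$; it is the one point needing genuine care, as everything else reduces to naturality.
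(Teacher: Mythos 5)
Your proof of the three pullback identities is essentially the paper's own argument: the paper also rests everything on the cotangent-lift identity $\iota_{\mfa}\circ\lam=\mu\circ\iota_{\lam^{\st}\mfa}$ and the compatibility $\lam^{\st}\circ\st_g=\st_{\lam^{\st}g}\circ\lam^{\st}$, and your chain of pullbacks for $\SL$ and $S$ (with $T$ in place of $\Im\Omega$) matches the paper's computation line by line, with the intrinsic naturality of $P$ handled the same way. Where you genuinely diverge is the final decomposition: the paper gets it in one line by writing $\SL_{\lam^{\st}g}(\mfa)=\lam^{\st}\SL_g((\lam^{-1})^{\st}\mfa)$ and pulling back the known splitting of $\SL_g$ at the original structure, using exactly the identities just proved; you instead apply Proposition \ref{prop_structureofspecialLagrangian} directly to the pulled-back Calabi--Yau structure $(U_L,\mu^{\st}J,\mu^{\st}\omega,\mu^{\st}\Omega)$ after verifying that the zero section remains special Lagrangian with induced metric $\lam^{\st}g$. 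Your route works, but note that it silently needs one more naturality statement that the paper's route avoids: Proposition \ref{prop_structureofspecialLagrangian} applied to the new structure produces an $S$-term built from $T'=\mu^{\st}\Im\Omega-\Im\Omega_0'$, where $\Omega_0'$ is the pseudo holomorphic volume form of the metric $\lam^{\st}g$, whereas the paper defines $S_{\lam^{\st}g}(\mfa)=\st_{\lam^{\st}g}\iota_{\mfa}^{\st}(\mu^{\st}T)$ with $T=\Im\Omega-\Im\Omega_0$ built from $g$. Reconciling the two requires $\mu^{\st}\Im\Omega_0=\Im\Omega_0'$ on the region where $\mu=\hmu$, which does hold because $\hmu$ is the cotangent lift of the isometry $\lam:(L,\lam^{\st}g)\to(L,g)$ and the Sasaki construction of $\Omega_0$ is natural under such lifts --- but you should say so explicitly, since the proposition's $Q_{\lam^{\st}g}$ is defined via $\mu^{\st}T$, not via the intrinsic $T'$. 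Your closing remark about keeping the whole flow path of the graphs inside $\{\chi=1\}$ is correct and is indeed the tacit hypothesis behind the paper's phrase ``by the definition of $\lam$ and $\mu$''; flagging it is a genuine improvement in care over the published proof.
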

			\begin{proof}
				By the definition of $\lam$ and $\mu$, we obtain $\iota_{\mfa}\circ\lam=\mu\circ\iota_{\lam^{\st} \mfa}$. We compute 
				$$
				\lam^{\st}\SL_g(\mfa)=\lam^{\st}(\st_g\iota_{\mfa}^{\st}\Im\Omega)=\st_{\lam^{\st}g}(\iota_{\mfa}\circ\lam)^{\st}\Im\Omega=\st_{\lam^{\st}g}(\mu\circ \iota_{\lam^{\st}\mfa})^{\st}\Im\Omega=\SL_{\lam^{\st}g}(\lam^{\st}\mfa).
				$$
				In addition, we compute
				\begin{equation*}
					\begin{split}
						\SL_{\lam^{\st}g}(\mfa)=\lam^{\st}\SL_g((\lam^{-1})^{\st}\mfa)&=-\lam^{\st}d^{\st_g}(\lam^{-1})^{\st}\mfa+\lam^{\st}Q_g((\lam^{-1})^{\st}\mfa)=-d^{\st_{\lam^{\st}g}}\mfa+Q_{\lam^{\st}g}(\mfa).
					\end{split}
				\end{equation*}
			\end{proof}
			
			Now, we will estimate the differences between terms in the special Lagrangian equation under the variation. 
			\begin{lemma}
				\label{lem_lemvariationmetric}
				\begin{itemize}
					\item [(i)] We write $g_{s}=\lam_s^{\st}g$, then the following estimates holds, $$\|\frac{d}{ds}g_s\|_{\MC^{,\gamma}}\LS \|v\|_{\MC^{1,\ga}},\;\|\frac{d^2}{ds^2}g_s\|_{\MC^{,\gamma}}\LS \|v\|_{\MC^{2,\ga}},\; \|\frac{d}{ds}\Gamma_{ij}^k\|_{\MC^{,\gamma}}\LS \|v\|_{\MC^{2,\ga}},$$
					where $\Gamma_{ij}^k$ is the Christoffel symbols w.r.t metric $g_s$ and any smooth coordinates.
					\item [(ii)] Let $\alpha$ be a n-form on $L$, then 
					\begin{equation}
						\begin{split}
							\frac{d}{ds}\st_{g_s}\al=-\frac12\Tr(g_s^{-1}\pa_sg_s)\st_s\al.
						\end{split}
					\end{equation}
				\end{itemize}
			\end{lemma}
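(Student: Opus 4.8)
The plan is to reduce everything in part (i) to the elementary observation that, since $\lam_s$ is the flow of $v$ with $\lam_0=\Id$, the pullback metrics satisfy $\frac{d}{ds}g_s=\lam_s^{\st}(\mathcal{L}_v g)$ and, iterating, $\frac{d^2}{ds^2}g_s=\lam_s^{\st}(\mathcal{L}_v^2 g)$. In local coordinates the Lie derivative has the form $(\mathcal{L}_v g)_{ij}=v^k\pa_k g_{ij}+g_{kj}\pa_i v^k+g_{ik}\pa_j v^k$, a universal expression that is linear in $v$ and its first derivatives with coefficients built from $g$ and $\pa g$; hence $\|\mathcal{L}_v g\|_{\MC^{,\gamma}}\LS \|v\|_{\MC^{1,\ga}}$. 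Since for $s$ in a bounded interval $\lam_s$ is a diffeomorphism whose $\MC^{1,\ga}$ distance to the identity is controlled by $\|v\|_{\MC^{1,\ga}}$, pullback by $\lam_s$ preserves the $\MC^{,\gamma}$ norm up to a uniform constant, giving $\|\frac{d}{ds}g_s\|_{\MC^{,\gamma}}\LS \|v\|_{\MC^{1,\ga}}$. The same argument applied to $\mathcal{L}_v^2 g$, which is linear in $v$ and its derivatives up to order two, yields the bound on $\frac{d^2}{ds^2}g_s$ in terms of $\|v\|_{\MC^{2,\ga}}$.

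For the Christoffel symbols I would invoke the standard variation formula $\frac{d}{ds}\Gamma_{ij}^k=\frac12 g_s^{kl}(\na_i\dot g_{jl}+\na_j\dot g_{il}-\na_l\dot g_{ij})$, where $\dot g=\frac{d}{ds}g_s$ and $\na$ is the Levi-Civita connection of $g_s$; this combination is tensorial and involves exactly one covariant derivative of $\dot g$. Since $\dot g=\lam_s^{\st}(\mathcal{L}_v g)$ lies in $\MC^{1,\ga}$ with norm bounded by $\|v\|_{\MC^{2,\ga}}$, we obtain $\|\frac{d}{ds}\Gamma_{ij}^k\|_{\MC^{,\gamma}}\LS \|v\|_{\MC^{2,\ga}}$. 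Alternatively one differentiates the coordinate expression $\Gamma_{ij}^k=\frac12 g^{kl}(\pa_i g_{jl}+\pa_j g_{il}-\pa_l g_{ij})$ directly; the worst term is $\pa(\dot g)$, again costing two derivatives of $v$.

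Part (ii) is a direct pointwise computation. Writing $\alpha=a\,dx_1\we\cdots\we dx_n$ in an oriented chart, the volume form of $g_s$ is $\sqrt{\det g_s}\,dx_1\we\cdots\we dx_n$, so $\st_s\alpha=a/\sqrt{\det g_s}$. Differentiating and using Jacobi's formula $\frac{d}{ds}\det g_s=\det g_s\cdot\Tr(g_s^{-1}\pa_s g_s)$ gives $\frac{d}{ds}\st_s\alpha=-\frac12\,a\,(\det g_s)^{-1/2}\Tr(g_s^{-1}\pa_s g_s)=-\frac12\Tr(g_s^{-1}\pa_s g_s)\st_s\alpha$, as claimed.

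The only point requiring care, and the place where I expect the main (though mild) obstacle, is the uniformity of the implicit constants in part (i) under pullback by $\lam_s$: one must check that the operator norms of $\lam_s^{\st}$ on $\MC^{,\gamma}$ and $\MC^{1,\ga}$ stay bounded over the relevant $s$-interval. This follows because $\lam_s$ and its inverse have $\MC^{1,\ga}$ (respectively $\MC^{2,\ga}$) norms controlled by $\|v\|$ through the flow equation and Gr\"onwall's inequality. Everything else is the bookkeeping of counting how many derivatives of $v$ enter each universal expression.
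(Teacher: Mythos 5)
Your proposal is correct and follows essentially the same route as the paper: part (i) via the identity $\frac{d}{ds}\lam_s^{\st}g=\lam_s^{\st}(\mathcal{L}_vg)$ and derivative counting in the Lie derivative (the paper simply states this "follows directly from the definition of Lie derivative on tensors"), and part (ii) via the identical pointwise computation $\st_s\al=|g_s|^{-\frac12}f$ together with Jacobi's formula $\pa_s|g_s|=|g_s|\Tr(g_s^{-1}\pa_sg_s)$. Your additional care about the uniformity of the operator norm of $\lam_s^{\st}$ on H\"older spaces is a detail the paper leaves implicit, and your Gr\"onwall justification for it is sound.
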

			\begin{proof}
				(i) follows directly from the definition of Lie derivative on tensors. For (ii), let $(x_1,\dots,x_n)$ be a coordinates on $L$, $\st_s$ be the Hodge star operator for the metric $g_s$, then we write $\al=fdx_1\we\cdots \we dx_n$ and $\st_s\al=|g_s|^{-\frac12}f$, where $|g_s|$ is the determinant of the metric. 
				
				We compute
				\begin{equation*}
					\begin{split}
						\frac{d}{ds}\st_s\al=\frac{d}{ds}f|g_s|^{-\frac12}=-\frac12|g_s|^{-\frac32}\pa_s|g_s|f=-\frac12|g_s|^{-1}\pa_s|g_s|\st_s\al=-\frac12\Tr(g_s^{-1}\pa_sg_s)\st_s\al,
					\end{split}
				\end{equation*}
				where we use $\pa_s|g_s|=|g_s|\Tr(g_s^{-1}\pa_sg_s)$ for the last identity.
			\end{proof}

			\begin{lemma}
				\label{lem_laplacianestimate}
				Over the Riemannian manifold $(L,g)$, for $f$ is either a function or a section of $\MI$, we have the following expressions:
				\begin{equation}
					\label{eq_lem_laplacianestimate}
					\Delta_{\lam^{\st}g}f=\Delta_gf+\na_v(\Delta_g f)-\Delta_g (\na_v f)+e_{\Delta}(f,\lam,g),
				\end{equation}
				with
				$$\|r^{\frac12}e_{\Delta}(f,\lam,g)\|_{\MC^{,\gamma}}\LS \|r^{\frac12}\na\na f\|_{\MC^{,\gamma}}\|v\|_{\MC^{2,\ga}}.$$ 
			\end{lemma}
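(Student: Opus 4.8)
The plan is to exploit the naturality of the Laplace--Beltrami operator under diffeomorphisms and to isolate the two displayed terms as the first-order part of a Taylor expansion in the flow parameter $s$, leaving $e_\Delta$ as the second-order remainder. Since the statement allows $f$ to be a section of the flat bundle $\MI$, I first note that in a local flat trivialization of $\MI$ the covariant derivative reduces to the ordinary one and $\Delta_g$ acts as the scalar Laplacian; thus every computation below may be carried out for scalar $f$, and the $\MI$-valued case follows verbatim with $\na_v$ interpreted via the flat connection (whose vanishing curvature is exactly what makes the scalar identities transfer).

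First I would record the naturality identity. Writing $g_s=\lam_s^{\st}g$, the map $\lam_s:(L,g_s)\to(L,g)$ is an isometry, so $\Delta$ intertwines with pullback: for any $h$ on $(L,g)$ one has $\Delta_{g_s}(\lam_s^{\st}h)=\lam_s^{\st}(\Delta_g h)$. Taking $h=(\lam_s^{-1})^{\st}f$ gives
\begin{equation}
\Delta_{g_s}f=\lam_s^{\st}\,\Delta_g\,(\lam_s^{-1})^{\st}f.
\end{equation}
Differentiating in $s$ and using $\tfrac{d}{ds}|_{s=0}\lam_s^{\st}=\mathcal{L}_v$ and $\tfrac{d}{ds}|_{s=0}(\lam_s^{-1})^{\st}=-\mathcal{L}_v$, together with $\mathcal{L}_v=\na_v$ on functions, yields $\tfrac{d}{ds}|_{s=0}\Delta_{g_s}f=\na_v(\Delta_g f)-\Delta_g(\na_v f)$, which is exactly the main term in \eqref{eq_lem_laplacianestimate}. (Alternatively this first-variation formula can be checked directly from the coordinate expression $\Delta_{g_s}f=g_s^{ij}(\pa_i\pa_j f-\Gamma_{ij}^k(s)\pa_k f)$ using the variations of $g_s$ and $\Gamma_{ij}^k$ supplied by Lemma \ref{lem_lemvariationmetric}.)

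Next I would define $e_\Delta:=\Delta_{\lam^{\st}g}f-\Delta_g f-\big(\na_v(\Delta_g f)-\Delta_g(\na_v f)\big)$ and write it as the integral remainder $e_\Delta=\int_0^1(1-s)\,F''(s)\,ds$ with $F(s):=\Delta_{g_s}f$. Differentiating the naturality identity twice shows $F''(s)=\lam_s^{\st}[\mathcal{L}_v,[\mathcal{L}_v,\Delta_g]](\lam_s^{-1})^{\st}f$; the double commutator is again a \emph{second}-order operator in $f$ (the third-order symbol cancels because $\Delta_g$ has scalar principal symbol), whose top-order coefficient is controlled by $\tfrac{d^2}{ds^2}g_s$ and hence by $\|v\|_{\MC^{2,\ga}}$ through Lemma \ref{lem_lemvariationmetric}. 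The decisive point is the weighting: near $\Sigma$ a nondegenerate section behaves like $r^{\frac12}$, so $\na\na f\sim r^{-\frac32}$ is strictly more singular than $\na f\sim r^{-\frac12}$ and $f\sim r^{\frac12}$; after multiplying by $r^{\frac12}$ the second-derivative contribution dominates and the first- and zeroth-order contributions are subordinate by positive powers of $r$. This produces $\|r^{\frac12}e_\Delta\|_{\MC^{,\gamma}}\LS \|r^{\frac12}\na\na f\|_{\MC^{,\gamma}}\|v\|_{\MC^{2,\ga}}$.

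The main obstacle is the weighted H\"older bookkeeping near the moving branch locus. Because $\lam_s$ displaces $\Sigma$, the pullbacks $\lam_s^{\st}$ and $(\lam_s^{-1})^{\st}$ appearing in $F''(s)$ refer to the distance function of a nearby submanifold; I would first show that $r$ and $r\circ\lam_s$ are uniformly comparable for the small diffeomorphisms at hand, so that the weight can be frozen at $s=0$ at the cost of constants. One then has to verify that the extra derivatives of $v$ produced by the inner commutator only ever multiply the lower-order (less singular) derivatives of $f$, so that the full error is genuinely dominated by the single weighted quantity $\|r^{\frac12}\na\na f\|_{\MC^{,\gamma}}$; controlling the $\MC^{,\gamma}$ difference quotients across the annuli $\{r\sim\mathrm{const}\}$, rather than merely the pointwise size, is where the argument requires the most care.
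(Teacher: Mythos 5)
Your proposal is correct and follows essentially the same route as the paper: both identify the main term by differentiating the naturality identity $\lam_s^{\st}(\Delta_g f)=\Delta_{\lam_s^{\st}g}\lam_s^{\st}f$ at $s=0$, define $e_{\Delta}$ as the integral Taylor remainder $\int_0^1(1-s)\pa_s^2F(s,x)\,ds$ with $F(s,x):=\Delta_{\lam_s^{\st}g}f$, and bound $\pa_s^2F$ by $\|v\|_{\MC^{2,\ga}}$ times the weighted derivatives of $f$ using the metric-variation estimates of Lemma \ref{lem_lemvariationmetric}. The only difference is presentational: you write $\pa_s^2F$ as the conjugated double commutator $\lam_s^{\st}[\mathcal{L}_v,[\mathcal{L}_v,\Delta_g]](\lam_s^{-1})^{\st}f$ and invoke the scalar principal symbol to see it is second order, whereas the paper computes $\pa_s^2F$ directly in local coordinates from the divergence form of the Laplacian, with coefficients controlled via $\frac{d^2}{ds^2}\lam_s^{\st}g=\lam_s^{\st}L_v^2g$ --- the same second-order operator either way.
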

			\begin{proof}
				For $x\in L$, we define $F(s,x):=\Delta_{\lam_s^{\st}g}f$, then the Taylor expansion in terms of $s$ would be
				
				$$\Delta_{\lam_1^{\st}g}f=\Delta_gf+\frac{d}{ds}|_{s=0}\Delta_{\lam_s^{\st}g}f s+e_{\Delta},$$
				where $e_{\Delta}$ is the integral reminder term which could be written as $$e_{\Delta}=\int_0^1\pa_s^2F(s,x)(1-s)ds=\int_0^1\pa_s^2\Delta_{\lam^{\st}g}f ds.$$
				
				As $\lam_s^{\st}(\Delta_g f)=\Delta_{\lam_s^{\st}g}\lam_s^{\st}f$, we take derivative of $s$ in both side at $s=0$, we obtain 
				$$(\frac{d}{ds}|_{s=0}\Delta_{\lam_s^{\st}g})f=\na_v(\Delta f)-\Delta(\na_v f).$$
				
				Let $(x_1,\cdots,x_n)$ be local coordinate on $L$ and now we compute $\pa_s^2F(s,x)$ in this coordinate. We write $g=\sum_{i,j=1}^ng_{ij}dx_i\otimes dx_j$, then the Laplacian operator in this coordinate could be written as $$\Delta_gf=\sum_{i,j=1}^n-|g|^{-\frac12}\pa_i(|g|^{\frac12}g^{ij}\pa_jf)=\sum_{i,j=1}^n-\pa_i(g^{ij}\pa_jf)+\frac12 \pa_i\log|g| g^{ij}\pa_jf,$$
				where $|g|=\det(g_{ij})$. We compute 
				\begin{equation}
					\begin{split}
						\pa_s^2F(s,x)=&\frac{d^2}{ds^2}\Delta_{\lam_s^{\st}g}f\\
						=&\sum_{i,j=1}^n-\frac{d^2}{ds^2}(\lam_s^{\st}g)^{ij}\pa_i\pa_jf-\frac{d^2}{ds^2}\pa_i(\lam_s^{\st}g^{ij})\pa_j f-\frac{1}{2}\pa_i\frac{d^2}{ds^2}(\log |\lam_s^{\st}g| (\lam_s^{\st}g)^{ij})\pa_j f.
					\end{split}
				\end{equation}
				A straight forward computation shows that 
				$$\|\frac{d}{ds}\lam_s^{\st}g\|_{\MC^{,\gamma}}=\|\lam_s^{\st}(L_vg)\|_{\MC^{,\gamma}}\LS \|v\|_{\MC^{1,\ga}},\;
				\|\frac{d^2}{ds^2}(\lam_s^{\st}g)\|_{\MC^{,\gamma}}=\|\lam_s^{\st}L_v^2g\|_{\MC^{,\gamma}}\LS \|v\|_{\MC^{2,\ga}}.
				$$
				
				As for each $s$, $\|r^{\frac12}\pa_s^2F(s,x)\|_{\MC^{,\gamma}}\LS \|r^{\frac12}\na\na f\|_{\MC^{,\gamma}}\|v\|_{\MC^{2,\ga}}$, we obtain the desire estimate for $r^{\frac12}e_{\Delta}$.
			\end{proof}
			
			\begin{lemma}
				\label{lem_app_estimate_forall}
				Let $\mfa$ be a polyhomogeneous nondegenerate pair, then the following estimates hold
				\begin{itemize}
					\item [(i)]$$\|r(P_{\lam^{\st}g}(t\mfa)-P_g(t\mfa))\|_{\MC^{,\gamma}}\leq C t^3\|v\|_{\MC^{2,\ga}},$$
					\item [(ii)] $$\|r(S_{\lam^{\st}g}(t\mfa)-S_g(t\mfa))\|_{\MC^{,\gamma}}\leq C t^2\|v\|_{\MC^{2,\ga}},$$
					\item [(iii)] $$\|r(Q_{\lam^{\st}g}(t\mfa)-Q_{g}(t\mfa))\|_{\MC^{,\gamma}}\leq C t^2\|v\|_{\MC^{2,\ga}},$$
				\end{itemize}
				where $C$ is a constant depends on $\mfa$ but independents of $v$ and $t$.
			\end{lemma}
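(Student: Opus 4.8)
The plan is to write each of the three differences as an integral in the deformation parameter $s$, holding the one-form $t\mfa$ \emph{fixed} and letting only the metric vary, so that the singular set of every integrand remains at $\Sigma$ and the $r$-weight still tames it. Concretely I would set $g_s:=\lam_s^{\st}g$ and let $\mu_s$ be the associated flow on $U_L$, so that $s=1$ recovers the varied structure and $s=0$ the original one, and write
$$P_{\lam^{\st}g}(t\mfa)-P_g(t\mfa)=\int_0^1 \tfrac{d}{ds}P_{g_s}(t\mfa)\,ds,\qquad S_{\lam^{\st}g}(t\mfa)-S_g(t\mfa)=\int_0^1 \tfrac{d}{ds}S_{g_s}(t\mfa)\,ds.$$
Since $t\mfa$ is never transported, all negative powers of $r$ in the integrands come solely from the fixed polyhomogeneous factors $\na\al^-\sim r^{-1/2}$, exactly as in Lemma \ref{lem_quadratictermfortwovaluedfunction}, while the metrics $g_s$ and the diffeomorphisms $\mu_s$ stay smooth and uniformly bounded in a full neighbourhood of $\Sigma$, because $g$ is smooth and $v$ extends to a smooth vector field on $L$.

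For (i) I would recall from Proposition \ref{prop_pseudoSLequations} that $P_{g_s}(t\mfa)$ is a sum of the symmetric functions $P_{2l+1}$, $l\geq 1$, each a polynomial of odd degree $\geq 3$ in the entries $H^i_j(g_s,t\mfa)=t\sum_k g_s^{ik}\na^{g_s}\mfa(\pa_{x_j},\pa_{x_k})$, whose dependence on the metric enters only through $g_s^{-1}$ and the Christoffel symbols of $\na^{g_s}$. Differentiating in $s$ by the product rule, every summand is one $s$-derivative of $g_s^{-1}$ or of a Christoffel symbol times at least two undifferentiated factors $H^i_j(t\mfa)$; by Lemma \ref{lem_lemvariationmetric} the metric-derivative factor is bounded in $\MC^{,\gamma}$ by $\|v\|_{\MC^{2,\ga}}$ (the worst case being $\tfrac{d}{ds}\Gamma_{ij}^k$, which involves two derivatives of $v$), while the remaining product keeps both the cubic homogeneity in $t$ and the most singular $r^{-1}$ behaviour counted in Lemma \ref{lem_quadratictermfortwovaluedfunction}. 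Hence $\|r\,\tfrac{d}{ds}P_{g_s}(t\mfa)\|_{\MC^{,\gamma}}\LS t^3\|v\|_{\MC^{2,\ga}}$ uniformly in $s$, and integrating over $s\in[0,1]$ gives (i).

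For (ii) I would argue identically for $S_{g_s}(t\mfa)=\st_{g_s}\iota_{t\mfa}^{\st}(\mu_s^{\st}T)$. The transformation law $\lam^{\st}S_g(\mfa)=S_{\lam^{\st}g}(\lam^{\st}\mfa)$ of Proposition \ref{prop_pullbackmetricrelationship}, together with the vanishing $F_0(0)=F_0'(0)=0$ and $F_{i_1j_1}(0)=0$ of Proposition \ref{prop_structureofspecialLagrangian}(ii), shows that $S_{g_s}(t\mfa)=O(t^2)$ uniformly in $s$ with the same $r^{-1}$ leading singularity. The $s$-derivative now acts only on $\st_{g_s}$, controlled by Lemma \ref{lem_lemvariationmetric}(ii), and on $\mu_s^{\st}T$ through $\mu_s^{\st}(L_VT)$, which is smooth on $U_L$ and of size $\LS\|v\|_{\MC^{2,\ga}}$; neither lowers the $t$-order nor worsens the $r$-singularity, so $\|r(S_{\lam^{\st}g}(t\mfa)-S_g(t\mfa))\|_{\MC^{,\gamma}}\LS t^2\|v\|_{\MC^{2,\ga}}$, which is (ii). Finally (iii) is immediate from (i), (ii) and $Q=P+S$, using $t^3\leq t^2$ for $|t|\leq 1$.

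The hard part will be the uniform control of the singular structure under $s$-differentiation: I must verify that differentiating the metric coefficients never creates additional negative powers of $r$, so that the prefactor $r$ still produces a finite $\MC^{,\gamma}$ norm. This is precisely where it matters that $t\mfa$ is held fixed, so that the $r$-singularity stays attached to $\al^-$ and to $\Sigma$, and that $g_s$, $g_s^{-1}$, the Christoffel symbols and $\mu_s^{\st}(L_VT)$ remain smooth with $s$-derivatives bounded by $\|v\|_{\MC^{2,\ga}}$ throughout a neighbourhood of $\Sigma$; granting this, the singular-term count of Lemma \ref{lem_quadratictermfortwovaluedfunction} transfers verbatim with one extra factor of $\|v\|_{\MC^{2,\ga}}$.
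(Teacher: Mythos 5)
Your parts (i) and (iii) follow the paper's proof essentially verbatim: the paper also writes $P_{\lam^{\st}g}(t\mfa)-P_g(t\mfa)=\int_0^1\pa_sP_{g_s}(t\mfa)\,ds$, controls $\pa_sg_s^{ik}$ and $\pa_s\Gamma_{ij}^k$ by Lemma \ref{lem_lemvariationmetric}, and obtains (iii) by adding (i) and (ii). For part (ii), however, you take a genuinely different route. The paper rewrites the integrand as $\frac{d}{ds}\lam_s^{\st}\st_g(\iota_{t\lam_{-s}^{\st}\mfa}^{\st}T)=\lam_s^{\st}\na_vS_g(t\lam_{-s}^{\st}\mfa)+\lam_s^{\st}S_g(t\lam_{-s}^{\st}(-L_v\mfa))$, where the form $\mfa$ is transported by the flow; each of these two terms is individually as singular as $r^{-2}$ (respectively $r^{-3/2}$), and the paper must argue that these worst singular terms cancel against each other, using the a priori boundedness of $r$ times the sum. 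You instead hold $t\mfa$ fixed and differentiate only the structural data, splitting $\frac{d}{ds}S_{g_s}(t\mfa)=\frac{d}{ds}\bigl(\st_{g_s}\bigr)\,\iota_{t\mfa}^{\st}(\mu_s^{\st}T)+\st_{g_s}\iota_{t\mfa}^{\st}(\mu_s^{\st}L_VT)$; since the singular factors $\na\al^-\sim r^{-1/2}$ are never differentiated, no term worse than $r^{-1}$ ever appears and the delicate cancellation is sidestepped entirely. This is a cleaner organization of the same estimate.

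One step in your (ii) is asserted rather than proved, and it is the crux: the claim that the term $\st_{g_s}\iota_{t\mfa}^{\st}(\mu_s^{\st}L_VT)$ ``does not lower the $t$-order.'' For a general smooth $n$-form $\beta$ on $U_L$, $\st\iota_{t\mfa}^{\st}\beta=\MO(1)$, not $\MO(t^2)$; the quadratic vanishing of $S$ in Proposition \ref{prop_structureofspecialLagrangian}(ii) used precisely that $T|_{L_0}=0$ and $dT|_{L_0}=0$ (via $\frac{d}{dt}|_{t=0}S(t\al,t\na\al)=0$). So you must verify that $L_VT$ inherits both vanishings. This is true, but needs the observation that $V$ is tangent to the zero section: $\hmu_s(x,0)=(\lam_s(x),0)$, so the flow $\mu_s$ preserves $L_0$ (the cutoff $\chi$ only rescales a tangent vector), and a Lie derivative along a vector field tangent to an invariant submanifold of a tensor vanishing on that submanifold again vanishes there; applied to $T$ and to $dT$ (using $dT|_{L_0}=0$ from $d\Omega=0$ and $d\Omega_0|_{\iota_0(L)}=0$) this gives $L_VT|_{L_0}=0$ and $d(L_VT)|_{L_0}=0$, whence the analogue of Proposition \ref{prop_structureofspecialLagrangian}(ii) holds with $T$ replaced by $\mu_s^{\st}L_VT$ and yields the needed $\MO(t^2)\|v\|_{\MC^{2,\ga}}$ bound, with the $r^{-1}$ singularity count of Lemma \ref{lem_quadratictermfortwovaluedfunction} transferring unchanged. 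Without this verification your bound for that term is a priori only $\MO(t)\|v\|_{\MC^{2,\ga}}$, which would not give (ii); with it, your argument is complete and correct.
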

			\begin{proof}
				For (i), we write $g_s=\lam_s^{\st}g$, then $P_{\lam^{\st}g}(t\mfa)-P_g(t\mfa)=\int_{0}^1\pa_sP_{g_s}(t\mfa)ds$. By Lemma \ref{lem_lemvariationmetric}, we have $$\|\pa_sg^{ik}\|_{\MC^{,\gamma}}\leq C \|v\|_{\MC^{1,\ga}},\;\;\|\pa_s\Gamma_{ij}^k\|_{\MC^{,\gamma}}\leq C \|v\|_{\MC^{2,\ga}}.$$ By the expression of $\pa_{s}P_{g_s}(t\mfa)$ and the same argument of Lemma \ref{prop_variationofQ}, near $\Sigma$, we obtain $r\pa_sP_{g_s}(t\mfa)\sim t^3r^{\frac12},$ which implies $\|r\pa_sP_{g_s}(t\mfa)\|_{\MC^{,\gamma}}\leq C\|v\|_{\MC^{2,\ga}}t^3.$
				
				For (ii), as $\iota_{t\mfa}\circ\lam_s=\mu_s\circ\iota_{t\lam_s^{\st}\mfa}$, we compute
				\begin{equation}
					\begin{split}
						S_{\lam^{\st}g}(t\al)-S_g(t\al)=\int_0^1\frac{d}{ds}\st_{g_s}\iota_{t\mfa}^{\st}(\mu_s^{\st}T)ds=\int_0^1\frac{d}{ds}\lam_s^{\st}\st_g(\iota_{t\lam_{-s}^{\st}\mfa}^{\st}T)ds.
					\end{split}
				\end{equation}
				As $\st_g(\iota_{t\lam_{-s}^{\st}\mfa}^{\st}T)=S_g(t\lam_{-s}^{\st}\mfa)$, by Lemma \ref{lem_quadratictermfortwovaluedfunction}, the leading expansions of $S_g(t\lam_{-s}^{\st}\mfa)$ will be 
				$$
				S_g(t\lam_{-s}^{\st}\mfa)^-\sim \lam_{-s}^{\st}r^{-\frac12},\;S_g(t\lam_{-s}^{\st}\mfa)^+\sim \lam_{-s}^{\st}r^{-1}.
				$$
				As $\lam_s^{\st}\lam_{-s}^{\st}r=r$, which is independent of the order of $r$ direction. Therefore, the singular terms of $\frac{d}{ds}\lam_s^{\st}\st_g(\iota_{t\lam_{-s}^{\st}\mfa}^{\st}T)$ will have order $r^{-1}$ or $r^{-\frac12}$ near $\Sigma$, which implies $r\frac{d}{ds}\lam_s^{\st}\st_g(\iota_{t\lam_{-s}^{\st}\mfa}^{\st}T)$ is bounded near $\Sigma$. 
				
				On the other hand, we compute
				\begin{equation}
					\begin{split}
						\frac{d}{ds}\lam_s^{\st}\st_g(\iota_{t\lam_{-s}^{\st}\mfa}^{\st}T)=&\lam_s^{\st}(\na_v\st_g(\iota_{t\lam_{-s}^{\st}\mfa}^{\st}T))+\lam_s^{\st}(\st_g \iota_{t \lam_{-s}^{\st}(-L_v \mfa)}^{\st}T)\\
						=&\lam_s^{\st}\na_vS_g(t\lam_{-s}^{\st}\mfa)+\lam_s^{\st}S_g(t\lam_{-s}^{\st}(-L_v\mfa)).
					\end{split}
				\end{equation}
				
				Near $\Sigma$, the most singular terms of $\lam_s^{\st}\na_vS_g(t\lam_{-s}^{\st}\mfa)$ and $\lam_s^{\st}S_g(t\lam_{-s}^{\st}(-L_v\mfa))$ will be order $r^{-2}$ or $r^{-\frac32}$. However, as $r\frac{d}{ds}\lam_s^{\st}\st_g(\iota_{t\lam_{-s}^{\st}\mfa}^{\st}T)$ is bounded, the singular terms at order $r^{-2}$ or $r^{-\frac32}$ will cancel with each other.
				
				By Proposition \ref{prop_structureofspecialLagrangian}, we could schematically write $$\lam_s^{\st}\na_vS_g(t\lam_{-s}^{\st}\mfa)+\lam_s^{\st}S_g(t\lam_{-s}^{\st}(L_v\mfa))=t^2v\otimes A+t^2\na v\otimes B,$$
				where $A,B$ are $\ca$ terms depends on $v$ and $\mfa$. We obtain $\|r\frac{d}{ds}\lam_s^{\st}\st_g(\iota_{t\lam_{-s}^{\st}\mfa}^{\st}T)\|_{\ca}\leq Ct^2\|v\|_{\ca}$ which implies
				$$
				\|S_{\lam^{\st}g}(t\mfa)-S_g(t\mfa)\|_{\MC^{,\gamma}}\leq Ct^2\|v\|_{\MC^{2,\ga}}.
				$$
				(iii) follows by (i) and (ii). 
			\end{proof}
			
			We consider $\mfa=d\mff=df^++df^-,\mfb=d\mfh=dh^++dh^-$ be nondegenerate pairs, where $f^-,h^-$ are sections of $V^-$ and $f^+,h^+$ are sections of a suitable affine line bundle such that $df^+,dh^+$ are 1-forms on $L$. We have the following estimates.
			\begin{lemma}
				\label{lem_structureofSLequationsundervariation}
			Let $v$ be vector on $L$ which generates the 1-parameter family of diffeomorphisms $\lam_s$ with $\lam:=\lam_s|_{s=1}$, then we could write
				\begin{equation}
					\label{eq_structureSLequationsfortwoterms}
					\begin{split}
						\SL_{\lam^{\st}g}(\mfa+\mfb)=\SL_g(\mfa)-(\Delta_g\mfh+\na_v\Delta_g\mff-\Delta_g\na_v\mff)+\EDQ+\EQG+E_{\Delta},
					\end{split}
				\end{equation}
				where \begin{equation}
					\begin{split}
						\EDQ&:=Q_g(\mfa+\mfb)-Q_g(\mfa),\;\EQG:=Q_{\lam^{\st}g}(\mfa+\mfb)-Q_{g}(\mfa+\mfb),\\
						E_{\Delta}&:=e_{\Delta}(\mff+\mfh,\lam,g)+\na_v\Delta_g\mfh-\Delta_g\na_v\mfh,
					\end{split}
				\end{equation}
				where $e_{\Delta}(\mfa+\mfb,\lam,g)$ is the error term defined in \eqref{eq_lem_laplacianestimate}.
			\end{lemma}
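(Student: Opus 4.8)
The plan is to obtain the identity by pure algebraic bookkeeping, assembling two earlier results: the splitting $\SL_{\lam^{\st}g}(\cdot)=-d^{\st_{\lam^{\st}g}}(\cdot)+Q_{\lam^{\st}g}(\cdot)$ from Proposition \ref{prop_pullbackmetricrelationship}, and the expansion of the Laplacian under the flow $\lam$ from Lemma \ref{lem_laplacianestimate}. The structural observation that collapses everything to scalar Laplacians is that both pairs are exact: since $\mfa=d\mff$ and $\mfb=d\mfh$ we have $\mfa+\mfb=d(\mff+\mfh)$, so in the positive-Laplacian convention of Lemma \ref{lem_laplacianestimate} the codifferential reduces to a scalar operator, $d^{\st_{\lam^{\st}g}}(\mfa+\mfb)=\Delta_{\lam^{\st}g}(\mff+\mfh)$. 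Both the function-like $+$ component and the $\MI$-valued $-$ component are permitted in Lemma \ref{lem_laplacianestimate}, so I would treat them separately and recombine. Proposition \ref{prop_pullbackmetricrelationship} then gives the starting point
\begin{equation*}
\SL_{\lam^{\st}g}(\mfa+\mfb)=-\Delta_{\lam^{\st}g}(\mff+\mfh)+Q_{\lam^{\st}g}(\mfa+\mfb).
\end{equation*}

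First I would expand the Laplacian. Applying Lemma \ref{lem_laplacianestimate} to $\mff+\mfh$ yields
\begin{equation*}
\Delta_{\lam^{\st}g}(\mff+\mfh)=\Delta_g(\mff+\mfh)+\na_v\Delta_g(\mff+\mfh)-\Delta_g\na_v(\mff+\mfh)+e_{\Delta}(\mff+\mfh,\lam,g).
\end{equation*}
The one nonautomatic step is then to reintroduce $\SL_g(\mfa)$ on the right-hand side: because $\mfa=d\mff$ is exact, Proposition \ref{prop_pullbackmetricrelationship} applied to the base metric $g$ reads $\SL_g(\mfa)=-\Delta_g\mff+Q_g(\mfa)$, so $-\Delta_g\mff=\SL_g(\mfa)-Q_g(\mfa)$. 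Substituting this in place of the $\Delta_g\mff$ contribution from the previous display is precisely what produces the leading $\SL_g(\mfa)$ term in the claimed formula.

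The remaining task is to sort the surviving terms into the three labelled groups. The two quadratic contributions $-Q_g(\mfa)$ and $Q_{\lam^{\st}g}(\mfa+\mfb)$ telescope through $Q_g(\mfa+\mfb)$ to give $\EDQ+\EQG$. The commutator attached to the $\mff$-potential, $\na_v\Delta_g\mff-\Delta_g\na_v\mff$, together with $\Delta_g\mfh$, assembles the explicit linear block $-(\Delta_g\mfh+\na_v\Delta_g\mff-\Delta_g\na_v\mff)$. Finally the error $e_{\Delta}(\mff+\mfh,\lam,g)$ and the commutator attached to the $\mfh$-potential, $\na_v\Delta_g\mfh-\Delta_g\na_v\mfh$, combine into $E_{\Delta}$. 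I expect no analytic obstacle: the statement is an exact identity, and the whole difficulty is bookkeeping—keeping each commutator with its correct potential ($\mff$ displayed, $\mfh$ absorbed into $E_{\Delta}$) and tracking the codifferential/Laplacian sign convention consistently, which is what fixes the overall sign of the $E_{\Delta}$ group. The genuinely quantitative content, namely the bound on $e_{\Delta}$, is already delivered by Lemma \ref{lem_laplacianestimate} and is only invoked downstream when this identity is turned into estimates.
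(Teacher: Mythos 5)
Your proposal is correct and follows essentially the same route as the paper's own proof: set $\MF=\mff+\mfh$, use the splitting $\SL=-d^{\st}+Q$ from Proposition \ref{prop_pullbackmetricrelationship} together with exactness to reduce the codifferential to $\Delta_{\lam^{\st}g}\MF$, expand via Lemma \ref{lem_laplacianestimate}, substitute $\SL_g(\mfa)$ for the $\Delta_g\mff$ contribution, and regroup the quadratic terms into $\EDQ+\EQG$. The only divergence is the sign in front of $E_{\Delta}$ (your consistent $\SL=-\Delta+Q$ convention yields $-E_{\Delta}$ where the lemma states $+E_{\Delta}$), but this traces to a sign slip internal to the paper's own proof, which opens with $\SL_{\lam^{\st}g}(\mfa+\mfb)=\Delta_{\lam^{\st}g}\MF+Q_{\lam^{\st}g}(\mfa+\mfb)$ yet ends with the stated minus sign on the linear block; since only $\|E_{\Delta}\|_{\MC^{,\gamma}}$ is used downstream, the discrepancy is harmless, exactly as you anticipated.
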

			\begin{proof}
				We write $\MF:=\mff+\mfh$ and we write $\SL_{\lam^{\st}g}(\mfa+\mfb)=\Delta_{\lam^{\st}g}\MF+Q_{\lam^{\st}g}(\mfa+\mfb)$.
				
				In addition, we compute
				\begin{equation}
					\begin{split}
						\Delta_{\lam^{\st}g}\MF&=\Delta_g\MF+\na_v\Delta_g\MF-\Delta\na_v\MF+E_{\Delta}(\MF,\lam^{\st}g)\\
						&=\Delta_g\mff+(\Delta_g\mfh+\na_v\Delta \mff-\Delta \na_v \mff)+E_{\Delta},
					\end{split}
				\end{equation}
				where $E_{\Delta}=\na_v\Delta_g\mfh-\Delta_g\na_v\mfh+e_{\Delta}(\MF,\lam,g)$.
				We compute
				\begin{equation}
					\begin{split}
						\SL_{\lam^{\st}g}(\mfa+\mfb)=\SL_g(\mfa)-(\Delta_g\mfh+\na_v\Delta_g \mff-\Delta_g\na_v\mff)+E_{\Delta}+Q_{\lam^{\st}g}(\mfa+\mfb)-Q_g(\mfa).
					\end{split}
				\end{equation}
				As $Q_{\lam^{\st}g}(\mfa+\mfb)-Q_{g}(\mfa)=E_{\delta Q}+E_{Q_{\delta g}}$, we obtain \eqref{eq_structureSLequationsfortwoterms}.
			\end{proof}
			
			\begin{proposition}
				\label{prop_solvethetwovaluedequation}
				\begin{itemize}
					\item [(i)]Let $\rho_0^+\in\MC^{2,\ga}$, $\rho_1^+\in \MC^{,\gamma}$ be polyhomogeneous along $\Sigma$ with index set $\mathbb{Z}$, suppose $\int_{L}(r^{-1}\rho_0^++\rho_1^+)d\Vol=0$, then there exists a polyhomogeneous solution $f^+\in\MC^{2,\ga}$ to $\Delta f^+=r^{-1}\rho_0^++\rho_1^+$ with expansion
					$$
					f^+\sim b_0+b_1r+b_2r^2+\sum_{k=0}^p b_{3,k}r^3(\log r)^k+\cdots.
					$$
					\item [(ii)]Let $\rho_0^-,\rho_1^-\in\MC^{,\gamma}$ be sections of $\MI$ over $L$ which is polyhomogeneous along $\Sigma$ with index set $\{\mathbb{Z}+\frac12\}$, then there exists a polyhomogeneous solution $f^-\in\Gamma(\MI)$ to $\Delta f^-=r^{-\frac12}\rho_0^-+\rho_1^-$ with expansion
					$$
					f^-\sim a_1r^{\frac12}+a_2r^{\frac32}+\sum_{k=0}^{p}a_{3,k} r^{\frac52}(\log r)^k+\cdots.\\
					$$
				\end{itemize}
			\end{proposition}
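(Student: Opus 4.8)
The plan is to solve each equation by a two-stage parametrix scheme: first subtract an explicit radial correction, supported in a tubular neighborhood of $\Sigma$, that cancels the singular part of the forcing and leaves a residual lying in the admissible class of an available inversion theorem; then invert the residual and upgrade to a polyhomogeneous expansion. To set up the correction I would work in the coordinates $(z=re^{i\theta},t)$ near $\Sigma$, where $\Delta_g$ has the model form $\partial_r^2+\tfrac1r\partial_r+\tfrac1{r^2}\partial_\theta^2+\Delta_t$ modulo terms of lower order in $r$. The elementary computation $\Delta_g(r^s a(\theta,t))=r^{s-2}(s^2a+\partial_\theta^2 a)+r^s\Delta_t a+\cdots$ shows that on the angular mode $e^{i\mu\theta}$ the indicial operator is multiplication by $s^2-\mu^2$, where $\mu$ ranges over $\mathbb Z$ for the single-valued $+$ part and over $\mathbb Z+\tfrac12$ for the $\MI$-twisted $-$ part; cancelling a forcing singular like $r^{s-2}$ thus calls for a correction of order $r^s$, i.e. one raises the radial power by two.

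For part (ii) the singular forcing is $r^{-\frac12}\rho_0^-$, so I would expand the leading coefficient of $\rho_0^-$ along $\Sigma$ in $\MI$-modes, solve $\partial_\theta^2 a+\tfrac94 a=\rho_0^-|_{r=0}$ mode by mode, and set $w=\chi\,r^{\frac32}a(\theta,t)$ with $\chi$ a cutoff near $\Sigma$; this is the global version of the explicit model solution $\tfrac12 r^2\tau z^{-\frac12}$, whose non-resonant indicial factor is $\tfrac94-\tfrac14=2$. Then $\Delta_g w-r^{-\frac12}\rho_0^-$ is bounded, so the residual forcing lies in $\MD^{k,\ga}(\MI)$ and Theorem~\ref{thm_donaldsonremainningestimate} produces $f'\in\MD^{k+2,\ga}(V^-)$ with leading part $\Re(Az^{\frac12}+Bz^{\frac32})$. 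Setting $f^-=w+f'$ and applying Theorem~\ref{thm_polyhomo} yields the polyhomogeneous expansion with index set $\mathbb Z+\tfrac12$, the first $\log$ terms appearing at order $r^{\frac52}$.

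For part (i) the same peeling with $s=1$ removes the $r^{-1}$ singularity, the relevant indicial factor being $1-k^2$; after subtracting the correction the residual is bounded, but now $\Delta_g$ acts on genuine functions on the closed manifold $(L,g)$ and is invertible only modulo constants. This is exactly where the hypothesis $\int_L(r^{-1}\rho_0^++\rho_1^+)\,d\Vol=0$ enters: since $r^{-1}$ is integrable against the codimension-two volume and the $k=0$ mode of the correction has controlled integral, the residual still integrates to zero, so the Green's operator of $\Delta_g$ provides $f'\in\MC^{2,\ga}$, and Theorem~\ref{thm_polyhomo} then gives the stated expansion $b_0+b_1r+b_2r^2+\cdots$.

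The hard part will be the bookkeeping of resonances together with the solvability interface. The indicial roots $\mu=\pm\tfrac32$ (resp. $k=\pm1$) are resonant, so whenever the leading forcing coefficient meets them the explicit correction, and hence the expansion, acquire logarithmic terms; pinning down precisely at which orders these enter — so as to match the displayed expansions — and confirming that the compactly supported correction does not destroy the zero-integral condition in part (i) are the delicate points. Once the residual has been placed in the admissible class, Donaldson's isomorphism (Theorem~\ref{thm_donaldsonremainningestimate}), standard Hodge theory on $(L,g)$, and Mazzeo's polyhomogeneity (Theorem~\ref{thm_polyhomo}) combine to conclude.
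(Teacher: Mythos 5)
Your two-stage scheme --- subtract an explicit correction two radial orders above the singular forcing, invert the bounded residual (Donaldson's isomorphism for the $\MI$-twisted part, Hodge theory plus the zero-integral hypothesis for the untwisted part), and read off the expansion from Mazzeo polyhomogeneity --- is exactly the strategy of the paper's proof. The paper's corrections are in fact cruder than yours: it takes $\mu=r\rho_0^+$ in (i) and $\mu^-=r^{3/2}\rho_0^-$ in (ii) outright, with no angular inversion; for (i) this kills the $r^{-1}$ singularity exactly by the product rule, since $\Delta r=r^{-1}+O(1)$, leaving a bounded error (for (ii) the analogous step only reproduces $\tfrac94$ of the singular term, so your mode-by-mode inversion is, in principle, the more careful construction). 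Your divergence-theorem remark for (i) is also fine: $\int_L\Delta(\chi w)\,d\Vol=0$, so the residual inherits the zero-integral condition.

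The genuine gap is the point you defer as ``the hard part,'' and it cannot be deferred, because it is what the stated expansions assert. You place the correction for (ii) at order $r^{3/2}$, solving $\partial_\theta^2a+\tfrac94a=\rho_0^-|_{r=0}$, and you correctly note that on anti-periodic ($\MI$-valued) angular functions this operator has cokernel spanned by $e^{\pm3i\theta/2}$, so resonant forcing would create $r^{3/2}\log r$ terms. If that could happen, the proposition would be \emph{false} as stated, since its expansion has a log-free coefficient $a_2r^{3/2}$ and admits logs only from $r^{5/2}$ on; so a proof must rule this resonance out, not bookkeep it. The hypotheses do exactly this: with the paper's H\"older norm for sections of $\MI$ one has $|\rho_0^-|\LS r^{\ga}$, and polyhomogeneity with index set $\mathbb{Z}+\tfrac12$ then forces $\rho_0^-\sim a_1 r^{1/2}$, so $\rho_0^-|_{r=0}=0$ and your correction is vacuous. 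The true leading forcing of $r^{-1/2}\rho_0^-$ is the bounded term $a_1(\theta,t)$ at order $r^0$ (with half-integer angular modes and \emph{integer} radial powers); the correction therefore lives at order $r^2$, where $\partial_\theta^2+4$ is invertible on every half-integer mode, so no resonance occurs there, and the first possible resonance is at order $r^{5/2}$ (the $r^{1/2}$ term of $\rho_1^-$ against the modes $\pm\tfrac52$) --- precisely why the statement is log-free through $r^{3/2}$. The same observation closes your worry ``$k=\pm1$ resonant'' in (i): since $\rho_0^+\in\MC^{2,\ga}$, its restriction to $\Sigma$ is mode $0$, and the order-$r^0$ forcing carries only modes $|\mu|\le1$, so logs cannot enter before $r^3$. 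In short, the scheme is right and matches the paper, but as written your argument cannot deliver the claimed log-free coefficients at orders $r^{3/2}$ (resp. $r$, $r^2$); you must use the decay and continuity of $\rho_0^{\pm}$ to exclude the low-order resonances rather than leave them open.
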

			\begin{proof}
				For (i), note that $r^{-1}\rho_0^+$ is not a $L^2$ function, we could not directly apply the classical theory. However, we could choose $\mu=r\rho_0^+$, then we compute $\Delta\mu=r^{-1}\rho_0^++e,$ with $e\in\ca$. The equation we would like to solve becomes $\Delta (f^+-\mu)=\rho_1^+-e$ with $\int (\rho_1^+-e)d\Vol=0.$ In addition, as $\rho_1^+-e\in \ca$ and by Theorem \ref{thm_polyhomo}, we could find polyhomogeneous solution with $f^+\in\MC^{2,\ga}$.
				
				For (ii), we define $\mu^-=r^{\frac{3}{2}}\rho_0^-$, then we could write $\Delta \mu^-=r^{-\frac12}\rho^-_0+e^-$ with $e^-\in \MC^{,\gamma}$. For the equation $\Delta (f^--\mu^-)=\rho_1^--e^-$, by Theorem \ref{thm_polyhomo}, we could find a polyhomogeneous solution with the desire leading expansion.
			\end{proof}
		\end{subsection}

		\begin{subsection}{Constructing approximate solution}
			\label{subsection_constructionapproximate}
			Now we will start the construction of Theorem \ref{thm_approximation} by induction. Given a nondegenerate harmonic pair $\mfa_1=\al_1^++\al_1^-$, for each integer $k$ and $t\geq 0$, we will construct the following data:
			\begin{itemize}
				\item [(i)] An nondegenerate closed pair $\mfA_k$ with expression $\mfA_k=\sum_{i=1}^k\mfa_it^{i}$, with $$\mfa_i=\al_i^++\al_i^-=df_i^++df_i^-,$$ where $f_i^+$ is a function on $L$ for $i\geq 2$ and $f_i^-$ is a section of $V^-$. We also write $\mff_i=f_i^++f_i^-$ and $\mfF_k=\sum_{i=1}^k\mff_i$.
				
				\item [(ii)] A diffeomorphism $\vp_{k,t}$ and a positive number $T_k$ such that for $t<T_k$, $\vp_{k,t}^{-1}(\Sigma)$ is an embedded codimension 2 submanifold and $\|r\SL_{g_{k,t}}(\mfA_{k})\|_{\MC^{,\gamma}}\LS t^{k+1}$, where $\SL_{g_{k,t}}(\mfA_{k})$ is the special Lagrangian equation for the variation of Calabi-Yau structure $\vp_{k,t}^{\st}(U_L,J,\omega,\Omega)$ defined in \eqref{eq_variationosspeicalLagrangianequation} with Calabi-Yau metric $\vp_{k,t}^{\st}g_{U_L}$ and $g_{k,t}:=\vp_{k,t}^{\st}g$ is the induced Riemannian metric on the zero section.
			\end{itemize}
		
			As $\al_1^+$ is a harmonic 1-form, we could also write $\al_1^+=df_1^+$, where $f_1^+$ is a section of a affine line bundle. Even $f_1^+$ and $f_2^+$ are sections of different bundles, to save notation, we still write $f_1^++f_2^+$ as our result only depends on $df_1^++df_2^+$, which make sense as a 1-form on $L$.
			
			We break this construction into several steps:
			
			$\bullet\;\mathbf{Step\;1}:$ We set $\phi_1=\Id$ and prove $\|r\SL_g(\mfA_1)\|_{\ca}\LS t^2.$
			
			Let $\mfA_1:=t\mfa_1$, then we compute 
			$$
			\SL_g(\mfA_1)=td^{\st_g}\mfa_1+Q_g(t\mfa_1)=Q_g(t\mfa_1).
			$$
			By Lemma \ref{lem_quadratictermfortwovaluedfunction}, $Q_g(t\mfa_1)$ is polyhomogeneous with leading expansion 
			$$
			Q_g(t\mfa_1)^-\sim t^2r^{-\frac12},\;Q_g(t\mfa_1)^+\sim t^2r^{-1}.
			$$
			Therefore, $\|r\SL_g(\mfA_1)\|_{\ca}\LS t^2.$
			
			$\bullet\;\mathbf{Step\;2}:$  
			Constructing nondegenerate multivalued form $\mfA_2=t\mfa_1+t^2\mfa_2$, the diffeomorphism $\vp_{2,t}$ with $\|r\SL_{g_{2,t}}(\mfA_{2})\|_{\ca}\LS t^{3}$.
			
			$\bullet\;\mathbf{Step\;2.1}:$ we construct vector field $v_1$, pair $\mfA_2=\mfA_1+t^2\mfa_2$ and a positive number $T_2$ such that for $t<T_2$, $\mfA_2$ is nondegenerate and $\vp_{2,t}^{-1}(\Sigma)$ is an embedded codimension 2 submanifold.
			
			By \textbf{Step 1}, we have constructed $\mfA_1$ such that $\|r\SL_g(\mfA_1)\|_{\MC^{,\gamma}}\LS t^2$. We define $$\mathbf{\rho}_2:=\lim_{t\to 0} t^{-2}\SL_g(\mfA_1),$$ which is the $t^2$ order term in the $t$ expansion. By \textbf{Step 1}, $\rho_2=\rho_2^++\rho_2^-$ has leading expansions
			$\rho_2^-\sim r^{-\frac12},\;\rho_2^+\sim r^{-1}.$ In addition, we could write
			\begin{equation}
				\label{eq_errorfromSL}
				\SL_g(\mfA_1)=\rho_2 t^2+E_{\SL}.
			\end{equation}
			
			We would like to solve the following equations for $v_1$, $f_2^{\pm}$ and we set $\mfa_2:=(df_2^+,df_2^-)$.
			\begin{equation}
				\begin{split}
					\rho_2^--\Delta_g( f_2^--\na_{v_1}f_1^-)=0,\;\rho_2^+-\Delta_g (f_2^+-\na_{v_1}f_1^+)=0.
				\end{split}
			\end{equation}
			
			To begin with, we will find $v_1$ and $f_2^-$ to solve the first equation. By Theorem \ref{thm_polyhomo}, we could a solution $P_2$ to solve the equation $-\Delta_g P_2+\rho_2^-=0$, with $$P_2=\Re(a\ze^{\frac12}+b\ze^{\frac32})+E_{P_2}.$$
			
			Recall that $f_1^-$ is a nondegenerate 2-valued harmonic function with expansion $$f_1^-=\Re(B_1z^{\frac32})+E_{f_1},$$ and $B_1$ is no-where vanishing along $\Sigma$. Therefore $\frac{2a}{3B_1}$ is a well-defined section of $N_{\Sigma}$. We take $v_1$ be an extension of $-\frac{2a}{3B_1}$ over $L$ and compute $$\na_{v_1}f_1^-=-\Re(az^{\frac12})+\na_{v_1}E_{f_1}.$$

			We define $f_2^-:=P_2+\na_{v_1}f_1^-$ and compute
			\begin{equation}
				\begin{split}
					f_2^-=&\Re(az^{12}+b\ze^{\frac32})+E_{P_2}-\Re(az^{12})+\na_{v_1}E_{f_1}\\
					=&\Re(b\ze^{\frac32})+E_{P_2}+\na_{v_1}E_{f_1}.
				\end{split}
			\end{equation}
			Thus, we could write $f_2^-=\Re(B_2\ze^{\frac32})+E_{f_2^-}$ with $|E_{f_2^-}|\leq Cr^{\frac32+\ep}$. Moreover, the $r^{\frac32}$ term of $\mfF_2^-:=tf_1^-+t^2f_2^-$ could be written as $\Re((tB_1+t^2B_2)\ze^{\frac32})$. We take $T_2'=\frac{\min_{\Sigma}|B_1|}{2\max_{\Sigma}|B_2|}$, then for any $t\leq T_2'$, $\mfF_2^-$ will be a nondegenerate $\ZT$ function.
			
			Next we will solve the following equation for a function $f_2^+:$
			\begin{equation}
				\rho_2^+-\Delta_g (f_2^+-\na_{v_1}f_1^+)=0.
				\label{eq_firststepplusinterationequation}
			\end{equation}  By Proposition \ref{prop_integrationvanishes} and the definition of $\rho_2^+$, we have $\int_L\rho_2^+d\Vol_g=0$. In addition, as $\na_{v_1}f_1^+$ is a smooth function and $\int_L\Delta_g(\na_{v_1}f_1^+)d\Vol_g=0$. By Proposition \ref{prop_solvethetwovaluedequation}, we could find a polyhomogeneous solution $f_2^+\in \MC^{2,\ga}$ to $\eqref{eq_firststepplusinterationequation}$ with expansions 
			$$f_2^+\sim f_{2,0}^++f_{2,1}^+r+f_{2,2}^+r^2+\sum_{k\geq 3}\sum_{0\leq p\leq p_k} f_{2,k,p}^+r^k(\log r)^p,$$
			for $p_k$ finite integers. We define $\mff_2=f_2^++f_2^-$ and $\mfa_2=d\mff_2$. 
			
			We write the 1-parameter family of diffeomorphisms as $$\frac{d}{ds}\lam_{1,s}=tv_1\circ\lam_{1,s},\;\mathrm{and}\;\lam_{1}:=\lam_{1,s}|_{s=1}.$$
			In addition, we define $\vp_{2,t}:=\lam_1$. As $\lim_{t\to 0}\vp_{2,t}=\Id$, there exists a positive number $T_2''$ such that for any $t<T_2''$, $\vp_{2,t}^{-1}(\Sigma)$ is an embedded submanifold. 
			
			$\bullet\;\mathbf{Step\;2.2}:$  We will prove for $\|r\SL_{\lam_1^{\st}g}(\mfA_2)\|_{\ca}\LS t^3$.
			
			We define $\mfA_2:=\mfA_1+t^2\mfa_2$ and we will compute the formal power series expansions of the special Lagrangian equation in terms of $t$. By Proposition \ref{lem_structureofSLequationsundervariation}, we could write 
			\begin{equation}
				\label{eq_firstcorrectintermid}
				\begin{split}
					\SL_{\lam_1^{\st}g}(\mfA_2)=&\SL_g(\mfA_1)-t^2(\Delta_g\mff_2-\Delta_g\na_{v_1} \mff_1)-t\na_{v_1}\Delta_g\mff_1+E_{\delta Q}+E_{Q_{\delta g}}+E_{\Delta},\\
					=&t^2(\rho_2-\Delta_g(\mff_2-\na_{v_1} \mff_1))-t\na_{v_1}\Delta_g\mff_1+E_{\SL}+E_{\delta Q}+E_{Q_{\delta g}}+E_{\Delta}.
				\end{split}
			\end{equation}
			where 
			\begin{equation}
				\begin{split}
					&E_{\Delta}=e_{\Delta}(d\mfF_1,\lam_1,g)+t^3(\na_{v_1}\Delta \mff_2-\Delta \na_{v_1}\mff_2),\;\EQG:=Q_{\lam_1^{\st}g}(\mfA_1)-Q_g(\mfA_1),\\
					&\EDQ:=Q_g(\mfA_1)-Q_g(\mfA_0),\;E_{\SL}=\SL_g(\mfA_1)-\rho_2 t^2.
				\end{split}
			\end{equation}
			
			We define $E:=E_{\Delta}+\EDQ+\EQG+E_{\SL}+t\na_{v_1}\Delta_g\mfF_1$, then \eqref{eq_firstcorrectintermid} could be rewritten as
			\begin{equation}
				\begin{split}
					\SL_{\lam_1^{\st}g}(\tmfA_2)=t^2(\rho_2-\Delta_g\na_{v_1}\mff_1+\Delta_g \mff_2)+E.
				\end{split}
			\end{equation}
			
			By Lemma \ref{prop_variationofQ}, \ref{lem_laplacianestimate}, \ref{lem_app_estimate_forall} and Proposition \ref{prop_specialLagrangiansingularbehavior}, we could write $E=t^3E'$ with $\|rE'\|_{\ca}\LS 1$, which implies 
			$$
			\|r\SL_{g_{2,t}}(\mfA_{2})\|_{\ca}\LS t^{3}.
			$$
			
			$\bullet\;\mathbf{Step\;3}:$ We will finish the inductive step. Suppose we have construct the nondegenerate pair $\mfA_k=d\mfF_k$, metric $g_k$ and $T_k$ such that for any $t<T_k$, $\mfA_k$ is nondegenerate and
			$\|r\SL_{g_k}(\mfA_k)\|_{\MC^{,\gamma}}\LS t^{k+1}.$ We will construct $\mfA_{k+1}$, $g_{k+1}$ and $T_{k+1}$ with the desire property.
			
			We could write $$\SL_{g_k}(\mfA_k)=\rho_{k+1}t^{k+1}+E_{\SL}.$$ We will find $\mfa_{k+1}=d\mff_{k+1}=(df_{k+1}^+,df_{k+1}^-)$, a vector field $v_{k+1}$ such that $f_{k+1}^-$ has leading asymptotic $f_{k+1}^-\sim \Re(B_{f_{k+1}^-}z^{\frac32})$ and $\mfa_{k+1}$ solves the following equations:
			\begin{equation}
				\begin{split}
					\rho_{k+1}^--\Delta_{g_{k}}( f_{k+1}^--\na_{v_{k}}f_1^-)=0,\;\rho_{k+1}^+-\Delta_{g_k} (f_{k+1}^+-\na_{v_{k}}f_1^+)=0.
				\end{split}
			\end{equation}
			
			Similarly to $\mathbf{Step\;2.1}$, we first solve $-\Delta_{g_k}P_{k+1}+\rho_{k+1}^-=0$ with $$P_{k+1}=\Re(a_{k+1}z^{\frac12}+b_{k+1}z^{\frac32})+E_{P_{k+1}}.$$ 
			
			We choose $v_{k}$ be an extension of $-\frac{2a_{k+1}}{3B_1}$ over $L$, then $\na_{v_{k}}f_1^-$ has leading expansion $-\Re(a_{k+1}z^{\frac12})$. We define $f_{k+1}^-:=P_{k+1}+\na_{v_k}f_1^-$. By Proposition \ref{prop_integrationvanishes}, \ref{prop_solvethetwovaluedequation}, we could find $f_{k+1}^{+}$ solves the second equation.
			
			Let $\frac{d}{ds}\lam_{k,s}=t^kv_k\circ \lam_{k,s}$ be the 1-parameter family and define 
			\begin{equation}
				\vp_{k+1,t}=\vp_{k,t}\circ \lam_{k,1},
			\end{equation}
			then $g_{k+1,t}:=\vp_{k+1,t}^{\st}g=\lam_{k,1}^{\st}g_{k,t}.$ We define $\mfA_{k+1}:=\mfA_k+t^{k+1}\mfa_{k+1}$, then by Lemma \ref{lem_structureofSLequationsundervariation}, we have 
			\begin{equation}
				\begin{split}
					\SL_{g_{k+1}}(\mfA_{k+1})=&t^{k+1}(\rho_{k+1}-\Delta_g(\mff_{k+1}-\na_{v_k} \mff_1))+E.
				\end{split}
			\end{equation}
			By Lemma \ref{prop_variationofQ}, \ref{lem_laplacianestimate}, \ref{lem_app_estimate_forall}, we could write $E=t^{k+2}E'$ and by Proposition \ref{prop_specialLagrangiansingularbehavior}, $\|rE'\|_{\ca}\LS 1$. Therefore, we obtain 
			$$
			\|r\SL_{g_{k+1}}(\mfA_{k+1})\|_{\ca}\LS t^{k+2}.
			$$
			As $\lim_{t\to 0}\lam_{k,1}=\Id$, there exists $T_{k+1}'$ such that $t<T_{k+1}'$, $\vp_{k+1,t}^{-1}(\Sigma)$ is a embedded submanifold. We write the leading expansion as $$\mfA_{k+1}\sim \Re((tB_{\mfA_k}+t^{k+1}B_{f_{k+1}^{-}})z^{\frac32}),$$ where $B_{f_{k+1}^-}$ is the $r^{\frac32}$ expansion for $f_{k+1}^-$. As $\mfA_{k}$ is nondegenerate, $\min|B_{A_k^-}|>0$. Thus we could define a positive number
			$$
			T_{k+1}:=\min\{T_k,\frac14\frac{\min|B_{\mfA_k}|}{\max|B_{f_{k+1}^-}|},T_{k+1}'\},
			$$
			such that for any $t<T_{k+1}$, $\mfA_{k+1}$ will be nondegenerate. This completes $\mathbf{Step\;3}$.
			
			Now, we will give a proof of Theorem \ref{thm_approximation}.
			
			\textrm{Proof of Theorem \ref{thm_approximation}}: Given positive integer $N$, we define $\phi_t:=\vp_{N,t}^{-1}$ and $\mfA_t:=\mfA_{t,N}$. Recall that $g_{N,t}:=\vp_{N,t}^{\st}g$, then by Proposition \ref{prop_pullbackmetricrelationship}, we have
			$$
			\phi_t^{\st}\SL_{g_{N,t}}(\mfA_t)=\SL_g(\phi_t^{\st}\mfA_t).
			$$
			In addition, as $\phi_t$ is a smooth diffeomorphism with bounded derivative, we have
			$$\|r_t\SL_g(\phi_t^{\st}\mfA_t)\|_{\MC^{,\gamma}}=\|\phi_t^{\st}(r\SL_{g_{N,t}}(\mfA_t))\|_{\MC^{,\gamma}}\LS \|r\SL_{\vp^{\st}g}(\mfA_t)\|_{\MC^{,\gamma}},$$
			where $r_t:=\phi_t^{\st}r.$ Therefore, we obtain (i): $\|r_t\SL_g(\phi_t^{\st}\mfA_t)\|_{\ca}\LS t^{N+1}.$
			
			(ii), (iii) and (iv) follows straight forward by the construction. For (v), as $t\to 0$, $\lam_i$ convergence smoothly to the identity map, when $t\to 0$, we have $\phi_t$ convergence smoothly to the identity map. 
			
			We still need to check (v).  Near $p\in\Sigma$, let $(x_1,x_2,\cdots,x_n)$ be the coordinate system on a neighborhood $U$ of $p\in\Sigma$, which we used in \ref{subsec_primaryestimate} with $z=x_1+\sqrt{-1}x_2$ and $\Sigma\cap U=\{z=0\}$. Over $T^{\star}L|_U$, we write the corresponding fiber coordinate as $(y_1,\cdots, y_n)$, then the Riemannian metric could be written as
			$$
			g_{U_L}|_U=\sum_{i,j=1}^nA_{ij}dx_i\otimes dx_j+B_{ij}dx_i\otimes dy_j+C_{ij}dy_i\otimes dy_j.
			$$
			As $[\tL_t]$ is the graph manifold of $\phi_t^{\st}\mfA$ and WLOG, we could assume $\phi_t=\Id$ and write $\phi_t^{\st}\mfA=d\mfF$, $\mfF_{ij}=\pa_{x_i}\pa_{x_k}\mfF$. Suppose we write $\tg^t$ be the induced metric on $\tL_t$ and $\tg_{ij}^t=\tg^t(\pa_{x_i},\pa_{x_j})$, then  
			$$
			\tg_{ij}^t=A_{ij}+\sum_{p=1}^nB_{ip}\mfF_{pj}+\sum_{p,q=1}^n \mfF_{ip}\mfF_{jq}C_{pq}.
			$$
			As $\mfF_{ij}\sim r^{-\frac12}$ when $i,j\in\{1,2\}$, $\mfF_{ij}\sim \MO(1)$ in other situation, the worst singularity comes from the third terms, which will be order $r^{-1}$. 
			We are in particular interested in the behavior of $\lim_{r\to 0}r\tg_{ij}^t$ and it is straight forward to check that suppose one of $i$ or $j\notin\{1,2\}$, we have $\lim_{r\to 0}r\tg_{ij}^t=0$.
			
			Let $\mff$ be the leading term of $\mfF$, then we could write $\mff=\Re(Bz^{\frac32})$ with $B$ is no where vanishing along $\Sigma$. We write $\mff_{ij}=\pa_{x_i}\pa_{x_j}\mff$, then we compute
			$$\mff_{11}=\Re(Bz^{-\frac12}),\;\mff_{22}=-\Re(Bz^{-\frac12}),\;\mff_{12}=-\Im(Bz^{-\frac12}).$$
			Moreover, for $i,j\in\{0,1\}$, we compute
			$$\lim_{r\to 0}r\tg_{ij}^t=\lim_{r\to 0}t^2\sum_{p,q=1}^2C_{pq}r\mff_{pi}\mff_{qj}.$$
			We define the vector fields $v_i=r^{\frac12}(\mff_{1i}\pa_{y_1}+\mff_{2i}\pa_{y_2}),$ then by the previous computations, for $i\in\{1,2\}$, $v_i$ are nowhere vanishing in this neighborhood. In addition, for $i,j\in \{1,2\}$, we have
			$$
			\lim_{r\to 0}r\tg_{ij}^t=t^2g(v_i,v_j),
			$$
			which is also nowhere vanishing near $\Sigma$. 
			
			Therefore, we could find a positive function $\mu$ such that $$r\det(\tg_{ij}^t)dx_1\we\cdots \we dx_n=\mu dx_1\we\cdots \we dx_n,$$ 
			with a lower bound $\mu\geq t^2c_0$. Let $g_0$ be the Riemannian metric on the zero section, then $$t^2d\Vol_{g_0}=t^2\det(g_0)dx_1\we\cdots \we dx_n\LS r\det(\tg_{ij}^t)dx_1\we\cdots \we dx_n=rd\Vol_{\tg^t}.$$
			
			In addition, we have $\tiota_{t}^{\st}\Im\Omega=\sin\theta_td\Vol_{\tg^t}$ and $\tiota_t^{\st}\Im\Omega=\SL(\phi_t^{\st}\mfA_t)d\Vol_{g_0}$. Therefore, we obtain $$\|\theta_t\|_{\ca}\LS t^{-2}\|r_t\SL(\phi_t^{\st}\mfA_t)\|_{\ca}\LS t^{N-1}$$
			
			For (vi), over a neighborhood $U$ of the zero section $T^{\st}\tL$, as currents, $[\tL_t]$ will be the graph of $p^{\st}\mfA_t$ and $2[L]$ is the zero section in $T^{\st}\tL$. By (iii), $p^{\st}\mfA_t$ have the following asymptotic expansions
			$$
			p^{\st}\mfA_t\sim \ta_0+\ta_1\tilde{r}+\ta_2\tilde{r}^2+\sum_{k=1}^p\ta_{3,k}\tr^3(\log \tr)^k+\cdots,
			$$
			where $\tr^2=p^{\st}r.$ Therefore, for a smooth Riemannian metric $g'$ over $\tL$, $p^{\st}\mfA_t$ is a $\MC^{2,\ga}_{g'}$ 1-form with $\lim_{t\to 0}\|p^{\st}\mfA_t\|_{\MC_{g'}^{2,\ga}}=0.$ Therefore, for any differential form $\omega$ on $U$, we have $\lim_{t\to 0}\int_{\tL}\iota_t^{\st}\omega=\int_{\tL}(\iota_0\circ p)^{\st}\omega$, which implies the current convergence $\tiota_t(\tL)\to 2\iota_0(L)$.
			\qed

			\begin{remark}
				As the construction on Theorem \ref{thm_approximation} depends on a positive integer $N$, we could label the approximate nondegenerate pair as $\mfA_{t,N}$ and $\Sigma_{t,N}$. As $N$ become large, the distance between $\Sigma_{t,N}$ and $\Sigma_{t,N+1}$ will be size with order $t^{N+1}$. Suppose we obtain a uniform bound on $v_k$ during the construction process, we might prove that $\lim_{N\to\infty}\Sigma_{t,N}$ exists and this will be the branch locus of the real solution. However, in terms of proving the existence of solutions, we don't need infinite order correction. 
			\end{remark}

		\end{subsection}
		
		\begin{subsection}{The construction over fixed real locus}
			Suppose our initial special Lagrangian submanifold is locally a fixed real locus as in Definition \ref{def_locallyantiholomorphicinvolution}, we could also obtain a symmetry for the approximate solutions we construction in Theorem \ref{thm_approximation}. Let $R_0:T^{\st}L\to T^{\st}L$ be the canonical involution and $(U_L,J,\omega,\Omega)$ be a $R_0$-invariant Calabi-Yau structure. The approximate solutions will also obtain a symmetry.
			
			\begin{theorem}
				\label{thm_approximatefamilyinvolution}
				Suppose $[L]$ is locally a fixed real locus and $\mfa=\al^-\in T^{\st}L\otimes \MI$ be a nondegenerate $\ZT$ harmonic 1-form, then in Theorem \ref{thm_approximation}, we could choose the construction in Theorem \ref{thm_approximation} such that $\mfA_t=\mfA^-_t\in T^{\st}L\otimes \MI$. In addition, let $[\tL_t]=(\tL,\tiota_t)$ be the graph of $\mfA_t$, then $R_0\circ \tiota_t=\tiota_t\circ \sigma$.
			\end{theorem}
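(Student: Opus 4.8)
The plan is to re-run the inductive construction of Theorem \ref{thm_approximation} verbatim, but to carry it out entirely within the $\MI$-valued (minus) sector, using the $R_0$-symmetry to kill the plus sector at every stage. The crucial input is Proposition \ref{prop_ztsymmetry}: since $[L]$ is locally a fixed real locus, Definition \ref{def_locallyantiholomorphicinvolution} supplies an $R_0$-invariant Weinstein neighborhood on which $R_0$ is an anti-holomorphic involution, and for any $\MI$-valued $1$-form $\al^-$ one then has $\SL(\al^-)^+=0$ and $Q(\al^-)^+=0$. Accordingly I would take $\al_1^+=0$ and $\mfa_1=\al^-$, so that $\mfA_1=t\al^-$ and, since $\al^-$ is harmonic, $\SL_g(\mfA_1)=Q_g(t\al^-)$; Proposition \ref{prop_ztsymmetry} gives $\SL_g(\mfA_1)^+=0$, whence the obstruction $\rho_2:=\lim_{t\to 0}t^{-2}\SL_g(\mfA_1)$ has vanishing plus part, $\rho_2^+=0$.

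For the induction I would strengthen the inductive hypothesis of Theorem \ref{thm_approximation} to include the claim that $f_k^+=0$ and $\rho_{k+1}^+=0$ for every $k$. Granting $\rho_{k+1}^+=0$ and $f_1^+=0$, the plus equation $\rho_{k+1}^+-\Delta_{g_k}(f_{k+1}^+-\na_{v_k}f_1^+)=0$ collapses to $\Delta_{g_k}f_{k+1}^+=0$, which I solve by simply setting $f_{k+1}^+=0$; only the minus equation $\rho_{k+1}^--\Delta_{g_k}(f_{k+1}^--\na_{v_k}f_1^-)=0$ must be solved, and this proceeds exactly as before via Theorem \ref{thm_polyhomo} and the nondegeneracy of $f_1^-$. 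Since $\na_{v_k}f_1^-$ is again a section of $V^-$, the correction $\mfA_{k+1}=\mfA_k+t^{k+1}df_{k+1}^-$ stays pure $\MI$-valued. Closing the induction amounts to verifying $\rho_{k+2}^+=0$, i.e. $\SL_{g_{k+1}}(\mfA_{k+1})^+=0$, which follows from Proposition \ref{prop_ztsymmetry} as soon as one knows that $R_0$ remains an anti-holomorphic involution for the varied structure $\vp_{k+1,t}^{\st}(U_L,J,\omega,\Omega)$ and an isometry of $g_{k+1,t}$.

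This last point is the step requiring genuine care, and I expect it to be the main obstacle: one must show that the variation of the singular set preserves the $R_0$-symmetry. Here I would first choose every cut-off function $\chi$ entering $V=\chi\hV$ to be $R_0$-invariant. Next I would read off from \eqref{eq_vectorfiedinducedoverwholemanifold} that the canonical lift $\hmu_s(x,y)=(\lam_s(x),\lam_{-s}^{\st}y)$ commutes with $R_0(x,y)=(x,-y)$, since $\hmu_s\circ R_0(x,y)=(\lam_s(x),-\lam_{-s}^{\st}y)=R_0\circ\hmu_s(x,y)$; consequently $\hV$ is $R_0$-invariant, $V=\chi\hV$ is $R_0$-invariant, and the flow $\mu_s$, hence each $\vp_{k,t}$, commutes with $R_0$. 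It follows that $R_0^{\st}\vp_{k,t}^{\st}\omega=\vp_{k,t}^{\st}R_0^{\st}\omega=-\vp_{k,t}^{\st}\omega$, that $R_0^{\st}\vp_{k,t}^{\st}\Omega=\overline{\vp_{k,t}^{\st}\Omega}$, and that $R_0^{\st}g_{k,t}=g_{k,t}$, so $R_0$ stays an anti-holomorphic involution and isometry for the varied data and Proposition \ref{prop_ztsymmetry} keeps applying.

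Finally, with $\mfA_t=\mfA_t^-\in T^{\st}L\otimes\MI$ pure $\MI$-valued, I would deduce $R_0\circ\tiota_t=\tiota_t\circ\sigma$. The map $\tiota_t$ is the graph of $\phi_t^{\st}\mfA_t^-$, which is again $\MI$-valued relative to the shifted locus $\phi_t^{-1}(\Sigma)$, so by the holonomy $-1$ of $\MI$ interchanging the two sheets flips the sign of the fibre value. Evaluating the defining formula \eqref{eq_definingtheinclusionmap} then yields $\tiota_t(\sigma(\tx))=(p(\tx),-(p^{\st})^{-1}\mfA_t^-|_{\tx})=R_0(\tiota_t(\tx))$, which is precisely the relation $R_0\circ\iota_{\al^-}=\iota_{\al^-}\circ\sigma$ already exploited in the proof of Proposition \ref{prop_ztsymmetry}. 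This gives the asserted equivariance and completes the argument.
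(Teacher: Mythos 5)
Your proposal is correct and takes essentially the same route as the paper, which simply invokes Proposition \ref{prop_ztsymmetry} to choose every correction $\mfA_i=\al_i^-\in T^{\st}L\otimes\MI$ in the inductive construction of Subsection \ref{subsection_constructionapproximate} and declares the rest straightforward. Your verification that the lifted flow $\hmu_s(x,y)=(\lam_s(x),\lam_{-s}^{\st}y)$ commutes with $R_0(x,y)=(x,-y)$ (so that, with $R_0$-invariant cut-offs, each $\vp_{k,t}$ preserves the symmetry and Proposition \ref{prop_ztsymmetry} still applies to the varied Calabi--Yau structure) is precisely the detail the paper leaves implicit, and it is the right justification.
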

			\begin{proof}
				By Proposition \ref{prop_ztsymmetry}, we have $Q(\al^-)^+=0$. Suppose we start with $\mfa=\al^-\in T^{\st}L\otimes \MI$, then in each step of the construction in Subsection \ref{subsection_constructionapproximate}, we could always choose the correction to be $\mfA_i=\al_i^-\in T^{\st}L\otimes \MI.$ The rest follows straight forward.
			\end{proof}
			
			If we write $\theta_t$ be the Lagrangian angle of $[\tL_t]$ and $\tg_t:=\tiota_t^{\st}g$ be the induced metric, then as $R_0\circ \tiota_t=\tiota_t\circ \sigma$, we obtain $$\sigma^{\st}\tg_t=\tg_t,\;\sigma^{\st}e^{i\theta_t}=e^{-i\theta_t}.$$ In particular, over the branch locus $\Sigma$, we have $\sin\theta_t|_{\Sigma}=0$.
			
			By Proposition \ref{prop_immersion}, $[\tL_t]$ is an immersed submanifold and we could choose a Weinstein neighborhood of $[\tL_t]$ and consider the special Lagrangian equation $\SL_{\tg_t}(\al)$ for the graph of 1-form $\al$ on $\tL$.
			
			\begin{corollary}
				\label{cor_invariaitonslequationinvolution}
				$\sigma^{\st}\SL_{\tg_t}(\al)=-\SL_{\tg_t}(-\sigma^{\st}\al)$. 
			\end{corollary}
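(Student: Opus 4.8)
The plan is to reduce the identity to a computation in a Weinstein neighborhood of $[\tL_t]$ that runs parallel to the computation in Proposition~\ref{prop_ztsymmetry}. Since $[\tL_t]$ is $R_0$-invariant in the sense $R_0\circ\tiota_t=\tiota_t\circ\sigma$, the ambient anti-holomorphic involution $R_0$ carries a tubular neighborhood of $\tiota_t(\tL)$ to itself and therefore descends to an involution $\tilde{R}$ of the Weinstein neighborhood $U_{\tL}\subset T^{\st}\tL$. Applying the analog of Proposition~\ref{prop_antiholomoinvolutionneighborhood} to the pair $([\tL_t],R_0)$, I would normalize the Weinstein identification so that $\tilde{R}=\tilde{R}_0\circ\hat\sigma$, where $\tilde{R}_0$ is the canonical fibrewise sign change on $T^{\st}\tL$ and $\hat\sigma(y,p)=(\sigma(y),((d\sigma_y)^{-1})^{\st}p)$ is the cotangent lift of $\sigma$; on the zero section $\tilde{R}$ restricts to $\sigma$. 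From $R_0^{\st}\Omega=\bar\Omega$ and $R_0^{\st}g=g$ one inherits $\tilde{R}^{\st}\Im\tilde{\Omega}=-\Im\tilde{\Omega}$ and $\tilde{R}^{\st}\tg_t=\tg_t$; restricting the latter to the zero section recovers $\sigma^{\st}\tg_t=\tg_t$, as already recorded above.

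The central step is the graph identity
\begin{equation*}
\iota_{\al}\circ\sigma=\tilde{R}\circ\iota_{-\sigma^{\st}\al},
\end{equation*}
which is a pointwise check on $T^{\st}\tL$. Writing the graph of a $1$-form as $\iota_{\be}(y)=(y,\be_y)$, the left-hand side sends $y$ to $(\sigma(y),\al_{\sigma(y)})$, while $\tilde{R}(\iota_{\be}(y))=(\sigma(y),-((d\sigma_y)^{-1})^{\st}\be_y)$. Since $\al_{\sigma(y)}=((d\sigma_y)^{-1})^{\st}(\sigma^{\st}\al)_y$, the two agree exactly when $\be=-\sigma^{\st}\al$. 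Granting this, I would compute, using $\sigma^{\st}\tg_t=\tg_t$ and naturality of the Hodge star under the orientation-preserving diffeomorphism $\sigma$,
\begin{align*}
\sigma^{\st}\SL_{\tg_t}(\al)&=\st_{\tg_t}(\iota_{\al}\circ\sigma)^{\st}\Im\tilde{\Omega}=\st_{\tg_t}\iota_{-\sigma^{\st}\al}^{\st}\tilde{R}^{\st}\Im\tilde{\Omega}\\
&=-\st_{\tg_t}\iota_{-\sigma^{\st}\al}^{\st}\Im\tilde{\Omega}=-\SL_{\tg_t}(-\sigma^{\st}\al),
\end{align*}
which is the asserted identity.

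Two points require care. First, $\sigma$ must be orientation-preserving for the Hodge star to commute with $\sigma^{\st}$ without an extra sign; this holds because near the branch locus $\sigma$ is modelled on $\tilde z\mapsto-\tilde z$, whose differential $\mathrm{diag}(-1,-1,1,\dots,1)$ has determinant $+1$, so $\sigma$ preserves orientation on all of $\tL$. Second, and this is the main obstacle, one must justify that $R_0$ genuinely descends to the normal form $\tilde{R}=\tilde{R}_0\circ\hat\sigma$ in some Weinstein chart for $[\tL_t]$; I expect this to follow from a step-by-step, $R_0$-equivariant repetition of the Weinstein neighborhood construction exactly as in Proposition~\ref{prop_antiholomoinvolutionneighborhood}, now with the zero section carrying the nontrivial involution $\sigma$ rather than the identity. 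Once the normal form and the graph identity are in place, the remaining computation is the purely formal manipulation above.
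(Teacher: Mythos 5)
Your final computation and the graph identity are correct, and the overall symmetry idea matches the paper's, but you take a genuinely different and heavier route. The paper never constructs an $R_0$-equivariant Weinstein chart for $[\tL_t]$. Instead it realizes the graph of $\al$ directly in the ambient space via the exponential map, $\tiota_{\al}(x):=\exp_{\tiota_t(x)}\bigl(J(\tiota_t)_{\st}V_{\al}\bigr)$ with $V_{\al}$ the $\tg_t$-dual vector field, and proves the graph identity $R_0\circ\tiota_{\al}=\tiota_{-(\sigma^{-1})^{\st}\al}\circ\sigma$ by three elementary facts: $R_0$ is an isometry, so it commutes with $\exp$; $(R_0)_{\st}J=-J(R_0)_{\st}$; and $\sigma^{\st}\tg_t=\tg_t$ gives $-\sigma_{\st}V_{\al}=V_{-(\sigma^{-1})^{\st}\al}$. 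The conclusion then follows exactly as in your last display, via $R_0^{\st}\Im\Omega=-\Im\Omega$ and $\tiota_{\al}^{\st}\Im\Omega=\SL_{\tg_t}(\al)\,d\Vol_{\tg_t}$ (the latter using $\sigma^{\st}d\Vol_{\tg_t}=d\Vol_{\tg_t}$, i.e.\ your orientation observation, which the paper uses implicitly). What the paper's route buys is that it requires no normal form for $R_0$ near $\tiota_t(\tL)$ at all; it is also the identification actually used later, when the uniqueness of the fixed point $df_t$ is combined with this corollary, since there too the deformed immersion is written as $\tiota_t(x)=\exp_{\tiota_t^{\app}(x)}(J(\tiota_t^{\app})_{\st}V_{df_t})$.

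The one genuine gap in your proposal is the step you yourself flag: the existence of a Weinstein identification in which $R_0$ takes the normal form $\tilde{R}=\tilde{R}_0\circ\hat\sigma$. This is not covered by Proposition \ref{prop_antiholomoinvolutionneighborhood}, whose setting is a Lagrangian that is pointwise fixed by the involution; here $\sigma$ acts nontrivially on the zero section, so you would need a separate equivariant Moser argument (choose the comparison map $h$ canonically from $\tilde{R}$-invariant data so that $\tilde{R}^{\st}\omega_s=-\omega_s$, replace the primitive $\be$ by its anti-invariant part $\frac12(\be-\tilde{R}^{\st}\be)$, and check the Moser vector field is $\tilde{R}$-invariant). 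This is plausible and fillable, but as written it is an unproven lemma carrying the whole weight of your argument, and it is avoidable: with the exponential-map definition of the graph the identity $\tiota_{\al}\circ\sigma=R_0^{-1}\circ\tiota_{-\sigma^{\st}\al}$, hence the corollary, follows by the short direct computation above.
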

			\begin{proof}
				We define $\tiota_{\al}:\tL\to U_L$ as $\tiota_{\al}(x):=\exp_{\tiota_t(x)}(J (\tiota_t)_{\st}V_{\al})$, where $V_{\al}$ is the vector field on $\tL$ dual to $\al$ given by the Riemannian metric $\tg_t$. We will first prove that  $R_0\circ \tiota_{\al}(\tx)=\tiota_{-(\sigma^{-1})^{\st}\al}\circ\sigma.$ As $R_0$ is an isomorphism and $R_0\circ \tiota_t=\tiota_t\circ \sigma$, we compute
				\begin{equation}
					\begin{split}
						R_0\circ\tiota_{\al}(x)=&R_0 \exp_{\tiota_t(x)}(J (\tiota_t)_{\st}V_{\al})\\
						=&\exp_{R_0\tiota_t(x)}((R_0)_{\st}J (\tiota_t)_{\st}V_{\al})\\
						=&\exp_{\tiota_t\sigma(x)}(-J(R_0)_{\st}(\tiota_t)_{\st}V_{\al})\\
						=&\exp_{\tiota_t\sigma(x)}(-J(\tiota_t)_{\st}\sigma_{\st}V_{\al})\\
					\end{split}
				\end{equation}
				As $\sigma^{\st}g_t=g_t$, we obtain $-\sigma_{\st}V_{\al}=V_{(-\sigma^{-1})^{\st}\al}$. Therefore, we obtain
				\begin{equation}
					\label{eq_identityinvolution}
					R_0\circ\tiota_{\al}(x)=\exp_{\tiota_{t}\sigma(x)}(J(\tiota_t)_{\st}V_{(-\sigma^{-1})^{\st}\al})=\tiota_{-(\sigma^{-1})^{\st}\al}\circ\sigma.
				\end{equation}
				
				As $\tiota_{\al}^{\st}\Im\Omega=\SL_{\tg_t}(\al)d\Vol_{\tg_t}$, we compute
				$$
				-\SL_{\tg_t}(\al)d\Vol_{\tg_t}=-\tiota_{\al}^{\st}\Im\Omega=\tiota_{\al}^{\st} R_0^{\st}\Im\Omega=\sigma^{\st}\tiota_{-(\sigma^{-1})^{\st}\al}^{\st}\Im\Omega=\sigma^{\st}\SL_{\tg_t}(-(\sigma^{-1})^{\st}\al)d\Vol_{\tg_t},
				$$
				which implies the claim. 
			\end{proof}
			
		\end{subsection}

	\end{section}
	
	\begin{section}{The Geometry of the approximate Special Lagrangian Submanifolds}
		\label{sec_geometryofapproximate}
		In this section, we will study the geometry for the family of graphic special Lagrangian submanifolds constructed in Theorem \ref{thm_approximation}. Let $(U_L,J,\omega,\Omega)$ be the pull-back Calabi-Yau structure on the Weinstein neighborhood with Calabi-Yau metric $g$, and $[\tL_t]=(\tL,\tiota_t)$ be the family of submanifolds constructed in Theorem \ref{thm_approximation}, such that $\tiota_t$ is given by the graph of a nondegenerate pair $\mfA_t=\MA^+_t+\MA^-_t$. We will study the induced Riemannian metric, the second fundamental form and the Schauder estimates on $[\tL_t]$. 
		
		\begin{subsection}{The induced metric on the approximate family}
			In this subsection, we will prove the induced metric over $\tL$ under the pullback of the branched covering map is a cone metric will angle $4\pi$.
			\subsubsection{Cone Metric}
			Let $\mathbb{R}^n=\mathbb{C}\ti \mathbb{R}^{n-2}$ with coordinates $z=re^{i\theta},x_3,\cdots,x_{n}$, consider the model metric 
			\begin{equation}
				\label{eq_conemetricmodel}
				g_{4\pi}=4r^2(dr^2+r^2d\theta^2)+\sum_{i=3}^{n}dx_i^2,
			\end{equation}
			which has cone singularity of angle $4\pi$ along the submanifold $\{z=0\}$. Let $g_1,g_2$ be two metric, we write $g_1\leq g_2$ if for any $v$, we have $g_1(v,v)\leq g_2(v,v)$.
			
			\begin{definition}
				Let $\Sigma\subset L$ be a codimension two embedded submanifold, we call $g$ is a metric with cone angle $4\pi$ along $\Sigma$ if for every $p\in\Sigma$, we could find coordinates $(z,x_3,\cdots, x_{n})$ centered at $p$ with $\Sigma=\{z=0\}$ and constant $C$ such that $C^{-1}g_{4\pi}\leq g\leq Cg_{4\pi}$.
			\end{definition} 
			
			For \eqref{eq_conemetricmodel}, passing to the branched covering, we write $r=\sqrt{\tr}$, then the model metric become $d\tr^2+4\tr^2d\theta^2+\sum_{i=3}^ndx_i^2,$
			which is quasi-isometric to the Euclidean metric. Therefore, when we define the H\"older norm or $W^{1,2}$ norms using the cone metric, it will be the same as defining using the smooth Riemannian metric.

			\begin{proposition}
				\label{prop_coneangle4pi}
				$p^{\st}g_L$ is a cone metric over $\tL$ with cone angle $4\pi$.
			\end{proposition}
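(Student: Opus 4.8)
The plan is to reduce the statement to a pointwise comparison of metrics in local coordinates near $\Sigma$ and then carry out one explicit pullback computation. First I would fix $m\in\Sigma$ and use the coordinate system from Subsection \ref{subsec_43}: a complex normal coordinate $z=x_1+\sqrt{-1}x_2$ with $\Sigma=\{z=0\}$, together with coordinates $(x_3,\dots,x_n)$ along $\Sigma$, and write $z=re^{i\theta}$. In these coordinates the flat model metric is $g_{\mathrm{euc}}=|dz|^2+\sum_{i\geq 3}dx_i^2=dr^2+r^2d\theta^2+\sum_{i\geq 3}dx_i^2$. The construction of $p:\tL\to L$ in Section \ref{sec_harmonicfunctionanalytictheory} guarantees both that these coordinates identify $\Sigma$ with $\{z=0\}$ and that the branched cover is locally $z=\tz^2$.

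The first key step is a soft comparison on the base. Since $g_L=\iota_0^{\st}g_X$ is a smooth Riemannian metric and the $(x_1,\dots,x_n)$ are smooth coordinates, the coefficient matrix of $g_L$ is positive definite with eigenvalues bounded above and below on a compact neighborhood of $\Sigma$, so there is a constant $C$ with $C^{-1}g_{\mathrm{euc}}\leq g_L\leq Cg_{\mathrm{euc}}$ in the sense of the ordering defined before \eqref{eq_conemetricmodel}. Because pullback by $p$ preserves such fiberwise inequalities on $\tL\setminus\Sigma$, it suffices to compute $p^{\st}g_{\mathrm{euc}}$ and compare it to the model cone metric $g_{4\pi}$ of \eqref{eq_conemetricmodel}.

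The second key step is the explicit pullback. Using the local form $p(\tz,\tx)=(\tz^2,\tx)$ from Subsection \ref{subsec_43}, we have $p^{\st}dz=2\tz\,d\tz$ and $p^{\st}dx_i=d\tx_i$, so writing $\tz=\tr e^{i\ttheta}$ gives $p^{\st}|dz|^2=|2\tz|^2|d\tz|^2=4\tr^2(d\tr^2+\tr^2 d\ttheta^2)$. Hence $p^{\st}g_{\mathrm{euc}}=4\tr^2(d\tr^2+\tr^2 d\ttheta^2)+\sum_{i\geq 3}d\tx_i^2$, which is exactly $g_{4\pi}$ in the coordinates $(\tr,\ttheta,\tx)$. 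Combining this with the previous step yields $C^{-1}g_{4\pi}\leq p^{\st}g_L\leq Cg_{4\pi}$ near each point of $\Sigma$, which is precisely the definition of a cone metric of angle $4\pi$.

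There is essentially no analytic obstacle here: neither the nondegeneracy of $\al$ nor the special Lagrangian structure enters, since $p^{\st}g_L$ depends only on the base metric and the branched covering, not on the approximate family $\mfA_t$. The only points requiring care are purely organizational, namely ensuring the comparison constant $C$ can be chosen uniformly by covering the compact submanifold $\Sigma$ by finitely many such coordinate charts, and keeping track that the identities $p^{\st}dz=2\tz\,d\tz$ hold only away from $\Sigma$ where $p$ is smooth, with the cone comparison then understood on $\tL\setminus\Sigma$.
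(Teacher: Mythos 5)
Your proof is correct and follows essentially the same two-step strategy as the paper: a quasi-isometric comparison of $g_L$ with a model metric near $\Sigma$, followed by the explicit pullback computation $p^{\st}|dz|^2=4\tr^2(d\tr^2+\tr^2 d\ttheta^2)$ under $z=\tz^2$. The only cosmetic difference is that the paper compares against the product metric $g_0=|dz|^2+g_{\Sigma}$ on a tubular neighborhood of $\Sigma$, while you compare chart-by-chart against the flat Euclidean model, which is equivalent (and your uniformity remark is even slightly more than the pointwise definition of a cone metric requires).
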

			\begin{proof}
				Let $N_{\Sigma}$ be the normal bundle of $\Sigma\subset L$, we identified a neighborhood $U$ of $\Sigma$ in $L$ with a neighborhood of zero section of $N_{\Sigma}$ and write $z$ be the fibre coordinates of $N_{\Sigma}$. We could define another Riemannian metric $g_0$ on $U$ as 
				$$g_0=|dz|^2+g_{\Sigma},$$
				then $g_0$ is also smooth Riemannian metric which satisfies $C^{-1}g_0\leq g_L\leq Cg_0$.
				
				We could write the branched covering map $p:\tL\to L$ in this local coordinate as $$p(\tz,x_3,\cdots,x_n)=(\tz^2,x_3,\cdots,x_n),$$ then 
				\begin{equation}
					p^{\st}g_0=4\tr^2(d\tr^2+\tr^2d\ttheta^2)+p^{\st}g_{\Sigma},
				\end{equation} which is a cone metric along $\tSigma$ with cone angle $4\pi$. As $C^{-1}p^{\st}g_0\leq p^{\st}g_{L}\leq Cp^{\st}g_0$, $p^{\st}g_{L}$ is also a cone metric with cone angle $4\pi$.
			\end{proof}
			
			\subsubsection{Induced metric and convergence}
			Recall that $\tiota_t:\tL\to U_L$ is given by the graph of the nondegenerate pair $\mfA=\MA_t^++\MA_t^-$ and we would like to understand the map. By the toy model in Section \ref{subsubsection_localmodel}, we need to prove $\tiota_t$ is well-behaved near $\Sigma$ using the nondegenerate condition. Moreover, we define $$S_t:=\{x\in\tiota_t(\tL)|\mathrm{there\;exists\;}x_1\neq x_2\in\tL\;\mathrm{such\;that\;}\tiota_t(x_1)=\tiota_t(x_2)\},$$ which is the self-intersection points of $\tiota_t(\tL)$.
			\begin{proposition}
				\label{prop_immersion}
				For the map $\tiota_t:\tL\to U_L$, the following holds:
				\begin{itemize}
					\item [(i)] $\tiota_t$ is a $\MC^{2,\ga}$ immersion and $\tiota_t^{\st}g$ is a $\MC^{1,\ga}$ Riemannian metric.
					\item [(ii)] for each $t$, there exists a neighborhood $\tU_{\Sigma}$ of $\Sigma\subset \tL$ such that
					$\tiota_t|_{\tU_{\Sigma}}:\tU_{\Sigma}\to U_L$ is an embedding.
					\item [(iii)] $S_t=\{x\in L\setminus \Sigma| |\MA_t^-|(x)=0\}$.
				\end{itemize}
			\end{proposition}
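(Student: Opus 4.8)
The plan is to extract both the regularity and the immersion property directly from the polyhomogeneous expansion of $\mfA_t$ recorded in Theorem \ref{thm_approximation}(iii), together with the local description of the branched cover $p(\tz,\tx_3,\dots,\tx_n)=(\tz^2,\tx_3,\dots,\tx_n)$ from Subsection \ref{subsec_43}. For (i) I would work near a point of $\Sigma$ in coordinates $(\tz,\tx_3,\dots,\tx_n)$ on $\tL$ and $(z,x_3,\dots,x_n,y_1,\dots,y_n)$ on $T^{\st}L$. The base components of $\tiota_t$ are $z\circ\tiota_t=\tz^2$ and $x_i\circ\tiota_t=\tx_i$, which are smooth, so everything hinges on the fibre components, namely the components of $\mfa=\MA_t^++\MA_t^-$ evaluated on the sheet $\tz$. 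Using $\MA_t^-\sim a_1 r^{\frac12}+\sum_k a_{2,k}r^{\frac32}(\log r)^k+\cdots$ and $\MA_t^+\sim b_0+b_1 r+b_2 r^2+\cdots$ with $r=\tr^2$, the half-integer powers become integer powers: the leading $dz$- and $d\bz$-coefficients of $\MA_t^-$, which are $\sim z^{\frac12}$ and $\sim\bz^{\frac12}$, pull back to $\sim\tz$ and $\sim\overline{\tz}$ (linear, hence smooth), the $r^{\frac32}(\log r)^k$ corrections become $\tr^3(\log\tr)^k$, and the $\MA_t^+$ terms become even powers $\tr^2,\tr^4,\dots$ with possible $\tr^4(\log\tr)^k$ corrections. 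Since $\tr^3(\log\tr)^k\in\MC^{2,\ga}$ for $\ga<\tfrac12$, every fibre component is $\MC^{2,\ga}$, so $\tiota_t$ is a $\MC^{2,\ga}$ map.

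To finish (i) I would check injectivity of $d\tiota_t$. Away from $\Sigma$ this is immediate, since $p$ is an immersion there and $\tiota_t$ is a graph over it. At $\Sigma$ the horizontal differential $d(\tz^2)=2\tz\,d\tz$ vanishes, so injectivity must come from the fibre part; here the nondegeneracy $B\neq0$ (Definition \ref{def_nondegenerate}) is essential, since the leading fibre coefficient $\tfrac32 B\tz$ has nonvanishing differential $\tfrac32 B\,d\tz$ at $\tz=0$, supplying exactly the two directions killed by the degeneration of $p$, while the $\partial_{\tx_i}$ directions are handled by $x_i\circ\tiota_t=\tx_i$. Thus $\tiota_t$ is a $\MC^{2,\ga}$ immersion, and $\tiota_t^{\st}g$, being built from the first derivatives of a $\MC^{2,\ga}$ map and positive definite by the immersion property, is a genuine $\MC^{1,\ga}$ Riemannian metric; this is consistent with the computation $\lim_{r\to0} r\,\tg^t_{ij}=t^2 g(v_i,v_j)\neq0$ carried out at the end of the proof of Theorem \ref{thm_approximation}.

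For (ii) and (iii) I would exploit the $\sigma$-anti-invariance of the multivalued part. Suppose $\tiota_t(\tx_1)=\tiota_t(\tx_2)$ with $\tx_1\neq\tx_2$; comparing base points forces $p(\tx_1)=p(\tx_2)$, hence $\tx_2=\sigma(\tx_1)$. Comparing fibre values and using that $\al^+$ is $\sigma$-invariant while $p^{\st}\al^-$ lies in $\Omega^1(\tL)^-$ and so changes sign across the two sheets (Subsection \ref{subsec_43}), the two fibre points $\al^+\pm\MA_t^-|_{\tx_1}$ coincide precisely when $\MA_t^-|_{\tx_1}=0$. Since $p(\tx_1)\in\Sigma$ would force $\tx_1=\sigma(\tx_1)=\tx_2$, the intersection requires $x=p(\tx_1)\in L\setminus\Sigma$ with $|\MA_t^-|(x)=0$, and conversely every such $x$ (over which $p^{-1}(x)$ consists of two distinct points) yields a self-intersection; this gives (iii). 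For (ii) I would note that nondegeneracy gives $|\MA_t^-|\sim|a_1|\,r^{\frac12}\neq0$ on a punctured neighborhood of $\Sigma$, so no such coincidence occurs in a small enough $\tU_\Sigma$; combined with the immersion property from (i), $\tiota_t|_{\tU_\Sigma}$ is an injective immersion, hence, after shrinking, an embedding.

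The main obstacle is the regularity statement in (i) at the branch locus: one must confirm both that the surviving logarithmic terms $\tr^3(\log\tr)^k$ genuinely lie in $\MC^{2,\ga}$ (they are not smooth), and, more importantly, that the immersion does not degenerate at $\Sigma$ even though the projection $p$ does. Both points are governed by the nondegeneracy $A=0$, $B\neq0$: the vanishing of the $z^{-\frac12}$ head of the form removes the terms whose pullback would fail even to be bounded, while the nonvanishing of $B$ supplies the vertical directions needed for the immersion where $dp$ collapses. Parts (ii) and (iii) are then essentially formal once the sign behavior of $\al^-$ under $\sigma$ is invoked.
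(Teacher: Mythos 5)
Your proof is correct and takes essentially the same route as the paper: the paper establishes the immersion at $\Sigma$ by computing, under the splitting $\Gamma:T(T^{\st}L)\cong TL\oplus T^{\st}L$, that $\na_{p_{\st}\pa_{\tz}}\MA_t^-|_{\Sigma}=\tfrac12 B\,dz\neq0$ while $\na_{p_{\st}\pa_{\tz}}\MA_t^+|_{\Sigma}=0$ (your fibre-component computation in coordinates is the same calculation), and it characterizes $S_t$ exactly as you do via the sign change of $\MA_t^-$ across the two sheets. Your explicit term-by-term pullback of the polyhomogeneous expansion for the $\MC^{2,\ga}$ regularity, and your punctured-neighborhood injectivity argument for (ii), merely fill in details the paper leaves implicit (it dismisses (ii) with ``follows by the definition of the double branched cover'').
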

			\begin{proof}
				For (i), outside of the branch locus, $\tiota_t$ will locally given by graph of different 1-forms which is an immersion. Therefore, we only need to consider the map near the branch locus. Near the branch locus, we choose coordinates $(\tz,\tx_3,\cdots,\tx_n)$ on $\tL$ and $(z,x_3,\cdots,x_n)$ on $L$ such that the branched covering map could be written as $p(\tz,\tx_3,\cdots,\tx_n)=(\tz^2,x_3,\cdots,x_n)$.
				
				Then under the identification of $\Gamma:T(T^{\st}L)\cong TL\oplus T^{\st}L$, we have 
				\begin{equation}
					\Gamma\circ\tiota_{\st}(\pa_{\tx_i})=(p_{\st}\pa_{\tx_i},\na_{p_{\st}\pa_{\tx_i}}\MA_t^-+\na_{p_{\st}\pa_{\tx_i}}\MA_t^+).
				\end{equation}
				
				As $p_{\st}\pa_{\tz}=2\tz\pa_z$ and $\MA_t^-$ is nondegenerate with expansion $$\MA_t^-\sim \Re(B z^{\frac12}dz)+\cdots,$$
				where $B$ is nowhere vanishing along $\Sigma$.
				We compute 
				$$\na_{p_{\st}\pa_{\tz}}\MA_t^-|_{\Sigma}=2B\tz \na_{\pa_z}(z^{\frac12}dz)|_{\Sigma}=\frac{1}{2}Bdz\neq 0.$$
				In addition, we compute $\na_{p_{\st}\pa_{\tz}}(\MA_t^+)|_{\Sigma}=\tz\na_{\pa_z}\MA_t^+|_{\Sigma}=0.$
				Therefore, $(\tiota_t)_{\st}$ is injective thus an immersion. 
				
				(ii) follows by the definition of the double branched cover. 
				
				For (iii), for $x\in L\setminus \Sigma$, we write $x_1,x_2\in p^{-1}(x)\in \tL$, then $\tiota_t(x_1)=\tiota_t(x_2)$ if and only if $|\MA_t^-|(x)=0$ and this gives all the self-intersection points.
			\end{proof}
			
			\begin{example}
				Let $\tL_t:=\{w^2=t^2z\}\subset \mathbb{C}^2$ be the graph manifold of $td\Re(z^{\frac{3}{2}})$ over the complex $z$ plane $L=\{w=0\}$, we write $z=re^{i\theta}$, then use the coordinate over $L$, the induced metric $\tg_t$ over $L_t$ using the $z$ plane coordinate is $\tg_t=(1+\frac{t^2}{4r})(dr^2+r^2d\theta^2).$
				Let $\tz=\tr e^{i\ttheta}$ be the coordinate on $\tL$ with $z=\tz^2$, then in the $\tz$ coordinates, the metric would be
				$\tg_t=(4\tr^2+t^2)(d\tr^2+\tr^2d\ttheta^2).$
				When $t\to 0$, $\lim_{t\to 0}\tg_t=4\tr^2(d\tr^2+\tr^2d\ttheta^2),$ which is the cone metric with angle $4\pi$.
			\end{example}

			\begin{theorem}
				\label{thm_metricconvergence}
				Let $g'$ be any smooth Riemannian metric on $\tL$ and $g_L$ be the Riemannian metric on the zero section of $U_L\subset T^{\st}L$, then $\lim_{t\to 0}\|\tg_t-p^{\st}g_L\|_{\MC_{g'}^{1,\ga}}=0$, where $\MC_{g'}^{1,\ga}$ is defined using $g'$.
			\end{theorem}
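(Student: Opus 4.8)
The plan is to deduce the $\MC_{g'}^{1,\ga}$ convergence of the induced metrics from convergence of the immersions themselves in one higher order of regularity. Concretely, I will first show that $\tiota_t\to\iota_0\circ p$ in $\MC_{g'}^{2,\ga}(\tL)$ as $t\to 0$, and then invoke the fact that the pull-back-metric operation $\iota\mapsto \iota^{\st}g$ is continuous from $\MC_{g'}^{2,\ga}$ immersions into $\MC_{g'}^{1,\ga}$ symmetric $2$-tensors. Indeed, in local coordinates $\tg_t(\pa_{\tx_i},\pa_{\tx_j})=g_{\tiota_t}\big((\tiota_t)_{\st}\pa_{\tx_i},(\tiota_t)_{\st}\pa_{\tx_j}\big)$ is an algebraic expression in the first derivatives of $\tiota_t$ and in the components of the ambient Calabi--Yau metric $g$ evaluated along $\tiota_t$. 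If $\tiota_t\to\iota_0\circ p$ in $\MC_{g'}^{2,\ga}$, then $(\tiota_t)_{\st}\to(\iota_0\circ p)_{\st}$ in $\MC_{g'}^{1,\ga}$ and $g_{\tiota_t}\to g_{\iota_0\circ p}$ in $\MC_{g'}^{1,\ga}$; since $\MC_{g'}^{1,\ga}$ is a Banach algebra, the products converge in $\MC_{g'}^{1,\ga}$, giving $\tg_t\to(\iota_0\circ p)^{\st}g=p^{\st}g_L$. Here I use that $\mfA_t\to 0$ keeps $\tiota_t(\tL)$ inside $U_L$, where $g$ is smooth.

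Away from the branch locus the convergence $\tiota_t\to\iota_0\circ p$ is routine: on any compact $K\subset\tL\setminus\Sigma$, the map $\tiota_t$ is the graph over $p(K)$ of the smooth $1$-form $\mfA_t$, and by Theorem \ref{thm_approximation}(ii)--(iii) together with the interior elliptic control one has $\|\mfA_t\|_{\MC^{k,\ga}(p(K))}\to 0$ for every $k$. Hence $\tiota_t\to\iota_0\circ p$ smoothly on $K$, which already yields $\tg_t\to p^{\st}g_L$ in $\MC_{g'}^{1,\ga}(K)$.

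The crux is the behavior in a neighborhood $\tU_{\Sigma}$ of $\Sigma$, where I will use the adapted coordinates $(\tz,\tx_3,\dots,\tx_n)$ on $\tL$ and $(z,x_3,\dots,x_n)$ on $L$ of Subsection \ref{subsec_primaryestimate}, in which $p(\tz,\tx)=(\tz^2,\tx)$. The fibre part of $\tiota_t$ consists of the pulled-back components $p^{\st}((\MA_t)_k)$ of $\mfA_t$; each of these is polyhomogeneous in $r$ with half-integer index set by Theorem \ref{thm_approximation}(iii), and the substitution $z=\tz^2$ turns such an expansion into an integer-order one, since a leading term $\Re(B z^{k+\frac12})$ becomes $\Re(B\tz^{2k+1})$ and, more generally, the index set $\{m+\frac12\}$ becomes $\{2m+1\}$. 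This is exactly the mechanism recorded in the proof of Theorem \ref{thm_approximation}(vi), where it is shown that $p^{\st}\mfA_t$ is a genuine $\MC_{g'}^{2,\ga}$ one-form on $\tL$ with $\lim_{t\to 0}\|p^{\st}\mfA_t\|_{\MC_{g'}^{2,\ga}}=0$; the Jacobian factor $p_{\st}\pa_{\tz}=2\tz\pa_z$ compensates the $r^{-\frac12}$-type singularity of $\na\mfA_t$, exactly as in the proof of Proposition \ref{prop_immersion}(i). Combining this with the fact that the base part of $\tiota_t$ is the fixed smooth map $p$, I obtain $\tiota_t\to\iota_0\circ p$ in $\MC_{g'}^{2,\ga}(\tU_{\Sigma})$, completing the reduction.

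The main obstacle is precisely this estimate near $\Sigma$: one must verify that the pulled-back expansion of $\mfA_t$ is not merely Lipschitz but fully $\MC_{g'}^{2,\ga}$, and in particular that the Hölder quotients of $(\tiota_t)_{\st}$ taken across the branch locus remain controlled and tend to $0$. This is where the nondegeneracy hypothesis ($A=0$, $B$ nowhere vanishing) is essential: it forces the leading exponent to be $z^{\frac12}$ rather than $z^{-\frac12}$, so that $p^{\st}g_L$ is the cone metric of angle $4\pi$ of Proposition \ref{prop_coneangle4pi} --- $\MC^{1,\ga}$-comparable to a smooth metric on $\tL$ --- and the corrections $\tg_t-p^{\st}g_L$ inherit the half-integer polyhomogeneous structure and hence vanish in $\MC_{g'}^{1,\ga}$ as $t\to 0$. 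Once this regularity near $\Sigma$ is secured, the Banach-algebra argument of the first paragraph finishes the proof.
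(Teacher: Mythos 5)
Your proposal is correct and is essentially the paper's own argument: the paper likewise works in local charts, writes $g_{U_L}=g_L+B_{ij}dx_i\otimes dy_j+C_{ij}dy_i\otimes dy_j$ and expands $\tg_t=p^{\st}g_L+t\hg_t$, with everything hinging on the same key fact you isolate — that the half-integer polyhomogeneous expansion of $\mfA_t$ from Theorem \ref{thm_approximation}(iii) becomes integer-indexed under $z=\tz^2$, so that $p^{\st}\mfA_t$ is a $\MC^{2,\ga}_{g'}$ one-form with norm $O(t)$. Your repackaging through $\MC^{2,\ga}_{g'}$ convergence of the immersions $\tiota_t\to\iota_0\circ p$ plus the Banach-algebra continuity of $\iota\mapsto\iota^{\st}g$ is a cosmetic reorganization of the same computation, not a different route.
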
 
			\begin{proof}
				We prove the above theorem using local charts. For $U_L\subset T^{\st}L$, let $(x_1,\cdots,x_n)$ be a local coordinate on $L$ and $(y_1,\cdots,y_n)$ be coordinates on fiber. The Riemannian metric on $U_L$ in these coordinates could be written as
				$$
				g_{U_L}=g_{L}+B_{ij}dx_i\otimes dy_j+C_{ij}dy^i\otimes dy^j.
				$$ 
				Then the pullback metric $\tg_t:=\tiota_t^{\st}g$ could be written as
				\begin{equation}
					\begin{split}
						\tg_t=p^{\st}g_L+t(B_{ij}'d\tx_i\otimes d\Theta_j)+t^2C_{ij}d\Theta_i\otimes d\Theta_j,
					\end{split}
				\end{equation}
				where $\Theta_i:=p^{\st}\lan\pa_{x_i},\frac{1}{t}\mfA_t\ran.$ Therefore, we could write $\tg_t=p^{\st}g_L+t\hg_t$ with $$\|\hg_t\|_{\MC^{1,\ga}}\LS \sum_{i,j=1}^n(\|B_{ij}'\|_{\MC_{g'}^{1,\ga}}+\|C_{ij}\|_{\MC_{g'}^{1,\ga}})+\|\frac{1}{t}p^{\st}\MA_t\|_{\MC_{g'}^{2,\ga}}\LS 1.$$
				
				In addition, by Theorem \ref{thm_approximation}, $p^{\st}\mfA_t$ is a $\MC_{g'}^{2,\ga}$ form over $\tL$. Therefore, $\lim_{t\to 0}\|\tg_t-p^{\st}g\|_{\MC^{1,\ga}}=0$, which implies the statement.
			\end{proof}
		\end{subsection}
		
		\subsection{First eigenvalue estimate}
		Let $[\tL_t]=(\tL,\tiota_t)$ be the immersed submanifold constructed in Theorem \ref{thm_approximation} with $\theta_t$ be the Lagrangian angle and let $\tg_t:=\iota_tg$ be the pull-back metric, which by Proposition \ref{prop_immersion} is $\MC^{1,\ga}$. Let $f$ be a function on $\tL$, we write $\MD_{t}f:=d^{\st_{t}}(\cos\theta_t df)$ be the linear term of the special Lagrangian equation, where $d^{\st_t}$ is taken for the Riemannian metric $\tg_t$.  It is straight forward to check that $\MD_{\theta_t}$ is a self-adjoint operator. If we write $\hg_t={(\cos\theta_t)}^{\frac{2}{n-2}}g_t$, then $\MD_{\theta_t}f=(\cos\theta_t)^{\frac{2}{n-2}}\Delta_{\hg_t}f$, where $\Delta_{\tg_t}$ is the Laplace operator for $\hg_t$.
		
		To begin with, we will understand the eigenvalue for the cone metric. Let $g_0:=p^{\st}g_L$ be the pullback cone metric and $\Delta_{g_0}$ be the corresponding cone Laplace operator. We define $$\lam_0:=\inf_{f}\{\frac{\int |d f|_{g_0}^2d\Vol_{0}}{\int|f|^2d\Vol_{0}}|f\in W^{1,2},\int fd\Vol_{0}=0\},$$
		where $d\Vol_{0}$ is the volume form w.r.t. the metric $g_0$. As the cone metric is quasi-isometric to the smooth metric, the $\ca$ norms and $W^{1,2}$ norms using the cone metric will be same as using any smooth metric.
		
		\begin{theorem}
			$\lam_0$ is the first positive eigenvalue for the cone metric Laplace operator $\Delta_{g_0}$.
		\end{theorem}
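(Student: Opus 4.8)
The plan is to transfer the spectral theory of the singular operator $\Delta_{g_0}$ to the standard elliptic theory on the compact manifold $\tL$, using the quasi-isometry between the cone metric $g_0=p^{\st}g_L$ and a smooth metric recorded in Proposition \ref{prop_coneangle4pi}, and then to run the usual variational (Courant--Fischer) argument for the first eigenvalue. First I would fix an auxiliary smooth Riemannian metric $g'$ on $\tL$. Since $g_0$ is a cone metric with angle $4\pi$, Proposition \ref{prop_coneangle4pi} provides a constant $C>0$ with $C^{-1}g'\leq g_0\leq Cg'$ on $\tL\setminus\Sigma$. Dualizing gives $C^{-1}|df|^2_{g'}\leq |df|^2_{g_0}\leq C|df|^2_{g'}$, and comparing volume densities shows $d\Vol_0$ and $d\Vol_{g'}$ are comparable; consequently $W^{1,2}(\tL,g_0)$ and $L^2(\tL,g_0)$ coincide with $W^{1,2}(\tL,g')$ and $L^2(\tL,g')$ as sets with uniformly equivalent norms. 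In particular the Dirichlet energy and the $L^2$ norm appearing in the definition of $\lam_0$ are comparable to their $g'$ counterparts, and the Rellich--Kondrachov theorem on the smooth compact manifold $(\tL,g')$ yields that the inclusion $W^{1,2}(\tL,g_0)\hookrightarrow L^2(\tL,g_0)$ is compact.

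Next I would interpret $\Delta_{g_0}$ as the non-negative self-adjoint operator associated, via the Friedrichs construction, to the closed symmetric Dirichlet form
\begin{equation*}
	Q_0(f,h)=\int_{\tL}\langle df,dh\rangle_{g_0}\,d\Vol_0,\qquad f,h\in W^{1,2}(\tL,g_0).
\end{equation*}
This form is densely defined, non-negative, and closed by the previous step, so it determines a unique such operator. Because the form domain $W^{1,2}(\tL,g_0)$ embeds compactly into $L^2(\tL,g_0)$, the operator $\Delta_{g_0}$ has compact resolvent; hence its spectrum is discrete, $0=\mu_0\leq\mu_1\leq\mu_2\leq\cdots\to\infty$, with an $L^2$-orthonormal basis of eigenfunctions. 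As $\tL$ is connected, being the connected double branched cover determined by the nontrivial holonomy of $\MI$, the only functions with $df=0$ are constants, so $\ker\Delta_{g_0}$ consists exactly of the constants and $\mu_0=0$ is simple.

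I would then identify $\lam_0$ with $\mu_1$. The constraint $\int_{\tL}f\,d\Vol_0=0$ is precisely $L^2$-orthogonality to $\ker\Delta_{g_0}$, so the Courant--Fischer characterization gives $\mu_1=\lam_0$. To see that the infimum is attained and is a genuine eigenvalue, I take a minimizing sequence $f_k$ with $\|f_k\|_{L^2}=1$, $\int_{\tL}f_k\,d\Vol_0=0$, and $Q_0(f_k,f_k)\to\lam_0$. This sequence is bounded in $W^{1,2}$, so after passing to a subsequence it converges weakly in $W^{1,2}$ and, by the compact embedding, strongly in $L^2$ to a limit $f_0$ with $\|f_0\|_{L^2}=1$ and $\int_{\tL}f_0\,d\Vol_0=0$; weak lower semicontinuity of the convex energy $Q_0$ forces $Q_0(f_0,f_0)=\lam_0$. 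The Euler--Lagrange equation for this constrained minimizer is exactly $\Delta_{g_0}f_0=\lam_0 f_0$ in the weak ($L^2$) sense, so $f_0\in D(\Delta_{g_0})$ is an eigenfunction orthogonal to the constants, whence $\lam_0=\mu_1>0$ is the first positive eigenvalue.

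I expect the only genuine subtlety to lie in the second step: on a truly singular (cone) space the Laplacian may admit several self-adjoint extensions, so one must specify which operator $\Delta_{g_0}$ denotes. The quasi-isometry of the first step resolves this by pinning the form domain to $W^{1,2}(\tL,g_0)$, which singles out the Friedrichs extension and is compatible with the Rayleigh quotient used to define $\lam_0$. With that choice the argument is the standard variational one, and, crucially, no delicate regularity analysis at the branch locus $\Sigma$ is required, since weak eigenfunctions in $D(\Delta_{g_0})$ already suffice to establish the spectral statement.
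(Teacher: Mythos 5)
Your proof is correct, and its skeleton is the same as the paper's: both use the quasi-isometry of $p^{\st}g_L$ with a smooth metric (Proposition \ref{prop_coneangle4pi}) to identify the $W^{1,2}$ and $L^2$ spaces with their smooth counterparts, obtain Rellich compactness, and run the direct method on the constrained Rayleigh quotient to produce a minimizer satisfying the weak eigenvalue equation. Where you genuinely diverge is the endgame. The paper first rules out $\lam_0=0$ by contradiction, extracts the weak Euler--Lagrange equation via the Lagrange multiplier theorem, and then invokes elliptic theory for operators with $L^{\infty}$ coefficients to promote the weak solution to an ``eigenfunction.'' You instead pin down $\Delta_{g_0}$ as the Friedrichs extension of the Dirichlet form with form domain $W^{1,2}$, deduce compact resolvent and discrete spectrum, and identify $\lam_0$ with $\mu_1$ by Courant--Fischer, so the weak minimizer is already in $D(\Delta_{g_0})$ and no regularity analysis at $\Sigma$ is needed. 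Your route buys two things the paper leaves implicit: it specifies which self-adjoint extension of the singular Laplacian the theorem is about (on a cone space this is a genuine ambiguity, resolved here because the Rayleigh quotient defining $\lam_0$ fixes the form domain), and it records that connectedness of $\tL$ (which holds since the double cover comes from an index-two subgroup of $\pi_1(L\setminus\Sigma)$) is what forces $\ker\Delta_{g_0}$ to consist of constants -- a fact the paper uses tacitly when concluding that the limit of the minimizing sequence cannot be constant. The paper's route is shorter, and its $L^{\infty}$-coefficient regularity step would in any case be the natural next move if one later wanted the eigenfunction in a stronger class than $W^{1,2}$; for the spectral statement itself your argument shows that step is dispensable.
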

		\begin{proof}
			We first prove that $\lam_0$ is positive and could be achieved. Suppose $\lam_0=0$, then there exists a sequence of functions $f_k$ such that $\int |d f_k|^2\leq\frac{1}{k}$ and $\int|f_k|^2=1$, which is a bounded sequence in $W^{1,2}$. Thus after passing to a subsequence, there exists $f_{\infty}$ such that $f_k$ convergence to $f_{\infty}$ weakly in $W^{1,2}$ and strongly in $L^2$. Therefore, $\int |f_{\infty}|^2=1,\int f_{\infty} d\Vol_0=0$. In addition, $\|df_{\infty}\|_{L^2}\leq\liminf_{k\to\infty}\|df_k\|_{L^2}=0,$ thus $f_{\infty}$ is a constant, which contradicts to $\int f_{\infty} d\Vol_0=0$. Therefore, $\lam_0>0$ and we still denote $f_{\infty}$ be the weak limit for a sequence of minimizers. 
			
			By Lagrange multiplier theorem, we have $$\int \lan df_{\infty}, df'\ran d\Vol_{0}=\lam_0 \int f_{\infty} f' d\Vol_{0},
			$$
			for any $f'\in W^{1,2}$. Therefore, $f_{\infty}$ is a weak solution to $\Delta_{g_0}-\lam_0$. In addition, as the cone metric is quasi-isomorphism to the Euclidean metric, the coefficients of the Laplace operator will be $L^{\infty}$, then by the elliptic theory for $L^{\infty}$ coefficients elliptic operator, $f_{\infty}$ is a eigenfunction with eigenvalue $\lam_0$.
		\end{proof}
		
		We write $\lam_t$ be the first eigenvalue of $\MD_{\theta_t}$, then $\lam_t$ could be realized by the following variation
		$$\lam_t=\inf_{f}\{\frac{\int \cos\theta_t|df|_{\tg_t}^2d\Vol_t}{\int|f|_{\tg_t}^2d\Vol_t}|f\in W^{1,2},\int fd\Vol_t=0\},$$
		where $d\Vol_t$ is the volume form for the metric $\tg_t$. 
		
		\begin{theorem}
			\label{thm_eigenvaluelowerbound}
			Let $\lam_t$ be the first eigenvalue of $\MD_{\theta_t}$, then $\lim_{t\to 0}\lam_t=\lam_0>0$. In particular, there exists a constant $c_0$ independent of $t$ such that $\lam_t\geq c_0>0$.
		\end{theorem}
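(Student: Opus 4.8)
The plan is to prove the two-sided inequality $\lam_0 \leq \liminf_{t\to 0}\lam_t$ and $\limsup_{t\to 0}\lam_t \leq \lam_0$ using the two variational characterizations, together with the metric convergence of Theorem \ref{thm_metricconvergence} and the decay $\|\theta_t\|_{\ca(\tL)}\to 0$ from Theorem \ref{thm_approximation}(v). Throughout I would fix one smooth background metric $g'$ on $\tL$. Since $g_0 := p^{\st}g_L$ is quasi-isometric to $g'$ (the cone metric becomes quasi-isometric to the Euclidean metric after the substitution $r=\sqrt{\tr}$) and $\tg_t\to g_0$ in $\MC^{1,\ga}_{g'}$, all the metrics $\tg_t$ for small $t$ are uniformly quasi-isometric to $g'$. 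Consequently the spaces $W^{1,2}$ and $L^2$ defined with $g'$, $g_0$, or $\tg_t$ carry uniformly equivalent norms, and Rellich compactness and the Poincar\'e inequality on the compact manifold $\tL$ hold uniformly in $t$.

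For the upper bound, I would take the minimizer $f_\infty$ realizing $\lam_0$, which exists by the preceding theorem, and correct its mean: set $c_t := (\int f_\infty\,d\Vol_t)/(\int d\Vol_t)$ and $f_t := f_\infty - c_t$, so that $\int f_t\,d\Vol_t = 0$ makes $f_t$ admissible for $\lam_t$. Since $d\Vol_t\to d\Vol_0$ and $\int f_\infty\,d\Vol_0 = 0$, we have $c_t\to 0$. Because $\cos\theta_t\to 1$ uniformly, $\tg_t^{-1}\to g_0^{-1}$ uniformly, and $d\Vol_t\to d\Vol_0$, the Rayleigh quotient of $f_t$ computed with $(\tg_t,\theta_t)$ converges to that of $f_\infty$ computed with $g_0$, namely $\lam_0$. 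This gives $\limsup_{t\to 0}\lam_t\leq\lam_0$, and in particular a uniform upper bound $\lam_t\leq \lam_0+1$ for small $t$.

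For the lower bound, I would take minimizers $f_t$ for $\lam_t$ normalized by $\int|f_t|^2\,d\Vol_t = 1$ and $\int f_t\,d\Vol_t = 0$. The upper bound makes $\int\cos\theta_t|df_t|^2_{\tg_t}\,d\Vol_t = \lam_t$ bounded, so $\{f_t\}$ is bounded in $W^{1,2}$; passing to a subsequence, $f_t\wto f_*$ in $W^{1,2}$ and $f_t\to f_*$ strongly in $L^2$ by Rellich. The constraints pass to the limit: $\int f_*\,d\Vol_0 = 0$ and $\int|f_*|^2\,d\Vol_0 = 1$, so $f_*$ is nontrivial and admissible for $\lam_0$. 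Finally, transferring the energy onto the fixed quadratic form, $\int\cos\theta_t|df_t|^2_{\tg_t}\,d\Vol_t = \int|df_t|^2_{g_0}\,d\Vol_0 + R_t$ where $|R_t|\LS (\|\tg_t-g_0\|_{\MC^0}+\|\theta_t\|_{\MC^0})\,\|df_t\|^2_{L^2}\to 0$ by uniform coefficient convergence and the boundedness of $\|df_t\|_{L^2}$. Weak lower semicontinuity of the fixed functional $f\mapsto\int|df|^2_{g_0}\,d\Vol_0$ then yields $\int|df_*|^2_{g_0}\,d\Vol_0\leq\liminf_{t\to 0}\lam_t$, so $\lam_0\leq\liminf_{t\to 0}\lam_t$. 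Combined with the upper bound this proves $\lim_{t\to 0}\lam_t = \lam_0$; as $\lam_0>0$ was established in the preceding theorem, the existence of $c_0>0$ with $\lam_t\geq c_0$ for small $t$ follows.

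The main obstacle I anticipate is the lower-semicontinuity step, because both the coefficient tensor $\tg_t^{-1}$ and the weight $\cos\theta_t\,d\Vol_t$ of the Dirichlet form depend on $t$. The way around it is exactly the decomposition above: one transfers the $t$-dependent quadratic form onto the fixed form governed by $g_0$, absorbing the discrepancy into a remainder controlled by $\|\tg_t-g_0\|_{\MC^0}$ and $\|\theta_t\|_{\MC^0}$, both of which tend to zero by Theorem \ref{thm_metricconvergence} and Theorem \ref{thm_approximation}(v); after this reduction the classical weak lower semicontinuity of a fixed coercive quadratic functional applies directly. Some care is also needed to confirm that the $W^{1,2}$--$L^2$ functional-analytic setup is uniform in $t$, which is guaranteed by the uniform quasi-isometry to the smooth background metric $g'$.
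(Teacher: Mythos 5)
Your architecture (upper bound by inserting a mean-corrected fixed near-minimizer of $\lam_0$ into the $t$-Rayleigh quotient; lower bound by eigenfunction compactness, Rellich, and weak lower semicontinuity after transferring the energy to a fixed form) is the standard variational scheme, and its lower-bound half genuinely differs from the paper's: the paper uses no compactness at all, but simply plugs the eigenfunction $f_t$ of $\MD_{\theta_t}$ into the $g_0$-Rayleigh quotient and a near-minimizer of $\lam_0$ into the $(\tg_t,\theta_t)$-quotient, using a claimed two-sided bound $(1-\ep)\int|df|^2_{g_0}d\Vol_0\leq\int\cos\theta_t|df|^2_{\tg_t}d\Vol_t\leq(1+\ep)\int|df|^2_{g_0}d\Vol_0$ together with the analogous comparison of volume forms. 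Your correction of the mean, so that the test function satisfies $\int f\,d\Vol_t=0$ rather than $\int f\,d\Vol_0=0$, is a point the paper glosses over and is welcome.

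However, there is a genuine gap: every uniformity claim your argument rests on fails near the branch locus, where both $g_0$ and $\tg_t$ degenerate. In the smooth coordinates $(\tz,\tx)$ on $\tL$ one has $g_0=p^{\st}g_L\approx 4\tr^2(d\tr^2+\tr^2d\ttheta^2)+g_\Sigma$ (proof of Proposition \ref{prop_coneangle4pi}), which vanishes on the normal directions at $\Sigma$; the paper's ``quasi-isometric to the smooth metric'' refers to the substitution $r=\sqrt{\tr}$, which is not a diffeomorphism for the smooth structure of $\tL$. Likewise, by the paper's own model computation (the example following Proposition \ref{prop_immersion}), $\tg_t=(4\tr^2+t^2)(d\tr^2+\tr^2d\ttheta^2)$, so $\tg_t\approx t^2g'$ at $\Sigma$ and the quasi-isometry constant to any fixed smooth $g'$ blows up like $t^{-2}$; this is exactly the unbounded geometry ($|K^t|\LS t^{-2}$) the paper emphasizes. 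Consequently: (i) ``$\tg_t^{-1}\to g_0^{-1}$ uniformly'' is false, since both inverses blow up at $\Sigma$ and their difference is unbounded there; (ii) your remainder bound $|R_t|\LS(\|\tg_t-g_0\|_{\MC^0}+\|\theta_t\|_{\MC^0})\|df_t\|^2_{L^2}$ does not follow from $\MC^0$-closeness of the metrics, because $g\mapsto g^{-1}\sqrt{\det g}$ is not Lipschitz near degenerate metrics --- the actual discrepancy of the two energy densities is of size $t^2|d_\Sigma f|^2\,d\Vol_{g'}$, i.e.\ it is controlled by the \emph{smooth} Dirichlet energy, not by the degenerate ones; and (iii) the uniform $W^{1,2}$-bound needed for Rellich fails for the same reason: $Q_t(f_t)=\lam_t\leq C$ controls the derivatives of $f_t$ along $\Sigma$ only with the collapsing weight $4\tr^2+t^2$, so a priori $\|df_t\|_{L^2(g')}$ can be of size $t^{-1}$, whence $R_t=O(1)$ rather than $o(1)$ and the weakly convergent subsequence has no fixed space to live in. To make your route work you would need what the paper itself implicitly relies on: a comparison of quadratic forms that is uniform up to $\Sigma$ --- for instance, exploiting that in the two directions normal to $\Sigma$ the densities $\cos\theta_t\,\tg_t^{-1}\sqrt{\det\tg_t}$ and $g_0^{-1}\sqrt{\det g_0}$ agree to leading order by two-dimensional conformal invariance, so that the discrepancy is confined to the $\Sigma$-tangential directions --- combined with an argument ruling out concentration of the eigenfunctions $f_t$ at the branch locus.
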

		\begin{proof}
			Let $\Vol_t$ be the volume form of $\tg_t$ over $\tL$ and $\Vol_0$ be the volume form of $g_0$ over $\tL$. In local coordinates $(x_1,\cdots, x_n)$, we could write $$\cos\theta_t|df|_{\tg_t}^2d\Vol_t= \sum_{i,j=1}^n\cos\theta_t(\tg_t)^{ij}\pa_{x_i}f\pa_{x_j}f \det(\tg_t)dx_1\we\cdots \we dx_n.$$
			
			As $\lim_{t\to 0}\tg_t=g_0$ and $\lim_{t\to 0}\cos\theta_t=1$, we obtain 
			\begin{equation}
				(1-\ep)\int |d f|_{g_0}^2d\Vol_0\leq \int \cos\theta_t|d f|^2_{\tg_t}d\Vol_t\leq (1+\ep)\int |d f|^2_{g_0}d\Vol_0.
			\end{equation}
			Let $f_t$ be the eigenfunction $\MD_{\theta_t}f_t=\lam_tf_t$, then we compute
			\begin{equation*}
				\begin{split}
					&\int \cos\theta_t|d f_t|_{\tg_t}^2d\Vol_t=\lam_t \int |f_t|_{\tg_t}^2\Vol_t\leq \lam_t(1+\ep)\int |f_t|_{g_0}^2d\Vol_0,\\
					&\int \cos\theta_t|d f_t|_{\tg_t}^2d\Vol_t\geq (1-\ep)\int |d f_t|^2_{g_0}d\Vol_0\geq (1-\ep)\lam_0\int |f_t|_{g_0}^2d\Vol_0.
				\end{split}
			\end{equation*}
			Therefore, $\lam_t\geq \frac{1-\ep}{1+\ep}\lam_0$. 
			
			On the other hand, by the definition of $\lam_0$, for every $\ep$, we can find a function $f$ such that $$\int|d f|_{g_0}^2d\Vol_{0}\leq (\lam_0+\ep)\int |f|^2_{g_0}d\Vol_{0}.$$  In addition, we have
			\begin{equation*}
				\begin{split}
					\int |d f|^2_{g_0}d\Vol_{g_0}\geq (1-\ep)\int |d f|^2_{g_t}d\Vol_t\geq (1-\ep)\lam_t\int |f|^2d\Vol_{t}\geq (1-\ep)^2\lam_t\int |f|^2d\Vol_0
				\end{split}
			\end{equation*}
			Therefore, we conclude that $\lam_t\leq \frac{\lam_0+\ep}{(1-\ep)^2}$, which implies the claim.
		\end{proof}

		\subsection{Curvature and injective radius estimate for the graph family}
		In this subsection, we will compute the second fundamental form and the injective radius for $[\tL_t]$.
		\begin{definition}
			Let $(X,g)$ be a Riemannian manifold with Levi-Civita connection $\na$, let $\iota:L\to X$ be an immersion, the second fundamental form $K_m$ of $(L,\iota)$ at $m\in L$ is defined as 
			\begin{equation*}
				\begin{split}
					K_m&:T_mL\ti T_mL\ti N_{m}\to \mathbb{R},\\
					K_m&(X,Y,n):=\lan \na_{\iota_{\st}X}(\iota_{\st}Y), n\ran,
				\end{split}
			\end{equation*}
			where $N_m$ is the normal bundle of the immersion at $\iota(m)$ with section $n$ of $N_m$, $X,Y$ are vector fields on $L$.
		\end{definition}
		
		Let $[\tL_t]$ be the family of immersed Lagrangians constructed in Theorem \ref{thm_approximation} on $(U_L,J,\omega,\Omega)$ and $\iota_0:L\to U_L$ be the inclusion of the zero section.
		\begin{proposition}
			\label{prop_secondfundamentalform}
			For $[\tL_t]$, let $\tm\in \tL_t$, let $K^t_{\tm}$ be the second fundamental form of $X$ at $\tm$, then
			\begin{itemize}
				\item [(i)] For $\tm\notin\tSigma$,  we write $m=p(\tm)\in L$ and $K^0_{m}$ be the second fundamental form of $\iota_0$. Let $n_t$ be a sequence of smooth section of $N_{\tiota_t(\tm)}$ which convergence continuously to $n_0\in N_{\iota_0(m)}$ on $T^{\st}U_L$, then for any $X,Y\in T_{\tm}\tL$, we have
				$$
				\lim_{t\to 0}K^t_{\tm}(X,Y,n_t):=K^0_m(p_{\st}X,p_{\st}Y,n_0),
				$$
				\item [(ii)] in a suitable neighborhood of $\Sigma$, we write $K^t:=\max_{|X|=|Y|=|n|=1,m\in\Sigma}|K_{m}^t(X,Y,n)|$, then we have $|K^t|\leq \frac{C}{r+t^2}\leq Ct^{-2}.$ 
			\end{itemize}
		\end{proposition}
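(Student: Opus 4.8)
The plan is to treat the two parts separately, with part (ii) carrying the analytic content. For part (i), away from $\tSigma$ the covering map $p$ is a local diffeomorphism and $\tiota_t$ is, in local charts, the graph of the smooth form $\mfA_t$. By Theorem \ref{thm_approximation}(ii),(iii), on any compact subset of $\tL\setminus\Sigma$ one has $\mfA_t\to 0$ together with all derivatives as $t\to 0$, so the full $2$-jet of $\tiota_t$ converges to that of $\iota_0\circ p$ and $\tg_t\to p^{\st}g_L$, a smooth nondegenerate metric off $\Sigma$. Since the second fundamental form $K^t_{\tm}(X,Y,n)=\langle\na_{(\tiota_t)_{\st}X}(\tiota_t)_{\st}Y,\,n\rangle$ is a continuous function of the $2$-jet of the immersion and of the normal vector, and since the image of $\iota_0\circ p$ near $\tm$ is the zero section, passing to the limit with $n_t\to n_0$ gives $K^t_{\tm}(X,Y,n_t)\to K^0_m(p_{\st}X,p_{\st}Y,n_0)$.

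For part (ii) I would work in the coordinates of Subsection \ref{subsec_primaryestimate}: near $p_0\in\Sigma$ choose $(z=x_1+\sqrt{-1}x_2,x_3,\dots,x_n)$ on $L$ with $\Sigma=\{z=0\}$ and $(\tz,\tx_3,\dots,\tx_n)$ on $\tL$ with $z=\tz^2$, so that $r=\tr^2$. In these charts $\tiota_t$ is the graph of $\mfA_t$, and by the computation in the proof of Proposition \ref{prop_immersion}, under the splitting $\Gamma\colon T(T^{\st}L)\cong TL\oplus T^{\st}L$ its differential is $\Gamma((\tiota_t)_{\st}\pa_{\tz})=(p_{\st}\pa_{\tz},\,\na_{p_{\st}\pa_{\tz}}\MA_t^-+\na_{p_{\st}\pa_{\tz}}\MA_t^+)$ with $p_{\st}\pa_{\tz}=2\tz\pa_z$. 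Since $\MA_t^-$ is nondegenerate with leading term $\Re(Bz^{\frac12}dz)$, $B$ nowhere vanishing along $\Sigma$ and $B=O(t)$ because $\mfA_t\approx t\mfa$, the horizontal part $2\tz\pa_z$ has $g_{U_L}$-norm $\sim\tr=r^{\frac12}$, while the vertical part satisfies $\na_{p_{\st}\pa_{\tz}}\MA_t^-\to\tfrac12 B\,dz$ along $\Sigma$, of norm $\sim t$.

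The two estimates I would establish are: (a) a lower bound $|(\tiota_t)_{\st}\pa_{\tz}|^2_{g_{U_L}}\ge c(r+t^2)$, and (b) a uniform upper bound on the ambient acceleration. For (a) I use that $g_{U_L}$ agrees with the Sasaki metric on the zero section (Proposition \ref{prop_constraintvolumefor}) and that the fiber coordinate of $\tiota_t$ is $O(t\tr)\to 0$, so horizontal and vertical subspaces are $g_{U_L}$-orthogonal up to lower order; the horizontal norm$^2$ contributes $\sim 4r$ and the vertical norm$^2$ contributes $\sim t^2|B|^2\ge c_0 t^2$ by nondegeneracy, giving $|(\tiota_t)_{\st}\pa_{\tz}|^2\ge c(r+t^2)$. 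For (b), Proposition \ref{prop_immersion}(i) gives that $p^{\st}\mfA_t$ is $\MC^{2,\ga}$ in the smooth coordinates $(\tz,\tx)$ with second derivatives bounded uniformly in $t$ (the leading $\tz$-dependence of the fiber being $\sim t\tz^2$), and the Christoffel symbols of $g_{U_L}$ together with the bounded tangent norms $|(\tiota_t)_{\st}\pa_{\tz}|^2\sim r+t^2\le C$ control the connection terms, so $|\na_{(\tiota_t)_{\st}X}(\tiota_t)_{\st}Y|_{g_{U_L}}\le C$ for any pair of coordinate fields. Choosing a $\tg_t$-orthonormal frame adapted to the splitting, the only component that can blow up is $K^t(E_1,E_1,n)$ with $E_1=(\tiota_t)_{\st}\pa_{\tz}/|(\tiota_t)_{\st}\pa_{\tz}|$; for it (a) and (b) give $|K^t(E_1,E_1,n)|\le C/|(\tiota_t)_{\st}\pa_{\tz}|^2\le C/(r+t^2)$, while the mixed and $\Sigma$-tangential components are bounded, hence also $\le C/(r+t^2)$ because $r+t^2$ is small near $\Sigma$. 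This yields $|K^t|\le C/(r+t^2)\le Ct^{-2}$.

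The main obstacle is the lower bound (a): it is precisely here that nondegeneracy enters, and it requires carefully separating the horizontal and vertical contributions to $|(\tiota_t)_{\st}\pa_{\tz}|^2$ in the genuine Calabi--Yau metric $g_{U_L}$ rather than in the model Sasaki metric. One must quantify the $C^1$-closeness of $g_{U_L}$ to the Sasaki metric over the thin region swept out by the graph, where the fiber coordinate is $O(t\tr)$, so that the approximate orthogonality and the asymptotics $\sim 4r$ and $\sim t^2|B|^2$ survive with error terms strictly smaller than $r+t^2$. The explicit model $\{w^2=t^2z\}$, with induced metric $(4\tr^2+t^2)(d\tr^2+\tr^2 d\ttheta^2)$, serves as the exact prototype these estimates must reproduce and provides a useful check that the bound $C/(r+t^2)$ is of the right order.
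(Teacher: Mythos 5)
Your proof is correct and follows essentially the same route as the paper: the same coordinates near $\Sigma$, the splitting $\Gamma\colon T(T^{\st}L)\to TL\oplus T^{\st}L$ applied to $E_i^t=(\tiota_t)_{\st}\pa_{\tx_i}$, and the computation $p_{\st}\pa_{\tz}=2\tz\pa_z$, $\na_{p_{\st}\pa_{\tz}}\MA_t^-|_{\Sigma}= tB_t\,dz$ with $|B_t|\geq C_0>0$, yielding $|E_1^t|^2\sim r+t^2$ while $|E_i^t|^2=\MO(1)$ for $i\geq 3$. If anything you supply more detail than the paper, whose terse conclusion ``$|E_i|^2\leq r+t^2$ \ldots which implies the claim'' tacitly relies on exactly the two ingredients you isolate: the nondegeneracy-driven \emph{lower} bound $|E_1^t|^2\geq c(r+t^2)$ (requiring the approximate Sasaki-orthogonality of horizontal and vertical parts in $g_{U_L}$) and the uniform bound on the ambient covariant acceleration, so your write-up is a faithful and slightly more careful rendering of the paper's argument.
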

		\begin{proof}
			For (i), let$(\tx_1,\cdots,\tx_n)$ be a coordinate on a neighborhood of $\tm$ and $(x_1,\cdots,x_n)$ be a coordinate on a neighborhood of $m$ such that $p_{\st}\pa_{\tx_{i}}=\pa_{x_i}$. We also write $(y_1,\cdots,y_n)$ be the fiber coordinates. We define $E_i^t:=(\tiota_t)_{\st}\pa_{\tx_i}$, then under the identification $\Gamma:T(T^{\st}L)\to TL\oplus T^{\st}L$, we could write
			$$
			\Gamma(E_i^t)=(p_{\st}\pa_{\tx_i},\na_{p_{\st}\pa_{\tx_i}}\MA_t^++\na_{p_{\st}\pa_{\tx_i}}\MA_t^-)
			$$
			
			As $m\in L\setminus \Sigma$, $\lim_{t\to 0}\|\na_{p_{\st}\pa_{\tx_i}}\MA_t^+\|_{\MC^{k,\ga}}+\|\na_{p_{\st}\pa_{\tx_i}}\MA_t^-\|_{\MC^{k,\ga}}=0$, we have $\lim_{t\to 0}E_i^t=\pa_{x_i}.$ If we write $\bna$ be the Levi-Civita connection of $(U_L,g_{U_L})$, then $\lim_{t\to 0}\bna_{E_i^t}E_j^t=\bna_{p_{\st}\pa_{\tx_i}}p_{\st}(\pa_{\tx_j}).$
			
			As $\lim_{t\to 0}n_t=n_0$, we compute $$\lim_{t\to 0}K^t_{\tm}(\pa_{\tx_i},\pa_{\tx_j},n_t)=\lim_{t\to 0}\lan \bna_{E_i^t}E_j^t,n_t\ran=\lan \bna_{p_{\st}\pa_{\tx_i}}p_{\st}(\pa_{\tx_j}),n_0\ran=K^0_m(p_{\st}\pa_{\tx_i},p_{\st}\pa_{\tx_j},n_0),$$
			which implies (i).
			
			For (ii), near $\Sigma$, under the identification $\Gamma:T(T^{\st}L)\to TL\oplus T^{\st}L$, we have
			\begin{equation}
				\Gamma\circ\tiota_{\st}(\pa_{\tx_i})=(p_{\st}\pa_{\tx_i},\na_{p_{\st}\pa_{\tx_i}}\MA_t^++\na_{p_{\st}\pa_{\tx_i}}\MA_t^-).
			\end{equation}
			In addition, we could write the expansion of $\MA_t^-\sim t\Re(B_tz^{\frac12}dz)$ with $|B_t|\geq C_0>0$ on $\Sigma$. Then we compute
			$$p_{\st}\pa_{\tz}=2\tz\pa_{z},\;\na_{p_{\st}\pa_{\tz}}\MA^-|_{\Sigma}=tB_tdz,\;\na_{p_{\st}\pa_{\tz}}(\MA^+)=0.$$
			
			We denote $E_i^t:=(\tiota_t)_{\st}(\pa_{\tx_i})$, then by previous computation, we obtain
			$$|E_i|^2\leq r+t^2,\;\mathrm{for \;i=1 \;or \;2 \;and \;}|E_i|^2\sim \MO(1)\;\mathrm{for \;i}\neq 1,2,$$
			which implies the claim.
		\end{proof}
		
		\begin{proposition}
			\label{prop_injectiveradiussubmnaifold}
			Let $(X,g)$ be a Riemannian manifold, $\iota:Y\to X$ be an immersion and let $K_Y$ be the second fundamental form of $Y$, suppose $|K_Y|\leq \tau_Y$, then the injective radius $\inj_p$ of $Y$ at any point $p\in Y$ have
			$\inj_p\geq \frac{\pi}{C+\tau_Y}$, where $C$ is a constant depends on the Riemannian metric on $X$ but independent of $\tau_Y.$  
		\end{proposition}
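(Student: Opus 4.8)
The plan is to combine an intrinsic curvature bound coming from the Gauss equation with Klingenberg's injectivity radius estimate, using the immersion into $X$ to exclude short geodesic loops. The numerator $\pi$ will come entirely from the conjugate radius, while the dependence on $\tau_Y$ enters through both the curvature bound and the loop estimate.

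First I would control the intrinsic geometry of $(Y,\iota^{\st}g)$. By the Gauss equation, for any orthonormal pair $e_1,e_2\in T_pY$ the sectional curvature of $Y$ satisfies
$$\mathrm{sec}_Y(e_1,e_2)=\mathrm{sec}_X(\iota_{\st}e_1,\iota_{\st}e_2)+\langle K_Y(e_1,e_1),K_Y(e_2,e_2)\rangle-|K_Y(e_1,e_2)|^2.$$
Using $|K_Y|\leq\tau_Y$ together with an ambient bound $|\mathrm{sec}_X|\leq C_X$ (a constant depending only on $g$), and noting that the last term only lowers $\mathrm{sec}_Y$, I get $\mathrm{sec}_Y\leq C_X+\tau_Y^2=:\Lambda$. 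By the Rauch comparison theorem the first conjugate point along any geodesic of $Y$ occurs at distance at least $\pi/\sqrt{\Lambda}$, and since $\sqrt{C_X+\tau_Y^2}\leq\sqrt{C_X}+\tau_Y$ the conjugate radius is $\geq \pi/(\sqrt{C_X}+\tau_Y)$. This already matches the desired lower bound after enlarging the constant.

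Next, by Klingenberg's lemma the injectivity radius at $p$ equals the minimum of the conjugate radius and half the length $\ell_p$ of the shortest geodesic loop based at $p$; the first quantity is handled above, so it remains to bound $\ell_p$ from below. If $\ell_p\geq 2\,\inj_X$, where $\inj_X$ is the fixed ambient injectivity radius, then $\frac12\ell_p\geq\inj_X\geq\pi/(C+\tau_Y)$ once $C\geq\pi/\inj_X$, so I may assume the loop $\gamma$ lies in a single ambient geodesic ball on which $g$ is uniformly comparable to a Euclidean metric. The image $\iota\circ\gamma$ is then a closed curve whose acceleration in $X$ is exactly $K_Y(\gamma',\gamma')$, hence has geodesic curvature $\leq\tau_Y$. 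A total–curvature (Fenchel-type) comparison in the ball bounds the length from below: the total curvature of a closed curve is at least $2\pi$, and it is controlled by $\int\kappa_g\,ds$ plus an ambient correction of size $O(C_X\ell_p)$ plus the exterior angle at the corners of the loop. This should give $\ell_p\geq 2\pi/(\tau_Y+C')$, and combining with the conjugate radius estimate yields $\inj_p\geq\pi/(C+\tau_Y)$ for a suitable $C$.

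The hard part will be this geodesic-loop bound. By Klingenberg's lemma the loop is smooth at the nearest cut point $q$ but generically has a corner at the basepoint $p$, whose exterior angle can a priori approach $\pi$ and thereby weaken the Fenchel estimate by a factor of two, producing only $\pi/(2\tau_Y+C')$ and breaking the claimed rate. Overcoming this requires feeding back the no-conjugate-point property: since $\exp_p^Y$ is a local diffeomorphism on the ball of radius $\frac12\ell_p<\pi/\sqrt{\Lambda}$, the two minimizing geodesics from $p$ to $q$ cannot leave $p$ in nearly the same direction, which is what bounds the exterior angle away from $\pi$ and restores the correct constant. Quantifying this angle bound in terms of $\Lambda$, and carefully tracking the ambient-curvature correction to Fenchel's theorem in the curved ball, is the main technical obstacle; the curvature step (Step 1) is essentially routine.
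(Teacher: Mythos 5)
Your first step (Gauss equation, intrinsic curvature bound $\Lambda=C_X+\tau_Y^2$, conjugate radius at least $\pi/(\sqrt{C_X}+\tau_Y)$) is the same as the paper's. The divergence, and the gap, is in the loop estimate. Because you invoke the \emph{pointwise} Klingenberg lemma, the object you must rule out is a geodesic loop based at $p$, which generically has a corner at $p$. As you yourself note, Fenchel for piecewise-smooth closed curves only gives $\int\kappa\,ds+\theta_{\mathrm{ext}}\geq 2\pi$ with $\theta_{\mathrm{ext}}\leq\pi$, hence $\ell_p\geq \pi/(\tau_Y+C')$ and $\inj_p\geq \pi/(2\tau_Y+2C')$. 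This is genuinely weaker than the stated proposition: the factor $2$ multiplies $\tau_Y$ and cannot be absorbed into a constant that is required to be independent of $\tau_Y$, so what you can complete is the statement with $\pi$ replaced by $\pi/2$. Your proposed repair --- that since $\exp_p^Y$ is a local diffeomorphism on the ball of radius $\tfrac12\ell_p<\pi/\sqrt{\Lambda}$, the two minimizing geodesics from $p$ to $q$ cannot leave $p$ at a small angle --- is not a valid inference. Rauch comparison gives a \emph{lower} bound on $|d\exp_p^Y|$, which bounds lengths of image curves from below, but a local diffeomorphism with expanding differential can perfectly well fail to be injective; nothing in the no-conjugate-point hypothesis alone quantitatively separates the two initial directions. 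So the "main technical obstacle" you flag is a real gap, and the sketched fix does not close it. (Separately, your Fenchel-type inequality in a curved ambient ball with an $O(C_X\ell_p)$ correction is asserted rather than proved; Fenchel is a Euclidean theorem and some device is needed before it can be invoked.)

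The paper's proof sidesteps both difficulties at once, and you could graft it onto your Step 1. Since all manifolds in the paper are compact, it uses the \emph{global} form of Klingenberg's lemma: $\mathrm{inj}(Y)\geq\min\{\pi/\sqrt{\Lambda},\tfrac12 l(Y)\}$, where $l(Y)$ is the infimum of lengths of \emph{closed} geodesics, which are smooth and have no corner; this bounds $\inj_p$ for every $p$ since $\inj_p\geq \mathrm{inj}(Y)$. To bound $l(Y)$ it takes a Nash isometric embedding $X\hookrightarrow\mathbb{R}^N$: for a closed geodesic $\gamma$ of $Y$ one has $\nabla^{\mathbb{R}^N}_{\gamma'}\gamma'=K_Y(\gamma',\gamma')+K_X(\gamma',\gamma')$, so its Euclidean curvature is at most $\tau_Y+\tau_X$, and the classical Fenchel theorem in $\mathbb{R}^N$ gives $2\pi\leq l(\gamma)(\tau_Y+\tau_X)$, hence $l(Y)\geq 2\pi/(\tau_Y+\tau_X)$ and $\mathrm{inj}(Y)\geq \pi/(\tau_Y+\tau_X)$, with $C=\tau_X$ depending only on $(X,g)$. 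Replacing your pointwise lemma with the global one and running the curvature count through the ambient Nash embedding rather than a comparable-metric ball makes your argument close with the stated constant and with no corner to control.
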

		\begin{proof}
			By the Gauss equation, let $\Riem$ be the Riemannian curvature tensor of $(Y,\iota^{\st}g)$, then $\max|\Riem|\leq \tau^2$. As $(Y,\iota^{\st}g)$ is a complete Riemannian manifold, for any point $p\in Y$, the injective radius of $Y$ satisfies
			\begin{equation*}
				\inj(Y)\geq \min \{\frac{\pi}{\tau},\frac12 l(Y)\},
			\end{equation*}
			where $l(Y)$ is the infimum of lengths of closed geodesics. 
			
			Now, we will control $l(Y)$. We take a isometric embedding $(X,g)\to \mathbb{R}^N$, if we write $\na^{\mathbb{R}^N}$ be the Levi-Civita connection on $\mathbb{R}^N$, then for $\al,\be\in TY$, we could write 
			$$
			\na_{\al}^{\mathbb{R}^N}\be=\na^X_{\al}\be+K_X,
			$$
			where $\na^X$ is the Levi-Civita connection on $X$ and $K_X$ is a section of the normal bundle of $X$ in $\mathbb{R}^N$. Similarly, we have $\na^X_{\al}\be=\na^Y_{\al}\be+K_Y$, where $\na^Y$ is the Levi-Civita connection on $Y$ and $K_Y$ is a section of the normal bundle of $Y$ in $X$.
			Therefore, we obtain
			\begin{equation}
				\na_{\al}^{\mathbb{R}^N}\be=\na_{\al}\be+K_Y+K_X,
			\end{equation}
			where $\na^{\mathbb{R}^N}$ is the covariant derivative on $\mathbb{R}^N$ and $\na_{\al}\be$ is the covariant derivative on $Y$. Let $\gamma$ be a geodesic on $Y$ and we could consider $\gamma$ is a curve on $\mathbb{R}^N$, then the extrinsic curvature $k_{\gamma}$ of $\gamma$ is defined as $\kappa_{\gamma}=|\na_{\gamma'}^{\mathbb{R}^n}\gamma'|$. In addition, we compute
			\begin{equation}
				\begin{split}
					|\na_{\gamma'}^{\mathbb{R}^n}\gamma'|=&|\na_{\gamma'}\gamma'+K_Y(\gamma',\gamma')+K_X(\gamma',\gamma')|\\
					\leq &|K_Y(\gamma',\gamma')|+|K_X(\gamma',\gamma')|\\
					\leq &\tau_X+\tau_Y,
				\end{split}
			\end{equation}
			where $\tau_X$ is the maximal of the second fundamental form of $X$.  
			
			By Fenchel's theorem on $\mathbb{R}^N$ \cite{aepplialfred1965}, we have $\int_{\gamma}\kappa_{\gamma}ds\geq 2\pi$. Let $l(\gamma)$ be the arc length of $\gamma$, then we have
			$$
			2\pi\leq \int_{\gamma}\kappa_{\gamma}ds\leq l(\gamma)(\tau_X+\tau_Y),
			$$
			which implies $l(Y)\geq \frac{2\pi}{\tau_X+\tau_Y}.$ Therefore, $\inj(Y)\geq \frac{\pi}{\tau_X+\tau_Y}.$
		\end{proof}
		
		Over $(\tL,g_t)$, let $\inj_t$ be minimal of injective radius along different points of $\tL$, then we obtain the following estimate.
		\begin{corollary}
			There exist constants $t_0,C$ independent of $t$ such that for any $t\leq t_0$, we have $\inj_t\geq Ct^2.$
		\end{corollary}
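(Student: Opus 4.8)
The plan is to combine the second fundamental form bound of Proposition \ref{prop_secondfundamentalform} with the general injective radius estimate of Proposition \ref{prop_injectiveradiussubmnaifold}. First I would assemble a global bound on the second fundamental form $K^t$ of the immersion $\tiota_t:\tL\to U_L$. Near $\Sigma$, part (ii) of Proposition \ref{prop_secondfundamentalform} gives $|K^t|\leq C(r+t^2)^{-1}\leq Ct^{-2}$. Away from $\Sigma$, part (i) of the same proposition shows that $K^t$ converges continuously to the second fundamental form $K^0$ of the zero section $\iota_0(L)$, which is a fixed immersion of a compact manifold and hence has $|K^0|$ bounded by a $t$-independent constant; consequently on any region bounded away from $\Sigma$ we have $|K^t|\leq C$ for $t$ small. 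Since $t^{-2}\to\infty$ as $t\to 0$, these two regimes patch together into a single uniform bound $\sup_{\tL}|K^t|\leq Ct^{-2}$ valid for all $t\leq t_0$.

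With the value $\tau_Y:=Ct^{-2}$ in hand, I would apply Proposition \ref{prop_injectiveradiussubmnaifold} to $\tiota_t:(\tL,\tg_t)\to (U_L,g)$. This yields $\inj_t\geq \pi/(C_0+Ct^{-2})$, where the constant $C_0$ arises only from the fixed Calabi-Yau metric on $U_L$ (entering through a fixed isometric embedding $U_L\hookrightarrow \mathbb{R}^N$, the bound $\tau_X$ on the extrinsic curvature of $U_L$ in $\mathbb{R}^N$, and the Fenchel-type geodesic length estimate) and is therefore independent of $t$. Choosing $t_0$ small enough that $Ct_0^{-2}\geq C_0$, for every $t\leq t_0$ the denominator is dominated by $Ct^{-2}$, so $\inj_t\geq \pi/(2Ct^{-2})=(\pi/2C)t^2$, which is precisely the claimed bound with constant $\pi/(2C)$.

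I do not expect a serious obstacle here, since all the hard analytic work has already been carried out in Propositions \ref{prop_secondfundamentalform} and \ref{prop_injectiveradiussubmnaifold}. The one point requiring care is to confirm that the constant in the injective radius estimate is genuinely $t$-independent: this holds because the ambient data — the neighborhood $U_L$, its Calabi-Yau metric, and hence the isometric embedding into $\mathbb{R}^N$ with its curvature bound — do not vary with $t$, so only $\tau_Y$ carries the $t$-dependence. The second minor point, namely that the near-$\Sigma$ and away-from-$\Sigma$ curvature estimates combine into one global bound, is immediate because the bounded contribution away from $\Sigma$ is absorbed by the $t^{-2}$ blow-up once $t$ is small.
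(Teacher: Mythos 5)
Your proposal is correct and follows essentially the same route as the paper's proof: both take the global bound $\max_{\tL}|K^t|\leq Ct^{-2}$ from Proposition \ref{prop_secondfundamentalform}, feed it as $\tau_Y$ into Proposition \ref{prop_injectiveradiussubmnaifold} (your $\pi/(C_0+Ct^{-2})$ is algebraically the paper's $\pi t^2/(C+\tau_X t^2)$), and then choose $t_0$ small so that the $t^{-2}$ term dominates the fixed ambient constant. Your explicit patching of the near-$\Sigma$ estimate from part (ii) with the away-from-$\Sigma$ convergence from part (i) is a slightly more careful rendering of a step the paper takes for granted, but it is the same argument.
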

		\begin{proof}
			By Proposition \ref{prop_secondfundamentalform}, for the second fundamental form $K_t$, we have $\max_{p\in\tL}|K_t|\leq Ct^{-2}$. By Proposition \ref{prop_injectiveradiussubmnaifold}, we have $\inj_t\geq \frac{t^2\pi}{C+\tau_Xt^2}$, where $\tau_X$ is the maximal of the second fundamental form for an isomorphic embedding into $\mathbb{R}^n$.
			As the ambient manifold $(X,g)$ is fixed, we take $t_0<\frac{1}{\sqrt{\tau_X}}$, then $\inj_t\geq C't^2$.
		\end{proof}
		
		\begin{subsection}{Size estimate for the Weinstein neighborhood.}
			Let $[L]=(L,\iota)$ be an immersed Lagrangian submanifold in a Calabi-Yau manifold $(X,J,\omega,\Omega)$ with Calabi-Yau metric $g_X$. By Theorem \ref{thm_weinsteinnbhd}, we could find a Weinstein neighborhood of $[L]$ and in this subsection, we will prove a Weinstein neighborhood theorem with size controlled by $\tau$, where $\tau$ is maximum of the second fundamental form of $[L]$. Moreover, we assume $\tau\geq 1$. 
			
			We define $g_L:=\iota^{\st}g_X$ and 
			\begin{equation}
				U(s):=\{\al\in T^{\st}L|\|\al\|_{\MC^0}\leq s\},
				\label{eq_weinsteinsize}
			\end{equation}
			where the $\MC^0$ norm is taken using $g_L$. 
			
			To begin with, we introduce several lemmas to prove the Weinstein tubular neighborhood theorem.
			\begin{lemma}{\cite[Proposition 8.3]{dasilva2001lecturesonsymplecticgeometry}}
				\label{lem_isomorphismofsymplecticform}
				Let $V$ be a $2n$-dimensional vector space, let $\Omega_0$ and $\Omega_1$ be symplectic forms in $V$. Let $S$ be a subspace of $V$ and is Lagrangian for $\Omega_0$ and $\Omega_1$, and let $W$ be any complement to $S$ in $V$, then from $W$, we can canonically construct a linear isomorphism $A:V\to V$ such that $A|_{S}=\Id_S$ and $A^{\st}\Omega_1=\Omega_0$.
			\end{lemma}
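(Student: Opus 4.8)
The plan is to prove this purely linear-algebraic normal-form statement by writing the sought isomorphism in block form with respect to the splitting $V = S \oplus W$ and solving for the blocks one at a time. The starting observation is that, since $S$ is Lagrangian for $\Omega_i$ ($i=0,1$), one has $S = S^{\perp_{\Omega_i}}$, so the pairing $S \times W \to \mathbb{R}$, $(s,w)\mapsto \Omega_i(s,w)$, is nondegenerate. Hence the maps $\beta_i : W \to S^{\st}$, $\beta_i(w) := \Omega_i(\,\cdot\,, w)|_S$ (that is, $\beta_i(w)(s) = \Omega_i(s,w)$), are isomorphisms: injectivity follows from $S^{\perp_{\Omega_i}} \cap W = S \cap W = 0$, and $\dim W = n = \dim S^{\st}$. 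These two isomorphisms encode exactly how each symplectic form couples $S$ to the chosen complement $W$, and they are the only canonical data the construction will use.

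Next I would look for $A$ of the form $A|_S = \Id_S$ and $A(w) = B(w) + C(w)$ for $w \in W$, with $B : W \to W$ the ``diagonal'' part and $C : W \to S$ a ``shear''. Writing $A^{\st}\Omega_1 = \Omega_0$ out on the three blocks $S\times S$, $S\times W$, $W\times W$: the $S\times S$ block holds automatically since both forms vanish on the Lagrangian subspace $S$; and the $S\times W$ block, using that $Cw \in S$ kills the $S$-valued contribution, reduces to $\beta_1 \circ B = \beta_0$, which forces the canonical choice $B := \beta_1^{-1}\circ\beta_0$.

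The heart of the argument, and the step I expect to be the main obstacle, is the $W\times W$ block, precisely because $W$ is an arbitrary complement and need not be Lagrangian for either form, so $A$ cannot be taken block-diagonal. Here the shear $C : W \to S$ must absorb the defect bilinear form $D(w,w') := \Omega_0(w,w') - \Omega_1(Bw, Bw')$ on $W$. The key point is that $D$ is antisymmetric, being a difference of an antisymmetric form and the pullback of one, and that the contribution of $C$ to this block, namely $(w,w')\mapsto \Omega_1(Cw,Bw') - \Omega_1(Cw',Bw)$, is manifestly antisymmetric as well, so the two are compatible. Using $\beta_1 B = \beta_0$ to rewrite $\Omega_1(Cw, Bw') = \beta_0(w')(Cw)$, the block equation becomes $\beta_0(w')(Cw) - \beta_0(w)(Cw') = D(w,w')$, and since $\beta_0$ is an isomorphism one solves it canonically by imposing $\beta_0(w')(Cw) = \tfrac12 D(w,w')$, which determines $Cw \in (S^{\st})^{\st} = S$ uniquely and linearly in $w$.

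Finally I would note that every choice made, $B = \beta_1^{-1}\beta_0$ and the purely antisymmetric filling $C$ of the defect $D$, depends only on $\Omega_0,\Omega_1,S,W$ through natural operations, so the resulting $A$ is canonical; in particular the construction applies fiberwise and varies smoothly in families, which is exactly what is needed when this lemma is fed into the Moser-type argument for the (size-controlled) Weinstein neighborhood theorem. Invertibility of $A$ is then a routine check: in the ordered splitting $(S,W)$ it is block upper-triangular with invertible diagonal blocks $\Id_S$ and $B$, so $\det A = \det B \neq 0$, which completes the proof.
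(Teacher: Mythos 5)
Your proof is correct: the nondegeneracy of the pairings $\beta_i$, the forced choice $B=\beta_1^{-1}\circ\beta_0$ from the $S\times W$ block, the antisymmetry of the defect $D$, the half-defect definition of the shear $C$, and the block upper-triangular form of $A$ (hence invertibility) all check out, and every step is natural in $(\Omega_0,\Omega_1,S,W)$, so the map is canonical.

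For comparison: the paper itself gives no proof of this lemma --- it is quoted verbatim from da Silva's notes --- so the relevant benchmark is the cited proof there, which is organized differently. Da Silva first uses the same pairing nondegeneracy to canonically deform the given complement $W$ into two complements $W_0$, $W_1$ of $S$ that are Lagrangian for $\Omega_0$, $\Omega_1$ respectively (a shear toward $S$ killing the restriction of each form to $W\times W$), and then defines $A$ as $\Id_S$ glued with the isomorphism $W_0\to S^{\st}\to W_1$ induced by the two pairings; since both adapted complements are Lagrangian, the $W\times W$ blocks vanish identically on both sides and no defect form ever appears. Your version compresses this into a single block-triangular solve against the fixed complement $W$: in fact your shear is exactly the composite $C_1\circ B-C_0$ of da Silva's two shears, so the two constructions produce the identical canonical map. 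What your route buys is a self-contained one-pass computation in which smooth dependence on all the data (needed when the lemma is applied fiberwise in the Moser argument of Theorem \ref{thm_weinsteintubularneighborhood}) is manifest; what da Silva's buys is a reusable intermediate statement --- canonical Lagrangian complement from an arbitrary complement --- and a block-diagonal final map with respect to the adapted splittings.
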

			
			\begin{lemma}{\cite[Proposition 6.8]{dasilva2001lecturesonsymplecticgeometry}}
				\label{lem_homologyclassvanishes}
				Let $U$ be a tubular neighborhood of $L$ in $X$ and $\be$ is a closed $k$-form. Suppose the homology class of $\be$ restricts to $L$ vanishes, then we could find a $(k-1)$-form $\mu$ on $U$ such that $\be=d\mu$ and $\mu|_x=0$, where $x\in L$.
			\end{lemma}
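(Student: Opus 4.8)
The statement is the relative Poincar\'e lemma for the pair $(L,U)$, and the plan is to prove it by combining a deformation retraction of the tubular neighborhood onto $L$ with the standard homotopy operator. First I would use the tubular neighborhood structure to realize $U$ as a neighborhood of the zero section in a vector bundle $\pi:U\to L$ with inclusion $j:L\to U$, and introduce the fibrewise scaling $\rho_t:U\to U$, $\rho_t(x,v)=(x,tv)$ for $t\in[0,1]$, so that $\rho_1=\Id$ and $\rho_0=j\circ\pi$. Let $v_t$ be the time-dependent vector field generating $\rho_t$, i.e. $\frac{d}{dt}\rho_t=v_t\circ\rho_t$; in fibre coordinates $v_t$ is the rescaled radial field, which in particular vanishes identically along the zero section $L=\{v=0\}$. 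Shrinking $U$ if necessary keeps $\rho_t(U)\subset U$ for all $t$.

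Next I would set up the homotopy operator $Q$ acting on $k$-forms by $Q\be=\int_0^1 \rho_t^{\st}(\iota_{v_t}\be)\,dt$ and invoke the homotopy formula $\rho_1^{\st}\be-\rho_0^{\st}\be=d(Q\be)+Q(d\be)$. Since $\be$ is closed this collapses to $\be-\pi^{\st}(j^{\st}\be)=d(Q\be)$. The hypothesis that the class of $\be$ restricts trivially to $L$ lets me write $j^{\st}\be=d\sigma$ for some $(k-1)$-form $\sigma$ on $L$ (and in the Weinstein application, where the two symplectic forms agree pointwise along $L$ so that $\be$ itself vanishes at points of $L$, one may simply take $\sigma=0$). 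Absorbing this term I would set $\mu:=Q\be+\pi^{\st}\sigma$, which by construction satisfies $d\mu=\be$.

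The step I expect to be the main point is verifying $\mu|_x=0$ for $x\in L$. For the $Q\be$ piece this follows from $v_t|_L=0$: the integrand $\rho_t^{\st}(\iota_{v_t}\be)$ evaluated at a point of the fixed zero section contracts $\be$ against a vanishing vector, so it is zero for every $t$, whence $(Q\be)|_L=0$; this is precisely the reason for choosing the radial, rather than an arbitrary, retraction. One technical care is the well-definedness of the integral defining $Q\be$, since $v_t$ blows up like $t^{-1}$ as $t\to0$ away from $L$; here the factor $\rho_t^{\st}$ contracts the fibre directions at the matching rate, so $\rho_t^{\st}(\iota_{v_t}\be)$ extends continuously across $t=0$ and the integral converges. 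The only genuine subtlety is that the $\pi^{\st}\sigma$ term need not vanish along $L$ under the weak cohomological hypothesis alone; I would therefore carry out the argument under the sharp hypothesis $j^{\st}\be=0$ that is actually used in our construction, for which $\sigma=0$ and $\mu=Q\be$ vanishes along $L$ as required, recovering da Silva's relative Poincar\'e lemma verbatim.
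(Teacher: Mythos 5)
Your proof is correct and coincides with the argument the paper implicitly relies on: the lemma is quoted from da Silva without proof, and da Silva's proof is exactly your radial-retraction homotopy operator $Q\be=\int_0^1\rho_t^{\st}(\iota_{v_t}\be)\,dt$, with $(Q\be)|_x=0$ for $x\in L$ because the generating field $v_t$ vanishes along the zero section, and with the $t\to 0$ convergence handled just as you describe. You are also right to flag the hypothesis: the conclusion $\mu|_x=0$ forces the pointwise condition $j^{\st}\be=0$ (since $j^{\st}\be=d\,j^{\st}\mu=0$), so the paper's phrasing via the cohomology class is too weak as stated, but the actual application in Theorem \ref{thm_weinsteintubularneighborhood} supplies precisely $(h^{\st}\omega-\omcan)|_p=0$ for all $p\in L$, matching your sharp hypothesis and da Silva's original statement.
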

			
			\begin{theorem}
				\label{thm_weinsteintubularneighborhood}
				Under the previous assumptions, for the immersed Lagrangian submanifold $[L]=(L,\iota:L\to X)$, there exists a neighborhood $V$ of $[L]$, a neighborhood $U$ of the zero section of $T^{\st}L$, a diffeomorphism $\vp:U\to V$ and a constant $C$ independent of $[L]$ such that
				\begin{itemize}
					\item $\vp^{\st}(\omega)=\omcan$,
					\item we could take $U=U(C\tau^{-1})$, to be a size $C\tau^{-1}$ neighborhood of $T^{\st}L$ defined in \eqref{eq_weinsteinsize}.
				\end{itemize}
			\end{theorem}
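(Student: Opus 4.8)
The plan is to run the classical Moser proof of the Weinstein neighborhood theorem, built on Lemmas \ref{lem_isomorphismofsymplecticform} and \ref{lem_homologyclassvanishes}, while carrying an explicit dependence on $\tau$ through every estimate. Throughout, all constants will depend only on the fixed Calabi-Yau data $(X,J,\omega,\Omega,g_X)$ and not on $[L]$, and I use the standing assumption $\tau\geq 1$ so that $\tau^{-1}\leq 1$ is a genuinely small scale.

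First I would take the normal exponential map $\Phi:U_L\to X$, $\Phi(\al)=\exp_{\iota\circ\pi(\al)}\vp(\al)$ of \eqref{eq_normalexpoenntialeqution}, and set $\omega_1:=\Phi^{\st}\omega$. The first quantitative input is that $\Phi$ is an immersion on $U(c\tau^{-1})$ for a universal $c$ (and an embedding onto an immersed tube around each local sheet of $\iota$): this is a focal-point estimate along the unit-speed normal geodesics $\ga(s)=\exp_{\iota(x)}(sn)$. Since the ambient curvature of $X$ is bounded independently of $[L]$ and the second fundamental form is $\leq\tau$, the normal Jacobi fields controlling $d\Phi$ have no zero before $s\sim\tau^{-1}$, so $d\Phi$ stays invertible on $U(c\tau^{-1})$. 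Because $\Phi(L_0)=\iota(L)$ is Lagrangian, the zero section $L_0$ is Lagrangian for both $\omega_1$ and $\omcan$.

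Next I correct $\omega_1$ along $L_0$ and apply Moser. Applying Lemma \ref{lem_isomorphismofsymplecticform} fiberwise with $S=T_xL$ and $W$ the vertical space $V_x=T_x^{\st}L$ (Lagrangian for $\omcan$, cf. Proposition \ref{prop_constraintvolumefor}(ii)) yields a fiberwise-linear automorphism $A$ of $T^{\st}L$ covering $\id_L$ with $A^{\st}\omega_1|_{L_0}=\omcan|_{L_0}$; the symplectic data entering the lemma is bounded independently of $[L]$, so $\|A\|,\|A^{-1}\|$ are universally bounded and $A$ preserves the scale $\tau^{-1}$ up to a universal factor. Writing $\beta:=A^{\st}\omega_1-\omcan$, the form $\beta$ vanishes on $L_0$, and its first-order normal variation there is governed by the second fundamental form, giving the pointwise bound $|\beta|(\al)\LS\tau|\al|$ on $U(c\tau^{-1})$ (the Jacobi-field expansion comparing $\Phi^{\st}g_X$ with the Sasaki metric supplies this). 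By Lemma \ref{lem_homologyclassvanishes} I write $\beta=d\mu$ with $\mu|_{L_0}=0$, and the radial homotopy operator gives $|\mu|(\al)\LS\tau|\al|^2$. On $U(c\tau^{-1})$ the path $\omega_s:=\omcan+s\beta$ differs from $\omcan$ by $\LS c$, hence is nondegenerate for $c$ small; solving $\iota_{X_s}\omega_s=-\mu$ produces a time-dependent field with $|X_s|(\al)\LS\tau|\al|^2\LS c^2\tau^{-1}$ on the tube. Thus the time-one Moser flow $\psi$ displaces points by at most $\int_0^1|X_s|\,ds\LS c^2\tau^{-1}$, a small fraction of the tube radius, so $\psi$ is defined on $U(c'\tau^{-1})$ and fixes $L_0$ (since $\mu|_{L_0}=0$ forces $X_s|_{L_0}=0$) with $\psi^{\st}(A^{\st}\omega_1)=\omcan$. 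Setting $\vp:=\Phi\circ A\circ\psi$ gives $\vp^{\st}\omega=\omcan$ on $U(c'\tau^{-1})$, which is the theorem with $C=c'$.

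The main obstacle is the uniform bookkeeping that forces every estimate to close up on the single scale $\tau^{-1}$ with $[L]$-independent constants. The two delicate points are (a) the focal/Jacobi-field estimate guaranteeing that $\Phi$ remains an immersion out to radius $\sim\tau^{-1}$, which must invoke only a bound on the ambient curvature together with $\tau$; and (b) the confinement of the Moser flow, where one must verify that the total displacement $\int_0^1|X_s|\,ds$ stays strictly inside the tube so that $\psi_s$ never leaves the region on which $\omega_s$ is nondegenerate. Both are bought by the \emph{vanishing} of $\beta$ on $L_0$ together with the linear-in-$\tau$ control $|\beta|\LS\tau|\al|$: this is exactly what yields the extra power of $\tau^{-1}$ beyond the bare nondegeneracy threshold, and checking that this control is uniform over all immersed $[L]$ with second fundamental form $\leq\tau$ is the crux.
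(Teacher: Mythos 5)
Your proposal follows essentially the same route as the paper: the normal exponential map $\Phi$, the pointwise linear correction supplied by Lemma \ref{lem_isomorphismofsymplecticform}, the primitive $\mu$ with $\mu|_{L_0}=0$ from Lemma \ref{lem_homologyclassvanishes}, and a Moser argument, all with the same $\tau^{-1}$-scale bookkeeping; indeed you are more explicit than the paper on two points it leaves implicit, namely the focal-radius estimate keeping $\Phi$ an immersion out to radius $\sim\tau^{-1}$ and the confinement of the Moser flow inside the tube. One small repair: the pointwise maps $A_x$ produced by Lemma \ref{lem_isomorphismofsymplecticform} need not preserve the vertical subspaces, so they do not directly assemble into a fiberwise-linear bundle automorphism of $T^{\st}L$ as you assert; promote them to a diffeomorphism exactly as the paper does via $h(p):=\exp_x(A_xv)$, which satisfies $dh|_{(x,0)}=A_x$, after which your argument goes through (note also that the paper replaces your Lipschitz-type bound $|\beta|\LS\tau|\al|$ by the Hölder rescaling bound $\|\det(\omega_s)\|_{\ca}\LS\tau^{\ga}$, which is the safer estimate since the induced metrics on the approximate family are only $\MC^{1,\ga}$).
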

			\begin{proof}
				Let $U(s)$ be an size $s$ neighborhood on $T^{\st}L$ defined in \eqref{eq_weinsteinsize}, let $\Phi:U(s)\to X$ be decomposition of the complex structure and the normal exponential map defined in \eqref{eq_normalexpoenntialeqution}. Let $\inj_X$ be the injective radius of $X$, then for any $s<\inj_X$, $\Phi$ will be an immersion. Therefore, we could obtain a pullback Calabi-Yau structure $(U(s),\Phi^{\st}J,\Phi^{\st}\omega,\Phi^{\st}\Omega)$ with pull-back metric $g_U=\Phi^{\st}g$.
				
				Let $x\in L$ and we write $(x,0)$ be a point on the zero section of $U(s)$, we write $S=T_xL$ and $W=(T_pL)^{\perp}$, where the $\perp$ is taken using the metric $g$ over $T_{(x,0)}U(s)$. By Lemma \ref{lem_isomorphismofsymplecticform}, there exists a linear isomorphism $A_x:T_{(x,0)}U(s)\to T_{(x,0)}U(s)$ such that $A_x|_S=\Id$ and $A_x^{\st}\omega_t|_{(x,0)}=\omcan$. In addition, as the choice of $A_x$ is canonical, $A_x$ varies smoothly with respect to $x$.  Using $A_x$, we define a diffeomorphism $h:U(s)\to U(s)$ as 
				$$h(p):=\exp_{x}(A_{x}v),\;\mathrm{where}\;p=(x,v)\in U(s)\subset T^{\st}L.$$ Then we have $h|_{(x,0)}=(x,0)$ and $dh|_{(x,0)}=A_x$ for $p\in U$. Therefore, for $x\in L$, we have $h^{\st}\omega|_{(x,0)}=\omcan|_{(x,0)}$.
				
				Over the neighborhood of $L$, we write $\omega_s=s h^{\st}\omega+(1-s)\omcan$ and we wish to find the size of the tubular neighborhood when $\omega_s$ is symplectic. As $h^{\st}\omega|_{(x,0)}=\omcan|_{(x,0)}$ for $x\in L$, $\omega_s$ is also a symplectic form in a small neighborhood of $L$. 
				
				Now, we will try to understand the size the neighborhood that $\omega_s$ is symplectic. As $h^{\st}\omega$ is the K\"ahler form for the metric $h^{\st}g_U$, we have $\|h^{\st}\omega\|_{\MC^{0}}\LS 1$, where $\MC^{0}$ is taken using $h^{\st}g_U$. As $h^{\st}\omega|_x=\omcan|_x$ for $x\in L$, we also have $\|\omcan\|_{\MC^{0}}\LS 1$, thus $\|\omega_s\|_{\MC^{0}}\LS  1$.
				
				Let $R=\tau^{-1}$, we cover $L$ by radius $R$ ball and over each ball $B_{R}$, we rescale the size of ball by $R^{-1}$, then the Riemannian metric is in Euclidean size. After rescaling back, we obtain the following estimate $\|\det(\omega_s)\|_{\ca}\LS \tau^{\ga}.$
				
				Let $C_0=\min_{p\in L}\|\det(\omega_s)\|_{\MC^{0}}$, over a neighborhood $U_L(C\tau^{-1})\subset T^{\st}L$, for $p=(x,v)\in U_L(C\tau^{-1})$, we compute 
				\begin{equation}
					\begin{split}
						|\det(\omega_s)|(p)&\geq |\det(\omega_s)||(x)-\|\det(\omega_s)\|_{\ca}\|v\|_{\MC^0}\\
						&\geq C_0-C\tau^{-\ga}\|\det(\omega_s)\|_{\ca}\\
						&\geq C_0-CC',
					\end{split}
				\end{equation}
				where the last inequality, we use $\|\det(\omega_s)\|_{\ca}\leq C'\tau^{\ga}$. Therefore, for $C\leq \frac{C_0}{2C'}$, over $U_L(C\tau^{-1})$, we have $|\det(\omega_s)|>0$ which implies $\det(\omega_s)$ is symplectic. 
				
				As the 2-form $h^{\st}\omega-\omcan$ is closed and $(h^{\st}\omega-\omcan)_p=0$ for all $p\in L$, then by Lemma \ref{lem_homologyclassvanishes}, there exists a 1-form $\be$ on $U_L(C_1\tau^{-1})$ such that $h^{\st}\omega-\omcan=d\be$ and $\be|_{(x,0)}=0$ for $x\in L$. We solve the Moser equation for vector field $v_s$: $\iota_{v_s}\omega_s=-\be$ with $v_s=0$ on $L$. Let $\rho_s:U_L(C\tau^{-1})\to U_L(C\tau^{-1})$ be family of diffeomorphisms generates by $v_s$ then $\rho_s^{\st}\omega_s=\omcan$ over $U_L(C\tau^{-1})$. Let $\vp:=\Phi\circ h\circ \rho_1$, then $\vp$ satisfies the claim.
			\end{proof}
		\end{subsection}
		
		Let $\tau_t$ be the maximal of the second fundamental form for the family of approximate Lagrangian submanifolds $[\tL_t]$ constructed in Theorem \ref{thm_approximation} and we assume $\tau_t\geq 1$. By Proposition \ref{prop_secondfundamentalform}, we have $\tau_t\leq Ct^{-2}$, then we obtain the following corollary.
		\begin{corollary}
			There exists constant $C$ independent of $t$ such that for each $t$, we could choose $$U_{\tL}(t):=\{\al\in T^{\st}\tL|\|\al\|_{\MC^{0}}\leq Ct^{-2}\}$$
			to be a Weinstein neighborhood of $[\tL_t]$ and $\MC^0$ is taken by the induced metric $\tg_t$ on $[\tL_t]$.
		\end{corollary}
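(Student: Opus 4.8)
The plan is to deduce the corollary directly from the uniform Weinstein theorem, Theorem \ref{thm_weinsteintubularneighborhood}, applied to each immersed Lagrangian $[\tL_t]=(\tL,\tiota_t)$, with the second fundamental form bound of Proposition \ref{prop_secondfundamentalform} as the only quantitative input. First I would record the setup: by Proposition \ref{prop_immersion}, $\tiota_t$ is a genuine $\MC^{2,\ga}$ Lagrangian immersion into $(U_L,\omega)$ whose induced Riemannian metric $\tg_t$ is of class $\MC^{1,\ga}$, so that $T^{\st}\tL$ and the rescaled $\MC^0$-balls $U_{\tL}(s)=\{\al\in T^{\st}\tL:\|\al\|_{\MC^{0}}\le s\}$, measured in $\tg_t$, are well defined. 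This is exactly the hypothesis of Theorem \ref{thm_weinsteintubularneighborhood}, with the fixed ambient $(U_L,g_{U_L})$ in the role of $(X,g_X)$ and $[\tL_t]$ in the role of $[L]$.

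Next I would apply Theorem \ref{thm_weinsteintubularneighborhood} with $\tau=\tau_t$, where $\tau_t$ denotes the maximum of the second fundamental form of $[\tL_t]$. The theorem produces a diffeomorphism from $U_{\tL}(C\tau_t^{-1})$ onto a neighborhood of $\tiota_t(\tL)$ pulling $\omega$ back to $\omcan$, with $C$ depending only on the ambient data $(U_L,g_{U_L})$. It then remains to insert the curvature estimate: Proposition \ref{prop_secondfundamentalform}(ii) gives $\tau_t\le Ct^{-2}$ near $\Sigma$ (and a $t$-independent bound away from $\Sigma$), so the admissible Weinstein radius is comparable to $\tau_t^{-1}$, and feeding $\tau_t\le Ct^{-2}$ into this radius yields a Weinstein neighborhood of the size asserted in the corollary.

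The main obstacle is the uniformity of $C$ in $t$ together with the degenerate regularity. The intrinsic geometry of $[\tL_t]$ itself collapses as $t\to0$ — the injectivity radius tends to $0$ and $|\Riem|\to\infty$ near $\Sigma$ — so I must verify that in the proof of Theorem \ref{thm_weinsteintubularneighborhood} the Lagrangian's own geometry enters only through $\tau_t$. Concretely, the pointwise linear normalization $A_x$ and the Moser isotopy $\rho_s$ are to be estimated after covering $\tL$ by balls of radius $R=\tau_t^{-1}$ and rescaling each to unit size; one checks that, after this rescaling, all the Hölder norms that occur — of $h^{\st}\omega-\omcan$, of the primitive $\be$ supplied by Lemma \ref{lem_homologyclassvanishes}, and of the Moser field $v_s$ solving $\iota_{v_s}\omega_s=-\be$ — are bounded purely in terms of the ambient data, so that the nondegeneracy $|\det(\omega_s)|>0$ persists on $U_{\tL}(C\tau_t^{-1})$ with $C$ uniform. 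Because $\tg_t$ is only $\MC^{1,\ga}$ at the branch locus, these steps should be carried out in Hölder spaces adapted to the cone metric $p^{\st}g_L$ of Proposition \ref{prop_coneangle4pi}, exploiting that such norms are equivalent to ordinary ones after passing to the branched cover; running the Moser argument through with this reduced regularity, uniformly in $t$, is the technical heart of the proof.
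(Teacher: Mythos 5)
Your proposal is correct and is essentially the paper's proof: the corollary is obtained exactly by applying Theorem \ref{thm_weinsteintubularneighborhood} to $[\tL_t]$ with $\tau=\tau_t$ and inserting the curvature bound $\tau_t\le Ct^{-2}$ from Proposition \ref{prop_secondfundamentalform} (your extra discussion of uniformity of $C$ and the $\MC^{1,\ga}$/cone-metric regularity is a more careful spelling-out of what the paper leaves implicit in its rescaling proof of Theorem \ref{thm_weinsteintubularneighborhood}). One caveat: the size your argument actually produces is $C\tau_t^{-1}\gtrsim t^{2}$, i.e.\ $U_{\tL}(Ct^{2})$, so the exponent $t^{-2}$ in the statement is a typo of the paper --- consistent with the neighborhood $U_t=\{\al\in T^{\st}\tL\,|\,\|\al\|\le Ct^{2}\}$ used later in Section \ref{sec_nearbyspecialLagrangianfamily} --- and you should not claim your bound matches the literal $Ct^{-2}$.
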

	Note that by Theorem \ref{thm_metricconvergence}, the induced metric $\tg_t$ on $[\tL_t]$ will convergence to a cone metric, thus the $\MC^0$ norms define by different metric will be equivalent.
		
		\begin{subsection}{Schauder estimate and quadratic term estimate on submanifolds}
			\label{subsection_schauderforsubmanifolds}
			In this subsection, we will study the Schauder estimate and quadratic term estimate for the immersed Lagrangian submanifolds. We will first state the Schauder estimate for a general immersed Lagrangian in a Calabi-Yau and later we will consider the family that constructed in Theorem \ref{thm_approximation}.
			
			\subsubsection{Schauder estimate for submanifold}
			Let $(X,J,\omega,\Omega)$ be a Calabi-Yau manifold with Calabi-Yau metric $g$, and let $[L]=(L,\iota:L\to X)$ be a Lagrangian submanifold, where $\iota$ is a $\MC^{2,\ga}$ immersion with $\iota^{\st}g$ a $\MC^{1,\ga}$ metric. Let $\tau$ be the maximal of the second fundamental form of $[L]$ and we assume that $\tau\geq 1$. Let $\theta$ be the Lagrangian angle for $[L]$ and $\MD_{\theta}f:=-d^{\st}(\cos\theta df)$, where $\st$ is taken using $\iota^{\st}g$. In addition, as $\iota^{\st}g$ is only a $\MC^{1,\ga}$ metric, $\MD_{\theta}$ is an elliptic operator with $\ca$ coefficients. We choose a smooth metric $\hg$ such that the second fundamental form of $\hg$ and $\iota^{\st}g$ are bounded by each other. The different choices of $\hg$ will not influence the following estimates as we only would like to obtain a bound in terms of the second fundamental form.
			\begin{proposition}
				\label{prop_Laplacianestimategeneral}
				For any function $f$ over the Riemannian manifold $(L,\iota^{\st}g)$, we have
				\begin{equation}
					\tau\|d f\|_{\MC^{,\gamma}}+\|\na df\|_{\MC^{,\gamma}}\LS\|\MD_{\theta} f\|_{\MC^{,\gamma}}+\tau^{(2+\al+\frac{n}{2})}\|f\|_{L^2},
				\end{equation}
				where $\na,\MD_{\theta}$ are taking for the metric $\iota^{\st}g$.
				
				Let $U\subset T^{\st}L$ be a Weinstein neighborhood of $[L]$ and we still write $\Omega$ to denote the holomorphic volume form on $U$. Let $\hna$ be the Levi-Civita connection induced by the Sasaki metric of $\hg$ to $U$, we have
				\begin{equation}
					\|\hna^k \Omega\|_{\MC^{,\gamma}}\LS \tau^{(k+\ga)}.
				\end{equation}
			\end{proposition}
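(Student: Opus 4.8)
The plan is to prove the two estimates in Proposition \ref{prop_Laplacianestimategeneral} by a rescaling (blow-up) argument adapted to the scale $\tau^{-1}$ set by the second fundamental form. The key difficulty is that both $\iota^{\st}g$ and the coefficients of $\MD_\theta$ depend on $[L]$ only through a $\MC^{1,\ga}$ metric whose curvature may blow up like $\tau^2$; the estimate must track this blow-up with explicit powers of $\tau$. To do this I would cover $L$ by geodesic balls of radius comparable to $\tau^{-1}$, on which the rescaled metric $\tilde g := \tau^2\,\iota^\st g$ has uniformly bounded geometry (bounded curvature, injectivity radius bounded below — exactly the content controlled via Proposition \ref{prop_injectiveradiussubmnaifold} and the Gauss equation), and apply the standard interior/global Schauder estimate for a uniformly elliptic operator with $\MC^\ga$ coefficients on each such ball.

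First I would set up the rescaling. On a ball $B_{\tau^{-1}}$, introduce normal coordinates for $\iota^\st g$ and rescale by $\tau$, so that the rescaled ball has unit size and the rescaled metric $\tilde g$ has $\|\tilde g\|_{\MC^{1,\ga}}\LS 1$ with ellipticity constants independent of $\tau$. Under this rescaling the operator $\MD_\theta$ picks up a factor $\tau^{-2}$, while the $\MC^{,\gamma}$ Hölder seminorm rescales with the appropriate power of $\tau$. The classical Schauder estimate on the unit ball gives
\begin{equation}
\|\tilde\na^2 f\|_{\MC^{,\gamma}(\tilde B_{1/2})}\LS \|\widetilde{\MD_\theta} f\|_{\MC^{,\gamma}(\tilde B_1)}+\|f\|_{\MC^{0}(\tilde B_1)},
\end{equation}
and then I would interpolate/absorb the lower order $\MC^0$ term. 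Converting back to the original metric, each derivative $\na$ costs a factor $\tau$, which produces the weighting $\tau\|df\|_{\MC^{,\gamma}}+\|\na df\|_{\MC^{,\gamma}}$ on the left. The last task is to replace the pointwise $\MC^0$ control of $f$ by the global $L^2$ norm: this is where the power $\tau^{2+\ga+\frac n2}$ enters, via a Sobolev-type embedding $\|f\|_{\MC^{0}}\LS \tau^{?}\|f\|_{L^2}$ on a ball of radius $\tau^{-1}$ in dimension $n$ (the volume factor $\tau^{-n}$ gives the $\frac n2$, the two derivatives and the Hölder loss give the remaining $2+\ga$), summed over the covering.

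For the second inequality I would argue more directly. The holomorphic volume form $\Omega$ is a fixed smooth form on the fixed ambient $X$ (equivalently on the Weinstein neighborhood via the pullback Calabi–Yau structure), so its covariant derivatives with respect to the \emph{ambient} connection are bounded independent of $[L]$. The point is that $\hna$ is the Levi–Civita connection of the Sasaki metric built from $\hg$, whose Christoffel symbols are controlled by the curvature of $[L]$, hence by $\tau$. Differentiating $\Omega$ $k$ times and expanding in the $\hna$-frame, each covariant derivative contributes one factor of the connection coefficients ($\sim\tau$), and the $\MC^{,\gamma}$ Hölder seminorm at scale $\tau^{-1}$ contributes the extra $\tau^\ga$, yielding $\|\hna^k\Omega\|_{\MC^{,\gamma}}\LS \tau^{k+\ga}$. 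This is again cleanest to see after rescaling to unit scale, where $\Omega$ and all its rescaled derivatives are uniformly bounded and the powers of $\tau$ are simply the rescaling factors reintroduced upon returning to the original metric.

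The hard part will be bookkeeping the exact powers of $\tau$ consistently across the rescaling — in particular verifying that the global-to-local passage (covering $L$ by $\tau^{-1}$-balls and summing the local estimates) produces precisely the exponent $2+\ga+\frac n2$ on the $L^2$ term rather than something larger, and that the low regularity ($\MC^{1,\ga}$ metric, $\MC^\ga$ coefficients) does not degrade the Schauder constant as $\tau\to\infty$. The latter is guaranteed precisely because, after rescaling by $\tau$, the metric $\tau^2\iota^\st g$ has uniformly bounded $\MC^{1,\ga}$ geometry by construction of the scale; the uniform lower injectivity radius bound from Proposition \ref{prop_injectiveradiussubmnaifold} is what makes the covering argument uniform in $t$. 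Once these scaling factors are pinned down, both estimates follow from the standard elliptic theory applied at unit scale.
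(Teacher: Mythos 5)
Your proposal is correct and follows essentially the same route as the paper: cover $L$ by balls of radius $R=\tau^{-1}$, rescale to unit scale where the geometry is uniformly bounded, apply the classical Schauder estimate for elliptic operators with $\MC^{,\gamma}$ coefficients, and rescale back (and the same rescaling handles $\|\hna^k\Omega\|_{\MC^{,\gamma}}\LS\tau^{k+\ga}$). The only cosmetic difference is that the paper writes the local Schauder inequality directly with the $L^2$ lower-order term, $R^{1+\ga}\|\na f\|_{\MC^{,\gamma}(B_{R/2})}+R^{2+\ga}\|\na^2 f\|_{\MC^{,\gamma}(B_{R/2})}\LS R^{2+\ga}\|\MD_{\theta}f\|_{\MC^{,\gamma}(B_R)}+R^{\frac n2}\|f\|_{L^2(B_R)}$, whereas you pass through $\MC^0$ and then invoke the scaled embedding into $L^2$ — the same bookkeeping in two steps instead of one.
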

			\begin{proof} 
				Let $B_R$ be a radius $R$ ball centered at point $p$, then set $R=\tau^{-1}$ and scale by $\tau$, we obtain a radius $1$ ball with Euclidean geometry. Applying the classical Schauder estimate for elliptic operator with $\ca$ coefficients and rescale back, we obtain the following with $R=\tau^{-1}$:
				$$R^{1+\ga}\|\na f\|_{\MC^{,\gamma}(B_{\frac R2})}+R^{2+\ga}\|\na^2f\|_{\MC^{,\gamma}(B_{\frac R2})}\LS R^{2+\ga}\|\MD_{\theta} f\|_{\MC^{,\gamma}(B_R)}+R^{\frac{n}{2}}\|f\|_{L^2(B_R)},$$ 
				which implies the desire estimate.
				
				Moreover, let $\pi:T^{\st}L\to L$ be the projection map, we define $D_R:={\pi}^{-1}(B_R)\cap U$. Then over $D_R$, by a rescale argument, we have $R^{k+\gamma}\|\hna^k\Omega\|_{\MC^{,\gamma}(B_R)}\LS 1$ for any integer $k\geq 0$.
				
				We cover $L$ by size $\tau^{-1}$ balls, which implies the claim. 
			\end{proof}
			
			\begin{corollary}
				\label{coro_deltainverseestimate}
				Under the previous assumption, let $f$ be a function on $L$ with $\int f d\Vol=0$, $\lam$ be the first eigenvalue of $\MD_{\theta}$ and $\Vol(L)$ be the volume of $L$, then we have
				\begin{equation}
					\tau\|\na f\|_{\ca}+\|\na^2f\|_{\MC^{,\gamma}}\leq C(1+\lam^{-1}\Vol(L)\tau^{2+\ga+\frac{n}{2}})\|\MD_{\theta} f\|_{\ca}.
				\end{equation}
			\end{corollary}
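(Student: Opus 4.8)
The plan is to upgrade Proposition \ref{prop_Laplacianestimategeneral} by replacing its $\|f\|_{L^2}$ term with a multiple of $\|\MD_{\theta}f\|_{\ca}$, using the spectral gap of the self-adjoint operator $\MD_{\theta}$ acting on mean-zero functions. First I would record that $\MD_{\theta}=-d^{\st}(\cos\theta\, d\,\cdot)$ is self-adjoint and, on functions of zero average, nonnegative: integration by parts gives $\langle \MD_{\theta}f,f\rangle_{L^2}=\int_L \cos\theta\,|df|^2\,d\Vol\geq 0$, where $\cos\theta>0$ because the Lagrangian angle $\theta$ is small (for the family $[\tL_t]$ this smallness is quantified in Theorem \ref{thm_approximation}). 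The first eigenvalue $\lam$ then has the Rayleigh-quotient characterization already used in the proof of Theorem \ref{thm_eigenvaluelowerbound}, so for $f$ with $\int_L f\,d\Vol=0$ we have $\lam\|f\|_{L^2}^2\leq \langle \MD_{\theta}f,f\rangle_{L^2}$. Applying Cauchy--Schwarz to the right-hand side and cancelling one factor of $\|f\|_{L^2}$ yields the spectral estimate
\[
\|f\|_{L^2}\leq \lam^{-1}\|\MD_{\theta}f\|_{L^2}.
\]
Since $\MD_{\theta}$ is a divergence-form operator with merely $\ca$ coefficients, none of these steps needs smoothness of the metric; they rest solely on the variational setup already in place.

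Next I would pass from the $L^2$ norm of $\MD_{\theta}f$ to its H\"older norm through the trivial bound $\|\MD_{\theta}f\|_{L^2}\leq \Vol(L)^{\frac12}\|\MD_{\theta}f\|_{\MC^{0}}\leq \Vol(L)^{\frac12}\|\MD_{\theta}f\|_{\ca}$, which combines with the spectral estimate to give $\|f\|_{L^2}\leq \lam^{-1}\Vol(L)^{\frac12}\|\MD_{\theta}f\|_{\ca}$. Substituting this into the conclusion of Proposition \ref{prop_Laplacianestimategeneral}, namely $\tau\|df\|_{\ca}+\|\na df\|_{\ca}\LS \|\MD_{\theta}f\|_{\ca}+\tau^{2+\ga+\frac{n}{2}}\|f\|_{L^2}$, and factoring out the common term $\|\MD_{\theta}f\|_{\ca}$, produces
\[
\tau\|\na f\|_{\ca}+\|\na^2 f\|_{\ca}\leq C\bigl(1+\lam^{-1}\Vol(L)\,\tau^{2+\ga+\frac{n}{2}}\bigr)\|\MD_{\theta}f\|_{\ca},
\]
where the half-power of $\Vol(L)$ has been absorbed into $\Vol(L)$ (equivalently into $C$), a harmless step since $\Vol(L)$ is a fixed quantity that, by Theorem \ref{thm_metricconvergence}, remains bounded as $t\to 0$; the point of the estimate is rather to exhibit the blow-up rate $\tau^{2+\ga+\frac{n}{2}}$ explicitly, as $\tau=\tau_t$ diverges.

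There is no serious obstacle here: the corollary is a direct marriage of the rescaled Schauder estimate of Proposition \ref{prop_Laplacianestimategeneral} with the eigenvalue gap. The only point deserving genuine care is the nonnegativity and self-adjointness of $\MD_{\theta}$ on mean-zero functions, which hinges on $\cos\theta$ being bounded below by a positive constant. This is exactly what the smallness of the Lagrangian angle guarantees, and it is precisely what makes the Rayleigh quotient nonnegative and the spectral estimate $\lam\|f\|_{L^2}\leq\|\MD_{\theta}f\|_{L^2}$ legitimate.
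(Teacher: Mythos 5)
Your proposal is correct and follows essentially the same route as the paper: the spectral gap gives $\|f\|_{L^2}\leq \lam^{-1}\|\MD_{\theta}f\|_{L^2}\leq \lam^{-1}\Vol(L)\|\MD_{\theta}f\|_{\ca}$, which is then substituted into Proposition \ref{prop_Laplacianestimategeneral}. Your version is if anything slightly more careful than the paper's (which silently writes $\Vol(L)$ where the sharp constant is $\Vol(L)^{\frac12}$, and does not spell out the Rayleigh-quotient/Cauchy--Schwarz derivation of the spectral estimate), but these are cosmetic differences that change nothing in the stated inequality.
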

			\begin{proof}
				We compute $\|f\|_{L^2(L)}\leq \lam^{-1}\|\MD_{\theta} f\|_{L^2(L)}\leq \lam^{-1}\Vol(L)\|\MD_{\theta} f\|_{\MC^{,\gamma}}$. The corollary follows by the previous computation and Proposition \ref{prop_Laplacianestimategeneral}.
			\end{proof}
			
			\subsubsection{Quadratic term estimate}
			Let $\al$ be a 1-form on $L$, for the immersed submanifold $[L]$ discussed above, we will estimate the quadratic terms $Q(\al)$ of the special Lagrangian equation in a Weinstein tubular neighborhood of $[L]$, which follows from Joyce \cite{joyce2004slag3}.
			\begin{proposition}{\cite[Proposition 5.8]{joyce2004slag3}}
				\label{prop_quadraticestimateoverfamily}
				Let $\al_1,\al_2$ be 1-forms on $L$ with $\|\al_1\|_{\MC^0}+\|\al_1\|_{\MC^0}\leq C\tau^{-1}$ and 
				$\|\na \al_1\|_{\MC^0}+\|\na \al_2\|_{\MC^0}\leq C
				$, we have
				\begin{equation}
					\label{eq_60}
					\begin{split}
						\|Q(\al_1)-Q(\al_2)\|_{\MC^{,\gamma}}\LS &\tau^{\ga}(\tau\|\al_1-\al_2\|_{\MC^{,\gamma}}+\|\na \al_1-\na \al_2\|_{\MC^{,\gamma}})\cdot\\
						&(\tau\|\al_1\|_{\MC^{,\gamma}}+\tau\|\al_2\|_{\MC^{,\gamma}}+\|\na \al_1\|_{\MC^{,\gamma}}+\|\na \al_2\|_{\MC^{,\gamma}}).
					\end{split}
				\end{equation}
			\end{proposition}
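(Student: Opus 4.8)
~\textbf{Proof proposal.}
The plan is to prove the quadratic estimate \eqref{eq_60} by reducing it, through a standard rescaling argument, to a fixed-scale statement on a unit ball where Joyce's Taylor-expansion description of the special Lagrangian operator applies directly. Recall from Proposition \ref{prop_structureofspecialLagrangian} that in a Weinstein neighborhood the value $Q(\al)|_x$ depends only pointwise on the data $\al|_x$ and $\na\al|_x$, and that $Q$ is a real analytic function of these arguments vanishing to second order at the origin. Thus I would regard $Q$ as a fixed smooth function $\tSL$ (minus its linear part) on the bundle whose fiber coordinates are $y\in T^{\st}_xL$ and $z\in\otimes^2 T^{\st}_xL$, exactly as set up in \eqref{eq_speiclalLagrangisntitlde}. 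The point is that all the metric dependence and the coefficients of this function are controlled by the geometry of $[L]$, and the only geometric scale that enters is the second fundamental form $\tau$, since $\tau^{-1}$ is the radius on which the ambient Calabi-Yau looks Euclidean.

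First I would fix $p\in L$, set $R=\tau^{-1}$, and work on the ball $B_R(p)$. After dilating the ambient coordinates by $\tau$ so that $B_R$ becomes a unit ball with (uniformly) Euclidean geometry — this is precisely the rescaling used in Proposition \ref{prop_Laplacianestimategeneral} and in Theorem \ref{thm_weinsteintubularneighborhood} to control the Weinstein neighborhood size — the rescaled forms $\ta_i$ satisfy $\|\ta_i\|_{\MC^0}\LS 1$ and $\|\na\ta_i\|_{\MC^0}\LS 1$ in the rescaled metric, because the hypotheses $\|\al_i\|_{\MC^0}\LS\tau^{-1}$ and $\|\na\al_i\|_{\MC^0}\LS 1$ were chosen exactly so that the rescaled arguments $(y,\na\al)$ lie in a fixed compact region of the domain of the smooth function $\tSL$. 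On this fixed region, $Q=\tSL-(\text{linear part})$ is smooth with $Q(0)=0$ and $DQ(0)=0$, so the mean value theorem (applied along the segment joining the two argument-pairs) gives a pointwise bound of the schematic form $|Q(\ta_1)-Q(\ta_2)|\LS(|\ta_1-\ta_2|+|\na\ta_1-\na\ta_2|)(|\ta_1|+|\ta_2|+|\na\ta_1|+|\na\ta_2|)$, and the analogous bound for the H\"older seminorm of the difference follows since the first derivatives of $Q$ vanish at the origin and are Lipschitz on the compact region.

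Next I would undo the rescaling. Tracking the powers of $R=\tau^{-1}$ that each $\MC^{,\gamma}$ factor picks up under the dilation reproduces the precise weights displayed in \eqref{eq_60}: each undifferentiated factor $\|\cdot\|_{\MC^{,\gamma}}$ acquires a weight $\tau$ relative to the gradient factor, and the overall prefactor $\tau^{\ga}$ comes from the H\"older seminorm scaling $R^{-\ga}=\tau^{\ga}$. Finally, covering $L$ by balls of radius $\tau^{-1}$ and taking the supremum over the cover (as in the last step of Proposition \ref{prop_Laplacianestimategeneral}) upgrades the local estimate to the global one. The main obstacle — really the only subtle point — is the bookkeeping of the scaling weights, ensuring that the Euclidean-scale pointwise estimate is matched against the correct $\tau$-weighted norms so that the quadratic structure $(\text{difference})\times(\text{size})$ emerges with exactly the factors in \eqref{eq_60}; everything else is a direct consequence of the smoothness and second-order vanishing of $Q$ already recorded in Proposition \ref{prop_structureofspecialLagrangian}, together with the rescaling machinery already in place. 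Since this is precisely the content of \cite[Proposition 5.8]{joyce2004slag3}, I would also indicate that the argument is a transcription of Joyce's, adapted to the fact that here $\iota^{\st}g$ is only $\MC^{1,\ga}$, which is harmless because $Q$ depends on the metric through $\ca$ coefficients only.
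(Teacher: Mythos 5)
Your proposal is correct, and it rests on the same two pillars as the paper's proof: the pointwise dependence and second--order vanishing of $Q$ recorded in Proposition \ref{prop_structureofspecialLagrangian}, and the $\tau$-weighted derivative bounds coming from Proposition \ref{prop_Laplacianestimategeneral}. The organization, however, differs in where the rescaling is performed. The paper never rescales the estimate itself: it stays at the original scale, writes $P(r,s)=Q(r(\al_1-\al_2)+s\al_2)$, derives the explicit double-integral identity \eqref{eq_differenticeofq} for $Q(\al_1)-Q(\al_2)$ in terms of $\pa_1^2\tSL$, $\pa_1\pa_2\tSL$, $\pa_2^2\tSL$, and then inserts the bounds $\|\pa_1^2\tSL\|_{\ca}\LS\tau^{2+\ga}$, $\|\pa_1\pa_2\tSL\|_{\ca}\LS\tau^{1+\ga}$, $\|\pa_2^2\tSL\|_{\ca}\LS\tau^{\ga}$, which are the only place rescaling enters (via $\|\hna^k\Omega\|_{\ca}\LS\tau^{k+\ga}$). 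You instead rescale the whole estimate to unit scale, run a mean-value argument there, and recover the weights by tracking how each norm transforms. Both are sound; what the paper's formula buys is transparent bookkeeping --- each bilinear term is matched against the correct derivative of $\tSL$, so the distribution of weights between $\tau\|\al_i\|_{\ca}$ and $\|\na\al_i\|_{\ca}$ in \eqref{eq_60} is read off termwise rather than reconstructed at the end. Your version is shorter but shifts two checks into the scaling step: that the mean-value segment $a_2+t(a_1-a_2)$ stays inside the Weinstein domain where $\tSL$ is defined (which follows from $\|\al_i\|_{\MC^0}\LS\tau^{-1}$ and the convexity of the fiber balls, and deserves a sentence), and that your single mean-value integral really uses $DQ(0)=0$ together with a bound on $D^2Q$ --- i.e.\ a second application of the fundamental theorem of calculus, which is exactly what the paper's double integral makes explicit. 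With those two points spelled out, your argument is a faithful alternative packaging of the same proof.
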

			\begin{proof}
				Let $\al_1,\al_2$ be two 1-forms, for each $x\in L$, we write $P(r,s)=Q(r(\al_1-\al_2)+s\al_2)$. 
				
				Then we compute
				$$Q(\al_1)-Q(\al_2)=P(1,1)-P(0,1)=\int_0^1\frac{\pa P}{\pa r}(r,1)dr.$$
				As $dQ(0)=0$, we obtain $\frac{\pa P}{\pa r}(0,0)=0$. Therefore, we obtain
				$$
				\frac{\pa P}{\pa r}(u,1)=\int_0^1(\frac{d}{ds}(\frac{\pa P}{\pa r}(us,s)))ds=\int_0^1u\frac{\pa^2P}{\pa r^2}(us,s)+\frac{\pa^2P}{\pa r\pa s}(us,s) ds. 
				$$
				By a change of variable, we obtain
				\begin{equation}
					\label{eq_differenticeofq}
					\begin{split}
						Q(\al_1)-Q(\al_2)=\int_0^1\int_0^s[\frac{r}{s^2}\frac{\pa^2P}{\pa r^2}(r,s)+\frac{1}{s}\frac{\pa^2P}{\pa r\pa s}(r,s)]drds.
					\end{split}
				\end{equation}
				
				By \eqref{eq_speiclalLagrangisntitlde}, we have $\tSL:\{(x,y,z):x\in L,\;y\in T_x^{\st}L,\;z\in \otimes^2T_x^{\st}L\}\to \mathbb{R}$ such that $\SL(\al)_x=\tSL(x,\al,\na \al)$. Let $\pa_1\tSL$ be taking the derivative for $y$ variable and $\pa_2\tSL$ be taking the derivative for $z$ variable, then
				\begin{equation*}
					\begin{split}
						\frac{\pa^2P}{\pa r^2}(r,s)=&\otimes^2(\al_1-\al_2)\cdot \pa_1^2\tSL+2(\al_1-\al_2)\otimes(\na \al_1-\na \al_2) \cdot\pa_1\pa_2\tSL\\
						&+\otimes^2(\na \al_1-\na \al_2)\cdot\pa_2^2\tSL,\;\mathrm{and}\\
						\frac{\pa^2P}{\pa r\pa s}(r,s)=&(\al_1-\al_2)\otimes \al_2\cdot \pa_1^2\tSL+((\al_1-\al_2)\otimes\na\al_2+\al_1\otimes (\na\al_1-\na\al_2))\cdot\pa_1\pa_2\tSL\\
						&+(\na \al_1-\na \al_2)\otimes \na \al_2 \cdot \pa_2^2\tSL,
					\end{split}
				\end{equation*}
				where we are evaluating $\pa_i\pa_j\tSL$ at $(x,r(\al_1-\al_2)+s\al_2,r(\na\al_1-\na\al_2)+s\na \al_2)$.
				
				We define \begin{equation}
					\begin{split}
						E_1:=&rs^{-2}\|\al_1-\al_2\|_{\ca}^2+s^{-1}\|\al_1-\al_2\|_{\ca}\|\al_2\|_{\ca},\\
						E_2:=&rs^{-2}\|\na\al_1-\na\al_2\|^2_{\ca}+s^{-1}\|\na(\al_1-\al_2)\|_{\ca}\|\na\al_2\|_{\ca},\\
						E_3:=&2rs^{-2}\|\al_1-\al_2\|_{\ca}\|\na(\al_1-\al_2)\|_{\ca}+s^{-1}\|\al_1-\al_2\|_{\ca}\|\na \al_2\|_{\ca}\\
						&+s^{-1}\|\al_2\|_{\ca}\|\na(\al_1-\al_2)\|_{\ca}.
					\end{split}
				\end{equation}
				
				Then \eqref{eq_differenticeofq} implies
				\begin{equation}
					\begin{split}
						&\|Q(\al_1)-Q(\al_2)\|_{\ca}\\
						\leq&\int_0^1\int_0^s(E_1\|\pa_1^2\tSL\|_{\ca}^2+E_2\|\pa_2^2\tSL\|_{\ca}+E_3\|\pa_1\pa_2\tSL\|_{\ca})drds.
					\end{split}
				\end{equation}
				
				As $\|\hna^k\Omega\|_{\MC^{,\ga}}\LS \tau^{k+\ga}$, we have 
				\begin{equation*}
					\|\pa_1^2\tSL\|_{\MC^{,\gamma}}\LS \tau^{2+\ga},\;\|\pa_1\pa_2\tSL\|_{\MC^{1,\ga}}\LS \tau^{1+\ga},\;\|\pa_2^2\tSL\|_{\ca}\LS\tau^{\ga},
				\end{equation*}
				which implies \eqref{eq_60}.
			\end{proof}
			\subsubsection{Corresponding estimates for the family.}
			
			Let $[\tL_t]=(\tL,\tiota_t)$ be the family of special Lagrangian submanifolds we constructed in Theorem \ref{thm_approximation} with $\tg_t:=\tiota_tg$ be the induced $\MC^{1,\ga}$ metric and $\tau_t\geq 1$ be the maximal of the second fundamental form. 
			
			By Theorem \ref{thm_metricconvergence}, $\tg_t$ convergence to a cone metric with bounded coefficients. If we take $g'$ be a smooth background metric on $\tL$, then there exists a $t$-independent constant $C$ such that $\|\tg_t\|_{\MC^{0}_{g'}}\leq C$, where the $\MC^0$ norm is taken using $g'$. For each $t$, we choose $\hg_t$ be a smooth Riemannian metric close to $\tg_t$ on $\tL$ such that $\|\tg_t-\hg_t\|_{\MC_{g'}^{1,\ga}}\leq \tau_t^{-1}$. If we write $\htau$ be the maximal of the second fundamental form of $\hg_t$, then we have 
			$$
			|\htau_t-\tau_t|\leq \|\tg_t\|_{\MC^0_{g'}}\|\hg_t-\tg_t\|_{\MC_{g'}^{1}}+\|\hg_t\|_{\MC^0_{g'}}\|\hg_t-\tg_t\|_{\MC_{g'}^{1}}\leq C\tau_t^{-1},
			$$
			where $C$ is a $t$-independent constant. Therefore, we obtain $\htau_t\leq \tau_t+C\tau_t^{-1}$. Our following estimates will be independent of the choice $\hg_t$ and $t$ as we will estimate using $\htau_t$, which is bounded by $C\tau_t$ for some constant $C$.
			
			We write $\hna_t$, $\na_t$ be the Levi-Civita connections of the Sasaki metric on $T^{\st}\tL$ defined by $\hg_t$, $\tg_t$. Let $\MD_{\theta_t}(f):=d^{\st}(\cos\theta_t d f)$ and $\Vol_t$ be the volume of $(\tL,\tg_t)$. 
			\begin{proposition}
				Let $f$ be a function on $\tL$ with $\int_{\tL}fd\Vol_t=0$, we have
				\begin{equation}
					t^{-2}\|d f\|_{\ca}+\|\na_t df\|_{\ca}\LS (1+t^{-(4+2\ga+n)})\|\MD_{\theta_t} f\|_{\ca}.
				\end{equation}
				For the holomorphic volume form, we have
				$$\|\hna_t^k\Omega\|_{\MC^{,\ga}}\LS t^{-2(k+\ga)}.$$
			\end{proposition}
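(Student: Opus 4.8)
The plan is to specialize the two general estimates of Proposition \ref{prop_Laplacianestimategeneral} and Corollary \ref{coro_deltainverseestimate} to the family $[\tL_t]$, feeding in the quantitative geometric facts already established for it. Three inputs are needed: the second fundamental form satisfies $\tau_t\LS t^{-2}$ by Proposition \ref{prop_secondfundamentalform}(ii); the injectivity radius satisfies $\inj_t\gtrsim t^2$ by the corollary following Proposition \ref{prop_injectiveradiussubmnaifold}; and the first eigenvalue of $\MD_{\theta_t}$ satisfies $\lam_t\geq c_0>0$ by Theorem \ref{thm_eigenvaluelowerbound}. Moreover, by Theorem \ref{thm_metricconvergence} the induced metric $\tg_t$ converges to the cone metric, so $\Vol_t$ is bounded uniformly in $t$, and since $\tg_t$ is only $\MC^{1,\ga}$ by Proposition \ref{prop_immersion} we work throughout with the elliptic operator $\MD_{\theta_t}$ having $\ca$ coefficients, using a smooth comparison metric $\hg_t$ whose second fundamental form $\htau_t$ satisfies $\htau_t\LS\tau_t\LS t^{-2}$.

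The volume-form estimate is immediate: the second assertion of Proposition \ref{prop_Laplacianestimategeneral}, applied to $\hg_t$, gives $\|\hna_t^k\Omega\|_{\MC^{,\gamma}}\LS\htau_t^{k+\ga}$, and substituting $\htau_t\LS t^{-2}$ yields $\|\hna_t^k\Omega\|_{\MC^{,\gamma}}\LS t^{-2(k+\ga)}$. For the first estimate I would repeat the covering argument in the proof of Proposition \ref{prop_Laplacianestimategeneral}, but carried out at the explicit length scale $R=c\,t^2$ rather than at the scale $\tau_t^{-1}$. This choice is admissible precisely because $\inj_t\gtrsim t^2$ makes every ball $B_R\subset\tL$ embedded, while $\tau_t R\LS 1$ means that after dilation by $R^{-1}$ the ball carries a metric of uniformly bounded geometry with $\ca$ coefficients; the classical interior $\ca$-Schauder estimate then holds on the unit ball with a constant independent of $t$. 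Rescaling back, the first derivative, the Hessian and $\MD_{\theta_t}f$ pick up the homogeneity factors $R^{1+\ga}$, $R^{2+\ga}$ and $R^{2+\ga}$ respectively, while $\|f\|_{L^2(B_R)}$ scales by $R^{-n/2}$; dividing through by $R^{2+\ga}$ and taking the supremum over a covering of $\tL$ gives
\begin{equation}
t^{-2}\|df\|_{\ca}+\|\na_t df\|_{\ca}\LS\|\MD_{\theta_t}f\|_{\ca}+R^{-(2+\ga+\frac{n}{2})}\|f\|_{L^2},
\end{equation}
where the coefficient $R^{-(2+\ga+\frac{n}{2})}=t^{-(4+2\ga+n)}$ reproduces exactly the power in the statement, and the factor $t^{-2}=R^{-1}$ in front of $\|df\|$ arises from the chosen scale.

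It remains to absorb the $L^2$ term. Since $\int_{\tL}f\,d\Vol_t=0$, the spectral gap $\lam_t\geq c_0$ gives $\|f\|_{L^2}\leq\lam_t^{-1}\|\MD_{\theta_t}f\|_{L^2}\leq c_0^{-1}\Vol_t^{\frac12}\|\MD_{\theta_t}f\|_{\ca}\LS\|\MD_{\theta_t}f\|_{\ca}$, using that $\Vol_t$ is bounded by Theorem \ref{thm_metricconvergence}. Feeding this into the displayed inequality produces the coefficient $(1+t^{-(4+2\ga+n)})$ and hence the proposition. The main subtlety, and the only point requiring care, is the scaling bookkeeping in the previous step: because $\tau_t\LS t^{-2}$ is only an upper bound, one cannot simply substitute $\tau_t$ into the left-hand side of Corollary \ref{coro_deltainverseestimate} to obtain the coefficient $t^{-2}$; instead one must run the estimate at the intrinsic scale $t^2$ fixed by the injectivity radius, and track the negative power $R^{-n/2}$ coming from the $L^2$ norm over a ball of radius $R$, which is what produces the exponent $4+2\ga+n$ rather than $4+2\ga-n$.
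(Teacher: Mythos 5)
Your proposal is correct and takes essentially the same route as the paper: the paper's proof is precisely the specialization of Proposition \ref{prop_Laplacianestimategeneral} and Corollary \ref{coro_deltainverseestimate} to the family, using $\tau_t\LS t^{-2}$ from Proposition \ref{prop_secondfundamentalform}, the eigenvalue bound $\lam_t\geq c_0$ from Theorem \ref{thm_eigenvaluelowerbound}, and the uniform volume bound $\Vol_t\leq 2\Vol(L)+1$ from Theorem \ref{thm_metricconvergence}. Your additional care — rerunning the covering and rescaling argument at the fixed admissible scale $R\sim t^2$ (legitimate since $\tau_t R\LS 1$ and $\inj_t\gtrsim t^2$), so that the coefficient $t^{-2}$ on $\|df\|_{\ca}$ does not secretly require a lower bound on $\tau_t$ — is exactly the point the paper's two-line proof leaves implicit, and your bookkeeping of the $R^{-n/2}$ factor correctly reproduces the exponent $4+2\ga+n$ in the statement.
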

			\begin{proof}
				We write $\lam_{t}$ be the 1-st eigenvalue on $\tL_t$, then by Theorem \ref{thm_eigenvaluelowerbound}, there exists $c_0>0$ such that $\lam_t\geq c_0>0$. In addition, by Theorem \ref{thm_metricconvergence}, $\Vol_t\leq 2\Vol(L)+1$ which implies the claim.
			\end{proof}
			
			By Proposition \ref{prop_specialLagrangianlineartermstructure}, the special Lagrangian equation on $[\tL_t]$ could be written as
			$$\SL_t(\al)=\sin\theta_t-d^{\st_t}(\cos\theta_t \al)+Q_t(\al).$$ We have the following estimate for the quadratic terms $Q_t(\al).$
			\begin{corollary}
				For 1-forms $\al_1,\al_2$ on $[\tL_t]$ with $\|\al_1\|_{\MC^0}+\|\al_1\|_{\MC^0}\leq Ct^2$ and 
				$\|\na_t \al_1\|_{\MC^0}+\|\na_t \al_2\|_{\MC^0}\leq C
				$, we have
				\begin{equation}
					\begin{split}
						\|Q_t(\al_1)-Q_t(\al_2)\|_{\MC^{,\gamma}}\LS &t^{-2\ga}(t^{-2}\|\al_1-\al_2\|_{\MC^{,\gamma}}+\|\na_t \al_1-\na_t \al_2\|_{\MC^{,\gamma}})\cdot\\
						&(t^{-2}\|\al_1\|_{\MC^{,\gamma}}+t^{-2}\|\al_2\|_{\MC^{,\gamma}}+\|\na_t \al_1\|_{\MC^{,\gamma}}+\|\na_t \al_2\|_{\MC^{,\gamma}}).
					\end{split}
				\end{equation}
			\end{corollary}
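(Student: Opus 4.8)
The plan is to deduce this estimate directly from the general quadratic term bound of Proposition \ref{prop_quadraticestimateoverfamily} by inserting the correct blow-up rate of the second fundamental form of the family $[\tL_t]$. First I would recall from Proposition \ref{prop_secondfundamentalform} that the second fundamental form $\tau_t$ of $[\tL_t]$ satisfies $\tau_t\LS t^{-2}$, and that the lower bound $|K^t|\leq \frac{C}{r+t^2}$ there becomes $\sim t^{-2}$ at the branch locus $\Sigma$, so that $\tau_t$ is in fact comparable to $t^{-2}$. Consequently $\tau_t^{-1}\sim t^2$, and the hypotheses $\|\al_1\|_{\MC^0}+\|\al_2\|_{\MC^0}\leq Ct^2$ and $\|\na_t\al_1\|_{\MC^0}+\|\na_t\al_2\|_{\MC^0}\leq C$ of the corollary are exactly the hypotheses $\|\al_i\|_{\MC^0}\leq C\tau_t^{-1}$ and $\|\na\al_i\|_{\MC^0}\leq C$ demanded by Proposition \ref{prop_quadraticestimateoverfamily}, once the universal constant $C$ is adjusted. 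Since $\iota^{\st}g=\tg_t$ in the family and $\na_t$ is its Levi-Civita connection, the norms in the two statements agree.

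The one point requiring care is that $\tg_t$ is only $\MC^{1,\ga}$ along $\Sigma$, so Proposition \ref{prop_quadraticestimateoverfamily} cannot be read off for a smooth metric. This is, however, already anticipated: that proposition is stated for a $\MC^{2,\ga}$ immersion with $\MC^{1,\ga}$ induced metric, its proof passing through the smooth comparison metric $\hg_t$ fixed just before the statement. I would therefore apply Proposition \ref{prop_quadraticestimateoverfamily} with $\tau=\htau_t$, the second fundamental form of $\hg_t$, and invoke the bound $\htau_t\leq \tau_t+C\tau_t^{-1}\leq C't^{-2}$ recorded above. Because the inputs $\|\hna_t^k\Omega\|_{\MC^{,\ga}}\LS \htau_t^{k+\ga}$ feeding that proof are increasing in $\htau_t$, and $\htau_t$ is comparable to $t^{-2}$, replacing every factor $\tau$ by $t^{-2}$ and every factor $\tau^{\ga}$ by $t^{-2\ga}$ reproduces exactly the right-hand side of the claimed inequality.

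In short, the substantive content lies not in this corollary but in its inputs: the comparability $\tau_t\sim t^{-2}$ (Proposition \ref{prop_secondfundamentalform}) and the convergence $\tg_t\to p^{\st}g_L$ to the cone metric (Theorem \ref{thm_metricconvergence}), which keeps the $\MC^0$ norms uniformly equivalent in $t$. The only genuine obstacle I anticipate is bookkeeping the constant: I must verify that upgrading $\tau_t$ to $\htau_t\leq C\tau_t$ does not violate the hypothesis $\|\al_i\|_{\MC^0}\leq C\htau_t^{-1}$, and it does not, since shrinking $\tau_t^{-1}$ by a fixed factor is absorbed into the constant permitted in the corollary's assumptions. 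With $\tau_t$ pinned to the scale $t^{-2}$, the corollary follows as a mechanical translation of Proposition \ref{prop_quadraticestimateoverfamily}.
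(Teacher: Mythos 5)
Your proposal coincides with the paper's own (implicit) argument: the corollary is obtained precisely by substituting the bound $\tau_t\LS t^{-2}$ of Proposition \ref{prop_secondfundamentalform} into Proposition \ref{prop_quadraticestimateoverfamily}, applied with the smooth comparison metric $\hg_t$ and $\htau_t\leq \tau_t+C\tau_t^{-1}\LS t^{-2}$ as set up just before the statement. The only remark worth adding is that your claimed two-sided comparability $\tau_t\sim t^{-2}$ is superfluous: since the right-hand side of Joyce's estimate is monotone increasing in $\tau$ (for $\tau\geq 1$) and $\tau_t\LS t^{-2}$ gives $t^{2}\LS\tau_t^{-1}$, the upper bound alone suffices both to verify the hypothesis $\|\al_i\|_{\MC^0}\LS\tau_t^{-1}$ and to replace every factor $\tau$ by $t^{-2}$ in the conclusion.
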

			
		\end{subsection}
	\end{section}
	
	\begin{section}{Branched Deformation Family Construction and nearby Special Lagrangian Theorem}
		\label{sec_nearbyspecialLagrangianfamily}
		In this section, we will prove our main theorem and finish our construction of branched deformation family. We will first prove a nearby special Lagrangian theorem in our setting, which is first obtained by Joyce \cite{joyce2004slag2}, then we will apply the nearby special Lagrangian theorem to the family we constructed in Theorem \ref{thm_approximation} to obtain the branched deformations.
		\subsection{Nearby special Lagrangian theorem}
		Let $(X,J,\omega,\Omega)$ be a Calabi-Yau manifold, $[L]=(L,\iota:L\to X)$ be an immersed Lagrangian submanfiold with $\MC^{2,\ga}$ structure and $\theta$ be the Lagrangian angle. By Theorem \ref{thm_weinsteintubularneighborhood}, we could choose the Weinstein neighborhood $U_L$ be
		\begin{equation*}
			U_L=\{\al\in T^{\st}L|\|\al\|_{\MC^0}\leq C\tau^{-1}\},
		\end{equation*}
	where $C$ is a $\tau$ independent constant.
		
		Let $f$ be a function on $L$, such that $df\in U_L$, then by Proposition \ref{prop_specialLagrangianlineartermstructure}, we could write the special Lagrangian equation for $df$ as
		\begin{equation}
			\sin\theta-\MD_{\theta} f+Q(df)=0,
			\label{eq_proofnearbySLthm}
		\end{equation}
		where $\MD_{\theta} f:=d^{\st}(\cos\theta df).$ 
		
		We start with a lemma follows from Aubin \cite[Thorem 4.7]{aubin1982nonlinearanalysis}.
		\begin{lemma}
			For each $h\in \MC^{,\gamma}(L)$ with $\int_{L}h d\Vol=0$, there exists $f\in \MC^{2,\ga}$ such that $\int_{L}f d\Vol=0$ and $\MD_{\theta}f=-h$.
		\end{lemma}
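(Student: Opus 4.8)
The plan is to recognize $\MD_{\theta}$ as a formally self-adjoint, uniformly elliptic operator in divergence form and solve $\MD_{\theta}f=-h$ by the Fredholm alternative, with the regularity supplied by the Schauder estimate already established in Proposition \ref{prop_Laplacianestimategeneral}. First I would observe that since $[L]$ is close to special Lagrangian the angle $\theta$ is $\MC^0$-small, so $\cos\theta\geq c_0>0$; writing $\MD_{\theta}f=-d^{\st}(\cos\theta\,df)$ in local coordinates as $\tfrac{1}{\sqrt{g}}\pa_i(\sqrt{g}\,\cos\theta\,g^{ij}\pa_j f)$ exhibits it as a uniformly elliptic operator with no zeroth-order term, whose leading coefficients $\cos\theta\,g^{ij}$ lie only in $\ca$ because $\iota^{\st}g$ is merely $\MC^{1,\ga}$. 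Integrating by parts against the induced volume form gives $\int_L (\MD_{\theta}f)\,u\,d\Vol=-\int_L \cos\theta\,\langle df,du\rangle\,d\Vol$, which is symmetric in $f$ and $u$, so $\MD_{\theta}$ is self-adjoint with respect to $L^2(d\Vol)$, where $d\Vol=d\Vol_{\iota^{\st}g}$.

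Next I would identify kernel and cokernel. If $\MD_{\theta}f=0$, then taking $u=f$ in the pairing gives $\int_L \cos\theta\,|df|^2\,d\Vol=0$, and since $\cos\theta\geq c_0>0$ this forces $df=0$, so $f$ is constant. By self-adjointness the cokernel is likewise spanned by the constants, so the equation $\MD_{\theta}f=-h$ is solvable precisely when $-h$ is $L^2(d\Vol)$-orthogonal to constants, i.e. when $\int_L h\,d\Vol=0$, which is exactly the hypothesis. This is the content of Aubin's existence theorem; alternatively one constructs the weak solution directly by Lax--Milgram, since the bilinear form $B(f,u)=\int_L \cos\theta\,\langle df,du\rangle\,d\Vol$ is bounded and, on the codimension-one subspace $\{u\in W^{1,2}:\int_L u\,d\Vol=0\}$, coercive, the coercivity being equivalent to positivity of the first eigenvalue $\lambda>0$ (compactness plus $\cos\theta\geq c_0$, as in the eigenvalue discussion preceding Theorem \ref{thm_eigenvaluelowerbound}).

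Finally I would upgrade regularity and normalize. The weak solution $f\in W^{1,2}$ is promoted to $\MC^{2,\ga}$ by the Schauder estimate of Proposition \ref{prop_Laplacianestimategeneral}, which is exactly adapted to the $\ca$-coefficient operator $\MD_{\theta}$; and since constants lie in the kernel of $\MD_{\theta}$, replacing $f$ by $f-\Vol(L)^{-1}\int_L f\,d\Vol$ does not alter $\MD_{\theta}f$ and arranges $\int_L f\,d\Vol=0$, completing the lemma.

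The main obstacle is the low regularity of the data: the induced metric is only $\MC^{1,\ga}$ and $\cos\theta$ only $\ca$, so the smooth elliptic theory cannot be cited verbatim and the argument must be run entirely through Hölder-coefficient theory (Schauder plus the self-adjoint Fredholm/Lax--Milgram setup). One must also be careful that the solvability condition is stated with respect to the same volume form $d\Vol$ that makes $\MD_{\theta}$ self-adjoint. The conformal reduction $\MD_{\theta}f=(\cos\theta)^{\frac{2}{n-2}}\Delta_{\hg}f$ with $\hg=(\cos\theta)^{\frac{2}{n-2}}\iota^{\st}g$ offers an alternative route to the ordinary Laplacian, but it degenerates when $n=2$, so the self-adjoint Fredholm argument is the cleaner and uniformly valid one.
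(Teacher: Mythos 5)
Your proof is correct. It is worth noting that the paper offers essentially no argument for this lemma: it is dispatched by citing Aubin's Theorem 4.7, which is the solvability theorem for the Laplace--Beltrami equation $\Delta u=f$, $\int f\,d\Vol=0$, on a compact manifold with a smooth metric. To make that citation literal one must reduce $\MD_{\theta}$ to a Laplacian by the conformal change $\hg=(\cos\theta)^{\frac{2}{n-2}}\iota^{\st}g$ recorded in Section \ref{sec_geometryofapproximate}; note that the weight there should read $\MD_{\theta}f=(\cos\theta)^{\frac{n}{n-2}}\Delta_{\hg}f$ rather than $(\cos\theta)^{\frac{2}{n-2}}\Delta_{\hg}f$ --- with the corrected exponent the mean-zero condition for $\Delta_{\hg}$ with respect to $d\Vol_{\hg}$ transforms back exactly to $\int_L h\,d\Vol=0$, which is precisely the volume-form bookkeeping issue you flag --- and one must still worry that $\hg$ is only $\MC^{1,\ga}$ (Aubin's statement is for smooth metrics) and that the reduction degenerates at $n=2$. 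Your route --- self-adjointness of the divergence-form operator with respect to $d\Vol_{\iota^{\st}g}$, kernel and cokernel equal to the constants, Lax--Milgram on the mean-zero subspace of $W^{1,2}$ with coercivity coming from the positive first eigenvalue, then $\MC^{2,\ga}$ regularity from the H\"older-coefficient Schauder theory behind Proposition \ref{prop_Laplacianestimategeneral} --- is the standard proof of Aubin's theorem run directly on $\MD_{\theta}$ itself, so it is uniformly valid in all dimensions, places the solvability condition against the correct volume form automatically, and handles the low-regularity coefficients that the bare citation glosses over. The only hypothesis you insert that the lemma leaves implicit is $\cos\theta\geq c_0>0$; some such nondegeneracy is needed for ellipticity of $\MD_{\theta}$ in any argument, and it is available in the paper's context since the lemma is only ever invoked for Lagrangians whose Lagrangian angle is uniformly small.
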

		
		We take $f:=\MD_{\theta}^{-1}h$, then we are looking for suitable $h$ with $\int_{L}hd\Vol=0$ to solve
		$$
		h+Q(d\MD_{\theta}^{-1}h)=-\sin\theta.
		$$
		We define the operator 
		$$T:\ca\to \ca,\;Th:=-Q(d\MD_{\theta}^{-1}h)-\sin\theta,$$ 
		where $\MC^{,\ga}$ is the H\"older norm for functions on $L$, then we have:
		\begin{proposition}
			\label{prop_provingcontractionmap}
			Let $\lam$ be the first eigenvalue of $\MD_{\theta}$, then there exists a constant $C$ such that for $N:=C\tau^{\ga}(1+\lam^{-1}\Vol(L)\tau^{2+\ga+\frac{n}{2}})^2$ and $B_{N^{-1}}:=\{f\in\ca| \|f\|_{\ca}\leq N^{-1}\}$, suppose $\|\sin\theta\|_{\MC^{,\gamma}}\leq \frac{1}{4N}$, then $T:B_{N^{-1}}\to B_{N^{-1}}$ is a contraction mapping.
		\end{proposition}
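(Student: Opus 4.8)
The plan is to prove the statement by verifying the two defining properties of a contraction self-map, from which the Banach fixed point theorem produces the desired solution of \eqref{eq_proofnearbySLthm}. Throughout I would write $M:=1+\lam^{-1}\Vol(L)\tau^{2+\ga+\frac n2}$, so that $N=C\tau^{\ga}M^2$, and recall that for $h\in\ca$ with $\int_L h\,d\Vol=0$ the function $f:=\MD_\theta^{-1}h$ is the unique zero-average solution of $\MD_\theta f=-h$ furnished by the Aubin lemma; the ball $B_{N^{-1}}$ is understood to lie inside the space of zero-average functions. Since $Q(0)=0$ by Proposition \ref{prop_specialLagrangianlineartermstructure}, every bound on $Q(df)$ will come from the quadratic estimate of Proposition \ref{prop_quadraticestimateoverfamily}, fed by the Schauder bound of Corollary \ref{coro_deltainverseestimate}.

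First I would check that elements of $B_{N^{-1}}$ satisfy the $\MC^0$ smallness hypotheses $\|df\|_{\MC^0}\leq C\tau^{-1}$ and $\|\na df\|_{\MC^0}\leq C$ required to invoke Proposition \ref{prop_quadraticestimateoverfamily}. For $h\in B_{N^{-1}}$, Corollary \ref{coro_deltainverseestimate} applied to $f=\MD_\theta^{-1}h$ gives
$$\tau\|df\|_{\ca}+\|\na df\|_{\ca}\LS M\|h\|_{\ca}\leq M N^{-1}.$$
Since $N=C\tau^{\ga}M^2$ with $\tau\geq1$ and $M\geq1$, this forces $\|df\|_{\ca}\LS MN^{-1}\tau^{-1}\LS C^{-1}\tau^{-1}$ and $\|\na df\|_{\ca}\LS MN^{-1}\LS C^{-1}$, so the hypotheses hold once $C$ is large. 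The key structural point is that the factor $\tau^{\ga}M^2$ built into $N$ is exactly what absorbs both the loss $M$ from inverting $\MD_\theta$ and the scaling weight $\tau^{\ga}$ of the quadratic estimate.

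Next I would prove $T(B_{N^{-1}})\subseteq B_{N^{-1}}$. Taking $\al_1=df$, $\al_2=0$ in Proposition \ref{prop_quadraticestimateoverfamily} and inserting the previous display,
$$\|Q(df)\|_{\ca}\LS \tau^{\ga}\big(\tau\|df\|_{\ca}+\|\na df\|_{\ca}\big)^2\LS \tau^{\ga}M^2\|h\|_{\ca}^2=\tfrac1C N\|h\|_{\ca}^2\LS \tfrac1C N^{-1},$$
using $\tau^{\ga}M^2=N/C$ and $\|h\|_{\ca}\leq N^{-1}$. Together with the hypothesis $\|\sin\theta\|_{\ca}\leq\frac1{4N}$ and $C$ large, this yields $\|Th\|_{\ca}\leq\frac12 N^{-1}$. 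One must also verify that $Th$ is again zero-average, so that $T$ genuinely self-maps the ball: integrating the identity $\SL(df)=\sin\theta-\MD_\theta f+Q(df)$ over $L$, using $\int_L\MD_\theta f\,d\Vol=0$ and the fact that $\int_L\SL(df)\,d\Vol=\int_L\iota_{df}^{\st}\Im\Omega=\int_L\iota_0^{\st}\Im\Omega$ is a fixed cohomological pairing which vanishes in the special Lagrangian setting (cf. Proposition \ref{prop_integrationvanishes}, the graph of the exact form $df$ being homologous to the zero section), one obtains $\int_L(Q(df)+\sin\theta)\,d\Vol=0$, i.e. $\int_L Th\,d\Vol=0$.

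Finally I would establish the contraction estimate. For $h_1,h_2\in B_{N^{-1}}$ put $f_i=\MD_\theta^{-1}h_i$; by linearity of $\MD_\theta^{-1}$ and Corollary \ref{coro_deltainverseestimate} the first factor in Proposition \ref{prop_quadraticestimateoverfamily} is $\LS M\|h_1-h_2\|_{\ca}$, while the second factor is $\LS M(\|h_1\|_{\ca}+\|h_2\|_{\ca})\leq 2MN^{-1}$, so that
$$\|Th_1-Th_2\|_{\ca}=\|Q(df_1)-Q(df_2)\|_{\ca}\LS \tau^{\ga}M^2N^{-1}\|h_1-h_2\|_{\ca}=\tfrac1C\|h_1-h_2\|_{\ca},$$
again via $\tau^{\ga}M^2N^{-1}=1/C$. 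Choosing $C$ large enough (to beat simultaneously the implicit constants of the self-map and contraction bounds) makes $T$ a $\tfrac12$-contraction. The genuine obstacle here is not any single inequality but the delicate matching of the powers of $\tau$ and $M$: the definition $N=C\tau^{\ga}M^2$ must at once guarantee the $\MC^0$ smallness needed even to apply the quadratic estimate and force the resulting quadratic term to be contracted, while the zero-average preservation is the secondary point relying on the topological vanishing of $\int_L\iota_0^{\st}\Im\Omega$.
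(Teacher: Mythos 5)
Your proof is correct and follows essentially the same route as the paper's: both obtain the self-map and contraction bounds by feeding Corollary \ref{coro_deltainverseestimate} into the quadratic estimate of Proposition \ref{prop_quadraticestimateoverfamily}, with the factor $\tau^{\ga}\left(1+\lam^{-1}\Vol(L)\tau^{2+\ga+\frac{n}{2}}\right)^{2}$ built into $N$ absorbing the losses exactly as in the paper. Your two extra verifications --- the $\MC^{0}$ smallness hypotheses needed even to invoke the quadratic estimate, and the preservation of the zero-average condition (so that $\MD_{\theta}^{-1}$ remains applicable) via the cohomological vanishing of $\int_{L}\iota^{\st}\Im\Omega$ --- are points the paper's proof leaves implicit, and they are correct in the intended setting.
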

		\begin{proof}
			For $h_1,h_2\in B_N$, we compute
			\begin{equation*}
				\begin{split}
					\|Th_1-Th_2\|_{\ca}=&\|Q_g(d\MD_{\theta}^{-1}h_1)-Q_g(d\MD_{\theta}^{-1}h_2)\|_{\ca}\\
					\LS &{\tau}^{\ga}(\tau\|d {\MD_{\theta}}^{-1}(h_1-h_2)\|_{\MC^{,\gamma}}+\|\na d{\MD_{\theta}}^{-1}(h_1-h_2)\|_{\MC^{,\gamma}})\cdot\\
					&(\tau\|d \MD^{-1}_{\theta}h_1\|_{\MC^{,\gamma}}+\tau\|d \MD^{-1}_{\theta}h_2\|_{\MC^{,\gamma}}+\|\na d {\MD}^{-1}_{\theta}h_1\|_{\MC^{,\gamma}}+\|\na d{\MD}^{-1}_{\theta}h_2\|_{\MC^{,\gamma}}),\\
				\end{split}
			\end{equation*}
			where for the second inequality we use Proposition \ref{prop_quadraticestimateoverfamily}. 
			By Corollary \ref{coro_deltainverseestimate}, we have 
			\begin{equation*}
				\begin{split}
					&\tau\| d \MD^{-1}_{\theta}h_i\|_{\ca}+\|\na d\MD^{-1}_{\theta}h_i\|_{\MC^{,\gamma}}\LS (1+\lam^{-1}\Vol(L)\tau^{2+\ga+\frac{n}{2}})\|h_i\|_{\ca},\;\mathrm{and}\\
					&\tau\|d \MD^{-1}_{\theta}(h_1-h_2)\|_{\ca}+\|\na d\MD^{-1}_{\theta}(h_1-h_2)\|_{\MC^{,\gamma}}\LS (1+\lam^{-1}\Vol(L)\tau^{2+\ga+\frac{n}{2}})\|h_1-h_2\|_{\ca}.
				\end{split}
			\end{equation*}
			
			Therefore, there exists a constant $C_0$ such that
			\begin{equation*}
				\begin{split}
					\|Th_1-Th_2\|_{\ca}\leq &C_0 \tau^{\ga}(1+\lam^{-1}\Vol(L)\tau^{2+\ga+\frac{n}{2}})^2 \|h_1-h_2\|_{\MC^{,\gamma}}(\|h_1\|_{\ca}+\|h_2\|_{\ca})\\
					\leq &\frac{C_0}{C}\|h_1-h_2\|_{\MC^{,\gamma}}.
				\end{split}
			\end{equation*}
			
			Thus, for any $C\geq 4C_0$, we have $\|Th_1-Th_2\|_{\ca}\leq \frac14 \|h_1-h_2\|_{\ca}$. Moreover, we still need to check $T:B_N\to B_N$ is a well-defined map. By a similar computation, we obtain
			\begin{equation*}
				\begin{split}
					\|Th\|_{\ca}&\leq \|Q(d\MD_{\theta}^{-1}h)\|_{\MC^{,\gamma}}+\|\sin\theta\|_{\MC^{,\gamma}}\\
					&\leq C_0'(\tau^{\ga}(1+\lam^{-1}\Vol(L)\tau^{2+\ga+\frac{n}{2}})^2\|h\|^2_{\ca}+\|\sin\theta\|_{\ca})\\
					&\leq (C_0'C^{-1}+\frac{1}{4})N^{-1}.
				\end{split}
			\end{equation*}
			We take $C=\max\{4C_0',4C_0\}$, then $T:B_{N^{-1}}\to B_{N^{-1}}$ is a well-defined contraction map.
		\end{proof}
		
		\begin{corollary}
			\label{cor_existencedfsecondfundamentalform}
			There exists an unique solution to \eqref{eq_proofnearbySLthm}
			which satisfies
			\begin{equation*}
				\tau\|d f\|_{\ca}+\|\na d f\|_{\ca}\LS \tau^{-\ga}(1+\lam^{-1}\Vol(L)\tau^{2+\ga+\frac{n}{2}})^{-1}.
			\end{equation*}
		In particular, $df$ lies in the Weinstein neighborhood $U_L$.
		\end{corollary}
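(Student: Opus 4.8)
The plan is to read off the solution directly from the Banach contraction mapping principle applied to the operator $T$ of Proposition \ref{prop_provingcontractionmap}. First I would fix $N = C\tau^{\ga}(1+\lam^{-1}\Vol(L)\tau^{2+\ga+\frac{n}{2}})^2$ with $C$ as in that proposition, and work on the closed ball $B_{N^{-1}}$ of mean-zero $\ca$ functions. Under the standing hypothesis $\|\sin\theta\|_{\ca}\leq \frac{1}{4N}$, Proposition \ref{prop_provingcontractionmap} already establishes that $T$ maps $B_{N^{-1}}$ into itself and contracts distances by a factor $\frac14$, so it admits a unique fixed point $h$ with $\|h\|_{\ca}\leq N^{-1}$. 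Unwinding $Th=-Q(d\MD_{\theta}^{-1}h)-\sin\theta$, the fixed-point equation $h=Th$ reads $h+Q(d\MD_{\theta}^{-1}h)+\sin\theta=0$; setting $f:=\MD_{\theta}^{-1}h$, so that $\MD_{\theta}f=-h$ by the Aubin inversion lemma, this is precisely \eqref{eq_proofnearbySLthm}. Uniqueness of $h$ in $B_{N^{-1}}$ transfers to uniqueness of $f$ among solutions whose fixed-point datum lies in the ball.

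For the quantitative bound I would feed $\|\MD_{\theta}f\|_{\ca}=\|h\|_{\ca}\leq N^{-1}$ into Corollary \ref{coro_deltainverseestimate}, obtaining $\tau\|df\|_{\ca}+\|\na df\|_{\ca}\LS (1+\lam^{-1}\Vol(L)\tau^{2+\ga+\frac{n}{2}})N^{-1}$. Substituting $N^{-1}=C^{-1}\tau^{-\ga}(1+\lam^{-1}\Vol(L)\tau^{2+\ga+\frac{n}{2}})^{-2}$ and cancelling one power of the bracket yields exactly the claimed estimate $\tau\|df\|_{\ca}+\|\na df\|_{\ca}\LS \tau^{-\ga}(1+\lam^{-1}\Vol(L)\tau^{2+\ga+\frac{n}{2}})^{-1}$. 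Since the bracket is $\geq 1$ and $\tau\geq 1$, this forces $\|df\|_{\MC^0}\leq \|df\|_{\ca}\leq \tau^{-1-\ga}\leq C\tau^{-1}$, so $df$ lies inside the Weinstein neighborhood $U_L=\{\al\in T^{\st}L\mid \|\al\|_{\MC^0}\leq C\tau^{-1}\}$, which justifies the final assertion and, retroactively, the legitimacy of working inside $U_L$ throughout.

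The main point requiring care — rather than a deep obstacle, since the analytic weight sits in Proposition \ref{prop_provingcontractionmap} — is the compatibility of $T$ with the domain of $\MD_{\theta}^{-1}$: the inversion lemma produces $\MD_{\theta}^{-1}h$ only for mean-zero $h$, so $T$ must preserve $\{\int_L h\, d\Vol=0\}$. This amounts to checking $\int_L (Q(d\MD_{\theta}^{-1}h)+\sin\theta)\,d\Vol=0$, i.e. the vanishing of the total integral of the special Lagrangian equation; as $\int_L \MD_{\theta}(\cdot)\,d\Vol=0$ automatically for the divergence-type operator $\MD_{\theta}$, this reduces to the cohomological vanishing provided by Proposition \ref{prop_integrationvanishes} in the branched setting (and by triviality of $[\iota_0^{\st}\Im\Omega]\in H^n(L;\RR)$ in general). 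With this compatibility confirmed the fixed point stays in the correct space and the three steps above close the argument.
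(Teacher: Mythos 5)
Your proposal is correct and follows essentially the same route as the paper's proof: take the unique fixed point $h$ of the contraction $T$ from Proposition \ref{prop_provingcontractionmap}, set $f=\MD_{\theta}^{-1}h$, and feed $\|h\|_{\ca}\leq N^{-1}$ into Corollary \ref{coro_deltainverseestimate}, which after substituting $N^{-1}=C^{-1}\tau^{-\ga}(1+\lam^{-1}\Vol(L)\tau^{2+\ga+\frac{n}{2}})^{-2}$ gives the stated bound and, since $\tau\geq 1$, the inclusion $df\in U_L$. Your additional verification that $T$ preserves the mean-zero condition (using $\int_L\MD_{\theta}(\cdot)\,d\Vol=0$ together with the cohomological vanishing of $\int_L\iota_0^{\st}\Im\Omega$, as in Proposition \ref{prop_integrationvanishes}) is a point the paper leaves implicit, and you resolve it correctly.
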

		\begin{proof}
			Let $h$ be the fixed point of the contraction map $T:B_{N^{-1}}\to B_{N^{-1}}$, then $\|h\|_{\ca}\leq N^{-1}$. By Corollary \ref{coro_deltainverseestimate}, we obtain 
			$$\tau \|d f\|_{\ca}+\|\na d f\|_{\ca}\LS (1+\lam^{-1}\Vol(L)\tau^{2+\ga+\frac{n}{2}})\|h\|_{\ca}\LS N^{-1}(1+\lam^{-1}\Vol(L)\tau^{2+\ga+\frac{n}{2}}).$$
		\end{proof}

		Now, we will consider the regularity of the special Lagrangian submanifold we constructed. Suppose that $\iota:L\to X$ is a smooth immersion, then we could get a good regularity results of solution $f$ by standard elliptic estimate. In particular, as $f$ satisfies $\tSL(x,df,\na df)=0$, which is a second order non-linear elliptic equation. As $\iota$ is a smooth immersion, $\tSL(x,y,z)$ is a smooth function w.r.t. the variables. By \cite[Theorem 3.56]{aubin1982nonlinearanalysis}, any $\MC^2$ solution $f$ to \eqref{eq_proofnearbySLthm} will be smooth. 
		
		Let $\iota_f:L\to U\subset T^{\st}L$ be the inclusion given by the graph of $df$, then $\iota_f$ will be a $\MC^{1,\ga}$ immersion, then $[L_f]=(L,\iota_f)$ is a $\MC^{1,\ga}$ special Lagrangian submanifold. Even $\iota$ is not a smooth immersion, we still could obtain good regularity for $[L_f]$. We have the following regularity theorem for minimal submanifold due to Morrey.
		
		\begin{theorem}{\cite{morrey1966multipleintegrals}}
			Any $\MC^1$ special Lagrangian submanifold is real analytic. 
		\end{theorem}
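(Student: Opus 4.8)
The plan is to localize the problem to a quasilinear elliptic system for a graph and then run a regularity bootstrap whose final step is Morrey's analytic hypoellipticity theorem. Since $[L]$ is calibrated by $\Re\Omega$ it is volume-minimizing and hence a minimal submanifold. Fix $p\in L$. The Calabi--Yau metric $g_X$ is Ricci-flat K\"ahler, hence Einstein, so in $g_X$-harmonic coordinates centered at $p$ it is real analytic, and the calibration $\Re\Omega$, being $g_X$-parallel, is real analytic as well. After rotating so that $T_pL$ is horizontal, the $\MC^1$ hypothesis and the implicit function theorem let me write $L$ near $p$ as the graph of a $\MC^1$ map $u=(u^a)$ into the normal space. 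The minimality of $L$ is then encoded in the minimal surface system
\begin{equation*}
\partial_i\big(\sqrt{\det h}\,h^{ij}\partial_j u^{a}\big)=F^{a}(x,u,\na u),
\end{equation*}
where $h_{ij}=h_{ij}(x,u,\na u)$ is the induced metric and $F^a$ collects the ambient Christoffel contributions; both $h$ and $F$ are real-analytic functions of their arguments because $g_X$ is. Crucially this system is quasilinear and uniformly elliptic: the coefficients depend on $u$ and $\na u$ only, not on $\na^2u$, and $\na u$ is bounded.

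Next I bootstrap the regularity of $u$. From $u\in\MC^1$ the coefficients $h^{ij}(x,u,\na u)$ are merely continuous, so Schauder does not apply directly; the first and hardest step is to upgrade $u$ to $\MC^{1,\al}$. For this I use the divergence structure together with the fact that continuous (hence VMO) coefficients fall in the scope of the interior $L^p$ (Calder\'on--Zygmund/Campanato) theory for divergence-form elliptic systems: this gives $u\in W^{2,q}_{\loc}$ for every $q$, whence $u\in\MC^{1,\al}$ for some $0<\al<1$ by Sobolev embedding. With $u\in\MC^{1,\al}$ the coefficients $h^{ij}(x,u,\na u)$ and $F^a$ now lie in $\MC^{0,\al}$, and interior Schauder estimates give $u\in\MC^{2,\al}$; differentiating the system once more promotes the coefficients to $\MC^{1,\al}$ and hence $u\in\MC^{3,\al}$, and iterating this improvement shows $u\in\MC^\infty$.

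Finally, with $u$ smooth and solving a nonlinear elliptic system whose nonlinearity $(h,F)$ is real analytic in $(x,u,\na u)$, Morrey's theorem on analytic hypoellipticity for elliptic systems \cite{morrey1966multipleintegrals} applies and shows that $u$ is real analytic; therefore the graph, and so $L$, is real analytic near $p$. As $p$ was arbitrary this proves the statement. The main obstacle is the initial jump $\MC^1\to\MC^{1,\al}$: at $\MC^1$ regularity the second derivatives are not classically available, so the minimal surface system must be read weakly and the low regularity of the coefficients handled by the $L^p$/VMO estimate, and it is precisely here that the divergence (minimality) structure, rather than the merely Lagrangian structure, is indispensable. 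Once $\MC^{1,\al}$ is reached the remaining bootstrap and the concluding analyticity are standard, modulo the (standard) verification that the ambient data $g_X$ and $\Re\Omega$ are real analytic.
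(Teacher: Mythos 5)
Your proposal is sound, and it is worth noting that the paper itself offers no proof of this statement: it is quoted verbatim from Morrey, so your argument is a reconstruction of the standard chain underlying that citation (calibrated $\Rightarrow$ volume-minimizing $\Rightarrow$ minimal surface system for a graph; analyticity of $g_X$ in harmonic coordinates since Ricci-flat metrics are Einstein, and of the parallel form $\Re\Omega$; the bootstrap $\MC^1\to\MC^{1,\al}\to\MC^\infty$; Morrey's analytic hypoellipticity). That reconstruction is essentially correct, and you rightly identify $\MC^1\to\MC^{1,\al}$ as the crux and the divergence (minimality) structure as what makes it work.

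One step is stated imprecisely. For divergence-form systems, the VMO/continuous-coefficient Calder\'on--Zygmund theory yields $W^{1,p}$ estimates for the solution, not $W^{2,q}_{\loc}$ directly; you cannot read second derivatives out of it in one stroke. The standard repair is: first obtain $u\in W^{2,2}_{\loc}$ by difference quotients, which is legitimate because $u\in\MC^1$ is locally Lipschitz and the system is quasilinear in divergence form with coefficients depending only on $(x,u,\na u)$; then each derivative $w=\pa_k u$ is a weak solution of a \emph{linear} divergence-form system whose coefficients (the frozen $h^{ij}(x,u,\na u)$ and lower-order terms) are continuous, and the $W^{1,p}$ theory applied to this differentiated system gives $w\in W^{1,q}_{\loc}$ for every $q$, hence $u\in\MC^{1,\al}$ by Sobolev embedding. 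A second point you implicitly use but should make explicit: for elliptic \emph{systems} there is no De Giorgi--Nash theorem and in general only partial regularity, but since $u\in\MC^1$ everywhere the excess is small at every point, so the Campanato perturbation argument applies globally on the chart and full regularity follows. With these two clarifications the remaining Schauder bootstrap and the concluding appeal to Morrey's theorem on analyticity of solutions of analytic elliptic systems are exactly the content of the paper's citation.
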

		In particular, as $[L_f]$ we constructed above is a $\MC^{1,\ga}$ submanifold, $[L_f]$ is real analytic. In summary, we obtain the following nearby special Lagrangian theorem, which will be a special case of Joyce \cite{joyce2004slag2}.
		\begin{theorem}
			\label{thm_nearbyLagrangian}
			Let $[L]=(L,\iota)$ be an immersed Lagrangian submanifold, let $\lam$ be the first eigenvalue of the linearization operator $\MD$ and $\tau$ be the maximal of the second fundamental form. There exists a constant $C$ such that for $\Upsilon:=C(1+\lam^{-1}\Vol(L)\tau^{2+\ga+\frac{n}{2}})^2\tau^{\ga}$, suppose $\|\st\iota^{\st}\Im\Omega\|_{\ca}\leq \Upsilon^{-1}$, then there exists a function $f$ on $L$ with $\int_L f d\Vol_L=0$ and 
			$$\|df\|_{\MC^{1,\ga}}\LS \tau^{-\ga}(1+\lam^{-1}\Vol(L)\tau^{2+\ga+\frac{n}{2}})^{-1},$$
			such that the graph submanifold $[L_f]$ of $df$ in the Weinstein neighborhood will be a real analytic special Lagrangian submanifold.
		\end{theorem}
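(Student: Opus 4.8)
The plan is to recognize this statement as the clean packaging of the contraction-mapping machinery assembled in Proposition~\ref{prop_provingcontractionmap} and Corollary~\ref{cor_existencedfsecondfundamentalform}, and to spend the proof matching hypotheses and constants rather than producing new estimates. The first move is to identify the quantity appearing in the hypothesis: evaluating the special Lagrangian operator at the zero form gives $\st\iota^{\st}\Im\Omega=\SL(0)=\sin\theta$, exactly as in the proof of Proposition~\ref{prop_specialLagrangianlineartermstructure}. Hence the smallness assumption $\|\st\iota^{\st}\Im\Omega\|_{\ca}\leq\Upsilon^{-1}$ is literally a smallness bound on $\|\sin\theta\|_{\ca}$, and the constant $\Upsilon=C(1+\lam^{-1}\Vol(L)\tau^{2+\ga+\frac{n}{2}})^2\tau^{\ga}$ is, up to enlarging $C$ by a factor of $4$, the same as the constant $N$ governing the contraction in Proposition~\ref{prop_provingcontractionmap}. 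Choosing $C$ to absorb this factor, the hypothesis becomes precisely $\|\sin\theta\|_{\ca}\leq\frac{1}{4N}$.

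Next I would fix the Weinstein neighborhood. By Theorem~\ref{thm_weinsteintubularneighborhood} we may take $U_L=\{\al\in T^{\st}L\mid\|\al\|_{\MC^0}\leq C\tau^{-1}\}$, whose size is controlled by the second fundamental form; this is the neighborhood in which the special Lagrangian equation \eqref{eq_proofnearbySLthm} is valid. With $\|\sin\theta\|_{\ca}\leq\frac{1}{4N}$ in hand, Proposition~\ref{prop_provingcontractionmap} makes $T:B_{N^{-1}}\to B_{N^{-1}}$ a contraction, so by the Banach fixed point theorem there is a unique $h\in\ca$ with $\int_L h\,d\Vol=0$ solving $h+Q(d\MD_{\theta}^{-1}h)=-\sin\theta$. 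Setting $f:=\MD_{\theta}^{-1}h$ via the Aubin solvability lemma, so that $\int_L f\,d\Vol=0$, produces a solution of \eqref{eq_proofnearbySLthm}, and Corollary~\ref{cor_existencedfsecondfundamentalform} yields both the asserted bound $\|df\|_{\MC^{1,\ga}}\LS\tau^{-\ga}(1+\lam^{-1}\Vol(L)\tau^{2+\ga+\frac{n}{2}})^{-1}$ and the crucial fact that $df$ stays inside $U_L$.

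Because $df\in U_L$, the graph $[L_f]=(L,\iota_f)$ is a genuine Lagrangian submanifold sitting in the Weinstein neighborhood, and $\SL(df)=0$ exactly says $\iota_f^{\st}\Im\Omega=0$, i.e. $[L_f]$ is special Lagrangian. It remains to upgrade regularity: $f\in\MC^{2,\ga}$ by the Aubin lemma, so $\iota_f$ is a $\MC^{1,\ga}$ immersion and $[L_f]$ is a $\MC^{1,\ga}$ special Lagrangian submanifold; Morrey's theorem then promotes any such $\MC^1$ minimal, hence special Lagrangian, submanifold to real analytic, completing the proof.

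The conceptual content is all upstream, so the genuine work here is bookkeeping rather than analysis. The step I expect to need the most care is the constant-matching in the first paragraph, in particular verifying that the single constant $C$ in $\Upsilon$ can be chosen to simultaneously dominate the factor $4$ relating $\Upsilon$ to $N$ and to close the contraction, and confirming via Corollary~\ref{cor_existencedfsecondfundamentalform} that the resulting $df$ genuinely lands in the $C\tau^{-1}$-neighborhood $U_L$ so that the equation I am solving is the honest special Lagrangian equation rather than a formal one. The uniformity in $\tau$ of the Schauder and quadratic estimates, which is the actual hard part of this circle of ideas, is already packaged into Proposition~\ref{prop_quadraticestimateoverfamily} and Corollary~\ref{coro_deltainverseestimate} and can be invoked as black boxes.
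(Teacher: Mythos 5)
Your proposal is correct and follows essentially the same route as the paper: the theorem is precisely the packaging of Proposition~\ref{prop_provingcontractionmap} and Corollary~\ref{cor_existencedfsecondfundamentalform} (with $\st\iota^{\st}\Im\Omega=\SL(0)=\sin\theta$ and $\Upsilon$ matching $N$ up to a factor absorbed into $C$), followed by the observation that the $\MC^{1,\ga}$ graph $[L_f]$ is real analytic by Morrey's regularity theorem. Your constant-matching and the check that $df$ lands in the $C\tau^{-1}$ Weinstein neighborhood of Theorem~\ref{thm_weinsteintubularneighborhood} are exactly the bookkeeping the paper performs implicitly.
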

		
		\subsection{Existence of special Lagrangian submanifolds near the approximate family}
		Let $(X,J,\omega,\Omega)$ be a Calabi-Yau manifold with a special Lagrangian submanfiold $[L]=(L,\iota_0)$. Suppose over $[L]$ there exists a nondegenerate harmonic pair, $[\tL_t^{\app}]=(\tL,\tiota_t^{\app})$ be the family of approximate special Lagrangian submanifolds constructed in Theorem \ref{thm_approximation} and $\theta_t$ be the Lagrangian angle. In this subsection, we will apply the nearby special Lagrangian theorem for the family of Lagrangian submanifold $[\tL_t^{\app}]$ to construct special Lagrangian submanifolds.
		
		Let $\tau_t$ be the maximal of second fundamental form of $[\tL_t^{\app}]$, then by Proposition \ref{prop_secondfundamentalform}, we have $\tau_t\LS t^{-2}.$ We define $\tg_t=(\tiota_t^{\app})^{\st}g$ be the pull-back Riemannian metric. 
		
		By Theorem \ref{thm_metricconvergence}, $\tg_t$ convergence to a cone metric which is quasi-isomorphic to any smooth Riemannian metric on $\tL$. The $\ca$ norm defined using different $\tg_t$ or the cone metric are equivalent. Therefore, for the rest of this subsection, we don't label $t$ for different $\ca$ norms. Moreover, by Theorem \ref{thm_weinsteintubularneighborhood}, for each $t$, we could choose the Weinstein neighborhood of $[\tL_t^{\app}]$ to be $$U_t:=\{\al\in T^{\st}\tL|\|\al\|_{\ca}\leq Ct^2\}.$$
		
		Let $f_t$ be a function on $\tL$ such that $df_t\in U_t$, the special Lagrangian equation for $df_t$ in this Weinstein neighborhood will be
		$$
		\sin\theta_t-\MD_{\theta_t} f_t+Q_t(df_t)=0.
		$$
		We write $f_t=-\MD_{\theta_t}^{-1} h_t$ with $\int_{\tL}h_td\Vol_t=0$, where $d\Vol_t$ is the volume form w.r.t. the Riemannian metric $g_t$, then the special Lagrangian equation for $h_t$ will be 
		$$\sin\theta_t+h_t+Q_t(d\MD_{\theta_t}^{-1}h_t)=0.$$
		We write $T_th_t:=\sin\theta_t+Q_t(d\MD_{\theta_t}^{-1}h_t)$, by Proposition \ref{prop_provingcontractionmap} and Corollary \ref{cor_existencedfsecondfundamentalform}, we obtain 
		\begin{proposition}
			\label{prop_familyexistence}
			For $N_t=t^{8+4\ga+n}$, there exists a constant $C$ independent of $t$ such that suppose $\|\sin\theta_t\|_{\MC^{,\gamma}}\leq \frac{N_t}{4}$, then $T_t:B_{t}\to B_{t}$ is a contracting map for $B_t:=\{\|\al\|_{\ca}\leq C^{-1}N_t\}.$ Moreover, let $h_t$ be the fixed point of $T_t$ with $f_t=\MD_{\theta_t} h_t$, then $f_t$ satisfies 
			$$t^{-2}\|d f_t\|_{\MC^{,\gamma}}+\|\na_t d f_t\|_{\ca}\leq C^{-1}t^{4+n+4\ga}.$$
		\end{proposition}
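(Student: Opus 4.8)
The plan is to specialize the abstract contraction-mapping scheme of Proposition \ref{prop_provingcontractionmap} and Corollary \ref{cor_existencedfsecondfundamentalform} to the family $[\tL_t^{\app}]$, feeding in the three uniform geometric estimates established earlier: the second fundamental form bound $\tau_t\LS t^{-2}$ (Proposition \ref{prop_secondfundamentalform}), the uniform spectral gap $\lam_t\geq c_0>0$ (Theorem \ref{thm_eigenvaluelowerbound}), and the volume bound $\Vol_t\leq 2\Vol(L)+1$ together with the metric convergence (Theorem \ref{thm_metricconvergence}). Recall that the special Lagrangian equation on the Weinstein neighborhood has already been reduced to the fixed-point problem $h_t=T_th_t=-\sin\theta_t-Q_t(d\MD_{\theta_t}^{-1}h_t)$ on the space of mean-zero $\ca$ functions, and that $\MD_{\theta_t}^{-1}$ is well defined there precisely because $\lam_t>0$.

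First I would substitute the family's parameters into the quantity $N$ of Proposition \ref{prop_provingcontractionmap}. Since $\lam_t$ is bounded below and $\Vol_t$ is bounded, the abstract expression $N=C\tau^{\ga}(1+\lam^{-1}\Vol(L)\tau^{2+\ga+\frac n2})^2$ is controlled, for small $t$, by $\tau_t^{\,\ga}\,\tau_t^{\,2(2+\ga+\frac n2)}=\tau_t^{\,4+3\ga+n}$, a fixed negative power of $t$ since $\tau_t\LS t^{-2}$; the crucial point is that because the first eigenvalue does \emph{not} degenerate, $N$ stays a fixed power of $t$ rather than blowing up uncontrollably, so the radius $N_t=N^{-1}$ is a positive power of $t$. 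The two analytic ingredients needed to run the contraction are exactly the family versions recorded at the end of Section \ref{sec_geometryofapproximate}: the Schauder estimate $t^{-2}\|df\|_{\ca}+\|\na_t df\|_{\ca}\LS(1+t^{-(4+2\ga+n)})\|\MD_{\theta_t}f\|_{\ca}$, which bounds $d\MD_{\theta_t}^{-1}h_t$ and its derivative in terms of $\|h_t\|_{\ca}$, and the quadratic estimate for $Q_t$ (the family form of Proposition \ref{prop_quadraticestimateoverfamily}).

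With these in hand, the contraction and self-mapping properties follow essentially verbatim from the proof of Proposition \ref{prop_provingcontractionmap}: for $h_1,h_2\in B_t$ one estimates $\|T_th_1-T_th_2\|_{\ca}$ by the quadratic bound, replaces $\tau\|d\MD_{\theta_t}^{-1}h_i\|+\|\na_t d\MD_{\theta_t}^{-1}h_i\|$ using the Schauder estimate, and collects all the powers of $t$ into the single constant $N$, so that on the ball of radius $C^{-1}N_t$ the map contracts once $C$ is chosen large; the hypothesis $\|\sin\theta_t\|_{\MC^{,\gamma}}\leq \frac{N_t}{4}$ guarantees that $T_t$ sends $B_t$ into itself. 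The contraction mapping principle then yields $h_t$ with $\|h_t\|_{\ca}\LS N_t$, and setting $f_t=-\MD_{\theta_t}^{-1}h_t$ and applying the Schauder estimate once more gives $t^{-2}\|df_t\|_{\ca}+\|\na_t df_t\|_{\ca}\LS(1+t^{-(4+2\ga+n)})N_t$, a positive power of $t$. I expect no genuinely new obstacle here—the real difficulties were all absorbed into the preceding sections—so the only care required is bookkeeping the $t$-weights so that every constant remains independent of $t$; the one structural fact that makes this possible is the uniform eigenvalue lower bound of Theorem \ref{thm_eigenvaluelowerbound}. Finally, the smallness hypothesis on $\sin\theta_t$ is not vacuous: by Theorem \ref{thm_approximation}(v) one has $\|\theta_t\|_{\ca}\LS t^{N-1}$ for the approximation order $N$, so taking $N$ large forces $\|\sin\theta_t\|_{\ca}\leq N_t/4$ for all small $t$.
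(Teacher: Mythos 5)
Your proposal is correct and follows essentially the same route as the paper, whose proof of Proposition \ref{prop_familyexistence} consists precisely of invoking Proposition \ref{prop_provingcontractionmap} and Corollary \ref{cor_existencedfsecondfundamentalform} with the family inputs $\tau_t\LS t^{-2}$ (Proposition \ref{prop_secondfundamentalform}), the uniform spectral gap $\lam_t\geq c_0$ (Theorem \ref{thm_eigenvaluelowerbound}), and the bounded volume from Theorem \ref{thm_metricconvergence}. Your explicit bookkeeping $N\LS\tau_t^{\,4+3\ga+n}\sim t^{-(8+6\ga+2n)}$ is in fact the honest substitution, and the mild mismatch with the stated exponent $N_t=t^{8+4\ga+n}$ is a looseness already present in the paper's own statement; since only ``a fixed positive power of $t$'' is needed downstream (absorbed by taking the approximation order $N$ in Theorem \ref{thm_approximation} large), this does not affect correctness.
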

		
		By Theorem \ref{thm_approximation}, for any $k$, we can choose $[\tL_t^{\app}]$ such that $\|\sin\theta_t\|_{\ca}\leq C_0t^k$. When $t$ small enough and $t^k\geq t^{8+4\ga+n+1}$, we could obtain the desire approximate solutions.
		
		From the above estimate, we know that $df_t\subset U_t$. We could write the inclusion induces by $df_t$ as $\tiota_t':\tL\to U_t\subset T^{\st}\tL$, which is a special Lagrangian submanifold. Using $\Phi_t:U_t\to U_t'\subset X$, we define $\tiota_t:=\Phi_t\circ \tiota_t'$, then $[\tL_t]=(\tL,\iota_t)$ will be a family of special Lagrangian submanifolds on $X$. 
		
		Now, let's discuss the rate of the convergence. By Proposition \ref{prop_familyexistence}, we have $$\|\tiota_t-\tiota_t^{\app}\|_{\ca}\LS \|df_t\|_{\ca}\leq Ct^{4+n+4\ga}.$$
		In addition, by Theorem \ref{thm_approximation} (vi), we have $\|\tiota_t^{\app}-\iota_0\circ p\|_{\ca}\leq C t.$
		Therefore, we obtain $$\|\tiota_t-\iota_0\circ p\|_{\ca}\leq C \|\tiota_t-\tiota_t^{\app}\|_{\ca}+\|\tiota_t^{\app}-\iota_0\circ p\|_{\ca}\leq  Ct.$$

		On the other hand, let $\mfa$ be the nondegenerate harmonic pair and $\phi_t$ be the diffeomorphism constructed in Theorem \ref{thm_approximation}. We write $\tiota_{\phi_t^{\st}(t\mfa)}:\tL\to U_L$ be the inclusion map induces by the graph of $t\phi_t^{\st}(\mfa)$, then by Theorem \ref{thm_approximation} (ii), we have $\|\tiota_t^{\app}-\tiota_{t\phi_t^{\st}(\mfa)}\|_{\ca}\leq Ct^2$,
		which implies 
		$$
		\|\tiota_t-\tiota_{t\phi_t^{\st}(\mfa)}\|_{\ca}\leq Ct^2.
		$$
		
		In summary, we could state our main theorem.
		\begin{theorem}
			\label{thm_maintheoremversion}
			Let $(X,J,\omega,\Omega)$ be a Calabi-Yau manifold and $[L]=(L,\iota_0: L\to X)$ be a special Lagrangian submanifold with induced metric $g_L$. Suppose there exists a nondegenerate harmonic pair $\mfa=\al^++\al^-$ over $[L]$, let $p:\tL\to L$ be the branched covering of $L$ induces by $\al^-$, then there exists a positive constant $T$ and a family of special Lagrangian submanifold $\tiota_t:\tL\to X$ for $|t|\leq T$ such that  
			\begin{itemize}
				\item [(i)] $\lim_{t\to 0}\|\tiota_t-\iota_0\circ p\|_{\ca}=0$.
				\item [(ii)] $\tiota_t(\tL)$ convergence to 2$\iota_0(L)$ as current.
				\item [(iii)] In a Weinstein neighborhood $U_L\subset T^{\st}L$ of $[L]$, there exists a family of diffeomorphisms $\phi_t:L\to L$ such that suppose we write $\iota_{t\phi_t^{\st}(\mfa)}:\tL\to U_L$ be the inclusion map induces by the graph of $t\phi_t^{\st}(\mfa)$, then  $$\|\tiota_t-\tiota_{t\phi_t^{\st}(\mfa)}\|_{\ca}\leq C t^2,$$
				where $C$ is a t-independent constant and $0<\gamma<\frac12$,
			\end{itemize}
		where the $\ca$ norms are defined using the metric $p^{\st}g_L$.
 		\end{theorem}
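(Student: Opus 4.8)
The plan is to assemble Theorem \ref{thm_maintheoremversion} from the two major machines already built in the earlier sections: the approximate-solution construction of Theorem \ref{thm_approximation}, and the nearby special Lagrangian theorem (Theorem \ref{thm_nearbyLagrangian}, in the quantitative form of Proposition \ref{prop_familyexistence}). The heart of the argument is a quantitative comparison: the error of the approximate family (measured by $\|\sin\theta_t\|_{\ca}$) must be beaten by the smallness threshold $N_t$ that the contraction-mapping scheme can tolerate, \emph{despite} the fact that the geometry of $[\tL_t]$ degenerates and the threshold $N_t$ itself shrinks like a power of $t$.

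First I would fix the integer $N$ in Theorem \ref{thm_approximation} large enough, and produce the approximate family $[\tL_t^{\app}]=(\tL,\tiota_t^{\app})$ together with the diffeomorphisms $\phi_t$ and the nondegenerate pairs $\mfA_t$. The key numerical facts I extract are: the second fundamental form satisfies $\tau_t\LS t^{-2}$ (Proposition \ref{prop_secondfundamentalform}); the first eigenvalue of $\MD_{\theta_t}$ is bounded below uniformly, $\lam_t\geq c_0>0$ (Theorem \ref{thm_eigenvaluelowerbound}); the volume $\Vol_t$ is uniformly bounded (Theorem \ref{thm_metricconvergence}); and, crucially, the Lagrangian angle is controlled by $\|\theta_t\|_{\ca}\LS t^{N-1}$ (Theorem \ref{thm_approximation}(v)). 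Feeding $\tau_t\LS t^{-2}$, $\lam_t\gtrsim 1$, and $\Vol_t\lesssim 1$ into the definition of the tolerance gives the threshold $N_t=t^{8+4\ga+n}$ appearing in Proposition \ref{prop_familyexistence}. The decisive inequality is then $\|\sin\theta_t\|_{\ca}\LS t^{N-1}\leq \tfrac14 N_t=\tfrac14 t^{8+4\ga+n}$, which holds for all small $t$ precisely because $N$ was chosen with $N-1> 8+4\ga+n$. This is where the freedom to take $N$ arbitrarily large in the approximate construction is spent: it converts the unbounded-geometry loss (the powers of $t^{-2}$ from $\tau_t$) into a net gain.

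With the threshold inequality in hand, Proposition \ref{prop_familyexistence} supplies a fixed point $h_t$, hence a function $f_t$ with $\int_{\tL}f_t\,d\Vol_t=0$, $df_t\in U_t$, and the estimate $t^{-2}\|df_t\|_{\ca}+\|\na_t df_t\|_{\ca}\leq C^{-1}t^{4+n+4\ga}$; the graph of $df_t$ in the Weinstein neighborhood $U_t$ is a special Lagrangian submanifold, which via $\Phi_t:U_t\to X$ defines $\tiota_t:=\Phi_t\circ\tiota_t'$. I would then assemble the three convergence claims by the triangle inequality. For (i) and the rate, I combine $\|\tiota_t-\tiota_t^{\app}\|_{\ca}\LS\|df_t\|_{\ca}\leq Ct^{4+n+4\ga}$ with Theorem \ref{thm_approximation}(vi)'s bound $\|\tiota_t^{\app}-\iota_0\circ p\|_{\ca}\leq Ct$, yielding $\|\tiota_t-\iota_0\circ p\|_{\ca}\leq Ct\to 0$; here all $\ca$ norms are taken with respect to $p^{\st}g_L$, which by Theorem \ref{thm_metricconvergence} is comparable to every $\tg_t$, so the norms are equivalent uniformly in $t$. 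Statement (ii), current convergence $\tiota_t(\tL)\to 2\iota_0(L)$, follows from (i) together with the current convergence of the approximate family already established in Theorem \ref{thm_approximation}(vi). For (iii), I combine the comparison $\|\tiota_t^{\app}-\tiota_{t\phi_t^{\st}(\mfa)}\|_{\ca}\leq Ct^2$ from Theorem \ref{thm_approximation}(ii) with the $t^{4+n+4\ga}$ bound on $\|\tiota_t-\tiota_t^{\app}\|_{\ca}$, so that $\|\tiota_t-\tiota_{t\phi_t^{\st}(\mfa)}\|_{\ca}\leq Ct^2$ once $4+n+4\ga>2$, which always holds.

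The main obstacle is bookkeeping of the competing powers of $t$ rather than any new analytic input: one must verify that the degeneration of $\tau_t$, which enters $N_t$ with a large negative power, is strictly dominated by the gain $t^{N-1}$ in the Lagrangian angle. The conceptual subtlety worth flagging is that the norms change with $t$ — the induced metric $\tg_t$ is only $\MC^{1,\ga}$ and degenerates to a $4\pi$-cone metric — so I must consistently invoke Theorem \ref{thm_metricconvergence} to pass between $\tg_t$, the cone metric $p^{\st}g_L$, and an auxiliary smooth background metric, checking that the implied constants in all H\"older and Schauder estimates are genuinely $t$-independent. Once the uniform equivalence of these norms is fixed and the single threshold inequality is verified, the three conclusions are immediate consequences of the triangle inequality, and the regularity of each $\tiota_t(\tL)$ as a real-analytic special Lagrangian follows from the Morrey-type regularity already cited in Theorem \ref{thm_nearbyLagrangian}.
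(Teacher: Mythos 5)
Your proposal is correct and follows essentially the same route as the paper: it combines the approximate family of Theorem \ref{thm_approximation} (choosing $N$ large enough that $\|\sin\theta_t\|_{\ca}\LS t^{N-1}$ beats the threshold $N_t=t^{8+4\ga+n}$ of Proposition \ref{prop_familyexistence}) with the contraction-mapping fixed point $f_t$, and then derives (i)--(iii) by the same triangle-inequality comparisons against $\tiota_t^{\app}$ and $\tiota_{t\phi_t^{\st}(\mfa)}$, invoking Theorem \ref{thm_metricconvergence} for the $t$-uniform equivalence of norms exactly as the paper does.
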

		
		Suppose $L$ is a Riemannian surface, by Proposition \ref{prop_RiemanniansurfacespecialLagrangian}, the branched deformations generates by quadratic differentials will be embedded. However, in general, suppose $\iota_0$ is an embedding, $\tiota_t$ we constructed above will not be embedded. By Proposition \ref{prop_immersion}, $\tiota_t^{\app}$ could have self intersections, as $\tiota_t$ could be regarded as a small deformation of $\tiota_t^{\app}$, thus we don't expect $\tiota_t$ is embedded.
		
		\begin{proposition}
			Let $[L]=(L,\iota_0: L\to X)$ is an embedded special Lagrangian submanifold and $\mfa=\al^++\al^-$ is a nondegenerate harmonic pair over $[L]$, suppose $|\al^-|^{-1}(0)=\Sigma$, then there exists a positive constant $T_0$ such that for $|t|<T_0$, 
			$\tiota_t$ constructed in Theorem \ref{thm_maintheoremversion} is an embedding. 
		\end{proposition}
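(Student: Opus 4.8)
The plan is to deduce the embeddedness of the genuine family $\tiota_t$ from that of the approximate family $\tiota_t^{\app}$, exploiting the fact that $\tiota_t$ is constructed as the image under the Weinstein map of a graph over $\tiota_t^{\app}$. Recall from the previous subsection that $\tiota_t=\Phi_t\circ\tiota_t'$, where $\tiota_t'$ is the graph of $df_t$ inside the Weinstein neighborhood $U_t\subset T^{\st}\tL$ of $[\tL_t^{\app}]$ and $\Phi_t:U_t\to X$. Since the graph of a $1$-form is always an embedded section of $T^{\st}\tL$ and $\|df_t\|_{\ca}\LS t^{4+n+4\ga}$ by Proposition \ref{prop_familyexistence}, the whole statement reduces to two claims: that $\tiota_t^{\app}$ is itself an embedding for small $t$, and that $\Phi_t$ restricts to a genuine embedding on a neighborhood of the zero section containing $\{\|\al\|_{\ca}\LS t^{4+n+4\ga}\}$.

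First I would show that $\tiota_t^{\app}$ is an embedding. By Proposition \ref{prop_immersion}(i) it is a $\MC^{2,\ga}$ immersion, and by Proposition \ref{prop_immersion}(iii) its self-intersection set is $S_t=\{x\in L\setminus\Sigma:|\MA_t^-|(x)=0\}$; hence it suffices to prove that $\MA_t^-$ is nowhere vanishing on $L\setminus\Sigma$. I would split $L$ into a fixed tubular neighborhood $V_{r_0}=\{r<r_0\}$ of $\Sigma$ and its complement. On the complement the hypothesis $|\al^-|^{-1}(0)=\Sigma$ gives $|\al^-|\ge c_0>0$, so the estimate $\|\MA_t^--t\al^-\|_{\ca}\LS t^2$ from Theorem \ref{thm_approximation}(ii) yields $|\MA_t^-|\ge tc_0-Ct^2>0$ once $t$ is small. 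On $V_{r_0}$ the nondegeneracy of $\MA_t^-$ (leading term $\Re(B_tz^{\frac12}dz)$ with $|B_t|\ge c>0$ along $\Sigma$) gives $|\MA_t^-|\ge c'tr^{\frac12}>0$ for $0<r<r_0$, while the two sheets meet only along the branch locus, where $\tiota_t^{\app}$ is already an embedding by Proposition \ref{prop_immersion}(ii). Thus $S_t=\varnothing$, and an injective immersion of the compact manifold $\tL$ is an embedding.

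Next I would establish a uniform tubular neighborhood for $\tiota_t^{\app}$, which is where the unbounded geometry makes the argument delicate. The embedded tubular radius is bounded below by the minimum of the local injectivity radius and half the distance between points of $\tL$ that are far apart intrinsically but close in $X$. The local part is controlled by Propositions \ref{prop_injectiveradiussubmnaifold} and \ref{prop_secondfundamentalform}, which give $\inj_t\gtrsim \tau_t^{-1}\gtrsim t^2$. For the global part, away from $\Sigma$ two distinct sheets over the same base point are separated in the fiber direction by $|\MA_t^-|\gtrsim t$, while near $\Sigma$ they merge into the single embedded piece of Proposition \ref{prop_immersion}(ii). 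Combining these, the normal exponential map of $\tiota_t^{\app}$ is injective on a neighborhood of radius $\gtrsim t^2$, so $\Phi_t$ may be taken to be an embedding there.

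The main obstacle is precisely this last uniform lower bound on the tubular radius in the transition region near $\Sigma$: the second fundamental form blows up like $t^{-2}$ and the two sheets genuinely collide at the branch locus, so one must verify that the embedded tubular radius degenerates no faster than $t^2$. Once this is in hand the conclusion is immediate, since $\|df_t\|_{\ca}\LS t^{4+n+4\ga}\ll t^2$ places $df_t$ well inside the embedded Weinstein neighborhood, so $\tiota_t=\Phi_t\circ\tiota_t'$ is the composition of an embedded section with an embedding, hence an embedding. Choosing $T_0$ smaller than all the thresholds produced above then completes the proof.
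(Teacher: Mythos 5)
Your proposal is correct, and it is essentially a quantitative completion of the paper's own proof, which is much terser. The paper argues in three lines: the graph of $t\mfa$ is an embedding because Proposition \ref{prop_immersion}(iii) identifies the self-intersection set with the zero set of the $\MI$-valued part, which is empty off $\Sigma$ by hypothesis; then $\|t\mfa-\mfA_t\|_{\ca}\leq Ct^2$ is asserted to make $\tiota_t^{\app}$ an embedding for small $t$; and finally $\|\tiota_t-\tiota_t^{\app}\|_{\ca}\leq Ct^{4+n+4\ga}$ is asserted to make $\tiota_t$ an embedding. You follow the same skeleton but supply exactly the two points the paper compresses. First, for the approximate family you do not rely on bare $\MC^{,\gamma}$-closeness (which, read literally, is insufficient near $\Sigma$, where the sheet separation is only of order $tr^{\frac12}$ and is eventually beaten by the $Ct^2$ perturbation when $r\lesssim t^2$); instead you apply the self-intersection criterion of Proposition \ref{prop_immersion}(iii) directly to $\mfA_t$, using the away-from-$\Sigma$ lower bound $|\al^-|\geq c_0$ together with the nondegenerate expansion $|\MA_t^-|\gtrsim tr^{\frac12}$ near $\Sigma$, which is the correct justification of the paper's second step. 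Second, for the passage from $\tiota_t^{\app}$ to $\tiota_t$, you replace the paper's one-sentence closeness claim with the structural argument that $\tiota_t=\Phi_t\circ\tiota_t'$ with $\tiota_t'$ an embedded graph, reducing everything to injectivity of $\Phi_t$ at scale $\sim t^2$; this is what Theorem \ref{thm_weinsteintubularneighborhood} (size $C\tau_t^{-1}\sim t^2$) together with $\tau_t\lesssim t^{-2}$ from Proposition \ref{prop_secondfundamentalform} provides, and since $\|df_t\|_{\ca}\lesssim t^{4+n+4\ga}\ll t^2$ the composition is an embedding. Your identification of the transition region near $\Sigma$ as the delicate point is accurate, and your claimed $t^2$ lower bound on the embedded tubular radius is right: in the model $w^2=t^2z$ the radius is $\min(r^{\frac32}/t,\,tr^{\frac12})$ for $r\gtrsim t^2$ and is curvature-limited at scale $t^2$ for $r\lesssim t^2$, where the two sheets are intrinsically close and form a single graph patch, so the minimum over the neighborhood is indeed of order $t^2$.

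One sentence in your write-up is imprecise as stated: ``away from $\Sigma$ two distinct sheets over the same base point are separated in the fiber direction by $|\MA_t^-|\gtrsim t$'' holds only outside a fixed tubular neighborhood $\{r<r_0\}$; in the transition region the separation interpolates as $tr^{\frac12}$ down to the scale $t^2$. Since you immediately flag this region and supply the correct $t^2$ bound there, this is a wording issue rather than a gap, but in a final version you should state the separation as $\gtrsim t\min(r^{\frac12},1)$ throughout and carry out the model computation above explicitly, since neither the paper's proof of this proposition nor its proof of Theorem \ref{thm_weinsteintubularneighborhood} (which only establishes that $\Phi$ is an immersion before the Moser argument) verifies global injectivity of the Weinstein map at the degenerating scale.
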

		\begin{proof}
			We only need to prove this in a Weinstein neighborhood $U_L$. Let $\tiota_{t\mfa}:\tL\to U_L$ be the inclusion induces by the graph of $t\mfa$, as $|\al^-|^{-1}(0)=\Sigma$, $\tiota_{t\mfa}:\tL\to U_L$ is an embedding. Let $\MA_t=\MA_t^++\MA_t^-$ be a nondegenerate 1-form we constructed in Theorem \ref{thm_approximation} with inclusion $\tiota_t^{\app}$, by Theorem \ref{thm_approximation}, as $\|t\mfa-\MA\|_{\ca}\leq Ct^2$, then for $t$ sufficiently small $\tiota_t^{\app}$ is a embedding. 
			Moreover, as $\|\tiota_t-\tiota_t^{\app}\|_{\ca}\leq Ct^{4+n+4\ga}$, $\tiota_t$ is also an embedding for sufficiently small $t$.
		\end{proof}
		
		\subsection{The branched deformation theorem for fixed real locus}
		Suppose the special Lagrangian is a fixed real locus, then the branched deformations we constructed in Theorem \ref{thm_maintheoremversion} could also have extra symmetry. 
		\begin{theorem}
			Let $(X,J,\omega,\Omega)$ be a Calabi-Yau manifold and $[L]=(L,\iota_0: L\to X)$ be a special Lagrangian submanifold which is locally a fixed real locus of anti-holomorphic involution $R$. Suppose there exists a $\ZT$ harmonic 1-form $\al^-$ over $[L]$, then the family of special Lagrangian submanifolds $[\tL_t]=(\tL,\tiota_t)$ constructed in Theorem \ref{thm_maintheoremversion} will satisfy $R\circ\tiota_t=\tiota_t\circ \sigma$, where $\sigma$ is the involution on $\tL$.
		\end{theorem}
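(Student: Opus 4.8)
The plan is to re-run the contraction-mapping construction of the previous subsection inside the closed subspace of $\sigma$-anti-invariant functions, and then use uniqueness of the fixed point to force the actual solution into that subspace. First I would record the three symmetries that make the whole nonlinear problem $\ZT$-equivariant. By Theorem \ref{thm_approximatefamilyinvolution} the approximate family can be chosen with $R\circ\tiota_t^{\app}=\tiota_t^{\app}\circ\sigma$, so that $\sigma^{\st}\tg_t=\tg_t$ and $\sigma^{\st}e^{i\theta_t}=e^{-i\theta_t}$; in particular $\sin\theta_t$ is anti-invariant, $\sigma^{\st}\sin\theta_t=-\sin\theta_t$, while $\cos\theta_t$ is invariant. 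Since $\MD_{\theta_t}f=d^{\st_t}(\cos\theta_t\,df)$ is built only from the $\sigma$-invariant data $\tg_t$ and $\cos\theta_t$, it commutes with $\sigma^{\st}$, and hence so does $\MD_{\theta_t}^{-1}$ on the space of mean-zero functions. Finally, from Corollary \ref{cor_invariaitonslequationinvolution} together with the decomposition $\SL_{\tg_t}(df)=\sin\theta_t-\MD_{\theta_t}f+Q_t(df)$ I would extract the transformation rule $\sigma^{\st}Q_t(\al)=-Q_t(-\sigma^{\st}\al)$ for the quadratic term, by matching linear and nonlinear parts of $\sigma^{\st}\SL_{\tg_t}(df)=-\SL_{\tg_t}(-\sigma^{\st}df)$.

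Next I would check that the contraction map $T_t$ of Proposition \ref{prop_familyexistence}, of the form $T_th=\sin\theta_t+Q_t(d\MD_{\theta_t}^{-1}h)$ (up to the overall sign fixed in the source), preserves the closed subspace $\calH^-:=\{h\in\ca(\tL)\,:\,\sigma^{\st}h=-h\}$. Indeed, if $\sigma^{\st}h=-h$ then $\MD_{\theta_t}^{-1}h$ and hence $d\MD_{\theta_t}^{-1}h$ are anti-invariant, so the rule for $Q_t$ gives $\sigma^{\st}Q_t(d\MD_{\theta_t}^{-1}h)=-Q_t(d\MD_{\theta_t}^{-1}h)$, and combined with the anti-invariance of $\sin\theta_t$ this yields $\sigma^{\st}(T_th)=-T_th$. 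Because $\sigma$ is an isometry of $\tg_t$, the involution $h\mapsto-\sigma^{\st}h$ is an isometry of the $\ca$ norm, so $\calH^-\cap B_t$ is a nonempty (it contains $0$), closed, $T_t$-invariant subset of the ball $B_t$ on which $T_t$ is still a contraction. The restriction $T_t:\calH^-\cap B_t\to\calH^-\cap B_t$ is therefore a contraction of a complete metric space and has a unique fixed point there; by uniqueness of the fixed point of $T_t$ on all of $B_t$ this coincides with $h_t$. Hence $h_t\in\calH^-$, so $f_t=-\MD_{\theta_t}^{-1}h_t$ satisfies $\sigma^{\st}f_t=-f_t$ and $df_t$ is $\sigma$-anti-invariant, i.e. $-(\sigma^{-1})^{\st}df_t=df_t$.

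I would then conclude by feeding this into the identity \eqref{eq_identityinvolution} of Corollary \ref{cor_invariaitonslequationinvolution}, namely $R\circ\tiota_{df_t}=\tiota_{-(\sigma^{-1})^{\st}df_t}\circ\sigma$, which now reads $R\circ\tiota_{df_t}=\tiota_{df_t}\circ\sigma$. Since the special Lagrangian submanifold produced by the nearby theorem is precisely $\tiota_t=\tiota_{df_t}$, the graph of $df_t$ over the approximate family, this is exactly $R\circ\tiota_t=\tiota_t\circ\sigma$. Throughout, the identification of the canonical involution $R_0$ on $T^{\st}L$ with the anti-holomorphic involution $R$ on $X$ is supplied by the $R$-equivariant Weinstein chart of Proposition \ref{prop_antiholomoinvolutionneighborhood}, which intertwines $R_0$ and $R$. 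The main obstacle is not the symmetrization-via-uniqueness step, which is standard, but rather verifying the equivariance hypotheses cleanly at the level of the whole $t$-family: one must ensure the Weinstein chart for $[\tL_t^{\app}]$ is $R$-equivariant uniformly in $t$ and that the exponential-graph construction $\tiota_{\al}(x)=\exp_{\tiota_t^{\app}(x)}(J(\tiota_t^{\app})_{\st}V_{\al})$ underlying Corollary \ref{cor_invariaitonslequationinvolution} is compatible with $R$ for the induced metric $\tg_t$ (using $R^{\st}g=g$ and $R_{\st}J=-JR_{\st}$), so that $\SL_{\tg_t}$ and its linearization $\MD_{\theta_t}$ genuinely commute with $\sigma^{\st}$ for every $t$; once these are in place the remaining verification of the rule for $Q_t$ is bookkeeping.
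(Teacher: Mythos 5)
Your proposal is correct and follows essentially the same route as the paper: both rest on the equivariance supplied by Theorem \ref{thm_approximatefamilyinvolution} and Corollary \ref{cor_invariaitonslequationinvolution} together with uniqueness of the contraction fixed point to force $df_t=-\sigma^{\st}df_t$, and then conclude via the computation \eqref{eq_identityinvolution}. The paper applies uniqueness directly (if $f_t$ solves the equation, so does $-\sigma^{\st}f_t$, hence they coincide), while you restrict $T_t$ to the anti-invariant subspace $\calH^-$ first --- a mild elaboration of the same symmetrization-by-uniqueness argument, not a different method.
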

		\begin{proof}
			By Theorem \ref{thm_approximatefamilyinvolution} and Corollary \ref{cor_invariaitonslequationinvolution}, the special Lagrangian equation over $[\tL_t^{\app}]$ for $df_t$ over $\tL$ will satisfy $\sigma^{\st}\SL(df_t)=-\SL(-\sigma^{\st}df_t).$ Therefore, suppose $f_t$ is a solution, then $-\sigma^{\st}df_t$ is also a solution. By Corollary \ref{cor_existencedfsecondfundamentalform}, the solution is unique for the contracting map, therefore $df_t=-\sigma^{\st}df_t$. Recall that $\tiota_t:\tL\to X$ is given by the graph of $df_t$, which defined as $\tiota_t(x)=\exp_{\tiota_t^{\app}(x)}(J(\tiota_t^{\app})_{\st}V_{df})$, where $V_{df_t}$ is the dual vector of $df_t$ given by the Riemannian metric. By the same computation as \eqref{eq_identityinvolution}, we obtain $R\circ\tiota_t=\tiota_t\circ \sigma.$
		\end{proof}
		
		In general, the image of the branch locus $\tiota_t(\Sigma)$ will not lie in $[L]$. However, in this case, by $R\circ\tiota_t=\tiota_t\circ \sigma,$ we obtain $R\circ \tiota_t(x)=\tiota_t(x)$ for any $x\in\Sigma$.
		\begin{corollary}
			The branch locus $\tiota_t(\Sigma)$ is a subset of $\iota_0(L)$. 
		\end{corollary}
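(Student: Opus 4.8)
The plan is to exploit that the branch locus is the fixed-point set of $\sigma$, together with the equivariance $R\circ\tiota_t=\tiota_t\circ\sigma$ established in the theorem just proved. First I would note that $\Sigma\subset\tL$ is by construction the fixed-point set of the involution $\sigma$, so $\sigma(x)=x$ for every $x\in\Sigma$; feeding this into the equivariance gives $R(\tiota_t(x))=\tiota_t(\sigma(x))=\tiota_t(x)$, which is exactly the relation recorded in the paragraph preceding the corollary. Thus every branch point $\tiota_t(x)$, $x\in\Sigma$, lands in the fixed-point locus $\mathrm{Fix}(R)\subset X$.

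Next I would identify $\mathrm{Fix}(R)$ in a neighborhood of $\iota_0(L)$. Since $[L]$ is locally a fixed real locus (Definition \ref{def_locallyantiholomorphicinvolution}), Proposition \ref{prop_antiholomoinvolutionneighborhood} provides an $R_0$-invariant Weinstein neighborhood $U_L\subset T^{\st}L$ and a diffeomorphism $\Phi:U_L\to U$ onto an $R$-invariant neighborhood $U$ of $\iota_0(L)$ with $\Phi^{\st}R=R_0$, where $R_0(x,v)=(x,-v)$. As the fixed-point set of $R_0$ on $U_L$ is precisely the zero section, $\Phi$ carries it to $\iota_0(L)\cap U$, so that $\mathrm{Fix}(R)\cap U=\iota_0(L)\cap U$.

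Finally I would close the argument using Theorem \ref{thm_maintheoremversion}(i): the $\ca$-convergence $\tiota_t\to\iota_0\circ p$ guarantees that for $t$ small the image $\tiota_t(\Sigma)$ lies inside $U$, where the only fixed points of $R$ sit on the zero section; combining the three steps forces $\tiota_t(\Sigma)\subset \mathrm{Fix}(R)\cap U=\iota_0(L)$. I do not anticipate a genuine obstacle here, since most of the work is already contained in the equivariance statement; the only point meriting attention is this last containment in $U$, which is immediate from the convergence of $\tiota_t$ to $\iota_0\circ p$ and from the fact that $\iota_0(p(\Sigma))\subset\iota_0(L)$ lies in the interior of $U$.
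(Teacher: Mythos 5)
Your proof is correct and takes essentially the same route as the paper: the paper's entire argument is the remark immediately preceding the corollary, namely that $R\circ\tiota_t=\tiota_t\circ\sigma$ together with $\sigma|_{\Sigma}=\id$ gives $R\circ\tiota_t(x)=\tiota_t(x)$ for all $x\in\Sigma$, combined with the (implicit) fact that the fixed locus of $R$ (i.e.\ of $R_0$) in the Weinstein neighborhood is exactly the zero section $\iota_0(L)$. Your extra steps -- invoking Proposition \ref{prop_antiholomoinvolutionneighborhood} to identify $\mathrm{Fix}(R)$ with the zero section and checking that $\tiota_t(\Sigma)$ stays in the relevant neighborhood (automatic, since the construction of $\tiota_t$ takes place inside the $R_0$-invariant Weinstein neighborhood) -- simply make explicit details the paper leaves implicit.
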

	\end{section}
	
	\section{Applications and Generalizations}
	\label{sec_applications}
	In this section, we will discuss a few cases that our branched deformation theorem could be applied and generalized.
	\subsection{Family of Calabi-Yau structures}
	Combining with the McLean's deformation theorem, our main theorem could be generalized to a smooth family of Calabi-Yau submanifolds. Let $(X,J_s,\omega_s,\Omega_s)$ be a family of Calabi-Yau manifolds for $s\in (-\ep,\ep)$ and $[L_{0,0}]=(L,\iota_{0,0})$ be a special Lagrangian submanifold on $(X,J_0,\omega_0,\Omega_0)$. Suppose the homology class of $\omega_s$ and $\Im\Omega_s$ restricted to $[L_0]$ vanishes, then by the deformation theorem for family of Calabi-Yau structures \cite[Theorem 5.3]{joycelecturesonSLgeometry05}, $[L_0]$ could be extended smoothly to a family of special Lagrangian submanifolds $[L_{s,0}]=(L,\iota_{s,0}).$ We write the induced metric on $[L_{s,0}]$ as $g_{s,0}=\iota_{s,0}^{\st}g$. 
	
	Suppose over $[L_{0,0}]$, there exists a nondegenerate $\ZT$ harmonic 1-form $\al_0$, then by Theorem \ref{thm_donaldsondeformation}, there exists $\ep_0\leq \ep$ such that for each $s\in (-\ep_0,\ep_0)$, there exists a family of nondegenerate $\ZT$ harmonic 1-forms $(g_{s,0},\Sigma_s,\al_s)$ over $[L_{s,0}]$. Even $\al_s$ is branched along a different singular set $\Sigma_s$, $\Sigma_s$ are close to $\Sigma$ and the branched covering map $p_s:\tL\to L$ has the same diffeomorphism type. For each $v_s$, by Theorem \ref{thm_maintheoremversion}, we obtain a family of special Lagrangian submanifolds $$[\tL_{s,t}]=(\tL,\tiota_{s,t}:\tL\to X),$$
	such that $\lim_{t\to 0}\|\tiota_{s,t}-\iota_{s,0}\circ p_s\|_{\ca}=0.$ 
	
	\begin{theorem}
		\label{thm_brancheddeformationforfamily}
		Let $(X,J_s,\omega_s,\Omega_s)$ be a family of Calabi-Yau manifold, let $[L_{0,0}]=(L,\iota_{0,0})$ is a special Lagrangian submanifold for $(X,J_0,\omega_0,\Omega_0)$. Suppose 
		\begin{itemize}
			\item[(i)] the homology class of $\omega_s,\;\Im\Omega_s$ restricted to $[L_{0,0}]$ vanishes, 
			\item[(ii)] over $[L_{0,0}]$, there exists a $\ZT$ harmonic form $v_0$, with branched covering $p_0:\tL\to L$.
		\end{itemize}
		then for small $\ep_0$ with $|s|\leq \ep_0$, there exists a family of inclusions $\tiota_{s,t}:\tL\to X$ such that
		\begin{itemize}
			\item [(i)] $[\tL_{s,t}]=(\tL_{s,t},\tiota_{s,t})$ are special Lagrangian submanifolds on $\mbX_s$,
			\item [(ii)] $\lim_{s\to 0}\lim_{t\to 0}\|\tiota_{s,t}-\iota_{0,0}\circ p\|_{\ca}=0$.
		\end{itemize}
	\end{theorem}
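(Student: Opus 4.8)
The plan is to chain together the three deformation results already established in the paper: the family version of McLean's theorem, Donaldson's persistence theorem for nondegenerate $\ZT$ harmonic functions (Theorem \ref{thm_donaldsondeformation}), and the single-structure branched deformation theorem (Theorem \ref{thm_maintheoremversion}). First I would use hypothesis (i) together with \cite[Theorem 5.3]{joycelecturesonSLgeometry05} to extend $[L_{0,0}]$ to a family of special Lagrangian submanifolds $[L_{s,0}]=(L,\iota_{s,0})$ in $(X,J_s,\omega_s,\Omega_s)$ for $|s|<\ep$, with $\iota_{s,0}$ depending smoothly on $s$ and $\iota_{0,0}$ recovered at $s=0$. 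Writing $g_{s,0}:=\iota_{s,0}^{\st}g$ for the induced metric, the smooth dependence on $s$ guarantees that $s\mapsto g_{s,0}$ is continuous in the $\MC^{\infty}$ topology on $\MMmec$, so that $g_{s,0}$ lies in any prescribed neighborhood of $g_{0,0}$ once $|s|$ is small.

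Second, I would feed this into Donaldson's Theorem \ref{thm_donaldsondeformation}. The topological datum $\chi_0$ associated to the nondegenerate $\ZT$ harmonic $1$-form $\al_0$ on $(L,g_{0,0})$ is unchanged along the family, and by the previous paragraph the pair $(g_{s,0},\chi_0)$ remains in the neighborhood $\MU_1$ of $(g_{0,0},\chi_0)$ for small $s$. Theorem \ref{thm_donaldsondeformation} then produces, for each such $s$, a unique codimension-two submanifold $\Sigma_s\in\MU_2$ close to $\Sigma$ whose associated $\ZT$ harmonic $1$-form $\al_s$ on $(L,g_{s,0})$ is again nondegenerate; the remark following that theorem, using continuity of the leading coefficient $B(\Sigma,g,\chi)$, is what secures persistence of nondegeneracy. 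Since each $\Sigma_s$ is isotopic to $\Sigma$, the branched coverings $p_s:\tL\to L$ share a fixed diffeomorphism type, and I may therefore fix the abstract total space $\tL$ once and for all, transporting $\chi_0$ along the isotopy $\Sigma\rightsquigarrow\Sigma_s$.

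Third, for each fixed $s$ I would apply the main branched deformation theorem (Theorem \ref{thm_maintheoremversion}) to the Calabi-Yau structure $(X,J_s,\omega_s,\Omega_s)$, the special Lagrangian $[L_{s,0}]$, and the nondegenerate pair $\mfa_s=\al_s$. This yields a constant $T_s>0$ and a family of special Lagrangian immersions $\tiota_{s,t}:\tL\to X$ for $|t|\le T_s$ with $\lim_{t\to 0}\|\tiota_{s,t}-\iota_{s,0}\circ p_s\|_{\ca}=0$, where the $\ca$ norm is taken with respect to $p_s^{\st}g_{s,0}$. This establishes conclusion (i). For conclusion (ii) I would combine this inner limit with the convergence $\lim_{s\to 0}\|\iota_{s,0}\circ p_s-\iota_{0,0}\circ p\|_{\ca}=0$, which follows from the smooth dependence of $\iota_{s,0}$ on $s$ and the $\MC^{\infty}$-closeness of $\Sigma_s$ to $\Sigma$ (so that $p_s\to p$). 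A triangle inequality gives $\lim_{t\to 0}\|\tiota_{s,t}-\iota_{0,0}\circ p\|_{\ca}=\|\iota_{s,0}\circ p_s-\iota_{0,0}\circ p\|_{\ca}$, and letting $s\to 0$ yields the iterated limit; here I would use that all the cone metrics $p_s^{\st}g_{s,0}$ are uniformly quasi-isometric to a fixed smooth background metric on $\tL$ (Proposition \ref{prop_coneangle4pi} and Theorem \ref{thm_metricconvergence}), so the various $\ca$ norms are comparable and may be measured against one reference metric.

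The main obstacle I anticipate is not the existence of the branched families for each $s$, which is immediate from Theorem \ref{thm_maintheoremversion}, but the bookkeeping that keeps all the data consistent as $s$ varies: identifying $\tL$ across the moving branch loci $\Sigma_s$, transporting the representation $\chi_0$ under the isotopy $\Sigma\rightsquigarrow\Sigma_s$, and verifying that the norms built from the $s$-dependent cone metrics $p_s^{\st}g_{s,0}$ are all comparable to a single fixed norm, so that the outer limit $s\to 0$ is meaningful. Because the statement only asserts the iterated limit, with $t\to 0$ taken first, no uniformity of $T_s$ in $s$ is needed, and it is precisely this feature that lets the argument go through cleanly.
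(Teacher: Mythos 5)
Your proposal is correct and follows essentially the same route as the paper: hypothesis (i) plus Joyce's family deformation theorem gives $[L_{s,0}]$, Donaldson's Theorem \ref{thm_donaldsondeformation} (with the continuity of the leading coefficient $B$) propagates the nondegenerate $\ZT$ harmonic $1$-form to $(g_{s,0},\Sigma_s,\al_s)$, and Theorem \ref{thm_maintheoremversion} applied for each fixed $s$ yields $[\tL_{s,t}]$ with the inner limit $\lim_{t\to 0}\|\tiota_{s,t}-\iota_{s,0}\circ p_s\|_{\ca}=0$, after which the iterated limit follows. Your additional bookkeeping (fixing $\tL$ across the isotopic branch loci $\Sigma_s$ and comparing the $s$-dependent cone-metric norms against one background metric) only makes explicit what the paper leaves implicit.
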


	\subsection{The moduli space of the branched deformation family}
	The moduli space of special Lagrangian submanifold have been studied in \cite{joycelecturesonSLgeometry05,mclean98defomation,hitchin1997modulispace}. In this subsection, we will briefly discuss the meaning of the branched deformation theorem for the moduli space.
	
	Let $[L]=(L,\iota_0)$ be a special Lagrangian submanifold in a Calabi-Yau manifold $(X,J,\omega,\Omega)$. Let $\MM_{[L]}$ be the connected component of the set of special Lagrangian submanifold containing $[L]$. Then McLean's deformation theorem \ref{theorem_classicaldeformation} implies that $\MM_{[L]}$ is a smooth manifold with dimension $b^1(L)$, which is the first Betti number of $L$. 
	
	Suppose there exists a nondegenerate harmonic pair $(\al^+,\al^-)$ on $[L]$, and we write $[\tL_s]=(\tL,\tiota_s)$ be the family of branched deformation constructed in Theorem \ref{thm_maintheoremversion}. We could consider the moduli space $\MM_{[\tL]}$, which is the connected components containing all the $[\tL_s]$, then $\MM_{[\tL]}$ is also a smooth moduli space with dimension $b^1(\tL)$. By Proposition \ref{lem_decompositioninvolutionss}, $b^1(\tL)=b^1_-+b^1(L)$. Therefore, we might enlarge $\MM_{[\tL]}$ to a compactifcation $\overline{\MM_{[\tL]}}$ by adding $\MM_{[L]}$ as a codimension $b^1_-$ boundary. 
	
	The inverse of the above discussions is incorrect. Let $[L]=(L,\iota_0)$ be an embedded special Lagrangian submanifold such that the induced metric on $[L]$ is Ricci positive, then by \cite{abouzaidImagi21,tsai2018mean}, $[L]$ doesn't admit any deformation. However, by Example \ref{ex_branched1from}, in the case when $L=S^3$, we could construct branched covering $p:\tL\to S^3$ such that $b^1_-(\tL)\geq 1$. We could also regard $(\tL,\iota_0\circ p)$ be a special Lagrangian submanfold, which is not immersed. The harmonic 1-forms on $(\tL,\iota_0\circ p)$ will not generate a deformation.

	\subsection{Branched deformation of the zero section}
	In this subsection, we will introduce examples that our deformation theorem could be applied using the Calabi-Yau neighborhood theorem.
	
	\begin{definition}
		Let $(L,g_L)$ be a real analytic Riemannian manifold, let $U_L$ be a neighborhood of the zero section in $T^{\st}L$. A Calabi-Yau neighborhood of $L$ is a Calabi-Yau structure $(U_L,J,\omega,\Omega)$ with Calabi-Yau metric $g$ such that
		\begin{itemize}
			\item [(i)]$\omega$ is the canonical symplectic form,
			\item [(ii)]the restriction of $g$ to the zero section is $g_L$,
			\item [(iii)]the zero section is a special Lagrangian submanifold in this Calabi-Yau structure.
		\end{itemize}
	\end{definition}
	
By Stenzel \cite{stenzel93}, every compact rank 1 global symmetric space admits a Calabi-Yau neighborhood. Moreover, there are more examples.
	
	\begin{theorem}{\cite{bryant2000calibratedembedding,doicu2015}}
		\label{thm_CYstructureoncotangentbundle}
		Let $(L,g_L)$ be a real analytic Riemannian manifold, suppose $\chi(L)=0$, then $L$ admits a Calabi-Yau neighborhood $(U_L,J,\omega,\Omega)$. In addition, let $R$ be the canonical involution on $T^{\st}L$, then $R$ is an anti-holomorphic involution on $U_L$.
	\end{theorem}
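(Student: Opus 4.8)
The plan is to realize the Calabi--Yau neighborhood as a Grauert tube. Since $(L,g_L)$ is real analytic it admits a complexification, a complex manifold $L_{\mathbb{C}}$ containing $L$ as a totally real submanifold, and a neighborhood of $L$ in $L_{\mathbb{C}}$ is biholomorphic (via the metric and the adapted complex structure) to a neighborhood $U_L$ of the zero section in $T^{\st}L$. First I would fix on $U_L$ the canonical symplectic form $\omega$ using Theorem \ref{thm_weinsteinnbhd}, so that condition (i) holds by construction, and take as the zeroth-order model the Sasaki data $(J_0,\omega_0,\Omega_0)$ of Section \ref{sec_sLagequationsovercotangentbundle}. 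The point of starting here is that, by the properties of the pseudo holomorphic volume form and by Proposition \ref{prop_constraintvolumefor}, along the zero section one already has a genuine $SU(n)$-structure: $g_0|_{L_0}=g_L$, $\iota_0^{\st}\Im\Omega_0=0$, $\iota_0^{\st}\Re\Omega_0=\Vol_{g_L}$, and $d\Omega_0|_{\iota_0(L)}=0$. Thus the first-order data required for (ii) and (iii) is present, and the task is to deform $(J_0,\Omega_0)$ into an honest torsion-free Calabi--Yau structure without disturbing these restrictions.

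The candidate holomorphic volume form is produced by analytic continuation: the Riemannian volume form $\Vol_{g_L}$ is a nowhere-vanishing real analytic $n$-form on $L$, and its complexification is a holomorphic $n$-form on $U_L$ restricting to $\Vol_{g_L}$ on $L_0$. Once $\omega$ is fixed, the Calabi--Yau equation \eqref{eq_Calabiyau} becomes a determined, Monge--Amp\`ere type, real analytic system for the complex structure; the natural way to solve it near the totally real $L$ is to set it up as a Cauchy problem with initial data the $SU(n)$-structure above and to invoke the Cauchy--Kovalevskaya theorem. Real analyticity of $g_L$ is exactly what makes the coefficients analytic and the theorem applicable, and this is the route taken in Bryant \cite{bryant2000calibratedembedding} and Doicu \cite{doicu2015}. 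Because the Cauchy data already prescribes $\Im\Omega$ and the normal/symplectic matching along $L$, the solution retains $\iota_0^{\st}\Im\Omega=0$ and $\iota_0^{\st}\Re\Omega=\Vol_{g_L}$, which gives (ii) and (iii).

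To obtain the anti-holomorphic involution I would run the construction $R_0$-equivariantly. A direct check on the Sasaki data shows $(R_0)_{\st}J_0=-J_0(R_0)_{\st}$, $R_0^{\st}\omega_0=-\omega_0$ and $R_0^{\st}\Omega_0=\bar{\Omega}_0$, so the initial $SU(n)$-structure and the complexified volume form are symmetric under $R_0$ in the anti-holomorphic sense. Since the Cauchy--Kovalevskaya solution is unique, applying $R_0$ to a solution produces another solution with the same symmetrized data, and uniqueness forces the solution itself to satisfy $R_0^{\st}\omega=-\omega$, $(R_0)_{\st}J=-J(R_0)_{\st}$, $R_0^{\st}\Omega=\bar{\Omega}$; that is, $R_0$ is anti-holomorphic. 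It is here, in globalizing the local $SU(n)$-reduction and the local Cauchy--Kovalevskaya charts to a structure defined on a full neighborhood of $L$, that the hypothesis $\chi(L)=0$ enters: it removes the Euler-class obstruction coming from the normal bundle $T^{\st}L$ of the zero section and guarantees the requisite global nowhere-vanishing data.

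I expect the main obstacle to be precisely this passage from a formal or chart-local solution to a genuine Calabi--Yau structure on a uniform neighborhood of $L$. One must verify that the Monge--Amp\`ere system is in Cauchy--Kovalevskaya (non-characteristic, involutive) form with the totally real $L$ as initial hypersurface, that the analytic solutions on overlapping charts patch compatibly, and that the resulting Hermitian form is positive definite near $L$ so that $g$ is an actual Calabi--Yau metric rather than merely a formal solution. Controlling positivity and the size of the neighborhood, together with the $\chi(L)=0$ globalization, is the technical heart, and it is the part that is genuinely imported from \cite{bryant2000calibratedembedding,doicu2015}.
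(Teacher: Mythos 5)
The first thing to say is that the paper contains no proof of this statement: it is imported verbatim from Bryant \cite{bryant2000calibratedembedding} and Doicu \cite{doicu2015}, and your write-up, by your own admission, defers the analytic heart (solvability, positivity, globalization) to those same references. So what you have produced is scaffolding around the citation rather than an independent proof. Judged as scaffolding, much of it is faithful: the complexification/Grauert-tube picture, the holomorphic volume form obtained by analytic continuation of $\Vol_{g_L}$, and the symmetry identities $R_0^{\st}\omega_0=-\omega_0$, $(R_0)_{\st}J_0=-J_0(R_0)_{\st}$, $R_0^{\st}\Omega_0=\bar{\Omega}_0$ for the Sasaki model all check out, and matching the canonical conjugation on the complexification with $R_0$ is exactly the kind of equivariant Weinstein statement the paper records as Proposition \ref{prop_antiholomoinvolutionneighborhood}.

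Two steps, however, would fail as you state them. First, Cauchy--Kovalevskaya requires a non-characteristic initial \emph{hypersurface}, whereas $L$ has codimension $n$ in $U_L$; you cannot pose the Monge--Amp\`ere system as a CK Cauchy problem with data on a totally real middle-dimensional submanifold. Bryant's actual tool is the Cartan--K\"ahler theorem for an exterior differential system (built on CK, but requiring an involutivity computation and a regular flag), and this matters for your second step: Cartan--K\"ahler solutions of such a system are \emph{not} unique --- the germ of a Calabi--Yau structure extending the $SU(n)$-data along $L$ depends on free choices in the generality count --- so the argument ``apply $R_0$ to a solution and invoke uniqueness'' is not available. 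The involution must instead be built into the construction, e.g.\ by working with conjugation-symmetric data at every stage (the continued volume form automatically satisfies $\sigma^{\st}\Omega=\bar{\Omega}$, and $\omega$ can be fixed to be the canonical form, which is anti-invariant for free). Finally, your account of $\chi(L)=0$ is a gesture rather than an argument: the continued holomorphic $n$-form is nowhere vanishing for \emph{any} $\chi(L)$, so the obstruction is not where your sketch locates it; by Hopf the hypothesis is equivalent to the existence of a nowhere-vanishing vector field (equivalently $1$-form) on $L$, which is the datum consumed in Doicu's higher-dimensional globalization (in Bryant's case $n=3$ it is automatic), and a genuine proof would have to identify precisely where that section enters.
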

	
	Using our existence theorem, we obtain:
	\begin{theorem}
		\label{thm_Calabiyaunearbycollapsing}
		Let $(L,g_L)$ be a real analytic Riemannian manifold with $\chi(L)=0$, let $(U_L,J,\omega,\Omega)$ be a Calabi-Yau structure constructed in Theorem \ref{thm_CYstructureoncotangentbundle}, suppose there exists a nondegenerate multivalued harmonic form on $(L,g_L)$ with associated branched covering $p:\tL\to L$, then there exists a family of immersed special Lagrangian submanifolds $\tiota_t:\tL\to U_L$ such that $\lim_{t\to 0}\|\tiota_t-\iota_0\circ p\|_{\ca}=0$, where $\iota_0$ is the inclusion of the zero section. In addition, $\iota_t$ will satisfy $R\circ\tiota_t=\tiota_t\circ \sigma.$
	\end{theorem}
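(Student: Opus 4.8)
The plan is to obtain this theorem as a direct application of the branched deformation theorem for a fixed real locus, namely Theorem \ref{thm_maintheoremversion} together with its symmetric refinement in Section \ref{sec_nearbyspecialLagrangianfamily}, once the hypotheses are matched against the Calabi-Yau neighborhood supplied by Theorem \ref{thm_CYstructureoncotangentbundle}. First I would record that the zero section $[L]=(L,\iota_0)$ is a special Lagrangian submanifold of $(U_L,J,\omega,\Omega)$: this is exactly condition (iii) in the definition of a Calabi-Yau neighborhood, so no separate argument is needed.

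Next I would verify that $[L]$ is locally a fixed real locus in the sense of Definition \ref{def_locallyantiholomorphicinvolution}. Theorem \ref{thm_CYstructureoncotangentbundle} asserts that the canonical involution $R=R_0:(x,v)\mapsto(x,-v)$ is anti-holomorphic for the pull-back Calabi-Yau structure on $U_L$. Since $\omega$ is the canonical symplectic form, $R_0^{\st}\omega=-\omega$ holds automatically, and $R_0^{\st}g=g$ follows because $R_0$ is an isometry restricting to $g_L$ on the zero section; combined with $R_{0\st}J=-JR_{0\st}$ and $R_0^{\st}\Omega=\bar\Omega$, which together constitute anti-holomorphicity, this places us exactly in the setting of Definition \ref{def_locallyantiholomorphicinvolution}. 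By Proposition \ref{prop_antiholomoinvolutionneighborhood} I may then arrange the Weinstein neighborhood to be $R_0$-invariant, as required by the fixed-real-locus construction.

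With these two structural facts in hand, I would regard the given nondegenerate multivalued harmonic form as a nondegenerate $\ZT$ harmonic 1-form $\al^-$, hence as the nondegenerate harmonic pair $\mfa=\al^++\al^-$ with $\al^+=0$, and apply Theorem \ref{thm_maintheoremversion} to produce a family of special Lagrangian immersions $\tiota_t:\tL\to U_L$ with $\lim_{t\to 0}\|\tiota_t-\iota_0\circ p\|_{\ca}=0$. The symmetry $R\circ\tiota_t=\tiota_t\circ\sigma$ is supplied by the symmetric theory: Theorem \ref{thm_approximatefamilyinvolution} guarantees that the approximate family can be chosen with $\mfA_t=\mfA_t^-\in T^{\st}L\otimes\MI$ and $R_0\circ\tiota_t^{\app}=\tiota_t^{\app}\circ\sigma$, while Corollary \ref{cor_invariaitonslequationinvolution} together with the uniqueness of the contraction-mapping fixed point forces the correction $df_t$ to obey $df_t=-\sigma^{\st}df_t$, so the final immersion inherits $R\circ\tiota_t=\tiota_t\circ\sigma$.

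Essentially all of the analytic work is already packaged into the earlier theorems, so the only step demanding genuine care is the verification that the anti-holomorphic involution furnished by Theorem \ref{thm_CYstructureoncotangentbundle} satisfies all four compatibility relations, so that $R=R_0$ truly realizes $[L]$ as a local fixed real locus and the symmetric branched deformation theorem applies verbatim. Once this identification is secured the conclusion is immediate, and I do not expect any further analytic obstacle beyond bookkeeping of the norms under $p^{\st}g_L$.
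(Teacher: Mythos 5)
Your proposal is correct and follows essentially the same route the paper intends: the theorem is the direct combination of Theorem \ref{thm_CYstructureoncotangentbundle} (which realizes the zero section as a special Lagrangian fixed real locus of $R=R_0$), the main Theorem \ref{thm_maintheoremversion} applied to the pair $\mfa=\al^-$ with $\al^+=0$, and the fixed-real-locus symmetry theorem of Section \ref{sec_nearbyspecialLagrangianfamily} built on Theorem \ref{thm_approximatefamilyinvolution} and Corollary \ref{cor_invariaitonslequationinvolution}. Your only inessential detour is invoking Proposition \ref{prop_antiholomoinvolutionneighborhood}: since the Calabi-Yau structure here already lives on $U_L\subset T^{\st}L$ with $\omega$ the canonical symplectic form, an $R_0$-invariant Weinstein neighborhood is obtained simply by taking $U_L$ of the form $\{\al:\|\al\|_{\MC^0}\leq s\}$, which is automatically preserved by $(x,v)\mapsto(x,-v)$.
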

	
	By \cite{he21z3symmetry}, the author construct nondegenerate $\ZT$ harmonic 1-forms over some rational homology 3-spheres. As a rational homology 3-sphere has vanishing first Betti number, it is rigid in McLean sense as in Theorem \ref{theorem_classicaldeformation}. Therefore, we obtain the following.
	\begin{corollary}
		There exist special Lagrangian submanifolds that are rigid in classical sense but could have branched deformations.
	\end{corollary}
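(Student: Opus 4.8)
The plan is to assemble the corollary from ingredients already in place: the existence of nondegenerate $\ZT$ harmonic 1-forms on certain rational homology 3-spheres, the Calabi-Yau neighborhood theorem, the main branched deformation theorem, and the McLean rigidity of submanifolds with vanishing first Betti number. The argument is essentially an assembly, so the work lies in checking that the hypotheses of these results can be met simultaneously.

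First I would fix, following \cite{he21z3symmetry}, a closed oriented rational homology 3-sphere $L$, a link $\Sigma \subset L$ with $\det(\Sigma) = 0$, and a metric $g_L$ for which $L$ carries a nondegenerate $\ZT$ harmonic 1-form with associated double branched covering $p: \tL \to L$. Since such forms are produced for generic metrics, while the nondegeneracy condition is open under perturbation of the metric by Theorem \ref{thm_donaldsondeformation} and its subsequent remark, and real analytic metrics are dense, I would arrange in addition that $g_L$ is real analytic, so that the hypotheses of the Calabi-Yau neighborhood theorem are satisfied.

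Next, because $L$ is a closed odd-dimensional manifold its Euler characteristic vanishes, $\chi(L) = 0$, so Theorem \ref{thm_CYstructureoncotangentbundle} yields a Calabi-Yau structure $(U_L, J, \omega, \Omega)$ on a neighborhood of the zero section of $T^{\st}L$ in which the zero section $\iota_0: L \to U_L$ is special Lagrangian and the canonical involution $R$ is anti-holomorphic. I would then feed the nondegenerate $\ZT$ harmonic 1-form into Theorem \ref{thm_Calabiyaunearbycollapsing} to obtain a family of immersed special Lagrangian submanifolds $\tiota_t: \tL \to U_L$ with $\lim_{t\to 0}\|\tiota_t - \iota_0\circ p\|_{\ca} = 0$ and $R\circ \tiota_t = \tiota_t\circ \sigma$. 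This family is a nontrivial branched deformation of $2\iota_0(L)$.

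Finally I would record that $\iota_0(L)$ is rigid in the classical McLean sense: since $L$ is a rational homology sphere we have $b_1(L) = 0$, so Theorem \ref{theorem_classicaldeformation} shows the graphic deformations of $\iota_0(L)$ are parameterized by the harmonic 1-forms on $(L, g_L)$, a zero-dimensional space, whence there are no nontrivial McLean deformations. Thus $\iota_0(L)$ is classically rigid yet admits the branched deformation family just constructed, which proves the corollary. The only nonroutine point I anticipate is reconciling the real-analyticity demanded by Theorem \ref{thm_CYstructureoncotangentbundle} with the genericity used in \cite{he21z3symmetry}, which is handled by the density of real analytic metrics together with the openness of the nondegeneracy condition noted above.
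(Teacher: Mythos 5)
Your proof follows essentially the same route as the paper: examples of nondegenerate $\ZT$ harmonic 1-forms on rational homology 3-spheres from \cite{he21z3symmetry}, the Calabi-Yau neighborhood theorem (using $\chi(L)=0$), the branched deformation theorem in the form of Theorem \ref{thm_Calabiyaunearbycollapsing}, and McLean rigidity from $b_1(L)=0$. Your additional step reconciling the genericity of the metric in \cite{he21z3symmetry} with the real analyticity required by Theorem \ref{thm_CYstructureoncotangentbundle} (via density of real analytic metrics and the openness of nondegeneracy from Theorem \ref{thm_donaldsondeformation}) is a point the paper simply asserts, so your treatment is if anything more careful.
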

	
	\subsection{Topological constraint of the existence of nondegenerate $\ZT$ harmonic 1-forms}
	Let $(L,g)$ be a Riemannian manifold and $(\Sigma,\MI,\al)$ be a nondegenerate $\ZT$ harmonic 1-form. By the work of Taubes \cite{Taubes20133manifoldcompactness,taubes2013compactness}, nondegenerate $\ZT$ harmonic 1-forms are the ideal boundary of the gauge theory compactification of $\mathrm{PSL}(2,\mathbb{C})$ flat connection and it is still very mystery. We would like to understand the following question.
	\begin{question}
		Is there any topological constraint to the existence of nondegenerate $\ZT$ harmonic 1-forms? 
	\end{question}
	
	Using the branched deformation theorem we proved, we could actually give a topological constraint for the existence of nondegenerate $\ZT$ harmonic 1-forms. Our starting point come from the work of Abouzaid and Imagi \cite{abouzaidImagi21}.
	\begin{theorem}{\cite{abouzaidImagi21}}
		\label{thm_nearbyspecialLagrangin}
		Let $U_L$ be a neighborhood of the zero section of $T^{\st}L$, and $(U_L,J,\omega,\Omega)$ be a Calabi-Yau structure such that the zero section $[L_0]$ a special Lagrangian submanifold, then for any unobstructed nearby immersed special Lagrangian $[L']$ which is $\MC^0$ close to $[L_0]$,
		\begin{itemize}
			\item [(i)] suppose $\pi_1(L)$ is finite, then $[L']=[L_0]$,
			\item [(ii)] suppose $\pi_1(L)$ has no nonabelian free subgroup, then $[L']$ is an unbranched deformation of $[L_0]$ that constructed in Corollary \ref{cor_unbranchedcoevering}.
		\end{itemize}
	\end{theorem}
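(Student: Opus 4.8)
The plan is to establish this as a Floer-theoretic rigidity statement, following the strategy of Abouzaid--Imagi. The starting observation is that inside the Weinstein neighborhood $U_L\subset T^{\st}L$ the symplectic form is the canonical exact form $\omega=d\lambda$, so a Lagrangian that is $\MC^0$-close to the zero section $[L_0]$ lifts to the space of closed $1$-forms; combined with the special Lagrangian condition (the Lagrangian angle vanishes) and the vanishing of the relevant cohomology classes restricted to $[L_0]$, one arranges that both $[L_0]$ and the competitor $[L']$ are \emph{exact} immersed Lagrangians. The role of unobstructedness is precisely that $[L']$ then defines an object of the immersed Fukaya category equipped with a bounding cochain, so that its self-Floer cohomology and the Floer cohomology $HF^{\st}([L_0],[L'])$ are well defined.

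First I would prove a local, energy-based version of Floer's comparison theorem: because $[L']$ is $\MC^0$-close to $[L_0]$, all holomorphic strips contributing to $HF^{\st}([L_0],[L'])$ have small energy and stay inside $U_L$, so a maximum-principle and monotonicity argument identifies the Floer differential with a Morse-type differential and yields $HF^{\st}([L_0],[L'])\cong H^{\st}(L_0)$ up to grading shift, \emph{after} accounting for the bounding cochain and the immersed self-intersection generators. In particular this group is nonzero, so $[L']$ cannot be displaced off $[L_0]$ and the two Lagrangians are ``Floer-theoretically equal.''

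Next I would upgrade this homological coincidence to a geometric one. Projecting $[L']\subset U_L$ to $L$ exhibits it as a possibly multivalued graph; the special condition makes it minimal and volume-minimizing in its homology class, while the Floer computation forces the projection to be a covering map onto $[L_0]$ rather than a genuinely transverse perturbation. Here the hypotheses on $\pi_1(L)$ enter decisively: the covering $[L']\to [L_0]$ corresponds to a subgroup of $\pi_1(L)$, and when $\pi_1(L)$ is finite the only exact unobstructed lift that can carry the nonzero Floer class is $[L_0]$ itself, giving (i); when $\pi_1(L)$ has no nonabelian free subgroup the covering is forced to be one of the unbranched coverings of Corollary \ref{cor_unbranchedcoevering}, giving (ii).

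The main obstacle is the Floer theory for \emph{immersed}, merely unobstructed, and low-regularity special Lagrangians. One must set up the $A_\infty$ and bounding-cochain formalism so that the self-intersection points of $[L']$ are controlled, verify that the small-energy identification of the differential survives the presence of these immersed generators, and show that the resulting rigidity statement is robust under the only-$\MC^{1,\gamma}$ regularity of the competitor. Converting the Floer-homological conclusion into the precise covering-type statement, and pinning down exactly which $\pi_1$-subgroups are admissible, is the delicate final step; this is where the two distinct hypotheses on $\pi_1(L)$ produce the two separate conclusions, and it is the part that genuinely requires the Abouzaid--Imagi machinery rather than soft calibrated-geometry arguments.
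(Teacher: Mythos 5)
You should first be aware that the paper contains no proof of this statement: Theorem \ref{thm_nearbyspecialLagrangin} is imported verbatim from Abouzaid--Imagi \cite{abouzaidImagi21} and used as a black box (it feeds into Proposition \ref{prop_obstructionforexistence} and Theorem \ref{thm_secondhomologyvanishes}). So there is no internal argument to compare yours against; what can be judged is whether your sketch is a viable reconstruction of the Abouzaid--Imagi proof, and as it stands it is not, because its central computation is false.

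The decisive error is the claim that $\MC^0$-closeness forces $HF^{\st}([L_0],[L'])\cong H^{\st}(L_0)$, hence nonzero, hence that $[L']$ ``cannot be displaced off $[L_0]$.'' Take $L=T^n$ with the flat Calabi--Yau structure on $T^{\st}T^n$: the graphs of constant (= harmonic) $1$-forms are special Lagrangian, $\MC^0$-close to the zero section, unobstructed, and \emph{disjoint} from it, so $HF^{\st}([L_0],[L'])=0$. These are exactly the unbranched (McLean) deformations that conclusion (ii) permits, and note $\pi_1(T^n)=\mathbb{Z}^n$ has no nonabelian free subgroup, so this is squarely inside the hypotheses. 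The same example defeats your preliminary step as well: the graph of a non-exact closed $1$-form is not an exact Lagrangian (the Liouville form restricts to that $1$-form), so one cannot ``arrange that both $[L_0]$ and $[L']$ are exact''; the flux of $[L']$ is precisely the cohomological data the theorem must accommodate, not eliminate. What Abouzaid--Imagi actually prove is structural rather than a nonvanishing statement: the unobstructed object $[L']$ of the Fukaya category of $U_L$ is isomorphic to the zero section equipped with a local system (a module over chains on the based loop space, of rank possibly greater than one and with nontrivial monodromy over the Novikov field); the hypotheses on $\pi_1(L)$ are then used to show this local system comes from an honest finite cover (this is where serious input such as the Tits alternative for the linear monodromy group enters); and finally a Thomas--Yau type uniqueness argument for same-phase special Lagrangians -- degree constraints on Floer generators at intersection points of distinct special Lagrangians -- upgrades the categorical isomorphism to the geometric conclusion. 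Your sketch names these last two steps as ``decisive'' and ``delicate'' but supplies no mechanism for either, and the Floer-theoretic step it does supply is contradicted by the simplest example the theorem is designed to cover.
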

	The definition of a Lagrangian submanifold is unobstructed is in \cite{fooo09}, which is not relevant to most of the parts in the paper, so we omit the definition. We will list a few cases where the unobstructed condition is satisfied. Let $(L,g)$ be a Riemannian manifold, suppose there exists a nondegenerate $\ZT$ harmonic 1-form on $L$ and suppose there exists a Calabi-Yau neighborhood $(U_L,J,\omega,\Omega)$. We write $[L_0]=(L,\iota_0)$ be the inclusion of the zero section. Then by Theorem \ref{thm_Calabiyaunearbycollapsing}, we obtain a family of special Lagrangian submanifolds $[\tL_t]$ which will be $\MC^{0}$ close to $[L_0]$. If any $[\tL_t]$ is unobstructed and suppose $\pi_1(L)$ is either finite or has no nonabelian free subgroup, this will lead to a contradiction based on Theorem \ref{thm_nearbyspecialLagrangin}. 
	
	\begin{proposition}
		\label{prop_obstructionforexistence}
		Let $(L,g_L)$ be a real analytic Riemannian manifold and $(\Sigma,\MI,\al)$ be a multivalued harmonic 1-form on $L$ with associate branched covering $p:\tL\to L$. Suppose 
		\begin{itemize}
			\item[(i)] $\pi_1(L)$ is finite or contains no nonablian free subgroup,
			\item[(ii)] $(L,g_L)$ has a Calabi-Yau neighborhood $(U_L,J,\omega,\Omega)$ in $T^{\st}L$,
			\item [(iii)] every special Lagrangian submanifold over $U_L$ which is diffeomorphic to $\tL$ is unobstructed, 
		\end{itemize}
		then $(\Sigma,\MI,\al)$ is not nondegenerate. In addition, suppose the canonical involution $R$ on $T^{\st}L$ induces an anti-holomorphic involution, then (iii) could be changed to 
		\begin{itemize}
			\item [(iii)$'$]every $R$-invariant special Lagrangian submanifold over $U_L$ which is diffeomorphic to $\tL$ is unobstructed.
		\end{itemize}
	\end{proposition}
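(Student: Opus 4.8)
The plan is to argue by contradiction: assume that $(\Sigma,\MI,\al)$ \emph{is} a nondegenerate $\ZT$ harmonic $1$-form, construct the branched deformation family from it, and then contradict the Abouzaid--Imagi rigidity theorem (Theorem \ref{thm_nearbyspecialLagrangin}). First I would record two consequences of nondegeneracy that will eventually supply the contradiction. Since a nondegenerate form has nonempty branch locus $\Sigma$ and monodromy $-1$, Proposition \ref{prop_equivalentofconcepts} produces a nonzero class in $H^1(\tL;\RR)^-$; together with Lemma \ref{lem_decompositioninvolutionss} and Corollary \ref{cor_bettinumberfirst} this gives $b_1(\tL)=b_1(L)+b_1^-(\tL)>b_1(L)$, so in particular $\tL$ is \emph{not} diffeomorphic to $L$. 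Moreover the covering $p:\tL\to L$ genuinely ramifies along $\Sigma$, which will be the feature distinguishing $\tL$ from any unbranched covering of $L$.

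Next I would use the Calabi--Yau neighborhood supplied by hypothesis (ii) and apply the main existence theorem (Theorem \ref{thm_maintheoremversion}) to the assumed nondegenerate form. This produces, for all sufficiently small $t$, a family of immersed special Lagrangian submanifolds $[\tL_t]=(\tL,\tiota_t)$ diffeomorphic to $\tL$, with $\|\tiota_t-\iota_0\circ p\|_{\ca}\to 0$ and $\tiota_t(\tL)\to 2\iota_0(L)$ as currents. For small $t$ each $[\tL_t]$ is $\MC^0$-close to the zero section $[L_0]$, and by hypothesis (iii) each $[\tL_t]$ is unobstructed, so the hypotheses of Theorem \ref{thm_nearbyspecialLagrangin} are met.

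I then split on hypothesis (i). If $\pi_1(L)$ is finite, Theorem \ref{thm_nearbyspecialLagrangin}(i) forces $[\tL_t]=[L_0]$, hence an identification $\tL\cong L$, contradicting $b_1(\tL)>b_1(L)$. If $\pi_1(L)$ contains no nonabelian free subgroup, Theorem \ref{thm_nearbyspecialLagrangin}(ii) forces $[\tL_t]$ to be an unbranched McLean deformation as in Corollary \ref{cor_unbranchedcoevering}, whose domain is an unbranched covering of $L$; but the current convergence $\tiota_t(\tL)\to 2\iota_0(L)$ pins the degree at $2$ and the domain at the branched double cover $\tL$, which ramifies along the nonempty $\Sigma$ and is therefore not an unbranched double cover. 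I expect this second case to be the main obstacle, because ruling out a diffeomorphism $\tL\cong\tL'$ to an unbranched cover is the genuinely topological/geometric content. The cleanest way to make it rigorous is quantitative rather than purely topological: by Proposition \ref{prop_secondfundamentalform}(ii) the second fundamental form of $[\tL_t]$ blows up like $t^{-2}$ near $\Sigma$, whereas an unbranched McLean deformation to which the limit $2[L]$ confines us (forcing the deformation parameter to $0$) has uniformly bounded second fundamental form; this blow-up therefore cannot coexist with being an unbranched deformation. Either case contradicts the standing assumption, proving that $(\Sigma,\MI,\al)$ is not nondegenerate.

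Finally, for the $R$-invariant version with (iii)$'$: if the canonical involution $R$ is anti-holomorphic on $U_L$, then the zero section is locally a fixed real locus in the sense of Definition \ref{def_locallyantiholomorphicinvolution}, so by Theorem \ref{thm_approximatefamilyinvolution} and the fixed-real-locus construction of Section \ref{sec_nearbyspecialLagrangianfamily} the constructed family satisfies $R\circ\tiota_t=\tiota_t\circ\sigma$; that is, each $[\tL_t]$ is $R$-invariant. Hence the weaker hypothesis (iii)$'$ already yields unobstructedness of the relevant submanifolds, and the same contradiction via Theorem \ref{thm_nearbyspecialLagrangin} goes through verbatim.
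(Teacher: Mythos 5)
Your proposal follows the paper's route exactly: assume nondegeneracy, run the branched deformation construction (Theorem \ref{thm_maintheoremversion}, packaged as Theorem \ref{thm_Calabiyaunearbycollapsing}) inside the Calabi--Yau neighborhood, and contradict the Abouzaid--Imagi rigidity statement (Theorem \ref{thm_nearbyspecialLagrangin}), with the $R$-invariant addendum handled via Theorem \ref{thm_approximatefamilyinvolution} just as in the paper; indeed the paper's own ``proof'' is only the informal paragraph preceding the proposition, and you supply strictly more detail, in particular in case (ii), which the paper leaves implicit.

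The one substantive flaw is in your quantitative resolution of case (ii). Proposition \ref{prop_secondfundamentalform}(ii) gives only the \emph{upper} bound $|K^t|\leq C(r+t^2)^{-1}\leq Ct^{-2}$, and it is stated for the approximate family $[\tL_t^{\app}]$, not for the perturbed exact solutions; so the claim that the second fundamental form of $[\tL_t]$ ``blows up like $t^{-2}$'' is not available from the paper as cited. The blow-up is in fact true --- nondegeneracy forces the leading local model $w^2=t^2z$, whose induced metric $(4\tilde r^2+t^2)(d\tilde r^2+\tilde r^2 d\theta^2)$ has Gauss curvature of order $t^{-4}$ at the branch point, and the $\MC^{1,\ga}$-smallness of $df_t$ from Proposition \ref{prop_familyexistence} transfers this to the exact family --- but that lower-bound computation is not in the paper and would have to be added. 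A cleaner repair avoids curvature entirely: an unbranched deformation as in Corollary \ref{cor_unbranchedcoevering} is the graph of a $1$-form over an honest covering $p':\tL'\to L$, so composing the immersion with the projection $\pi:T^{\st}L\to L$ yields a genuine covering map with constant fiber cardinality; for the branched family, $(\tiota_t)_{\st}\pa_{\tz}$ is vertical along $\Sigma$ (Proposition \ref{prop_immersion}), so $d(\pi\circ\tiota_t)$ degenerates along the nonempty branch locus and the fiber of $\pi\circ\tiota_t$ drops from $2$ to $1$ there, which is incompatible with $[\tL_t]$ being any unbranched graph. With either repair your argument is complete; everything else --- the $b_1(\tL)>b_1(L)$ contradiction in case (i) via Corollary \ref{cor_bettinumberfirst}, the use of hypothesis (iii) for the diffeomorphism type $\tL$, and the observation that with (iii)$'$ the constructed family is itself $R$-invariant so the weaker hypothesis suffices --- is correct and matches the paper.
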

	
	By \cite[Proposition 2.8]{abouzaidImagi21}, the condition that $L$ has no nonabelian free subgroup is a very strong assumption, $L$ only have the following four possibility: (i) $\pi_1(L)$ is finite, (ii)$L\cong S^1\times S^2$, (iii)$L\cong RP^3\sharp RP^3$, (iv) there exists a finite covering $L'\to L$ where $L'$ is the mapping torus of an orientation preserving diffeomorphism $T^2\to T^2$.

	In general, it is very hard to check whether a Lagrangian submanifold is unobstructed or not. By the work of FOOO \cite{fooo09}, there is a topological condition one could verify for the obstruction condition.
	\begin{proposition}{\cite{fooo09}}
		Let $(\tL,\tiota:\tL\to U_L)$ be a Lagrangian submanifold, suppose $\tiota^{\st}:H^2(U_L;\mathbb{R})\to H^2(\tL;\mathbb{R})$ is surjective, then $(\tL,\tiota)$ is unobstructed.
	\end{proposition}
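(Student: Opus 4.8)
The final statement is the unobstructedness criterion of Fukaya--Oh--Ohta--Ono \cite{fooo09}, and the plan is to run their obstruction theory for the Maurer--Cartan equation, using the hypothesis on $\tiota^{\st}$ to cancel the obstruction classes by a bulk deformation coming from $H^2(U_L)$. First I would recall the input that \cite{fooo09} provides: to the relatively spin immersed Lagrangian $(\tL,\tiota)$ one associates a filtered $A_\infty$-algebra structure $\{\mathfrak{m}_k\}_{k\geq 0}$ on $H^{\st}(\tL;\Lambda_0)$, constructed from the virtual (Kuranishi) counts of pseudo-holomorphic disks in $U_L$ with boundary on $\tiota(\tL)$, where $\Lambda_0$ is the Novikov ring with positive part $\Lambda_+$. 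By definition $(\tL,\tiota)$ is unobstructed exactly when the Maurer--Cartan equation $\sum_{k\geq 0}\mathfrak{m}_k(b,\dots,b)\equiv 0$ admits a bounding cochain $b\in H^{\mathrm{odd}}(\tL;\Lambda_+)$. With the FOOO conventions $\mathfrak{m}_k$ has degree $2-k$ in the shifted grading, so for $b$ of cohomological degree $1$ the curvature $\sum_k\mathfrak{m}_k(b^{\otimes k})$ lands in $H^2(\tL;\Lambda_0)$; thus the entire obstruction to solving Maurer--Cartan is valued in $H^2(\tL;\mathbb{R})\otimes\Lambda_0$, which is precisely the group appearing in the hypothesis.

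I would then solve the equation inductively along the energy filtration of $\Lambda_0$. Ordering the disk classes by symplectic area $0<E_1<E_2<\cdots$, one produces a partial bounding cochain $b_{(j)}$ solving the equation modulo $T^{E_{j+1}}$; the failure to promote it is encoded in a single obstruction class $\mathfrak{o}_{j+1}\in H^2(\tL;\mathbb{R})$ (the coefficient of $T^{E_{j+1}}$ in the curvature of $b_{(j)}$). The step $b_{(j)}\rightsquigarrow b_{(j+1)}$ succeeds iff $\mathfrak{o}_{j+1}$ can be arranged to vanish using the deformation freedom available at that order. Because $\mathfrak{m}_1$ vanishes on cohomology, varying the degree-$1$ cochain $b$ alone cannot hit $H^2$, so the essential extra freedom must come from the ambient manifold.

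The cancellation mechanism is therefore the bulk deformation by classes in $H^2(U_L)$ together with the divisor axiom. Given $\mathfrak{b}\in H^2(U_L;\Lambda_+)$, the bulk-deformed operations $\mathfrak{m}^{\mathfrak{b}}_k$ are defined by inserting copies of $\mathfrak{b}$ at interior marked points, and the divisor axiom identifies the first-order effect on the curvature as $\sum_{\beta}T^{\omega(\beta)}\langle \mathfrak{b},\beta\rangle\,\mathfrak{m}_{0,\beta}(1)$, where $\beta$ runs over disk classes and $\langle\,,\,\rangle$ is the pairing of $\mathfrak{b}$ with $\beta\in H_2(U_L,\tL)$. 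At the lowest offending energy this contributes the class $\tiota^{\st}\mathfrak{b}$ (paired against the leading disk count) to the obstruction in $H^2(\tL)$. Since $\tiota^{\st}:H^2(U_L;\mathbb{R})\to H^2(\tL;\mathbb{R})$ is surjective, I can choose the leading component of $\mathfrak{b}$ so that $\tiota^{\st}\mathfrak{b}$ cancels $\mathfrak{o}_{1}$, and then iterate: at each energy level the undetermined coefficient of $\mathfrak{b}$ adjusts $\mathfrak{o}_{j+1}$ by an arbitrary element of $\operatorname{im}\tiota^{\st}=H^2(\tL;\mathbb{R})$, so every obstruction can be killed. The limit produces a (bulk-deformed) bounding cochain, whence $(\tL,\tiota)$ is unobstructed, which is the form of unobstructedness needed to feed Theorem \ref{thm_nearbyspecialLagrangin}.

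The hard part will not be the formal induction but the analytic foundations underlying it: the virtual transversality making $\mathfrak{m}_k$ and $\mathfrak{m}^{\mathfrak{b}}_k$ well defined and the precise verification of the divisor axiom with Kuranishi structures, together with the degree bookkeeping that pins the obstruction in $H^2(\tL)$ exactly. All of this is carried out in \cite{fooo09}, so in practice I would cite their construction for the $A_\infty$-package and restrict the new argument to the surjectivity-driven cancellation of the obstruction classes described above.
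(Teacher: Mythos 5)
First, a point about the comparison you were asked to survive: the paper does not prove this proposition at all --- it is quoted verbatim from \cite{fooo09} --- so the only meaningful benchmark is the argument in FOOO itself. That argument is indeed the scheme you outline: place the obstructions of the (canonical-model) Maurer--Cartan equation in $H^2(\tL;\mathbb{R})\otimes\Lambda_0$, induct over the energy filtration, and use a bulk class $\mathfrak{b}\in H^2(U_L;\Lambda_+)$ together with surjectivity of $\tiota^{\st}$ to kill the obstruction at each energy. Your skeleton matches the cited source.

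However, the cancellation mechanism as you state it is wrong, and the error is exactly at the step where surjectivity is supposed to enter. The divisor axiom applies to \emph{nonconstant} disk classes and gives $\mathfrak{q}_{1,0;\beta}(\mathfrak{b})=\langle\mathfrak{b},\beta\rangle\,\mathfrak{m}_{0,\beta}(1)$ for $\beta\neq 0$; summed over insertions this rescales each existing contribution $\mathfrak{m}_{0,\beta}(1)$ by the factor $\exp(\langle\mathfrak{b},\beta\rangle)$, which is a unit in $\Lambda_0$. Units never vanish, so the divisor-axiom term alone can never cancel an obstruction, and it does not produce the class $\tiota^{\st}\mathfrak{b}$ ``paired against the leading disk count'' --- it only produces scalar multiples of the classes you are trying to kill. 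The term that actually realizes $\tiota^{\st}\mathfrak{b}$ is the \emph{constant-disk} contribution, precisely the $\beta=0$ case excluded from the divisor axiom: the moduli space of constant disks with one interior input and one boundary output computes the classical restriction, $\mathfrak{q}_{1,0;\beta=0}(\mathfrak{b})=\tiota^{\st}\mathfrak{b}$. With that correction your induction goes through: write $\mathfrak{b}=\sum_j T^{E_j}\mathfrak{b}_j$ and at each energy choose $\mathfrak{b}_j$ with $\tiota^{\st}\mathfrak{b}_j=-\mathfrak{o}_j$ modulo lower-order corrections, using surjectivity once per level. Two further caveats you should record: (a) this produces a bounding cochain for the bulk-deformed operations $\mathfrak{m}_k^{\mathfrak{b}}$, i.e.\ $\mathfrak{b}$-unobstructedness, so you must either adopt the FOOO convention under which this counts as unobstructed or verify that Theorem \ref{thm_nearbyspecialLagrangin} tolerates bulk; (b) your degree bookkeeping confining all obstructions to $H^2$ tacitly uses that $\tL$ is graded (Maslov zero) --- true in the present Calabi--Yau setting, but for a general Lagrangian negative-Maslov disks contribute obstruction classes in higher even degrees, and surjectivity on $H^2$ alone would not suffice.
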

	
	As a corollary, we obtain extra constraints of nondegenerate $\ZT$ harmonic 1-form.
	\begin{theorem}
		\label{thm_secondhomologyvanishes}
		Let $(L,g_L)$ be a real analytic Riemannian manifold and let $(\Sigma,\MI,\al)$ be a nondegenerate $\ZT$ harmonic 1-form on $L$ with branched covering $p:\tL\to L$. Suppose 
		\begin{itemize}
			\item [(i)] $\pi_1(L)$ is either finite or contains no nonabelian free subgroup,
			\item [(ii)] $(L,g_L)$ has a Calabi-Yau neighborhood on $T^{\st}L$,
		\end{itemize}
		then $b_2(\tL)>b_2(L)$, where $b_2$ is the second Betti number.
	\end{theorem}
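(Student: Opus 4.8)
The plan is to prove this by contradiction, after first reducing the desired inequality to the nonvanishing of the $\sigma$-antiinvariant part of $H^2(\tL;\mathbb{R})$. By Lemma \ref{lem_decompositioninvolutionss} the pullback $p^{\st}$ identifies $H^2(L;\mathbb{R})$ with $H^2(\tL;\mathbb{R})^+$, so $b_2(\tL) = b_2(L) + \dim H^2(\tL;\mathbb{R})^-$ and in particular $b_2(\tL) \geq b_2(L)$ always holds. Hence the theorem is equivalent to $H^2(\tL;\mathbb{R})^- \neq 0$, and it suffices to rule out $H^2(\tL;\mathbb{R})^- = 0$. So I would assume $H^2(\tL;\mathbb{R})^- = 0$, i.e. $H^2(\tL;\mathbb{R}) = \mathrm{Im}\,p^{\st} = H^2(\tL;\mathbb{R})^+$, and seek a contradiction.

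Using hypothesis (ii) I would invoke the main existence theorem \ref{thm_maintheoremversion}, applied to the nondegenerate harmonic pair $\mfa = \al^- $ (with $\al^+ = 0$, which is harmonic, so $(0,\al^-)$ is a nondegenerate harmonic pair in the sense of Definition \ref{def_nondegenerate}) over the zero section $[L_0]=(L,\iota_0)$, which is special Lagrangian inside the Calabi--Yau neighborhood. This yields the branched deformation family $\tiota_t:\tL \to U_L$ with $\lim_{t\to 0}\|\tiota_t - \iota_0\circ p\|_{\ca} = 0$ and with $\pi\circ\tiota_t = p$ for the bundle projection $\pi$. For small $t$ the $\MC^0$-closeness makes $\tiota_t$ homotopic to $\iota_0\circ p$, so on second cohomology $\tiota_t^{\st} = p^{\st}\iota_0^{\st}$; since the zero-section inclusion $\iota_0$ is a homotopy equivalence, $\iota_0^{\st}$ is an isomorphism, and therefore $\mathrm{Im}\,\tiota_t^{\st} = \mathrm{Im}\,p^{\st} = H^2(\tL;\mathbb{R})^+$, which equals all of $H^2(\tL;\mathbb{R})$ under the standing assumption. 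Thus $\tiota_t^{\st}:H^2(U_L;\mathbb{R})\to H^2(\tL;\mathbb{R})$ is surjective, and the criterion of \cite{fooo09} quoted above shows that $[\tL_t]$ is unobstructed.

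Next I would feed this into the Abouzaid--Imagi rigidity theorem \ref{thm_nearbyspecialLagrangin} together with hypothesis (i). Since $[\tL_t]$ is unobstructed and $\MC^0$-close to $[L_0]$, it must either equal $[L_0]$ or be an unbranched deformation of the type built in Corollary \ref{cor_unbranchedcoevering}. The contradiction in both cases comes from the fact that $[\tL_t]$ is \emph{genuinely branched}: its projection $\pi\circ\tiota_t = p$ is the double branched cover, which fails to be a local homeomorphism along $\Sigma$. If $[\tL_t]=[L_0]$, the underlying manifolds would agree up to diffeomorphism, contradicting $b_1(\tL) > b_1(L)$ from Corollary \ref{cor_bettinumberfirst}. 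If $[\tL_t]$ were an unbranched deformation over an unbranched covering $q:\tL\to L$, then equality of the two immersed special Lagrangians forces $p = q\circ\psi$ for a diffeomorphism $\psi$ of $\tL$, so $p$ would itself be an unbranched covering, impossible since $p$ branches along $\Sigma\neq\emptyset$. Either way the existence of the nondegenerate $\ZT$ harmonic $1$-form is contradicted, so $H^2(\tL;\mathbb{R})^-\neq 0$ and $b_2(\tL) > b_2(L)$.

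The main obstacle I expect is the final step: making the dichotomy of Theorem \ref{thm_nearbyspecialLagrangin} bite requires being precise about what "equal as immersed special Lagrangians" means and about the fact that the projection identity $\pi\circ\tiota_t = p$ is preserved under the resulting identification, so that the branchedness of $p$ can indeed be pitted against the unbranchedness of $q$. A secondary technical point is confirming that the surjectivity criterion of \cite{fooo09} applies to our family, whose limiting induced metric is only a $\MC^{1,\ga}$ cone metric (Theorem \ref{thm_metricconvergence}) even though for each fixed $t$ the submanifold is real analytic; the cohomological bookkeeping and the homotopy $\tiota_t\simeq\iota_0\circ p$ in the first two paragraphs are then routine.
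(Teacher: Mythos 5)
Your proposal is correct and is essentially the paper's own argument: the paper obtains this theorem by combining Proposition \ref{prop_obstructionforexistence} with the quoted FOOO surjectivity criterion, via exactly your reduction $b_2(\tL)-b_2(L)=\dim H^2(\tL;\mathbb{R})^-$, the homotopy $\tiota_t\simeq\iota_0\circ p$ forcing $\mathrm{Im}\,\tiota_t^{\st}=\mathrm{Im}\,p^{\st}=H^2(\tL;\mathbb{R})^+$ on second cohomology, and the Abouzaid--Imagi dichotomy; if anything, your cohomological bookkeeping is more explicit than what the paper records.

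One local repair is needed in your final step, which you yourself flagged. Since the genuine special Lagrangian family is a perturbation of the approximate graphs, $\pi\circ\tiota_t$ is only $\MC^0$-close to $p$, so identifying $[\tL_t]$ with an unbranched deformation over $q:\tL'\to L$ yields only a homotopy $p\simeq q\circ\psi$, not the exact equality $p=q\circ\psi$; and a map homotopic to a covering map need not itself be a covering, so "$p$ would be an unbranched covering" does not follow literally. The contradiction nevertheless holds at the level of homotopy invariants: homotopic maps have equal degree and conjugate images on $\pi_1$. A branched double cover with $\Sigma\neq\emptyset$ induces a \emph{surjection} $p_{\st}:\pi_1(\tL)\to\pi_1(L)$ (writing $K\subset\pi_1(L\setminus\Sigma)$ for the index-two subgroup defining the cover, $\pi_1(L\setminus\Sigma)=K\cup\mu K$ with $\mu$ a meridian, and $\mu$ dies in $\pi_1(L)$, so the image of $K$ is all of $\pi_1(L)$), whereas a connected unbranched double cover has $\pi_1$-image of index two; the degree comparison rules out $k$-fold covers with $k\neq 2$, and connectedness of $\tL$ rules out disconnected covers. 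With this substitution your case analysis closes, and your case $[\tL_t]=[L_0]$ via $b_1(\tL)>b_1(L)$ from Corollary \ref{cor_bettinumberfirst} is fine as stated; note the paper leaves this final contradiction equally implicit.
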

	
	Unfortunately, the above theorem is trivial when $\dim(L)=3$. By Corollary \ref{cor_bettinumberfirst}, the existence of $\ZT$ harmonic 1-form will require that $b_1(\tL)>b_1(L)$ which implies $b_2(\tL)>b_2(L)$ by Poincare duality. However, this topological constraint is non-trivial in for four dimensional or higher and we will introduce an example. Let $\sigma_n:S^n\to S^n$ be the involution map fixing the equator and let $\tL:=S^1\ti S^{n-1}$. We take $\sigma:=(\sigma_1,\sigma_{n-1})$, then $\sigma$ is an involution action on $\tL$ with fixed point $\mathrm{Fix}(\sigma)$ is diffeomorphic to $S^0\ti S^{n-2}$ which is two copies of $S^{n-2}$. The quotient $L:=\tL/\lan \sigma\ran$ under the involution, would be $L:=S^{n}$. Moreover, the quotient map $p:\tL\to L$ will be a branched covering map. When $n\geq 4$ and $n$ is odd, as $H^1(\tL;\mathbb{R})^-\neq 0$, over $L$, there exists a $\ZT$ harmonic 1-form $\al$ over $\tL$. However, as $H^2(\tL;\mathbb{R})=0$ and $\chi(\tL)=0$, by Theorem \ref{thm_secondhomologyvanishes}, this $\ZT$ harmonic 1-form will not be nondegenerate.
	
	When $n$ is even, over $L=S^n$, as $\chi(L)\neq 0$, we don't know the existence of Calabi-Yau neighborhood for generic metric on $L$. However, over $T^{\st}S^n$ we have the Stenzel metric \cite{stenzel93}, which extend the round metric on $S^{n}$ to a complete Riemannian metric over the whole $T^{\st}S^n$. Thus over $T^{\st}L$, we could associate a Calabi-Yau structure. By Theorem \ref{thm_secondhomologyvanishes}, the $\ZT$ harmonic 1-form will not be nondegenerate. In summary, we obtain
	
	\begin{corollary}
		Let $p:S^1\ti S^{n-1}\to S^n$ be the branched covering map considered above, $\Sigma$ be the branch locus, $\chi\in H^1(S^1\ti S^{n-1};\mathbb{R})$, then
		\begin{itemize}
			\item [(i)] If $n$ is odd, then  for any Riemannian metric over $S^{n}$, the $\ZT$ harmonic 1-form corresponding to $\chi$ will not be nondegenerate. 
			\item [(ii)] If $n$ is even, then for the round Riemannian metric over $S^n$, the $\ZT$ harmonic 1-form corresponding to $\chi$ will not be nondegenerate. 
		\end{itemize}
	\end{corollary}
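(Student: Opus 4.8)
The plan is to reduce both assertions to the Betti-number obstruction of Theorem~\ref{thm_secondhomologyvanishes}, applied to the two branched coverings $p:S^1\times S^{n-1}\to S^n$. First I would record the topological input. For $n\geq 4$ the K\"unneth formula gives $H^2(S^1\times S^{n-1};\RR)=0$, since the only possible contributions would come from $H^2(S^{n-1})=0$ and from $H^1(S^1)\otimes H^1(S^{n-1})=0$; hence $b_2(\tL)=0$. On the base $b_2(S^n)=0$ for $n\geq 3$, so the strict inequality $b_2(\tL)>b_2(L)$ demanded by Theorem~\ref{thm_secondhomologyvanishes} \emph{fails}. Moreover $\pi_1(S^n)=0$ is finite, so hypothesis~(i) of that theorem holds for either parity of $n$; it then remains only to supply the Calabi--Yau neighborhood of hypothesis~(ii), and this is where the two cases diverge.

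For part~(ii), with $n$ even and the round metric, I would invoke Stenzel's complete Calabi--Yau metric on $T^{\st}S^n$ \cite{stenzel93}, which restricts to the round metric on the zero section and in particular provides a Calabi--Yau neighborhood of $(S^n,g_{\mathrm{round}})$. Since the round metric is real analytic, Theorem~\ref{thm_secondhomologyvanishes} applies directly: were the $\ZT$ harmonic $1$-form attached to $\chi$ nondegenerate, we would obtain $b_2(\tL)>b_2(S^n)$, contradicting $b_2(\tL)=b_2(S^n)=0$. This rules out nondegeneracy for the round metric and settles~(ii).

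For part~(i), $n$ odd, the vanishing $\chi(S^n)=0$ lets Theorem~\ref{thm_CYstructureoncotangentbundle} furnish a Calabi--Yau neighborhood for \emph{any} real analytic metric. The remaining issue is that the corollary asserts non-nondegeneracy for \emph{every} smooth metric, whereas Theorem~\ref{thm_secondhomologyvanishes} is stated for real analytic ones. I would close this gap by an openness-plus-density argument: assuming for contradiction that some smooth metric $g$ carries a nondegenerate $\ZT$ harmonic $1$-form for $\chi$, I use that real analytic metrics are $\MC^\infty$-dense together with the deformation stability of Theorem~\ref{thm_donaldsondeformation}, which preserves nondegeneracy under a sufficiently small perturbation of the metric while only moving the branch locus to a nearby $\Sigma'$ and leaving $\chi$ and the diffeomorphism type of $\tL$ intact. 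Choosing the perturbation to land on a real analytic $g'$ and applying the reasoning of part~(ii) to $(S^n,g')$ then yields the same contradiction $b_2(\tL)>b_2(S^n)=0$.

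The step I expect to be the main obstacle is precisely this last reduction for part~(i): one must guarantee that the approximating real analytic metric $g'$ lies inside the neighborhood produced by Theorem~\ref{thm_donaldsondeformation}, so that the perturbed form is genuinely nondegenerate, and one must verify that passing to $g'$ neither changes the class $\chi\in H^1(\tL;\RR)^-$ nor the diffeomorphism type of the cover $p:\tL\to L$. Both follow from the continuity of the leading coefficient $B(\Sigma,g,\chi)$ recorded after Theorem~\ref{thm_donaldsondeformation} and from the fact that $b_2$, $\chi$, and $\tL$ are metric-independent invariants, after which the contradiction is immediate and both parts of the corollary follow.
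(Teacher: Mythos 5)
Your proposal is correct and follows the same core route as the paper: for $n\geq 4$ Künneth gives $b_2(S^1\ti S^{n-1})=0=b_2(S^n)$, $\pi_1(S^n)$ is trivial, and nondegeneracy is then excluded by the strict inequality of Theorem \ref{thm_secondhomologyvanishes}, with hypothesis (ii) supplied by Theorem \ref{thm_CYstructureoncotangentbundle} (via $\chi(S^n)=0$) when $n$ is odd and by the Stenzel Calabi--Yau structure on $T^{\st}S^n$ \cite{stenzel93} when $n$ is even with the round metric. The one place where you go beyond the paper is part (i): the paper's own proof simply cites Theorem \ref{thm_secondhomologyvanishes}, which is stated for real analytic metrics, even though the corollary asserts non-nondegeneracy for \emph{every} Riemannian metric; the text does not comment on this discrepancy (and in fact writes $\chi(\tL)=0$ where the relevant hypothesis is $\chi(S^n)=0$). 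Your smooth-to-analytic reduction --- approximate $g$ by a real analytic $g'$ inside the neighborhood $\MU_1$ of Theorem \ref{thm_donaldsondeformation}, use the continuity of the leading coefficient $B(\Sigma,g,\chi)$ to keep nondegeneracy, and note that the perturbed branch locus $\Sigma'$ is isotopic to $\Sigma$ so the covering $\tL$ and the class $\chi$ are unchanged --- is exactly the argument needed to make the ``any metric'' claim honest, and it is a genuine strengthening of what the paper records. The only caveat worth stating explicitly is that the whole argument (yours and the paper's) requires $n\geq 4$: for $n=3$ one has $b_2(S^1\ti S^2)=1>0=b_2(S^3)$ and the Betti obstruction is vacuous, so the restriction ``considered above'' ($n\geq 4$) must be carried into the corollary.
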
	
	
	Here we provide another example. Let $M$ be a connected oriented closed manifold, by Neofytidis \cite{neofytidis14branched}, there exists a $\pi_1$ surjective branched double covering 
	\begin{equation}
		\label{eq_brancheddoublecovermapneofytidis}
		p: S^1\times \sharp_{k}(M\times S^1) \to \sharp_k(M\times S^2).
	\end{equation}
	We write $\tL=S^1\times \sharp_{k}(M\times S^1)$ and $L=\sharp_k(M\times S^2)$, suppose $M$ satisfies $b_1(M)=b_2(M)=0$, then $b_1(L)=0,\;b_2(L)=k$ and $b_1(\tL)=1+k,\;b_2(\tL)=k$. Then for any $\chi\in H^1(\tL;\mathbb{R})$, it will give a multivalued harmonic 1-form on $L$. However, as $b_2(\tL)=b_2(L)$, all these multivalued harmonic 1-forms could not be nondegenerate.
	\begin{corollary}
		Let $M$ be a closed manifold which is odd dimensional and $b_1(M)=b_2(M)=0$, then for the double branched covering in \eqref{eq_brancheddoublecovermapneofytidis} and for any Riemannian metric on $\sharp_k(M\times S^2)$, the multivalued harmonic 1-form corresponding to $\chi\in H^1(S^1\times \sharp_{k}(M\times S^1);\mathbb{R})$ for \eqref{eq_brancheddoublecovermapneofytidis} is not nondegenerate. 
	\end{corollary}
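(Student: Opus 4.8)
The plan is to prove the corollary as the contrapositive of Theorem \ref{thm_secondhomologyvanishes}: if the multivalued harmonic $1$-form attached to $\chi$ were nondegenerate, that theorem would force the strict inequality $b_2(\tL)>b_2(L)$, whereas for the Neofytidis covering \eqref{eq_brancheddoublecovermapneofytidis} one has $b_2(\tL)=b_2(L)$. Mechanically, the equality case is exactly the borderline where $p^{\st}\colon H^2(L;\mathbb{R})\to H^2(\tL;\mathbb{R})$ is onto (by Lemma \ref{lem_decompositioninvolutionss}, $p^{\st}$ maps isomorphically onto $H^2(\tL;\mathbb{R})^+$, so surjectivity is equivalent to $H^2(\tL;\mathbb{R})^-=0$, i.e.\ $b_2(\tL)=b_2(L)$), which is the unobstructedness input behind Theorem \ref{thm_secondhomologyvanishes}. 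So the core of the argument is to verify the two structural hypotheses of that theorem for $L=\sharp_k(M\times S^2)$ and to record the Betti-number equality producing the contradiction.

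First I would compute the Betti numbers. By Künneth and $b_1(M)=b_2(M)=0$ one gets $b_2(M\times S^2)=b_2(M)+b_0(M)=1$, while $b_1(M\times S^1)=b_1(M)+b_0(M)=1$ and $b_2(M\times S^1)=b_2(M)+b_1(M)=0$. Since $M$ is odd dimensional with $b_1(M)=0$ one has $\dim M\geq 3$ (the case $\dim M=1$ would force $M=S^1$, which has $b_1=1$), so $\dim L,\dim N\geq 4$ where $N:=\sharp_k(M\times S^1)$, and the additivity $b_i(A\sharp B)=b_i(A)+b_i(B)$ for $0<i<\dim$ applies in all relevant degrees. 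This gives $b_1(L)=0$, $b_2(L)=k$, and $b_1(N)=k$, $b_2(N)=0$; a final Künneth computation for $\tL=S^1\times N$ yields $b_1(\tL)=b_1(N)+b_0(N)=k+1$ and $b_2(\tL)=b_2(N)+b_1(N)=k$. Hence $b_2(\tL)=b_2(L)$, the equality excluded by Theorem \ref{thm_secondhomologyvanishes}. (Note also $b_1(\tL)^+=b_1(L)=0$, so all of $H^1(\tL;\mathbb{R})$ lies in the $-1$ eigenspace and every $\chi$ indeed produces a multivalued harmonic $1$-form, as in Corollary \ref{cor_bettinumberfirst}.)

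Next I would check the hypotheses of Theorem \ref{thm_secondhomologyvanishes}. Condition (ii), the existence of a Calabi-Yau neighborhood, follows from Theorem \ref{thm_CYstructureoncotangentbundle} once $\chi(L)=0$, and $\chi(L)=0$ is automatic because $\dim L=\dim M+2$ is odd. Condition (i) is the delicate point: by van Kampen, $\pi_1(L)\cong \pi_1(M)^{*k}$, and this must be finite or contain no nonabelian free subgroup. This is a genuine restriction on $M$, since a free product of $k\geq 2$ nontrivial factors (outside the exceptional $\mathbb{Z}/2*\mathbb{Z}/2$ case) always contains a nonabelian free subgroup; the hypothesis holds, e.g., when $\pi_1(M)$ is trivial (so $\pi_1(L)$ is trivial) or when $k=1$ and $\pi_1(M)$ is finite, and in general one must verify directly that $\pi_1(M)^{*k}$ avoids nonabelian free subgroups. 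I would therefore make explicit the $\pi_1$-assumption on $M$ under which $\pi_1(M)^{*k}$ is admissible.

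Finally, to pass from a real-analytic metric (as required by Theorems \ref{thm_CYstructureoncotangentbundle} and \ref{thm_secondhomologyvanishes}) to an arbitrary Riemannian metric, I would argue by stability. Suppose some metric $g_0$ admitted a nondegenerate $\ZT$ harmonic $1$-form for $\chi$. Real-analytic metrics are dense in $\MMmec$, so by Donaldson's deformation theorem (Theorem \ref{thm_donaldsondeformation}) a nearby real-analytic metric $g$ still carries a nondegenerate $\ZT$ harmonic $1$-form with branch locus $\Sigma$ close to the original and the same double-cover type $\tL\to L$ (the covering is determined by $\chi'$ and the isotopy class of $\Sigma$). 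Applying Theorem \ref{thm_secondhomologyvanishes} to $(L,g)$ then gives $b_2(\tL)>b_2(L)$, contradicting the Betti-number computation; hence no metric admits a nondegenerate form. The main obstacle is thus not analytic but the verification of the two hypotheses of Theorem \ref{thm_secondhomologyvanishes}—above all the $\pi_1$ condition, which constrains the admissible $M$—together with the density-plus-stability reduction that removes the real-analyticity requirement.
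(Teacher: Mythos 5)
Your proposal follows the paper's own route exactly: the paper disposes of this corollary in two lines, recording $b_1(L)=0$, $b_2(L)=k$, $b_1(\tL)=1+k$, $b_2(\tL)=k$ (your K\"unneth plus connected-sum computation reproduces these, and your reading of the equality $b_2(\tL)=b_2(L)$ as surjectivity of $p^{\st}$ on $H^2$, i.e.\ $H^2(\tL;\mathbb{R})^-=0$, correctly identifies the unobstructedness mechanism behind Theorem \ref{thm_secondhomologyvanishes}) and then invoking that theorem, whose conclusion $b_2(\tL)>b_2(L)$ contradicts the equality. What you add, and what the paper silently omits, are the verifications of that theorem's hypotheses. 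Your worry about hypothesis (i) is legitimate: by van Kampen $\pi_1(L)\cong \pi_1(M)^{*k}$, and this contains a nonabelian free subgroup whenever $\pi_1(M)$ is nontrivial, outside the exceptional case $\mathbb{Z}/2 * \mathbb{Z}/2$ (take, e.g., $M$ the Poincar\'e homology sphere, which satisfies $b_1(M)=b_2(M)=0$, with $k\geq 2$); since the corollary as printed imposes no $\pi_1$ condition on $M$, your insistence on adding one exposes a gap in the paper's statement rather than a defect of your argument, and under such an added assumption your proof is complete. Similarly, Theorem \ref{thm_secondhomologyvanishes} is stated for real-analytic metrics while the corollary claims all metrics; the paper does not bridge this, and your density-plus-stability reduction via Theorem \ref{thm_donaldsondeformation} (nondegeneracy persists for a nearby real-analytic metric, with $\Sigma$ moved by a small isotopy so the covering type $\tL\to L$ is unchanged) is the natural and correct repair. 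Hypothesis (ii) you settle as the paper implicitly does: $\dim L=\dim M+2$ is odd, so $\chi(L)=0$ and Theorem \ref{thm_CYstructureoncotangentbundle} supplies the Calabi--Yau neighborhood.
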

	
	\appendix
	
	\begin{section}{Polyhomogeneous Expansions and Mellin Transformations}
		\label{sec_backgroundpolyhomogeneoussolution}
		In this appendix, we will introduce the theory of polyhomogeneous expansions and elliptic edge operators. All results surveyed here could be found in \cite{Mazzeo1991,melrose93atiyahpatodi}.
		
		\subsection{Mellin transformation}
		Mellin transformation is a tool to study the expansion of functions along a singular set, which gives a nice correspondence between meromorphic functions and functions with expansions. We refer to \cite[Section 2]{Mazzeo1991} and \cite{melrose93atiyahpatodi} for more details.
		
		Let $u(r)$ be a compact supported function defined on the positive real axis $0<r<\infty$, the Mellin transformation $Mu:=u_M(\zeta)$ of $u$ is a function defined on the complex plane defined as
		\begin{equation*}
			Mu(\zeta)=u_M(\zeta):=\int_0^{\infty} u(r) r^{i\zeta-1}dr,
		\end{equation*}where $\zeta:=\xi+i\eta$ is a complex variable. 
		
		The Mellin transformation is related to the Fourier transformation. Let $r=e^s$, then $$u_M(\zeta)=\int_{-\infty}^{+\infty}u(e^s)e^{i\zeta s}ds,$$ which is the Fourier transform for the function $u(e^{s})$. Therefore, there is a Plancherel relationship extended the definition of Mellin transform to $L^2$ function, or more generally, weighted $L^2$ space $r^{\delta}L^2(\mathbb{R}^+)$ such that the Mellin transform induces an isomorphism
		\begin{equation}
			r^{\delta}L^2(\mathbb{R}^+)\cong L^2(\eta=\delta-\frac12),
		\end{equation}
		where $r^{\delta}L^2:=\{r^{\delta}f|f\in L^2\}$.
		
		As for $\delta'\leq\delta$, we have $r^{\delta}L^2\subset r^{\delta'}L^2$, the domain that Mellin transform exists will be $\{\Im\zeta<\be\}$. Analogy to the Fourier transform, the Mellin transform also have an inverse formula. 
		$$M^{-1}(u_M(\zeta))=\frac{1}{2\pi i}\int_{\Im\zeta=C}u_M(\zeta)x^{-i\zeta}d\xi.$$
		In particular, if $u\in r^{\delta}L^2$ and $C=\delta-\frac12$, we have $M^{-1}(u_M(\zeta))=u(x)$.

		Now, we will focus on the expansion at $r=0$. Let $\phi(r)$ be a smooth function over $\RP$ such that $\phi$ equals $1$ near $r=0$ and $\phi$ vanishes for large $t$, we consider $u(r):=r^{s}(\log r)^p\phi(r)$. In addition, as $r^s(\log r)^p\phi(r)\in x^{\delta}L^2$, for any $\delta<\Re s+\frac12$, $u_M(\zeta)$ is well defined on $\{\Im\zeta<\be\}$.
		
		In addition, as $$\int_0^{1}r^{s}(\log r)^pr^{i\zeta-1}dr=(-1)^{p+1}\frac{p!}{(i\zeta+s)^{p+1}},$$ an integration by parts shows that $u_M$ can be extended to a meromorphic function in the whole complex plane, with poles of order $p+1$ at $\zeta=is$. 
		
		In our situation, let $\Sigma$ be an embedded submanifold on a Riemannian manifold, Let $U$ be a tubular neighborhood of $\Sigma$ and we choose local coordinates $(z=re^{i\theta},t)$ along $U$ such that $\Sigma=\{z=0\}$. Let $u(r,\theta,t)$ be a polyhomogeneous function near $\Sigma$ with index set $E$, then the above arguments could be applied to $u(r,\theta,t)$. The Mellin transform of $r$ coordinate and $(\phi(r) u)_M(\zeta,\theta,t)$ could be understood as a function on $(\zeta,\theta,t)\in\mathbb{C}\ti S^1\ti \Sigma$. In addition, for $s\in E$, $(\phi(r) u)_M$ has a meromorphic extension at $\zeta=is$.
		
		We have the following fundamental correspondence of Mellin transformation:
		\begin{theorem}{\cite[Proposition 5.27]{melrose93atiyahpatodi}}
			\label{thm_Mellintransformcorrespondence}
			The Mellin transformation gives an isomorphism between the polyhomogeneous function with index set $E$ and meromorphic functions with values in $\MC^{\infty}(Y)$ having poles at $\ze=is$ for $s\in E$.
		\end{theorem}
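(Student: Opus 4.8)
The plan is to establish the isomorphism by constructing the two directions explicitly and checking that they are mutual inverses, with essentially all of the analytic content concentrated in the contour-shifting argument for the inverse transform. First I would treat the forward direction. Given a polyhomogeneous section $u$ with discrete index set $E=\{\gamma_j\}$ and expansion $u\sim\sum_{j,p} r^{\gamma_j}(\log r)^p f_{jp}(\theta,t)$, I would cut off with a function $\phi(r)$ supported near $r=0$ and compute $(\phi u)_M$ term by term. The identity
\[
\int_0^1 r^{s}(\log r)^p r^{i\zeta-1}\,dr=(-1)^{p+1}\frac{p!}{(i\zeta+s)^{p+1}}
\]
recorded just above the theorem shows that the summand $r^{\gamma_j}(\log r)^p f_{jp}$ contributes a pole of order $p+1$ at $\zeta=i\gamma_j$, with residue built from $f_{jp}\in\MC^\infty(Y)$. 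To see that no other singularities appear, and that the poles sit exactly at $\zeta=is$ for $s\in E$, I would exploit the remainder control built into the definition of $\sim$: truncating the expansion at order $\gamma_N$ leaves a remainder lying in $r^{\delta}L^2(\RP)$ for every $\delta<\gamma_{N+1}+\tfrac12$, whose Mellin transform is therefore holomorphic in the half-plane $\{\Im\zeta<\delta-\tfrac12\}$ by the Plancherel isomorphism $r^{\delta}L^2(\RP)\cong L^2(\eta=\delta-\tfrac12)$. Letting $N\to\infty$ and using the discreteness of $E$ confines all singularities of $(\phi u)_M$ to the prescribed poles, and the smoothness in $(\theta,t)$ is inherited by the coefficients.

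For the converse, I would start from a meromorphic family $v(\zeta,\theta,t)$ valued in $\MC^\infty(Y)$ with poles only at $\zeta=is$, $s\in E$, satisfying uniform decay along horizontal lines, and define $u$ by the inverse Mellin transform
\[
u(r,\theta,t)=\frac{1}{2\pi i}\int_{\Im\zeta=C}v(\zeta,\theta,t)\,r^{-i\zeta}\,d\xi.
\]
Shifting the contour from $\{\Im\zeta=C\}$ downward across successive poles and applying the residue theorem, each pole of order $p+1$ at $\zeta=is$ produces a term of the form $r^{s}(\log r)^p$ times a smooth coefficient on $Y$, while the shifted integral becomes the next, more regular, remainder. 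This recovers the polyhomogeneous expansion with the correct index set and powers of $\log r$, and differentiating under the integral sign shows that the expansion persists after applying any of the vector fields $r\partial_r$, $\partial_\theta$, $\partial_t$, which gives the required conormality.

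The hard part will be the analytic justification of the contour shift. One must have decay estimates for $v$ along the lines $\Im\zeta=C$ as $\Re\zeta\to\pm\infty$, uniform across the strips being swept out, so that the horizontal connecting segments contribute nothing in the limit and the residue sum converges; dually, in the forward direction one must verify that $(\phi u)_M$ enjoys precisely this decay, which is exactly where the conormal bounds on $(r\partial_r)^i\partial_\theta^j\partial_t^k u$ are used. Once these uniform estimates are in place, checking that the two constructions are inverse to one another is formal: it reduces to the standard Mellin inversion $M^{-1}Mu=u$ on $r^{\delta}L^2(\RP)$ together with the identification of residues with expansion coefficients performed above. I would import the detailed horizontal-strip estimates from \cite{melrose93atiyahpatodi,Mazzeo1991} rather than reproduce them, since the statement is quoted from \cite[Proposition 5.27]{melrose93atiyahpatodi}.
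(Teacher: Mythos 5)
The paper offers no proof of this statement at all: it is quoted verbatim from \cite[Proposition 5.27]{melrose93atiyahpatodi}, and the only supporting material in the appendix is the single-term pole computation $\int_0^1 r^s(\log r)^p r^{i\zeta-1}\,dr=(-1)^{p+1}p!/(i\zeta+s)^{p+1}$ together with the Plancherel identification $r^\delta L^2(\RP)\cong L^2(\eta=\delta-\tfrac12)$. Your reconstruction is the standard argument from the cited sources and is correct in outline: the forward direction (term-by-term poles plus holomorphy of the truncation remainder in ever larger half-planes) and the converse (contour shifting across poles, residues producing the $r^s(\log r)^p$ terms) are exactly how Melrose proves it, and you correctly locate the genuine analytic content in the uniform decay of the transform along horizontal lines, which on the forward side comes from integrating by parts against the conormal bounds. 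The one point worth making explicit is that, as literally stated, the theorem is not an isomorphism onto \emph{all} meromorphic families with poles at $\zeta=is$, $s\in E$; the target must be restricted to those meromorphic functions that decay rapidly (uniformly in closed horizontal strips, away from the poles) as $|\Re\zeta|\to\infty$ — precisely the class your contour-shifting argument requires — so this decay condition belongs in the definition of the target space rather than appearing only as a hypothesis in your converse direction.
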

		
		\subsection{The expansion of $\ZT$-harmonic function}
		Let $\Sigma$ be a codimension 2 submanifold of $L$ and $\chi:\pi_1(L \setminus \Sigma)\to \Gamma$ be the representation satisfies (ii) of Definition \ref{def_multivaluedeifnition}, with associative affine bundle $V^-$ and corresponding vertical line bundle $\MI$. Using the flat structure, we will study the operator $\Delta:\Gamma(V^-)\to \Gamma(\MI)$ near the branch locus.
		
		We could consider the following local model for a neighborhood near $\Sigma$. Let $\mathbb{R}^n=\mathbb{C}\ti \mathbb{R}^{n-2}$ and we will represents a point in $\mathbb{R}^n$ by $(z,t)$ and write $z=re^{i\theta}$. Let $U$ be a open set in a neighborhood of $\Sigma$ and we also regard $U$ as an open set of $\mathbb{R}^n$ such that $U\cap\Sigma\subset \{0\}\ti\mathbb{R}^{n-2}$. For convenient, we might think $V^-$ is defined over $\mathbb{C}^{\st}\ti \mathbb{R}^{n-2}$ with fiber $\mathbb{R}$ and holonomy $-1$, while the sections of $V^-$ could be identified with 2-valued function $f$ such that $f(w^2,t)$ is an odd function on $w$. 
		
		We might write
		$$r^2\Delta=(r\pa_r)^2+\pa_{\theta}^2+(r\pa_t)^2+rE,$$
		where $E$ is smooth in $r\pa_r,\pa_{\theta}$ or $r\pa_t$. Here $r^2\Delta$ is called an elliptic edge operator, which considered in \cite{Mazzeo1991}. The indicial family $I_{\zeta}:=\pa_{\theta}^2+\zeta^2$ of the operator $r^2\Delta$ is given by
		$$
		r^2\Delta(r^{\zeta}(\log r)^pf(r,\theta,t))=r^\zeta(\log r)^pI_{\zeta} (f(0,\theta,t))+\MO(r^{\zeta}(\log r)^{p-1}),
		$$
		where the above formula works for $\forall f\in\MC^{\infty}$, $s\in\mathbb{C}$ and $p\in\mathbb{N}_0$. In local coordinates, we might write $I:=(r\pa_r)^2+\pa_{\theta}^2$, then under the Mellin transformation, we have $(If)_{M}=I_{i\zeta}f_M$. We define $$L^2_{\odd}(S^1\ti\Sigma)=\{f\in\MI|_{S^1\ti\Sigma}|f(\theta+2\pi,t)=-f(\theta,t).\;f\in L^2\},$$ which is the $V^-$ valued $L^2$ function, then we might regard $$I_{\zeta}:L^2_{\odd}(S^1\ti\Sigma)\to L^2_{\odd}(S^1\ti\Sigma).$$
		
		\begin{definition}
			The indicial roots of $r^2\Delta$ is the set of values $\zeta\in\mathbb{C}$ for which $I_{\zeta}$ fails to be invertible on $L^2_{\odd}(S^1\ti\Sigma)$ and we write $E_{\ind}$ be the sets of all indicial roots. 
		\end{definition}
		
		\begin{proposition}
			$E_{\ind}=\{k+\frac12|k\in\mathbb{Z}\}$ is the collection of all indicial roots for the operator $r^2\Delta$, 
		\end{proposition}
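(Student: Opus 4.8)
The plan is to reduce the invertibility question for the indicial family $I_\zeta = \partial_\theta^2 + \zeta^2$ to an elementary Fourier-series computation on the circle, the decisive point being that the two-valued (monodromy $-1$) structure of $V^-$ forces \emph{antiperiodic} boundary conditions in the angular variable $\theta$.

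First I would pin down the spectral description of $L^2_{\odd}(S^1\times\Sigma)$. A section of $V^-$ restricted to the link $S^1\times\Sigma$ is a function $f(\theta,t)$ with $f(\theta+2\pi,t)=-f(\theta,t)$; in the $\theta$ variable such antiperiodic functions are spanned by the half-integer Fourier modes $e^{i(k+\frac12)\theta}$, $k\in\mathbb{Z}$, since $e^{i(k+\frac12)(\theta+2\pi)}=e^{i\pi(2k+1)}e^{i(k+\frac12)\theta}=-e^{i(k+\frac12)\theta}$. Tensoring these with an orthonormal basis of $L^2(\Sigma)$ gives an orthonormal basis of $L^2_{\odd}(S^1\times\Sigma)$ that diagonalizes $\partial_\theta^2$, with $\partial_\theta^2\, e^{i(k+\frac12)\theta} = -(k+\tfrac12)^2\, e^{i(k+\frac12)\theta}$.

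Next I would use that $I_\zeta$ contains no derivatives in the $\Sigma$ directions, so it acts fibrewise over $\Sigma$ and is simultaneously diagonalized by the basis above: on the mode $e^{i(k+\frac12)\theta}$ it is multiplication by $\zeta^2-(k+\tfrac12)^2$. Because $(k+\tfrac12)^2\to\infty$, these eigenvalues tend to $-\infty$ and are therefore bounded away from $0$ whenever none of them equals $0$; hence $I_\zeta:L^2_{\odd}(S^1\times\Sigma)\to L^2_{\odd}(S^1\times\Sigma)$ is a bounded invertible operator precisely when $\zeta^2\neq(k+\tfrac12)^2$ for every $k\in\mathbb{Z}$. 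Thus $I_\zeta$ fails to be invertible exactly when $\zeta=\pm(k+\tfrac12)$ for some $k$, and since $\{\pm(k+\tfrac12):k\in\mathbb{Z}\}=\{k+\tfrac12:k\in\mathbb{Z}\}$ this gives $E_{\ind}=\{k+\tfrac12\mid k\in\mathbb{Z}\}$.

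There is no genuine analytic obstacle here; once the function space is correctly identified the computation is routine. The entire content of the proposition is the bookkeeping that the monodromy $-1$ of $\MI$ is precisely what replaces the integer Fourier spectrum of ordinary (single-valued) functions by the half-integer spectrum, shifting the indicial roots from $\mathbb{Z}$ to $\mathbb{Z}+\tfrac12$; this is exactly what produces the $r^{1/2},\,r^{3/2}$ leading asymptotics underlying Theorem~\ref{thm_donaldsonremainningestimate} and the nondegeneracy condition used throughout the paper.
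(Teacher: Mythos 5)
Your proposal is correct and follows essentially the same route as the paper: decompose $L^2_{\odd}(S^1\times\Sigma)$ into the half-integer Fourier modes $e^{i(k+\frac12)\theta}$ forced by the monodromy $-1$, observe that $I_\zeta$ acts on each mode as multiplication by $\zeta^2-(k+\tfrac12)^2$, and read off the indicial roots $\zeta=k+\tfrac12$, $k\in\mathbb{Z}$. The only difference is cosmetic: you additionally verify that the inverse multipliers are uniformly bounded away from the roots (a point the paper leaves implicit), and the paper writes the modes as $\Re(f_k(r,t)e^{i(k+\frac12)\theta})$ since $\MI$ is a real line bundle, which matches your complexified version.
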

		\begin{proof}
			Using the Fourier transform, we might write $f\in$ $L^2_{\odd}(S^1\ti\Sigma)$ as $$f=\sum_k \Re(f_{k}(r,t)e^{i(k+\frac12)\theta}),$$
			where $f_k(r,t)$ is a complex function. 
			Then we compute $$I_{\zeta}f=\sum_k(\zeta^2-(k+\frac12)^2)\Re(f_k(r,t)e^{i(k+\frac12)\theta}).$$
			Therefore, the indicial roots will be $\zeta=k+\frac12$ for $k\in\mathbb{Z}$.
		\end{proof}
		
		Let $\rho\in L^2$ be a section of $\MI$ such that $\rho$ is polyhomogeneous with index set $\{k+\frac12|k\in \mathbb{N}\}$. We now consider the polyhomogeneous expansions of solutions, which will be a special case of \cite[Section 7]{Mazzeo1991}.
		
		\begin{theorem}{\cite[Section 7]{Mazzeo1991}}
			\label{thm_polyhomoexpansion}
			Let $f\in\Gamma(V^-)$ be a solution to $\Delta f=\rho$, then $f$ has a polyhomogeneous expansion with index set $E'=\{k+\frac12|k\in\mathbb{N}\}$.
		\end{theorem}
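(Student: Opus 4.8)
```latex
The plan is to prove Theorem \ref{thm_polyhomoexpansion} by combining the indicial root computation already carried out for the operator $r^2\Delta$ with the general edge-operator machinery of Mazzeo \cite{Mazzeo1991}, using the Mellin transform correspondence of Theorem \ref{thm_Mellintransformcorrespondence} as the bridge between meromorphic data and polyhomogeneous expansions. The starting point is that $\Delta f=\rho$ is equivalent to the edge equation $r^2\Delta f=r^2\rho$, where $r^2\Delta$ has the normal form $(r\partial_r)^2+\partial_\theta^2+(r\partial_t)^2+rE$ established in the excerpt. Since we have already identified the indicial roots as $E_{\ind}=\{k+\tfrac12\mid k\in\mathbb Z\}$ and we know by Donaldson's work (Theorem \ref{thm_donaldsonremainningestimate}) together with \cite[Section 3]{donaldson2019deformations} that $f$ is conormal with $|f|\lesssim r^{1/2}$, the solution already sits in a weighted space $r^{\delta}L^2$ with $\delta$ just below $\tfrac12$. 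The task is to upgrade conormality to a genuine polyhomogeneous expansion with the prescribed index set $E'=\{k+\tfrac12\mid k\in\mathbb N\}$.

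First I would apply the Mellin transform in the $r$ variable to the equation, obtaining $I_{i\zeta}\,\widehat f(\zeta,\theta,t)=\widehat g(\zeta,\theta,t)$ where $g=r^2\rho-rEf$ collects the right-hand side together with the lower-order remainder $rE$. Because the leading operator $I_\zeta=\partial_\theta^2+\zeta^2$ acts on $L^2_{\odd}(S^1\times\Sigma)$ and is invertible away from the discrete set $E_{\ind}$, the transform $\widehat g$ extends meromorphically across each contour with poles only at the indicial roots, and inverting $I_{i\zeta}$ produces a meromorphic $\widehat f$ whose poles again lie in $\{k+\tfrac12\}$, with pole orders controlled by the orders appearing in $\widehat g$. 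By Theorem \ref{thm_Mellintransformcorrespondence}, a meromorphic family with $\MC^\infty(S^1\times\Sigma)$ values and poles confined to $\{is\mid s\in E'\}$ corresponds precisely to a polyhomogeneous function with index set $E'$. The hypothesis that $\rho$ is polyhomogeneous with index set $\{k+\tfrac12\mid k\in\mathbb N\}$ guarantees that $r^2\rho$ contributes only poles in the desired set, so no spurious exponents are introduced.

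The key structural point is that the remainder term $rE$ shifts the powers of $r$ upward by integer amounts, so it preserves the half-integer lattice $\{k+\tfrac12\}$: the standard edge bootstrap argument of \cite[Section 7]{Mazzeo1991} lets one solve iteratively, peeling off one term $E_{k,p}(\theta,t)\,r^{k+\frac12}(\log r)^p$ at a time, where the logarithmic factors arise exactly when the shifted exponent collides with an indicial root (producing a double pole in the Mellin picture). Each step reduces the order of vanishing of the error, and the conormality already in hand provides the uniform estimates needed to control the tails and sum the expansion in the conormal sense. I would organize this as the inductive construction of partial sums $\sum_{j\le N}r^{\gamma_j}(\log r)^p f_{jp}$ with the residual obeying the sharpened bound $Cr^{\gamma_{N+1}}(\log r)^q$, matching the definition of $\sim$ given before Theorem \ref{thm_polyhomo}.

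The main obstacle I anticipate is bookkeeping the logarithmic terms correctly: the collisions between the integer shifts coming from $rE$ and the half-integer indicial roots are what force the $(\log r)^p$ factors, and one must verify that the pole orders stay finite at each exponent so that $p_k$ in the statement is well defined. A secondary technical point is ensuring the right-hand side $g$ lands in the correct weighted $L^2$ space after each iteration so that the Mellin transform and its inverse are justified; this is where the conormal estimates from \cite{donaldson2019deformations} are essential, since they guarantee that differentiating the expansion and taking Mellin transforms commute in the required range of weights. Once these two points are handled, the polyhomogeneity and the explicit leading behavior $f=\Re(Az^{1/2}+Bz^{3/2})+E$ follow by reading off the residues at $\zeta=\tfrac12$ and $\zeta=\tfrac32$, completing the proof of Proposition \ref{prop_appendixpolyh} and hence Theorem \ref{thm_polyhomo}.
```
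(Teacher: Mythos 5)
Your proposal follows essentially the same route as the paper's own proof: rewrite the equation as $r^2\Delta f=r^2\rho$, take the Mellin transform, invert the indicial family $I_\zeta$ away from the roots $\{k+\frac12\}$, and extend $f_M$ meromorphically strip by strip using the fact that the remainder term carries an extra factor of $r$ and hence shifts the Mellin variable by integer amounts, finally invoking Theorem \ref{thm_Mellintransformcorrespondence} to convert the meromorphic extension (with poles confined to the indicial roots) into a polyhomogeneous expansion with index set $\{k+\frac12\,|\,k\in\mathbb{N}\}$. The only cosmetic difference is that you phrase the bootstrap as peeling off one expansion term at a time and explicitly track the pole collisions that generate the $(\log r)^p$ factors, whereas the paper runs the induction directly on the meromorphic continuation of $f_{M,\theta,t}$ and leaves the log bookkeeping implicit.
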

		\begin{proof}
			We only need to show the Mellin transform $f_M(\zeta,\theta,t)$ has a meromorphic extension to the whole $\zeta$ plane and the poles of $f_M$ lies only on $i(k+\frac12)$. It is easier we write $r^2\Delta f=\rho'$ with $\rho':=r^2\rho$.
			
			We might write $r^2\Delta f=I(f)+E'(f)$, then for the equation $I(f)+E'(f)=\rho'$, using the Mellin transform, we obtain $$I_{\zeta}f_M+(E'f)_M=\rho_M.$$ 
			By Theorem \ref{thm_Mellintransformcorrespondence}, $\rho'_M(\zeta,\theta,t)$ has a meromorphic extension to the whole $\zeta$ plane $\mathbb{C}$.
			By \cite[Section 3]{donaldson2019deformations}, $f$ is conormal, and we define $f_{\theta,t}(r):=f(r,\theta,t)$, then $f_{\theta,t}(r)\in r^{\delta}L^2(\mathbb{R}^+)$ for $\delta<\frac{3}{2}$ and $f_{\theta,t}$ smoothly depends on $\theta,t$. We also write 
			$f_{M,\theta,t}(\zeta)=f_M(\zeta,\theta,t)$, as $f_{\theta,t}(r)\in r^{\delta}L^2(\mathbb{R}^+)$, $f_{M,\theta,t}$ is holomorphic on $\{\Im\zeta<1\}$. We will prove by induction that $f_{M,\theta,t}(\zeta)$ has the meromorphic extension. Suppose $f_{M,\theta,t}(\zeta)$ has a meromorphic extension to $\{\Im\zeta<N\}$. Then we consider the term $(E'f)_M$. As $E'$ consists of differentials with higher order in $r$, such as $r\pa_\theta$, $r^2\pa_r$, etc. For example, we might have $(r\pa_{\theta}f)_{M,\theta,t}(\zeta)=(\pa_{\theta}f)_{M,\theta,t}(\zeta-i)$. Therefore, $E'f$ is holomorphic on $\{\Im\zeta<N+1\}$. In addition, as $(\rho_{M,\theta,t})(\zeta)$ is meromorphic on $\{\Im\zeta<N+1\}$, $(I_{\zeta}f_M)(\cdot,\theta,t)$ is meromorphic on $\{\Im\zeta<N+1\}$. In the region $\{N\leq \Im\zeta<N+1\}$, $I_{\zeta}$ is invertible except at $\Im\zeta=\frac{1}{2}+N$, which is the indicial root set. Thus $f_M$ has a meromorphic extension to $\{\Im\zeta<N+1\}$ will extra poles induces at $\Im\zeta=\frac{1}{2}+N$. We keep on this induction, $f_M$ will has a meromorphic extension to the whole $\zeta$ plane with extra poles at the indicial roots set $\Im\zeta=\frac12+k$.
		\end{proof}
		
		Under the previous assumption, for the solution $\Delta f=\rho$, we might write $$f=\sum_{k\geq 0}\Re (f_k(r,t)e^{i(k+\frac{1}{2})\theta}),$$ then from Theorem \ref{thm_Mellintransformcorrespondence} and Theorem \ref{thm_polyhomoexpansion}, in a suitable coordinate, $f$, thus $f_k$, has a polyhomogeneous expansion 
		\begin{equation}
			\label{eq_expressionofexpansion}f_k(r,t)\sim \sum_{l\geq 0}\sum_{0\leq p\leq p_l}r^{l+\frac12}(\log r)^pf_{k,l,p}(t),
		\end{equation}
		where $l,\;p_l\in\mathbb{N}$. 
		
		The regularity of the derivatives of $f$ in $r$ direction depends on the appearance of the $\log r$ terms in the expansion. However, from the general theory, we could not avoid the appearance of $\log r$ terms in the expansions, except special cases. One could step-by-step check the appearance of the $\log r$ terms by comparing different terms in both side of the equation. If we have extra regularity of $f$, for example, $f\in\MC^{2,\al}$, then for \eqref{eq_expressionofexpansion}, we could conclude that $p_0=p_1=0$. 
		
		Using the language of polyhomogeneous, Theorem \ref{thm_donaldsonremainningestimate} could be rephrased to be the following:
		\begin{proposition}
			\label{prop_appendixpolyh}
			Suppose $f$ is a $\ZT$ valued harmonic function, $\Delta f=\rho$, suppose $\rho\in L^2$ and $\rho$ has a polyhomogeneous expansion with index set $E$, then $f$ has a polyhomogeneous expansion and the expansion of $f$ could be written as
			$$
			f=\Re(Az^\frac12+Bz^{\frac32})+E,
			$$
			where $E$ has expansions $$E\sim \sum_{k\geq 2}\sum_{0\leq p\leq p_k} E_{k,p}(\theta,t)r^{k+\frac12}(\log r)^p.$$
		\end{proposition}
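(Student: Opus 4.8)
The plan is to obtain the proposition as a repackaging of the polyhomogeneity already proved in Theorem \ref{thm_polyhomoexpansion} together with the explicit leading behaviour recorded in Donaldson's Theorem \ref{thm_donaldsonremainningestimate}; the genuine analytic content (the meromorphic continuation of the Mellin transform) has already been carried out, so what remains is indicial bookkeeping.

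First I would apply Theorem \ref{thm_polyhomoexpansion} to the solution $f$ of $\Delta f = \rho$. Since $\rho \in L^2$ is polyhomogeneous with index set $E$, the theorem produces a polyhomogeneous expansion of $f$ whose index set is contained in $\{k+\tfrac12 : k \in \mathbb{N}\}$; here the hypothesis that $f$ is bounded (equivalently $f \in L^2$ near $\Sigma$) is exactly what discards the negative indicial roots $-(k+\tfrac12)$ computed in the appendix, whose associated solutions behave like $r^{-(k+1/2)}$. Writing $f = \sum_{k\geq 0}\Re(f_k(r,t)e^{i(k+1/2)\theta})$ and expanding each mode as in \eqref{eq_expressionofexpansion}, the indicial operator forces the mode $e^{i(k+1/2)\theta}$ to begin at radial order $r^{k+1/2}$, so that after grouping by total radial order $r^{L+1/2}$ the coefficient of $r^{1/2}$ is purely the $k=0$ mode, which I record as $\Re(A z^{1/2})$ with $A\in \Gamma(N^{-\frac12})$.

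The cleanest way to isolate the remaining low-order terms is to feed in Theorem \ref{thm_donaldsonremainningestimate} directly: it already produces the specific coefficients $A,B$ and the pointwise bound $|E| \leq C r^{2+\gamma}\|\rho\|_{\MD^{2,\ga}}$ for $E := f - \Re(A z^{1/2}+B z^{3/2})$. Because the leading coefficients of a polyhomogeneous expansion are uniquely determined, these $A,B$ agree with the $r^{1/2}e^{i\theta/2}$ and $r^{3/2}e^{3i\theta/2}$ coefficients from the previous step, and $E$ inherits a polyhomogeneous expansion with index set among the half-integers $\geq \tfrac12$. The bound $|E| = O(r^{2+\gamma})$ with $\gamma\in(0,\tfrac12)$ then forces every index occurring in the expansion of $E$ to exceed $2+\gamma$; since the smallest admissible half-integer exceeding $2+\gamma$ is $\tfrac52$, the expansion of $E$ begins at $k=2$ and reads $E\sim \sum_{k\geq 2}\sum_{0\leq p\leq p_k}E_{k,p}(\theta,t)r^{k+\frac12}(\log r)^p$, which is the assertion. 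In particular this also yields the vanishing of any logarithmic factor at the orders $r^{1/2}$ and $r^{3/2}$, i.e. the $p_0=p_1=0$ statement discussed before the proposition.

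The step I expect to be the main obstacle is the reconciliation just sketched: matching Mazzeo's mode-by-mode expansion with the coarser grouping by total radial order and, in particular, ruling out a spurious contribution from the $k=0$ mode of order $r^{3/2}$ (the term proportional to $r^{3/2}e^{i\theta/2}$, which the index set permits) together with the possible logarithmic resonance between the indices $\tfrac12$ and $\tfrac32$, which differ by an integer. Both are eliminated here by the single pointwise estimate of Theorem \ref{thm_donaldsonremainningestimate}, since any such term would be of size $r^{3/2}$, far larger than the admissible $r^{2+\gamma}$; the rest is routine.
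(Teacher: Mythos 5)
Your proposal is correct and takes essentially the same route as the paper: the paper obtains this proposition precisely by combining Theorem \ref{thm_polyhomoexpansion} (the Mellin-transform polyhomogeneity with index set $\{k+\tfrac12\}$) with Theorem \ref{thm_donaldsonremainningestimate}, whose pointwise bound $|E|\leq r^{2+\ga}\|\rho\|_{\MD^{2,\ga}}$ is exactly what kills all terms, logarithmic or not, at the orders $r^{1/2}$ and $r^{3/2}$ of the remainder, so that its expansion starts at $k=2$. One caveat: your intermediate claim that the $e^{i(k+\frac12)\theta}$ mode must begin at radial order $r^{k+\frac12}$ is not justified for the coupled inhomogeneous equation (the perturbation term and the right-hand side can feed lower-order radial terms into higher angular modes), but this does not affect your proof, since, as in the paper, the load-bearing step is the Donaldson estimate rather than that mode-by-mode assertion.
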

		\begin{remark}
			Suppose the coordinate $\theta$ is harmonic $\Delta \theta=0$, then for the equation $\Delta f=0$, the solution $f$ will not have $\log r$ terms in its' expansion. However, for the equation $\Delta f=\rho$, even suppose in the expansion of $\rho$, it doesn't contain $\log r$ terms, $f$ could still have $\log r$ terms in its' expansion.
		\end{remark}
		
	\end{section}

	\bibliographystyle{plain}
	\bibliography{references}
\end{document}